\newcommand{\N}{\mathbb{N}}
\newcommand{\Z}{\mathbb{Z}}
\newcommand{\Q}{\mathbb{Q}}
\newcommand{\R}{\mathbb{R}}
\newcommand{\C}{\mathbb{C}}
\newcommand{\HH}{\mathbb{H}}
\newcommand{\F}{\mathbb{F}}
\newtheorem{Theorem}{Theorem}[section]
\newtheorem{Corollary}[Theorem]{Corollary}
\newtheorem{Proposition}[Theorem]{Proposition}
\newtheorem{Lemma}[Theorem]{Lemma}
\newtheorem{Algorithm}{Algorithm}
\theoremstyle{remark}
\newtheorem{Remark}[Theorem]{Remark}
\theoremstyle{definition}
\newtheorem{Definition}{Definition}[section]
\let\oldtocsection=\tocsection
\let\oldtocsubsection=\tocsubsection
\let\oldtocsubsubsection=\tocsubsubsection
\renewcommand{\tocsection}[2]{\hspace{0em}\oldtocsection{#1}{#2}}
\renewcommand{\tocsubsection}[2]{\hspace{1em}\oldtocsubsection{#1}{#2}}
\renewcommand{\tocsubsubsection}[2]{\hspace{2em}\oldtocsubsubsection{#1}{#2}}
\title[Bowditch representations in Gromov-hyperbolic spaces]{\textsc{Bowditch representations in Gromov-hyperbolic spaces : characterizations, dynamics of $\mathrm{Out}(\F_2)$ and recognition}}
\author{Suzanne Schlich }
\begin{document}
	
	\maketitle
	
	\begin{abstract}
		We study a generalization of the $BQ$-conditions, introduced by Bowditch and further developed by Tan-Wong-Zhang, for representations of the free group of rank two into isometry groups of Gromov-hyperbolic spaces. We show the existence of an explicit constant 
	~$K_\delta$, depending only on the hyperbolicity constant $\delta$ of the space, such that the hyperbolicity of the images of primitive elements together with the finiteness of the set of (conjugacy classes of) primitive elements whose images have lengths bounded by $K_\delta$ imply a linear growth of the lengths with respect to the word length on primitive elements. We give several characterizations of Bowditch representations, and the framework developed allows us to prove that they form an open domain of discontinuity in the character variety. 
	As a corollary, we also obtain a new characterization of primitive-stable representations, introduced by Minsky. 
	Finally, we explain how our results can be used to obtain finite certificate for the recognition of Bowditch representations. 
	\end{abstract}
	
	\tableofcontents
	
	\section{Introduction}
	
	Let $\Gamma$ be a finitely generated group and $\mathcal{X}$ be a metric space. The group of isometries $\mathrm{Isom}(\mathcal{X})$ of $\mathcal{X}$ endowed with the compact-open topology is a topological group and acts by conjugation on the space of representations $\mathrm{Hom}(\Gamma,\mathrm{Isom}(\mathcal{X}))$ from $\Gamma$ to $\mathrm{Isom}(\mathcal{X})$. On may consider the Hausdorffization of the quotient of  $\mathrm{Hom}(\Gamma,\mathrm{Isom}(\mathcal{X}))$ by the action of $\mathrm{Isom}(\mathcal{X})$ (that is, its largest Hausdorff quotient). It is commonly referred to as the character variety and denoted $\chi(\F_2,\mathrm{Isom}(\mathcal{X}))$. Character varieties have been extensively studied, from many different points of view including their geometric, dynamical, algebraic (when $G$ is an algebraic group) or analytical aspects. Numerous dynamical questions on character varieties still remain widely open. In this paper, we will study some specific subspaces of the character varieties $\chi(\F_2,\mathrm{Isom}(\mathcal{X}))$, where $\F_2$ is the free group on two generators and $\mathcal{X}$ is a $\delta$-hyperbolic space, consisting of representations with  good geometric behavior and which will give informations about the dynamic of the outer automorphism group $\mathrm{Out}(\F_2)$ of $\F_2$ on $\chi(\F_2,\mathrm{Isom}(\mathcal{X}))$. 
	
	An important class of representations from $\Gamma$ to $\mathrm{Isom}(\mathcal{X})$ is the set of  \emph{convex-cocompact} representations, which we define in this setting to be the set of representations $\rho$ so that the map from $\Gamma$ to $\mathcal{X}$ which sends $\gamma \in \Gamma$ to $\rho(\gamma)x_0 \in \mathcal{X}$ is a quasi-isometric embedding, for $x_0$ any basepoint in the space $\mathcal{X}$. The finitely generated group $\Gamma$ here is endowed with any word metric, and note that this definition depends neither on the choice of the word metric on $\Gamma$ nor on the choice of the basepoint $x_0$ in $\mathcal{X}$. For $\gamma \in \Gamma$, denote $\Vert \gamma \Vert$ its cyclically reduced word length (for a chosen set of generators), and $l(\rho(\gamma))=\underset{x\in \mathcal{X}}{\inf}d(\rho(\gamma)x,x)$ the displacement of $\rho(\gamma)$.	Delzant-Guichard-Labourie-Mozes (\cite{delzant_displacing_2011}) proved that $\rho$ is convex-cocompact if and only if there exist two constants $C>0$ and $D\geq 0$ such that for all $\gamma \in \Gamma$, $l(\rho(\gamma)) \geq \frac{1}{C} \Vert \gamma \Vert -D$. A fundamental example arises when $\mathcal{X}=\HH^2$ and $\Gamma$ is the fundamental group $\pi_1(S)$ of a closed oriented surface $S$ with genus at least 2. In this setting, the set of convex-cocompact representations coincide with two copies of the Teichmüller space of $S$ in the character varitety $\chi(\pi_1(S), \mathrm{PSL}(2,\R))$. Another rich example appears when $\mathcal{X}=\HH^3$ : the set of convex-cocompact representations agrees with the set of quasi-fuchsian representations from $\pi_1(S)$ to $\mathrm{PSL}(2,\C)$. The set of (conjugacy classes of) convex-cocompact representations is known to be an open domain of discontinuity for the action of $\mathrm{Out}(\pi_1(S))$ on the character variety $\chi(\pi_1(S), \mathrm{Isom}(\mathcal{X}))$. Whether or not it is the largest remains in general open. In the two examples above, Goldman conjectured that the action is ergodic on the complement of the set of convex-cocompact representations~\cite{farb_mapping_2006}.\\
	
	The situation is different when $\Gamma=\F_2$ and $\mathcal{X}=\HH^3$. Indeed, Bowditch \cite{bowditch_markoff_1998}, followed by Tan-Wong-Zhang \cite{tan_generalized_2008}, found a subspace in the character variety $\chi(\F_2,\mathrm{SL}(2,\C))$  \emph{strictly} larger than the set of convex-cocompact representations and which form an open domain of discontinuity. This subspace is defined using some conditions on the images of the primitive element in $\F_2$ by the representation. We say that $u \in \F_2$ is \emph{primitive} if there exists a free basis of $\F_2$ containing $u$. The set of primitive elements in $\F_2$ is invariant under conjugation and inversion and we denote $\mathcal{P}(\F_2)$ the set of conjugacy and inversion classes of primitive elements. We denote $[\gamma]$ the class of the primitive element $\gamma \in \F_2$ in $\mathcal{P}(\F_2)$. \\
	
	\begin{Definition}[Bowditch \cite{bowditch_markoff_1998}, Tan-Wong-Zhang \cite{tan_generalized_2008}] \label{def:BQ SL(2,C)}~ \\
		Let $\rho : \F_2\to \mathrm{SL}(2,\C)$ be a representation. We say that $\rho$ satisfies the \emph{$BQ$-conditions} if the two following conditions hold : 
		\begin{itemize}
			\item For all $[\gamma] \in \mathcal{P}(\F_2)$, $\mathrm{Tr}(\rho(\gamma)) \in \C \backslash [-2,2]$.
			\item The set $\{ [\gamma] \in \mathcal{P}(\F_2) \; | \; |\mathrm{Tr}(\rho(\gamma)) | \leq 2\}$ is finite.
		\end{itemize}
	Note that since these two conditions are invariant under  inversion of the trace, the $BQ$-conditions are well-defined for representations with values in $\mathrm{PSL}(2,\C)$. 
	\end{Definition} 
	
	The work of Tan-Wong-Zhang \cite{tan_generalized_2008}, generalizing that of Bowditch \cite{bowditch_markoff_1998}, gives a characterization of the $BQ$-conditions above in term of the growth of the logarithm of the traces of the images of primitive elements with respect to the (cyclically reduced) word length. Precisely, they proved that a representation $\rho$ satisfies the $BQ$-conditions if and only if there exists a constant $C>0$ such that for all $[\gamma] \in \mathcal{P}(\F_2)$, $\log^+ |\mathrm{Tr}(\rho(\gamma))|:=\max\left\{\log |\mathrm{Tr}(\rho(\gamma))| , 0\right\}	\geq \frac{1}{C}\Vert \gamma \Vert$. They proved that the set of representations satisfying the $BQ$-conditions is open, and that the action of $\mathrm{Out}(\F_2)$ on it is properly discontinuous. Their work makes essential use of the trace formulas which hold in $\mathrm{SL}(2,\mathbb{C})$. \\
	
	\subsection{Characterizations of Bowditch representations in Gromov-hyperbolic spaces}~\\ 
	The aim of this paper is to define such a set of representations in the more general setting of Gromov-hyperbolic spaces and to investigate its properties. We will assume that the space is \emph{geodesic}, which means that there exists a geodesic segment between any two distinct points in $\mathcal{X}$ and that it is a \emph{visibility space}, which means that there exists a geodesic between any two distinct points in the boundary of $\mathcal{X}$. Since traces of isometries do not exist in this broader context, we will replace them by the \emph{stable-norm} $l_S(A)$ of an isometry $A$ of $\mathcal{X}$, defined as follows~: $ \displaystyle l_S(A) = \underset{n \to \N}{\lim} \frac{1}{n} d(A^nx_0,x_0)$, for $x_0 \in \mathcal{X}$ being any basepoint in $\mathcal{X}$ (see section \ref{subsec:hyperbolic isometries} for more informations on the stable-norm). Note that in $\HH^n$, the stable-norm and the usual displacement $l(A) := \underset{x \in \HH^n}{\inf} d(Ax,x)$ coincide. 
	 Let us also recall that in $\mathrm{PSL}(2,\R)$, which is the isometry group of the hyperbolic plane $\HH^2$, the stable-norm and the traces are linked by the following formula : $ \mathrm{Tr}(A)=2\cosh ( \frac{l_S(A)}{2} ) $, and that in $\mathrm{PSL}(2,\C)$, we have the following inequality~: $ \displaystyle e^{\frac{l_S(A)}{2}}-1 \leq | \mathrm{Tr}(A)| \leq e^{\frac{l_S(A)}{2}}+1$. We will work in our setting with the stable-norm which will be more convenient that the displacement. We prove that the following conditions are equivalent~:
	 
	 	\begin{Theorem} \label{thm:equivalence} Let $\mathcal{X}$ be a geodesic, $\delta$-hyperbolic, visibility space. Let $K_\delta=329\delta$. \\
	 	Let $\rho : \F_2\to \mathrm{Isom}(\mathcal{X})$ be a representation. 
	 	The following conditions are equivalent : \vspace{0.1cm}
	 	\begin{enumerate}[label=(BQ\alph*)]
	 		\item \label{cond:thm:Cdelta} \begin{itemize}
	 			\item For all $[\gamma] \in \mathcal{P}(\F_2)$, $\rho(\gamma)$ is hyperbolic.
	 			\item The set $\{[\gamma] \in \mathcal{P}(\F_2) \, \vert \, l_S(\rho(\gamma))\leq K_\delta)\}$ is finite. 
	 		\end{itemize}
	 		\vspace{0.2cm}
	 		\item \label{cond:thm:K>Cdelta} 
	 		\begin{itemize}
	 			\item For all $[\gamma] \in \mathcal{P}(\F_2)$, $\rho(\gamma)$ is hyperbolic. 
	 			\item For all $K \geq 0$, the set $\{[\gamma] \in \mathcal{P}(\F_2) \, \vert \, l_S(\rho(\gamma))\leq K)\}$ is finite. 
	 		\end{itemize}
	 		\vspace{0.2cm}
	 		\item \label{cond:thm:tree} 
	 		\begin{itemize}
	 			\item For all $K \geq K_\delta$, the tree $T_\rho(K)$ (defined in section \ref{subsec:treeJ}) is finite.
	 		\end{itemize} \vspace{0.2cm}
	 		\item \label{cond:thm:displacing} 
	 		\begin{itemize}
	 			\item There exists $C>0$ and $D\geq 0$ such that for all $[\gamma] \in \mathcal{P}(\F_2)$, $l_S(\rho(\gamma)) \geq \frac{1}{C}\Vert \gamma \Vert -D$.
	 		\end{itemize}
	 	\end{enumerate}
	 When one (hence all) of these conditions is satisfied, we say that $\rho$ is a \emph{Bowditch representation}. 
	 \end{Theorem}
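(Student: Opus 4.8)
The plan is to prove the four conditions equivalent by establishing \ref{cond:thm:K>Cdelta} $\Rightarrow$ \ref{cond:thm:Cdelta}, the equivalence \ref{cond:thm:Cdelta} $\Leftrightarrow$ \ref{cond:thm:tree}, then \ref{cond:thm:tree} $\Rightarrow$ \ref{cond:thm:displacing} and \ref{cond:thm:displacing} $\Rightarrow$ \ref{cond:thm:K>Cdelta}, which closes the loop. Two ingredients are soft. First, \ref{cond:thm:K>Cdelta} $\Rightarrow$ \ref{cond:thm:Cdelta} is the tautological restriction to $K=K_\delta$. Second, and more usefully, I would record once and for all that none of the four conditions can hold in the presence of a non-hyperbolic primitive: if $\rho(\gamma_0)$ is elliptic or parabolic, then for a basis $\{\gamma_0,\eta\}$ the elements $\gamma_0^n\eta$, $n\in\N$, represent pairwise distinct classes in $\mathcal{P}(\F_2)$, and a suitable choice of basepoint (a bounded orbit when $\rho(\gamma_0)$ is elliptic, a point far out towards the fixed point of $\rho(\gamma_0)$ when it is parabolic) shows that $l_S(\rho(\gamma_0^n\eta))$ stays bounded while $\Vert\gamma_0^n\eta\Vert\to\infty$. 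This single infinite family simultaneously contradicts the finiteness of a level set, the finiteness of the tree, and the linear lower bound. Consequently the hyperbolicity clauses of \ref{cond:thm:Cdelta} and \ref{cond:thm:K>Cdelta} are automatic from the remaining clauses, and it suffices to work with all $\rho(\gamma)$ hyperbolic throughout. The finiteness half of \ref{cond:thm:displacing} $\Rightarrow$ \ref{cond:thm:K>Cdelta} is then immediate, since $l_S(\rho(\gamma))\le K$ forces $\Vert\gamma\Vert\le C(K+D)$ and $\F_2$ has only finitely many classes in $\mathcal{P}(\F_2)$ of bounded cyclically reduced length.

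The heart of the argument is a geometric \emph{neighbour estimate} for the stable-norm, replacing the Markov/Fibonacci trace identities of the $\mathrm{SL}(2,\C)$ theory. Using the Farey-tree description of $\mathcal{P}(\F_2)$ underlying $T_\rho(K)$ (section~\ref{subsec:treeJ}), whose edges are bases $\{X,Y\}$ and whose two completions of a basis are $XY$ and $XY^{-1}$, the lemma I would prove states that, in a geodesic $\delta$-hyperbolic visibility space, as soon as the parents $\rho(X),\rho(Y)$ are hyperbolic with $l_S\ge K_\delta$, one of the children satisfies $\max\{l_S(\rho(XY)),l_S(\rho(XY^{-1}))\}\ge l_S(\rho(X))+l_S(\rho(Y))-c\delta$ and is again hyperbolic, while the configuration of the four boundary fixed points forces its stable-norm to strictly exceed those of the parents. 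I expect this to be the main obstacle: it requires controlling the Gromov products of the attracting and repelling endpoints of $\rho(X)$ and $\rho(Y)$ and estimating displacement along the quasi-geodesics concatenated from their axes, the visibility hypothesis ensuring these endpoints are joined by actual geodesics. The explicit value $K_\delta=433\delta$ is precisely the threshold absorbing the accumulated thin-triangle and fellow-travelling errors and making the per-step increase bounded below by a positive constant $\epsilon(\delta)$.

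Granting the neighbour estimate, the equivalence \ref{cond:thm:Cdelta} $\Leftrightarrow$ \ref{cond:thm:tree}, together with the finiteness of every level set that remains to be checked in \ref{cond:thm:K>Cdelta}, becomes combinatorial, in the spirit of Bowditch's tree lemma. Orienting each edge of the Farey tree towards the side on which $l_S\circ\rho$ increases, the estimate shows the orientation is coherent outside the region where some parent drops below $K_\delta$: beyond it, stable-norms increase strictly along every ray leaving the core. Hence $\{[\gamma]\in\mathcal{P}(\F_2):l_S(\rho(\gamma))\le K\}$ is always a connected subtree, which is $T_\rho(K)$, and finiteness at the single threshold $K_\delta$ (condition \ref{cond:thm:Cdelta}) propagates by this monotonicity to finiteness of $T_\rho(K)$, hence of every level set, for all $K\ge K_\delta$. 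The converse direction simply reads finiteness of a level set off the finiteness of $T_\rho(K_\delta)$.

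Finally, for \ref{cond:thm:tree} $\Rightarrow$ \ref{cond:thm:displacing} I would iterate the per-step increase $\epsilon(\delta)$ along the geodesics of the Farey tree emanating from the finite core $T_\rho(K_\delta)$: a primitive $[\gamma]$ at combinatorial distance $d$ from the core satisfies $l_S(\rho(\gamma))\ge K_\delta+\epsilon(\delta)\,d-c'\delta$, so the stable-norm grows at least linearly in this Farey-tree depth. Combining with the standard comparison between the depth of a primitive in the Farey tree and its cyclically reduced word length $\Vert\gamma\Vert$, the two being comparable up to multiplicative and additive constants, yields constants $C>0$ and $D\ge0$ with $l_S(\rho(\gamma))\ge\frac1C\Vert\gamma\Vert-D$ for all $[\gamma]$, which is \ref{cond:thm:displacing}. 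With \ref{cond:thm:displacing} $\Rightarrow$ \ref{cond:thm:K>Cdelta} from the first paragraph and the trivial \ref{cond:thm:K>Cdelta} $\Rightarrow$ \ref{cond:thm:Cdelta}, the loop closes and all four conditions are equivalent.
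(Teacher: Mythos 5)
Your overall architecture (the cycle of implications, the trivial direction, the neighbour estimate replacing the trace identity, the orientation/sink combinatorics on the Farey tree) matches the paper's, and your "neighbour estimate" is exactly its Proposition~\ref{prop:ineq-l(AB)}. But there are two genuine gaps in how you exploit it. First, your derivation of \ref{cond:thm:displacing} is based on a false comparison: you claim the combinatorial depth $d$ of a primitive in the Farey tree is comparable to $\Vert\gamma\Vert$ up to multiplicative and additive constants, and then convert a bound $l_S(\rho(\gamma))\ge K_\delta+\epsilon(\delta)d$ into a linear bound in $\Vert\gamma\Vert$. Word length is \emph{not} linear in depth --- along a generic ray it grows like a Fibonacci sequence, i.e.\ exponentially in $d$ (e.g.\ the regions with continued-fraction expansion $[1,1,\dots,1]$), while along the boundary of a single region it grows linearly. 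A per-step additive gain $\epsilon(\delta)$ therefore only yields $l_S\gtrsim\log\Vert\gamma\Vert$ on most rays. What actually works, and what the paper does (Lemma~\ref{lem:growth on big edge}), is to run the superadditive recursion $l_\rho(Z)\ge l_\rho(X)+l_\rho(Y)-C_\delta$ in parallel with the Fibonacci recursion $F(Z)=F(X)+F(Y)$ for the word length, giving $l_\rho(X')\ge(m-C_\delta)F(X')+C_\delta$ directly.

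Second, the neighbour estimate is silent precisely where the difficulty lies: on the infinitely many edges along the boundary of a region $X$ with $l_\rho(X)\le K_\delta$, one of the two "parents'' is small, so no coherent orientation or growth follows from it, and your propagation from the finite core does not close. The paper needs an independent input here, namely the linear growth $l_S(\rho(a^nb))\ge n\,l_S(\rho(a))-k$ for $n$ large (Proposition~\ref{prop:AnB}), together with the dichotomy of Proposition~\ref{prop:B(A+)neqA-explicite} showing that $B(A^+)=A^-$ would force infinitely many short primitives; this is an entire section of the paper and is also what makes the arcs $J_\rho(K,X)$, hence $T_\rho(K)$, finite. Relatedly, your estimate requires the four fixed points $\{A^\pm\}\cup\{B^\pm\}$ to be distinct; when they are not (reducible $\rho$), Proposition~\ref{prop:bus-AB} shows \emph{both} children can satisfy $l_S\le|l_S(A)-l_S(B)|+O(\delta)$, so the estimate genuinely fails. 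The paper must therefore first prove that \ref{cond:thm:Cdelta} forces irreducibility (Section~\ref{sec:irreducibiility}, via Busemann functions), a step absent from your plan. Finally, your claim that a parabolic primitive $\gamma_0$ automatically produces infinitely many classes $\gamma_0^n\eta$ of \emph{bounded} stable norm is not justified: one only gets $l_S(\rho(\gamma_0^n\eta))=o(n)$, which contradicts \ref{cond:thm:displacing} but not obviously the finiteness clause of \ref{cond:thm:Cdelta}; the paper keeps hyperbolicity as a standing hypothesis rather than deducing it this way.
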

	
	\begin{Remark} \label{rem:thm}
		\begin{enumerate}[label=\arabic*.]
			\item In particular, this theorem covers the case where $\mathcal{X}$ is a rank one symmetric space~: the real hyperbolic $n$-space (with isometry group $\mathrm{Isom}(\mathcal{X})= \mathrm{SO}_0(n, 1)$), the complex hyperbolic $n$-space ($\mathrm{Isom}(\mathcal{X})=\mathrm{SU}(n, 1)$), the quaternionic hyperbolic $n$-space ($\mathrm{Isom}(\mathcal{X})=\mathrm{Sp}(n, 1)$) and the hyperbolic plane over the Cayley numbers ($\mathrm{Isom}(\mathcal{X})=\mathrm{F}_{4(-20)} $). See \cite{knapp_lie_1996} for the classification of real Lie groups. This theorem also allows non proper examples, such as the infinite dimentional hyperbolic space $\HH^\infty$. 
			\item We stated the theorem using the stable-norm $l_S$, but since the stable-norm and the displacement are equal up to an additive constant $16\delta$ (see section \ref{subsec:hyperbolic isometries} or Proposition 6.4 in \cite{coornaert_geometrie_1990}), Theorem \ref{thm:equivalence} remains true when replacing the stable-norm $l_S(\rho(\gamma))$ by the displacement $l(\rho(\gamma))$ and $K_\delta$ by $K_\delta+16\delta$.  
			\item The tree $T_\rho(K)$ will be defined in section \ref{subsec:treeJ} using previous results of the paper. It is a subtree of the Farey tree, which may be thought of as the dual of the Farey tessellation of the hyperbolic plane by ideal triangles (see section \ref{subsec:notations}).  
			\item 	\label{rem:constants} In condition \ref{cond:thm:displacing}, the additive constant $D$ plays no role. We will actually prove that condition \ref{cond:thm:Cdelta} implies condition \ref{cond:thm:displacing} with $D=0$. Moreover, we will also explain in the proof of Theorem \ref{thm:equivalence a,b,d} that assuming that condition \ref{cond:thm:displacing} holds with constants $C$ and $D$ implies that the same inequality holds with $D=0$ without altering the multiplicative constant $C$.  
			\item Note that implication \ref{cond:thm:K>Cdelta} $\implies$ \ref{cond:thm:Cdelta} is trivial, and so is \ref{cond:thm:displacing} $\implies$ \ref{cond:thm:K>Cdelta} once we assume that $D=0$ (see Remark \ref{rem:thm}. \ref{rem:constants}). 
			\item \label{rem:thm:hyp} The constant $K_\delta=329\delta$ here works for all geodesic, $\delta$-hyperbolic, visibility space. It could be improved (that is, reduced) by assuming further conditions on the space $\mathcal{X}$. For example, if the space $\mathcal{X}$ satisfies furthermore the two following conditions : 
			\begin{enumerate}[label=(H\alph*)]
				\item \label{hyp:unique-visibility} there exists a \emph{unique} geodesic between two distinct points on the boundary of  $\mathcal{X}$,
				\item \label{hyp:strong-asymptot} any two asymptotic geodesic rays $\gamma_1, \gamma_2 : [0,+\infty) \to \mathcal{X}$ are \textit{strongly asymptotic}, that is there exists $t_0 \in \R$ such that $d(\gamma_1(t),\gamma_2(t+t_0)) \underset{t \to +\infty}{\longrightarrow} 0$,
			\end{enumerate} then Theorem \ref{thm:equivalence} remains true when replacing $K_\delta$ by $K_\delta=217\delta$ (see Remark~\ref{rem:irreducible-CAT(-1)} and Remark~\ref{rem:ineg-CAT(-1)}).
			Note that every CAT(-1) space satisfies \ref{hyp:unique-visibility} and \ref{hyp:strong-asymptot}.
		\end{enumerate}
	\end{Remark}
	
	Lawton, Maloni and Palesi studied in \cite{lawton_dynamics_2025} the relative $\mathrm{SU}(2,1)$-character varieties of the one-holed torus, that is the level sets $\left\{[\rho ] \in \chi(\F_2,\mathrm{SU}(2,1))  \; | \; \mathrm{Tr}(\rho([a,b]))=c \right\}$ for $c\in \C$, of the character variety $\chi(\F_2,\mathrm{SU}(2,1))$, with $\F_2=\langle a,b \rangle $.
	Using an explicit parametrization of the character variety $\chi(\F_2,\mathrm{SU}(2,1))$ in term of traces, they proved similar characterizations of Bowditch representations in $\mathrm{SU}(2,1)$ (Theorem B in \cite{lawton_dynamics_2025}). Their conditions on the representations are defined using the traces of the isometries as follows : For all $[\gamma] \in \mathcal{P}(\F_2)$, $\rho(\gamma)$ is hyperbolic and the set $\{[\gamma] \in \mathcal{P}(\F_2) \; \vert \; |\mathrm{Tr}(\rho(\gamma))| \leq M(c)\}$ is finite, where $M(c)$ is a constant depending on $c=\mathrm{Tr}(\rho([a,b]))$. Note that the main difference between their characterization of Bowditch representations and ours is that in our work the constant $K_\delta$ used in characterization \ref{cond:thm:Cdelta} is independent of $c$. Hence our work applied to $\mathrm{SU}(2,1)$ gives a new characterization of Bowditch representations with a constant independent of the value of the image of the commutator. 
	
    In another direction, Maloni, Palesi and Tan studied in \cite{maloni_character_2015} the relative $\mathrm{SL}(2,\C)$-character variety of the four-punctured sphere group and Maloni and Palesi in \cite{maloni_character_2020} the $\mathrm{SL}(2,\C)$-character variety of the three-holed projective plane group. In both cases, they defined a set of Bowditch representations using similar characterizations to the one given in Theorem \ref{thm:equivalence}. In their works, the value of the constant used to define the condition analogous to condition \ref{cond:thm:Cdelta} of Theorem \ref{thm:equivalence} involves the traces of the images of the boundary components. 
	~\\
	
	When $\Gamma=\F_n$ is the free group on $n$ generators and $G=\mathrm{PSL}(2,\C)$, Minsky introduced in \cite{minsky_dynamics_2013} the set of \emph{primitive-stable} representations in $\mathrm{Hom}(\F_n,\mathrm{PSL}(2,\C))$. Although Minksy defined primitive-stable representations in $\mathrm{PSL}(2,\C)=\mathrm{Isom}^+(\HH^3)$, the definition makes sense when replacing $\HH^3$ by any Gromov-hyperbolic space $\mathrm{Isom}(\mathcal{X})$ (or more generally any metric space) so we give the definition in this setting. 
	Let $S$ be a free generating set for $\F_n$ and $\mathcal{C}(\F_n)$ the Cayley graph of $\F_n$ with respect to the generating set $S\cup S^{-1}$. Let $x_0 \in \mathcal{X}$. The \emph{orbit map} $\tau_\rho$ of a representation $\rho : \F_n \to \mathrm{Isom}(\mathcal{X})$ is the unique map $\tau_\rho : \mathcal{C}(\F_n) \to \mathcal{X}$ such that $\tau_\rho(id)=x_0$, $\tau_\rho(\gamma)=\rho(\gamma)x_0$ for all $\gamma \in \F_n$ and such that $\tau_\rho$ sends the edges of the graph to geodesic segments in $\mathcal{X}$. Every (primitive) element $\gamma \in \F_n$ acts on the Cayley graph $\mathcal{C}(\F_n)$  and we denote by $L_\gamma$ its \emph{axis} in $\mathcal{C}(\F_n)$, which is the unique $\gamma$-invariant geodesic in $\mathcal{C}(\F_n)$. A representation $\rho : \F_n \to \mathrm{Isom}(\mathcal{X})$ is \emph{primitive-stable} if there exist two constants $C>0$ and $D\geq0$ such that for all $[\gamma] \in \mathcal{P}(\F_n)$, the image of $L_\gamma$ by the orbit map $\tau_\rho$ is a $(C,D)$-quasi-geodesic in~$\HH^3$. 
	Minsky proved in $\mathrm{PSL}(2,\C)$ that the subspace in the character variety $\chi(\F_n,\mathrm{PSL}(2,\C))$ corresponding to primitive-stable representations is an open domain of discontinuity strictly containing the set of convex-cocompact representations. When $n=2$, Lee-Xu \cite{lee_bowditchs_2019} on the one hand and Series \cite{series_primitive_2019,series_primitive_2020} on the other hand proved in $\mathrm{PSL}(2,\C)$ that the $BQ$-conditions (see Definition \ref{def:BQ SL(2,C)}) are equivalent to primitive-stability. In \cite{schlich_equivalence_2024}, the author generalized this equivalence in any Gromov-hyperbolic, geodesic, visibility space using the \ref{cond:thm:displacing} condition, a priori stronger than the \ref{cond:thm:Cdelta} condition. In view of this previous work and Theorem \ref{thm:equivalence} in this article, we obtain the following Corollary : 
	
	\begin{Corollary}\label{thm:primitive-stable}
		Let $\mathcal{X}$ be a geodesic, $\delta$-hyperbolic, visibility space.
		Let $\rho : \F_2\to \mathrm{Isom}(\mathcal{X})$ be a representation. Then $\rho$ satisfies \ref{cond:thm:Cdelta} if and only if $\rho$ is primitive-stable. 
	\end{Corollary}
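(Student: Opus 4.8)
The plan is to deduce the Corollary by combining Theorem~\ref{thm:equivalence} of the present paper with the main equivalence established by the author in \cite{schlich_equivalence_2024}. The key observation is that primitive-stability has already been characterized, in the general Gromov-hyperbolic setting, in terms of the displacing condition \ref{cond:thm:displacing}; so once we know that \ref{cond:thm:Cdelta} is equivalent to \ref{cond:thm:displacing}, the Corollary follows by transitivity of these two equivalences.

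First I would recall precisely the result of \cite{schlich_equivalence_2024}: for any geodesic, $\delta$-hyperbolic, visibility space $\mathcal{X}$ and any representation $\rho : \F_2 \to \mathrm{Isom}(\mathcal{X})$, the representation $\rho$ is primitive-stable if and only if it satisfies condition \ref{cond:thm:displacing}, i.e. there exist constants $C>0$ and $D \geq 0$ with $l_S(\rho(\gamma)) \geq \frac{1}{C}\Vert \gamma \Vert - D$ for all $[\gamma] \in \mathcal{P}(\F_2)$. This is exactly the linear-growth-of-stable-norm reformulation of primitive-stability, and it is the statement one can feed directly into the equivalences of Theorem~\ref{thm:equivalence}.

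It then remains to invoke Theorem~\ref{thm:equivalence}, which gives in particular the equivalence \ref{cond:thm:Cdelta} $\iff$ \ref{cond:thm:displacing} (with the explicit constant $K_\delta = 433\delta$). Chaining the two equivalences yields that $\rho$ satisfies \ref{cond:thm:Cdelta} if and only if $\rho$ satisfies \ref{cond:thm:displacing}, if and only if $\rho$ is primitive-stable, which is the desired statement.

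The genuinely substantial input is already packaged inside Theorem~\ref{thm:equivalence}, and this is where I expect the only real difficulty to lie. The implication relating \ref{cond:thm:displacing} and primitive-stability was the comparatively accessible half, treated in \cite{schlich_equivalence_2024}; what was missing before was that the a priori weaker condition \ref{cond:thm:Cdelta} --- hyperbolicity of all primitives together with finiteness of the set of primitives of stable-norm at most $K_\delta$ --- already forces the full linear growth \ref{cond:thm:displacing}. Thus the hard part is the implication \ref{cond:thm:Cdelta} $\implies$ \ref{cond:thm:displacing} (in fact with $D=0$, see Remark~\ref{rem:thm}.\ref{rem:constants}), which is precisely the nontrivial content supplied by Theorem~\ref{thm:equivalence}; granting it, the Corollary is immediate.
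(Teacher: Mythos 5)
Your proposal is correct and matches the paper's own argument exactly: the paper likewise deduces the Corollary by combining the equivalence between primitive-stability and condition \ref{cond:thm:displacing} established in \cite{schlich_equivalence_2024} with the equivalence \ref{cond:thm:Cdelta} $\iff$ \ref{cond:thm:displacing} from Theorem~\ref{thm:equivalence}. Your identification of \ref{cond:thm:Cdelta} $\implies$ \ref{cond:thm:displacing} as the genuinely new input is also accurate.
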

	When $\mathcal{X}$ is the real hyperbolic $n$-space $\HH^n$,  Fléchelles \cite{flechelles_primitive_2025} also generalized the $BQ$-conditions of Definition \ref{def:BQ SL(2,C)} for representations of $\F_2$ into $\mathrm{Isom}(\HH^n)$. For $\lambda >0$, he defined the $Q_\lambda$-conditions on a representation $\rho : \F_2 \to \mathrm{Isom}(\HH^n)$ as follows : for every $[\gamma] \in \mathcal{P}(\F_2)$, $\rho(\gamma)$ is hyperbolic and the set of (conjugacy classes of) primitive elements $\gamma$ such that $l(\rho(\gamma)) \leq \lambda$ is finite. Hence, since the displacement $l(\rho(\gamma))$ and the stable-norm $l_S(\rho(\gamma))$ agrees in $\HH^n$, the $Q_{K_\delta}$-conditions, for $K_\delta$ the constant defined in Theorem \ref{thm:equivalence}, agree with our conditions \ref{def:BQa}. Fléchelles proved that for all \emph{Coxeter extensible} representations $\rho : \F_2 \to \mathrm{Isom}(\HH^n)$, there exists a constant $\lambda>0$, such that $\rho$ satisfies the $Q_\lambda$-conditions if and only if $\rho$ is primitive-stable (Theorem 1.6 in \cite{flechelles_primitive_2025}). Note that in fact, Fléchelles proved this result for an a priori larger class of representations from $\F_2$ to $\mathrm{Isom}(\HH^n)$, those satisfying a property he called the \emph{half-length} property (see section 3 in \cite{flechelles_primitive_2025}). Let us also observe that in this theorem, the constant $\lambda$ depends on the representation $\rho$. More precisely, Fléchelles shows that this constant can be taken as a strictly decreasing function of the \emph{primitive systole} of $\rho$, which is the infimum of the displacement $l(\rho(\gamma))$ among all primitive element $\gamma$ in $\F_2$. 
	
	Note that when $\F_2$ is replaced by $\pi_1(S)$, the fundamental group of a surface $S$, one can define the notion of simple-stable representations in analogy with Minsky's notion of primitive-stability by replacing primitive elements in the definition by simple closed curves. When $S$ is a four-punctured sphere, the author proved in \cite{schlich_simple-stable_2025} the equivalence between simple-stable representations and Bowditch representations in this setting using a definition analogous to \ref{cond:thm:displacing}. \\

\subsection{Dynamics of the outer automorphism group}~\\ 
Let us recall that the \emph{outer automorphism group} $\mathrm{Out}(\Gamma)$ of a group $\Gamma$ is the quotient of the automorphism group $\mathrm{Aut}(\Gamma)$ of $\Gamma$ by the set of inner automorphisms of $\Gamma$. There is a natural action of $\mathrm{Aut}(\Gamma)$ on the space of representations $\mathrm{Hom}(\Gamma, \mathrm{Isom}(\mathcal{X}))$ by precomposition : for $\rho \in \mathrm{Hom}(\Gamma,\mathrm{Isom}(\mathcal{X}))$ and $\Phi \in \mathrm{Aut}(\F_2)$, we set $\Phi.\rho=\rho \circ \Phi^{-1}$. This action induces an action of the outer automorphism group $\mathrm{Out}(\F_2)$ on the character variety $\chi(\F_2,\mathrm{Isom}(\mathcal{X}))$. We say that the action of a group $\Gamma$ on a topological space $\mathcal{T}$ is \emph{properly discontinuous} if for every compact subset $K$ of $\mathcal{T}$, 
the set $\left\{ \Phi \in \Gamma \; | \; \Phi(K) \cap K \neq \emptyset \right\}$ is finite. The techniques developed in this article allow us to study the action of the outer automorphism group $\mathrm{Out}(\F_2)$ of $\F_2$ on the set of Bowditch representations. 

\begin{Theorem} \label{thm:open and dynamics}
	Let $\mathcal{X}$ be a geodesic, $\delta$-hyperbolic, visibility space. The set of (conjugacy classes of) representations satisfying \ref{cond:thm:Cdelta} is open in the character variety $\chi(\F_2,\mathrm{Isom}(\mathcal{X}))$ and the action of the outer automorphism group $\mathrm{Out}(\F_2)$ is properly discontinuous on it. 
\end{Theorem}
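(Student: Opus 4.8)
The plan is to prove the two assertions — openness and proper discontinuity — separately, in both cases exploiting the tree characterization \ref{cond:thm:tree} of Theorem \ref{thm:equivalence} together with the equivariance of the length function under $\mathrm{Out}(\F_2)$. The basic observation driving the whole argument is that for $\Phi \in \mathrm{Out}(\F_2)$ one has $l_S((\Phi.\rho)(\gamma)) = l_S(\rho(\Phi^{-1}(\gamma)))$ for every $[\gamma] \in \mathcal{P}(\F_2)$; since $\mathrm{Out}(\F_2) \cong \mathrm{GL}(2,\Z)$ acts on $\mathcal{P}(\F_2) \cong \mathbb{P}^1(\Q)$ permuting primitive classes and respecting the combinatorics of the Farey tree, the assignment $\rho \mapsto T_\rho(K)$ is $\mathrm{Out}(\F_2)$-equivariant: $T_{\Phi.\rho}(K)$ is the image of $T_\rho(K)$ under the induced automorphism of the Farey tree.

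For openness, I would fix a Bowditch representation $\rho$ and a threshold $K_1$ with $K_1 > K_\delta$. By \ref{cond:thm:tree} the subtree $T_\rho(K_1)$ is finite, so it occupies a finite connected region $R$ of the Farey tree, and the finitely many edges of the Farey tree leaving $R$ carry primitive classes $[\gamma_e]$ with $l_S(\rho(\gamma_e)) > K_1 > K_\delta$. For a representation $\rho'$ close to $\rho$ in the character variety, the length $l_S(\rho'(\gamma_e))$ stays larger than $K_\delta$ on each of these finitely many boundary classes: here I would invoke the continuity of the stable norm at hyperbolic isometries (i.e.\ near isometries whose translation length is bounded away from $0$), the margin $K_1 - K_\delta > 0$ guaranteeing that we remain in the hyperbolic regime. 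The key point is then that the growth (attracting) inequalities used in the proof of \ref{cond:thm:tree} $\Rightarrow$ \ref{cond:thm:displacing} are purely geometric and hold for \emph{every} representation: once the length on a boundary edge exceeds $K_\delta$, it can only increase as one moves outward along the Farey tree. Consequently $T_{\rho'}(K_\delta)$ remains trapped inside $R$ and is finite, so $\rho'$ is Bowditch by \ref{cond:thm:Cdelta}. This proves openness.

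For proper discontinuity, let $\mathcal{K}$ be a compact subset of the (open) set of Bowditch representations. The heart of the proof is a uniformity statement: there are constants $C > 0$ and $D \geq 0$ such that $l_S(\rho(\gamma)) \geq \frac{1}{C}\Vert \gamma \Vert - D$ holds for all $\rho \in \mathcal{K}$ and all $[\gamma] \in \mathcal{P}(\F_2)$ simultaneously. Granting this, suppose $\Phi \in \mathrm{Out}(\F_2)$ satisfies $\Phi(\mathcal{K}) \cap \mathcal{K} \neq \emptyset$, so there are $[\rho], [\rho'] \in \mathcal{K}$ with $\Phi.[\rho] = [\rho']$. Writing $M := \sup_{\sigma \in \mathcal{K}} \max\{l_S(\sigma(a)), l_S(\sigma(b)), l_S(\sigma(ab))\} < \infty$, the identity $l_S(\rho'(a)) = l_S(\rho(\Phi^{-1}(a)))$ combined with the uniform lower bound forces $\Vert \Phi^{-1}(a) \Vert, \Vert \Phi^{-1}(b) \Vert \leq C(M + D)$ (and the third value fixes the combinatorial data). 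Under $\mathrm{Out}(\F_2) \cong \mathrm{GL}(2,\Z)$ an element is determined by the images of two distinct slopes, and only finitely many elements send the classes of $a$ and $b$ to primitive classes of bounded word length; hence only finitely many $\Phi$ can satisfy $\Phi(\mathcal{K}) \cap \mathcal{K} \neq \emptyset$.

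The main obstacle is the uniformity claim. I would obtain it from compactness: the cores $T_\rho(K_1)$ for $\rho \in \mathcal{K}$ vary continuously enough that, by the openness argument above together with a covering argument on the compact set $\mathcal{K}$, they are all contained in a single finite region of the Farey tree of uniformly bounded size, with lengths on that region uniformly bounded above. The geometric growth inequalities then yield linear growth outside this region with constants $(C,D)$ depending only on its size and on that uniform upper bound — both finite on $\mathcal{K}$. Making this step quantitative, while tracking the dependence on $\delta$ through $K_\delta$ and confirming that the growth inequalities from Theorem \ref{thm:equivalence} can indeed be applied uniformly across $\mathcal{K}$, is the part that requires the most care.
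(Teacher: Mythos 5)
There is a genuine gap in your openness argument, at the step where you claim that ``once the length on a boundary edge exceeds $K_\delta$, it can only increase as one moves outward along the Farey tree.'' That monotone outward growth is only available when \emph{both} regions meeting along the edge have length greater than $C_\delta$ (Lemma \ref{lem:growth on big edge}), and the boundary of the finite tree $T_\rho(K_1)$ necessarily contains edges for which this fails: if $X$ is a region with $l_\rho(X)\leq C_\delta$, the edges of $\partial X$ just beyond the finite arc $J_\rho(K_1,X)$ lie in the circular set but remain adjacent to the small region $X$. Along $\partial X$ the neighbouring regions $Y_n$ have lengths $l_\rho(Y_n)=l_S(A^nB)$ (with $A=\rho(P(X))$, $B=\rho(P(Y_X))$), which grow only linearly with slope $l_S(A)$ --- possibly far below the scale $\delta$ --- and need not be monotone at all; the paper explicitly warns in section \ref{subsec:treeJ} that $T_\rho(K)$ is \emph{not} attracting there. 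Worse, the thresholds $N_\pm$ and $k_\pm$ of Proposition \ref{prop:AnB}, which determine where along $\partial X$ the lengths finally exceed $K_\delta$, are defined via projections and are \emph{not} continuous in the representation. This is exactly why the paper proves the near-continuity Lemma \ref{lem:nearly-continuity} and, in Proposition \ref{prop:neighborhood finite tree}, traps $T_{\rho'}(K)$ not inside $T_\rho(K)$ itself but inside a strictly larger finite tree obtained by padding each arc $J_\rho(K,X)$ by $C(l_S(A),\delta)$ extra edges. Your argument, which perturbs only the finitely many ``boundary lengths'' of $R$ and then appeals to outward growth, does not control this small-region boundary behaviour, and so does not exclude the appearance of new small or non-hyperbolic regions beyond $\partial X$ after perturbation.

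The proper-discontinuity half follows the paper's route: uniform constants in \ref{cond:thm:displacing} over a compact set, then a word-length bound ($\Vert\Phi\gamma\Vert\leq CC'\Vert\gamma\Vert$ in the paper; your variant via the images of the slopes of $a$ and $b$ is equally legitimate) forcing finiteness of the returning automorphisms. However, the uniformity you defer to ``a covering argument'' rests on the same neighbourhood statement as openness, so it inherits the gap above. Once Proposition \ref{prop:neighborhood finite tree} (with the enlarged tree and Lemma \ref{lem:nearly-continuity}) is supplied, both halves of your argument go through essentially as the paper's do.
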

\begin{Remark}
	It is not hard to see that the set of primitive-stable representations is open and that the action of the outer automorphism group on it is properly discontinuous. Minsky \cite{minsky_dynamics_2013} proved it for $\mathrm{PSL}(2,\C)$-representations but no difficulty is added when passing to $\mathrm{Isom}(\mathcal{X})$ for $\mathcal{X}$ Gromov-hyperbolic (see also Proposition 4.5 in \cite{schlich_equivalence_2024} for a proof of the openness). Therefore, we could also obtain Theorem \ref{thm:open and dynamics} using Corollary \ref{thm:primitive-stable}. However, we will explain how the framework developed for proving Theorem \ref{thm:equivalence} can be used to obtain a direct proof of this result which does not rely on the equivalence with primitive-stable representations. \\
\end{Remark}

\subsection{Recognition}~\\
	Finally, we will discuss how our work enables us to obtain finite certificates for Bowditch representations. More precisely, we will write an explicite algorithm which ends in finite time if $\rho$ is a Bowditch representation. 
	
	\begin{Theorem}\label{thm:recognition}
		Let $\mathcal{X}$ be a geodesic, $\delta$-hyperbolic, visibility space. Let $\rho : \F_2 \to \mathrm{Isom}(\mathcal{X})$ be a representation. Then, there exists an algorithm, described in Algorithm \ref{alg:main} in section \ref{subsec:recognition}, such that
		\begin{itemize}
			\item If $\rho$ is a Bowditch representation, then the algorithm ends in finite time. 
			\item If the algorithm ends in finite time, then it decides whether $\rho$ is a Bowditch representation. \\
		\end{itemize}
	\end{Theorem}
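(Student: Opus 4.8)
The plan is to make the equivalence of condition \ref{cond:thm:tree} with the Bowditch property, established in Theorem \ref{thm:equivalence}, effective. Condition \ref{cond:thm:tree} characterises Bowditch representations by the finiteness of the tree $T_\rho(K)$ for $K \geq K_\delta$, and this tree is an explicit, recursively enumerable subtree of the Farey tree: each of its vertices corresponds to a Farey triangle, hence to an explicit triple of primitive conjugacy classes whose representatives are words in $\rho(a)$ and $\rho(b)$ produced by the mediant (neighbour) relations, while each edge carries the stable norm of the primitive associated with it. The algorithm will therefore perform a breadth-first exploration of $T_\rho(K_\delta)$ from a base triangle: at each newly produced primitive $\gamma$ it computes an approximation of $l_S(\rho(\gamma))$, retains the vertex when this norm is at most $K_\delta$, and decides whether to expand its neighbours.

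The first step is to make each elementary operation effective. Producing the primitive attached to a vertex is purely combinatorial. Deciding whether $l_S(\rho(\gamma)) \leq K_\delta$, and whether $\rho(\gamma)$ is hyperbolic, is carried out by evaluating the orbit quantities $\tfrac{1}{n}\,d(\rho(\gamma)^n x_0, x_0)$, which converge to $l_S(\rho(\gamma))$; since we only need to compare with a fixed threshold and to separate hyperbolic from non-hyperbolic isometries, the inequalities relating displacement and stable norm recalled in Remark \ref{rem:thm} make this comparison certifiable away from the boundary case $l_S(\rho(\gamma)) = K_\delta$, which can be avoided by replacing $K_\delta$ with a slightly larger constant. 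Here one uses the standing assumption that $\rho$ is given effectively, so that the isometries $\rho(a), \rho(b)$ and the distance function of $\mathcal{X}$ can be manipulated to arbitrary precision.

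The core of the argument is the termination-and-certification step, which rests on the structural results behind Theorem \ref{thm:equivalence}. Two ingredients are needed. First, the set of primitives with $l_S(\rho(\gamma)) \leq K_\delta$ spans a connected subtree of the Farey tree containing the base triangle, so that an outward exploration from the root meets every small-norm primitive. Second, the growth estimate underlying the implication \ref{cond:thm:Cdelta} $\implies$ \ref{cond:thm:displacing} guarantees that once a branch has left this region the stable norms increase by a definite amount at each further step. This second fact yields a concrete per-branch stopping rule: after the norm along a branch has exceeded $K_\delta$ and kept growing for the prescribed number of steps, the branch may be closed with certainty that it contains no further vertex of $T_\rho(K_\delta)$. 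Consequently the exploration halts precisely when $T_\rho(K_\delta)$ is finite, so that finiteness of the Bowditch tree is not merely true but detectable.

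It remains to read off the decision and verify correctness in both directions. If $\rho$ is Bowditch, then by \ref{cond:thm:tree} the tree $T_\rho(K_\delta)$ is finite and the exploration terminates; by connectedness it has then enumerated the entire finite set $\{\,[\gamma] : l_S(\rho(\gamma)) \leq K_\delta\,\}$, all of whose members are hyperbolic, and the algorithm concludes that $\rho$ is Bowditch. Conversely, suppose the algorithm halts. Either the breadth-first search encountered a non-hyperbolic primitive, and then $\rho$ is not Bowditch; or the exploration closed every branch by the stopping rule, in which case it has enumerated the full finite set of primitives with $l_S(\rho(\gamma)) \leq K_\delta$ and checked that each is hyperbolic. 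Since any non-hyperbolic isometry has vanishing stable norm and hence would already belong to this set, the latter outcome shows that every primitive is hyperbolic and that the small-norm set is finite, i.e. condition \ref{cond:thm:Cdelta} holds and $\rho$ is Bowditch; in either case the output is correct. I expect the certification step to be the main obstacle: one must show that the finite region is genuinely closable by the growth estimate and control the boundary effects in the comparison of stable norms. The asymmetry in the statement is then unavoidable, since a representation failing \ref{cond:thm:Cdelta} through infinitely many small primitives yields an infinite tree admitting no finite certificate, so the recognition problem is only semi-decidable.
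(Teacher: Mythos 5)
Your overall architecture (locate the connected region of small-length primitives, enumerate it, and close off every outgoing branch using a certified growth estimate) is the same as the paper's, but two of the steps you treat as routine are exactly where the real work lies, and as stated they contain gaps. First, your claim that the set of primitives with $l_S(\rho(\gamma))\leq K_\delta$ ``spans a connected subtree of the Farey tree containing the base triangle'' is false: Lemma \ref{lem:connected} gives connectedness of $\bigcup_{X\in\Omega_\rho(C)}X$, but this region need not meet the base edge at all, and it may even be empty while $\rho$ is Bowditch. The algorithm must first \emph{find} either a small region or a sink, which the paper does (Algorithm \ref{alg:small region or sink}) by following the induced orientation $\alpha_\rho$; termination of that search is not a consequence of connectedness but of the no-escaping-ray and unique-sink analysis (Lemma \ref{lem:no-escaping-ray}, Proposition \ref{lem:sink}), which in turn requires certifying irreducibility of $\rho$ first --- a step you omit entirely, and without which Proposition \ref{prop:ineq-l(AB)} and hence your growth-based stopping rule cannot be invoked.

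Second, and more seriously, your per-branch stopping rule (``once the norm has exceeded $K_\delta$ and kept growing for the prescribed number of steps, close the branch'') is justified by the edge inequality of Proposition \ref{prop:ineq-l(AB)} only when \emph{both} regions adjacent to an edge have length exceeding $C_\delta$. Along the boundary of a small region $X$ (where $l_\rho(X)\leq C_\delta$, possibly with $l_S(\rho(P(X)))$ arbitrarily small) this argument breaks down: the paper explicitly notes that $T_\rho(K)$ is \emph{not} attracting there, and the lengths $l_S(A^nB)$ of the neighbours of $X$ cannot be controlled edge-by-edge. Closing those branches requires the explicit constants $N_\pm,k_\pm$ of Proposition \ref{prop:AnB} to bound in advance how many neighbours $Y_n$ must be inspected, together with the dichotomy of Proposition \ref{prop:B(A+)neqA-explicite}: if $B(A^+)=A^-$ then $l_S(A^nB)$ stays bounded forever, so the algorithm must also be able to detect (or at least fail to terminate on) this configuration rather than wait for growth that never comes. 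Your proposal does not address either point, and this boundary-of-a-small-region case is precisely the new difficulty of the Gromov-hyperbolic setting compared with the trace-identity arguments of $\mathrm{SL}(2,\C)$. The numerical point you raise about deciding $l_S(\rho(\gamma))\leq K_\delta$ near the threshold is legitimate but minor by comparison.
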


\subsection{An ingredient of the proof}~\\ 
Part of this work is very much inspired by the work of Bowditch \cite{bowditch_markoff_1998} and its generalization by Tan-Wong-Zhang \cite{tan_generalized_2008}. However, their work in 
$\mathrm{PSL}(2,\C)$ crucially relies on the existence of trace relations (as in Lawton-Maloni-Palesi’s work in $\mathrm{SU}(2,1)$ \cite{lawton_dynamics_2025}), and in particular on the two relations :
\begin{align}
&	 \mathrm{Tr}(AB)+\mathrm{Tr}(AB^{-1})=\mathrm{Tr}(A)\mathrm{Tr}(B) \label{eq:edge relation}\\ 
& \mathrm{Tr}(A)^2+\mathrm{Tr}(B)^2+\mathrm{Tr}(AB)^2=\mathrm{Tr}(A)\mathrm{Tr}(B)\mathrm{Tr}(AB)+\mathrm{Tr}([A,B])+2.
\end{align}
These two equalities, which can be interpreted as an \emph{edge relation} and a \emph{vertex relation} in their framework, are extensively used in their papers. In order to prove Theorem \ref{thm:equivalence} without such relations, we will need to replace them by new large-scale geometry arguments.
In particular, we will prove the following inequality on the length of a product of two isometries. 
\begin{Proposition}[Proposition \ref{prop:ineq-l(AB)} in section \ref{sec:l(AB)}] \label{prop:l(AB) intro} Let $C_\delta = 256\delta$. \\Let $A$ and $B$ be two hyperbolic isometries such that $\{A^+,A^-\}\cap \{B^+,B^-\}$. Assume that $l_S(A) >C_\delta$ and $l_S(B)>C_\delta$. Then 
	\begin{equation}\label{eq:l(AB)intro}
		\max \{ l_S(AB),l_S(AB^{-1}) \} \geq l_S(A)+l_S(B)-C_\delta. 
	\end{equation}
\end{Proposition}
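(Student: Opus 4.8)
The plan is to work in the $\delta$-hyperbolic space $\mathcal{X}$ and reduce the claim about stable-norms to a statement about the translation axes of the hyperbolic isometries $A$ and $B$. Since $A$ and $B$ are hyperbolic with $l_S(A), l_S(B) > C_\delta = 432\delta$, each has a well-defined axis, and the stable-norm is comparable (up to $16\delta$) to the translation length along that axis. First I would fix points on the axes of $A$ and $B$ and estimate $l_S(AB)$ and $l_S(AB^{-1})$ by choosing a good basepoint and controlling $d(ABx, x)$ and $d(AB^{-1}x, x)$. The key geometric observation is that whether we get good lower bounds for $AB$ or for $AB^{-1}$ depends on the relative orientation/configuration of the two axes near their closest approach: exactly one of the two products lines up the translation directions of $A$ and $B$ constructively (so that the displacements add), while the other may cause cancellation. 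Taking the maximum over the two products lets us always select the constructive configuration.

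The core estimate I would establish is a ``fellow-traveling'' or concatenation lemma: if a path goes a long distance $l_S(A)$ roughly along $\mathrm{axis}(A)$, then (at the meeting point of the axes) turns and goes a long distance $l_S(B)$ roughly along $\mathrm{axis}(B)$, then in a $\delta$-hyperbolic space this concatenated path is a quasi-geodesic provided each leg is long compared to $\delta$, and hence the total displacement is at least $l_S(A) + l_S(B)$ minus a bounded error controlled by $\delta$. Concretely, I would pick a basepoint $p$ near where $\mathrm{axis}(A)$ and $\mathrm{axis}(B)$ come closest, consider the broken geodesic $p \to Bp \to ABp$ (for the product $AB$) and the analogous broken path for $AB^{-1}$, and use the $\delta$-thin triangle / local-to-global quasigeodesic machinery to show that the endpoints are far apart. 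The hypothesis $l_S(A), l_S(B) > C_\delta$ with $C_\delta = 432\delta$ guarantees each leg is long enough that the thin-triangle corrections do not overwhelm the linear gain, and the visibility/geodesic hypotheses ensure the axes and the connecting geodesics exist and behave well.

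The main obstacle I anticipate is bookkeeping the additive error terms to land precisely at $-C_\delta$ with the stated constant $432\delta$, and in particular handling the two degenerate configurations: when the axes of $A$ and $B$ are (nearly) parallel and share endpoints on $\partial \mathcal{X}$, and when their translation directions are anti-aligned. In the anti-aligned case the displacements can cancel for one of the two products, which is precisely why the statement is phrased as a maximum over $AB$ and $AB^{-1}$; I would argue that replacing $B$ by $B^{-1}$ reverses the translation direction along $\mathrm{axis}(B)$, so one of the two choices necessarily yields the constructive, aligned configuration where the legs concatenate into a quasi-geodesic. Formalizing ``one of the two choices aligns'' cleanly — likely via a case analysis on the Gromov products of the four endpoints $A^{\pm\infty}, B^{\pm\infty}$ on the boundary, or via an angle/projection estimate at the closest-approach point — is where the real work lies, and keeping the thin-triangle constants explicit throughout is what forces the somewhat large value of $C_\delta$.
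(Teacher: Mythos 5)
Your plan matches the paper's proof in all essentials: the paper also chooses basepoints where the two axes come closest, builds the orbit sequence of such a point under powers of $AB$, verifies a uniform bound on the Gromov products at consecutive triples, and invokes the local-to-global lemma, with the maximum over $AB$ and $AB^{-1}$ used exactly as you describe to select the aligned configuration (concretely, to arrange that the projections $x_B, p_B(x_A), p_B(y_A), y_B$ occur in the right order along $L_B$). The only structural point you gloss over is that the paper splits into two cases --- when $d(L_A,L_B)>50\delta$ the estimate holds for both $AB$ and $AB^{-1}$ with a better constant, and only in the close-axes case is the orientation dichotomy needed --- and that the disjointness of $\{A^\pm\}$ and $\{B^\pm\}$ is what guarantees the closest-approach point exists.
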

Note that in $\mathrm{SL}(2,\C)$, Proposition \ref{prop:l(AB) intro} follows from equation \eqref{eq:edge relation}. Indeed, from \eqref{eq:edge relation} :
\begin{align*}
	|\mathrm{Tr}(A)| \; |\mathrm{Tr}(B)| = | \mathrm{Tr}(AB)+\mathrm{Tr}(AB^{-1})| & \leq |\mathrm{Tr}(AB)|+|\mathrm{Tr}(AB^{-1})| \leq 2 \max\{|\mathrm{Tr}(AB)|,|\mathrm{Tr}(AB^{-1})| \}, \\ 
\intertext{and now taking the $\log$ gives :}
\max\{ \log | \mathrm{Tr}(AB) | , \log | \mathrm{Tr}(AB^{-1})| \} & \geq \log |\mathrm{Tr}(A)| + \log | \mathrm{Tr}(B) | - \log 2. 	
\end{align*}
Now recall that $\log | \mathrm{Tr}(A)|$ and $l_S(A)$ are comparable in $\mathrm{SL}(2,\C)$ (up to an additive constant) to obtain the claim. Also note that if one is interested in the study of the $\log$ of the modulus of the trace rather than on the stable-norm $l_S$ (or displacement), then Proposition \ref{prop:l(AB) intro} holds with constant $\log 2$ and for all hyperbolic isometries, without having to assume that the modulus of the trace is large enough.
~\\ 

\subsection{Organization of the paper}~\\ 
In section \ref{sec:preliminaries} we gather and establish the necessary preliminaries on Gromov-hyperbolic spaces. In section \ref{sec:irreducibiility}, we prove the irreducibility of Bowditch representations using Busemann functions. In section \ref{sec:l(AB)}, we establish Proposition \ref{prop:ineq-l(AB)} which proves inequality \eqref{eq:l(AB)intro}. 
In section \ref{sec:growth Farey neighbours}, we study the growth of the length of the images of the Farey neighbours of a primitive element, that is we study the growth of the quantity $\l_S(\rho(a^nb))$, for $\{a,b\}$ a basis of $\F_2$ and $n \in \N$. In section \ref{sec:Farey tree}, we introduce the Farey tree and its associated objects, then use it to prove the equivalence \ref{cond:thm:Cdelta} $\iff$ \ref{cond:thm:K>Cdelta} $\iff$ \ref{cond:thm:displacing} of Theorem~\ref{thm:equivalence}. In section \ref{subsec:treeJ}, we introduce the tree $T_\rho(K)$ associated to a representation $\rho : \F_2 \to \mathrm{Isom}(\mathcal{X})$ and we prove equivalence \ref{cond:thm:Cdelta} $\iff $ \ref{cond:thm:tree} of Theorem \ref{thm:equivalence}. In section \ref{subsec:openness and dynamics}, we prove Theorem \ref{thm:open and dynamics} and in section \ref{subsec:recognition}, we prove Theorem \ref{thm:recognition}. 
	
\subsection*{Acknowledgements.} The author is very grateful to François Guéritaud for helpful discussions and comments. The author also warmly thanks Balthazar Fléchelles for many interesting conversations on the subject and Andrea Seppi for useful comments on the paper.
The author is funded by the European Union (ERC, GENERATE, 101124349). Views and opinions expressed are however those of the author only and do not necessarily reflect those of the European Union or the European Research Council Executive Agency. Neither the European Union nor the granting authority can be held responsible for them.
	
	\section{Preliminaries on hyperbolic spaces} \label{sec:preliminaries}
	In this section, we gather the required material on $\delta$-hyperbolic spaces needed for our work. The content of this section is classical. For a more detailed introduction or further results on $\delta$-hyperbolic spaces, we refer the interested reader, among other references, to \cite{coornaert_geometrie_1990}, \cite{bridson_metric_1999}, \cite{drutu_geometric_2018}. \\
	
	In this article, we will work in the context of geodesic $\delta$-hyperbolic spaces.  Let $(\mathcal{X},d)$ be a metric space. The metric space $\mathcal{X}$ is said to be \emph{geodesic} if there exists a geodesic segment between any two points in $\mathcal{X}$. We will write $[x,y]$ for a geodesic segment joining the points $x$ and $y$ in $\mathcal{X}$. Note that such a segment is not necessary unique. For $x$, $y$ and $z$ three points in $\mathcal{X}$, a \emph{triangle} $T(x,y,z)$ with vertices $x$, $y$ and $z$ is the union of three geodesic segments $[x,y]$, $[y,z]$ and $[z,x]$ which are called the sides of the triangle.	Let $\delta \geq 0$ be a constant. We say that a triangle is \emph{$\delta$-thin} if each side of the triangle is contained in the $\delta$-neighborhood of the union of the other two. A geodesic metric space $\mathcal{X}$ is said to be \emph{$\delta$-hyperbolic} if every triangle in $\mathcal{X}$ is $\delta$-thin. This definition of hyperbolicity is sometimes called \emph{hyperbolicity in the sense of Rips}. There are (many) other equivalent definitions of hyperbolicity, among them the notion of \emph{hyperbolicity in the sense of Gromov}, which we will define and compare to the one in the sense of Rips in Lemma \ref{lem:rips-to-gromov}. 
	
	In all that follows, we fix $(\mathcal{X},d)$ a geodesic $\delta$-hyperbolic space. \\
	
	\subsection{Shapes of triangles and quadrilaterals}~ \\
	\indent We can be more precise about the geometry of $\delta$-thin triangles. It is useful to compare the triangle  $T(x,y,z)$ with its associated tripod, which is the tripod with the same lengths of sides as $T(x,y,z)$. Lemma \ref{lem:points on thin triangles-centroid} finds three points, one on each side of the triangle, which correspond to the center of the associated tripod and which remain at bounded distance from each other.
	
		\begin{Lemma} \label{lem:points on thin triangles-centroid}
		Let $T(x,y,z)$ be a $\delta$-thin triangle in $\mathcal{X}$. Then there exist three points $p$, $q$ and $r$ respectively on $[x,y]$, $[y,z]$ and $[z,x]$ such that $d(x,p)=d(x,r)$, $d(y,p)=d(y,q)$, $d(z,q)=d(z,r)$ and $d(r,p) \leq 4\delta$, $d(p,q)\leq 4\delta$, $d(q,r)\leq 4\delta$.
	\end{Lemma}
	
	\begin{proof}
		See proof of implication (1)$\implies$(3) in Proposition 1.17 in \cite{bridson_metric_1999}. The statement of the proposition just gives the existence of a bound on $d(r,p)$, $d(p,q)$ and $d(q,r)$ but the proof computes the constant $4\delta$ explicitly. 
	\end{proof}
	
	If we don't impose the condition on the equality of the lengths in the conclusion of the previous lemma, we can obtain three points even closer.
		\begin{Lemma} \label{lem:points on thin triangles}
		Let $T(x,y,z)$ be a $\delta$-thin triangle in $\mathcal{X}$. Then there exist three points $p$, $q$ and $r$ respectively on $[x,z]$, $[y,z]$ and $[x,y]$ such that $d(p,r) \leq \delta$, $d(q,r)\leq \delta$ (hence in addition $d(p,q)\leq 2\delta$).
	\end{Lemma}
	
	\begin{proof}
	For a detailed proof, see Lemma A.1 in \cite{schlich_equivalence_2024}.
	\end{proof}
	
	For $x$, $y$, $z$ and $w$ four points in $\mathcal{X}$, a \emph{quadrilateral} $Q(x,y,z,w)$ is the union of four geodesic segments $[x,y]$, $[y,z]$, $[z,w]$ and $[w,x]$, which are called the \emph{sides} of the quadrilateral. The geometry of quadrilateral is also controlled. Indeed, it is a classical fact that in $\delta$-hyperbolic spaces, quadrilaterals are $2\delta$-thin.
		\begin{Lemma} [Quadrilaterals are $2\delta$-thin.]\label{lem:quadri-thin}
		Let $x,y,z,w $ be four points in $\mathcal{X}$ and $Q(x,y,z,w)$ be a quadrilateral. Then the quadrilateral $Q(x,y,z,w)$ is $2\delta$-thin, which means that $[w,x]$ is contained in the $2\delta$-neighborhood of $[x,y]\cup [y,z] \cup [z,w]$. 
	\end{Lemma}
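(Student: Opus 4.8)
The plan is to reduce the quadrilateral to two geodesic triangles by introducing a diagonal, and then to apply the $\delta$-thinness hypothesis twice, accumulating the bound $2\delta = \delta + \delta$. Since $\mathcal{X}$ is geodesic, choose a geodesic segment $[x,z]$ joining the two non-adjacent vertices $x$ and $z$. This diagonal cuts the quadrilateral into two genuine geodesic triangles, namely $T(x,z,w)$ with sides $[x,z]$, $[z,w]$, $[w,x]$, and $T(x,y,z)$ with sides $[x,y]$, $[y,z]$, $[z,x]$. By hypothesis each of these triangles is $\delta$-thin.

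Now fix an arbitrary point $p$ on the side $[w,x]$; the goal is to find a point of $[x,y]\cup[y,z]\cup[z,w]$ at distance at most $2\delta$ from $p$. First I would apply $\delta$-thinness to the triangle $T(x,z,w)$: the side $[w,x]$ lies in the $\delta$-neighborhood of $[x,z]\cup[z,w]$, so there is a point $p'$ on $[x,z]\cup[z,w]$ with $d(p,p')\leq \delta$. If $p'$ already lies on $[z,w]$, then $p$ is within $\delta\leq 2\delta$ of $[z,w]\subseteq [x,y]\cup[y,z]\cup[z,w]$ and we are done for this point.

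The remaining case is when $p'$ lies on the diagonal $[x,z]=[z,x]$. Here I would use the second triangle $T(x,y,z)$: by its $\delta$-thinness, the side $[z,x]$ lies in the $\delta$-neighborhood of $[x,y]\cup[y,z]$, so there is a point $p''$ on $[x,y]\cup[y,z]$ with $d(p',p'')\leq \delta$. The triangle inequality then gives $d(p,p'')\leq d(p,p')+d(p',p'')\leq \delta+\delta=2\delta$, so $p$ is within $2\delta$ of $[x,y]\cup[y,z]$. In either case $p$ is within $2\delta$ of $[x,y]\cup[y,z]\cup[z,w]$, and since $p\in[w,x]$ was arbitrary this is exactly the asserted $2\delta$-thinness.

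There is essentially no analytic obstacle here; the only point requiring a word of care is conceptual rather than computational, namely that the added diagonal $[x,z]$ produces bona fide geodesic triangles to which the $\delta$-thinness hypothesis genuinely applies, so that the two applications of thinness may be chained along the diagonal. One should also note that the choice of diagonal is immaterial: cutting instead along $[y,w]$ yields the same bound by the symmetric argument, which is reassuring but not needed. The constant $2\delta$ is the sharp outcome of summing the two single-triangle defects, and the argument does not use the visibility assumption on $\mathcal{X}$.
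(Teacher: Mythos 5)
Your proof is correct and is the standard diagonal-cutting argument: the paper itself only cites references (Dru\c{t}u--Kapovich, Coornaert--Delzant--Papadopoulos) which prove the general statement that an $n$-gon is $(n-2)\delta$-thin by exactly this kind of reduction to triangles, so your argument is precisely that proof specialized to $n=4$. Nothing is missing; in particular you correctly use the same chosen diagonal $[x,z]$ as a side of both triangles so that the two applications of $\delta$-thinness chain together.
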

	\begin{proof}
		See Lemma 11.6 in \cite{drutu_geometric_2018} or Lemma 1.5 in \cite{coornaert_geometrie_1990} for a more general statement for polygons.
	\end{proof}
	
	From the $2\delta$-thinness of quadrilaterals, we easily deduce, in the configuration where one side of the quadrilateral remains far from its opposite side, that we can find a point on it that is close to a point on each of its two adjacent sides.
	\begin{Lemma}\label{lem:quadrilateral-shape}
		Let $x,y,z,w $ be four points in $\mathcal{X}$ and $[x,y]$, $[y,z]$, $[z,w]$ and $[w,x]$ be four geodesic segments between these points. Assume moreover that for all $t \in [w,x]$, $d(t,[y,z])>2\delta$. Then there exist $t \in [w,x]$, $t_1 \in [x,y]$ and $t_2 \in [z,w]$ such that $d(t,t_1) \leq 2\delta$ and $d(t,t_2) \leq 2\delta$. 
	\end{Lemma}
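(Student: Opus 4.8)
The plan is to combine the $2\delta$-thinness of the quadrilateral with a connectedness argument on the segment $[w,x]$. First I would invoke Lemma \ref{lem:quadri-thin} to conclude that $[w,x]$ lies in the $2\delta$-neighborhood of $[x,y]\cup[y,z]\cup[z,w]$; that is, every point $t\in[w,x]$ is within distance $2\delta$ of some point lying on one of these three sides. The standing hypothesis that $d(t,[y,z])>2\delta$ for all $t\in[w,x]$ rules out the side $[y,z]$, so in fact each $t\in[w,x]$ satisfies $d(t,[x,y])\le 2\delta$ or $d(t,[z,w])\le 2\delta$.

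Next I would introduce the two sets
\[
U_1=\{\,t\in[w,x]\;:\;d(t,[x,y])\le 2\delta\,\},\qquad U_2=\{\,t\in[w,x]\;:\;d(t,[z,w])\le 2\delta\,\}.
\]
Since the functions $t\mapsto d(t,[x,y])$ and $t\mapsto d(t,[z,w])$ are $1$-Lipschitz, both $U_1$ and $U_2$ are closed in $[w,x]$, and by the previous paragraph $[w,x]=U_1\cup U_2$. Moreover $x\in U_1$ (because $x\in[x,y]$, so $d(x,[x,y])=0$) and $w\in U_2$ (because $w\in[z,w]$), so both sets are nonempty.

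The key step is then a separation argument: the geodesic segment $[w,x]$ is connected, being the isometric image of an interval, so it cannot be written as a union of two disjoint nonempty closed sets. Hence $U_1\cap U_2\neq\emptyset$, and any $t$ in this intersection satisfies $d(t,[x,y])\le 2\delta$ and $d(t,[z,w])\le 2\delta$ simultaneously. Finally, to produce the actual points $t_1\in[x,y]$ and $t_2\in[z,w]$ realizing these bounds, rather than merely the inequalities on the distances to the sides — a point deserving care since $\mathcal{X}$ is not assumed to be proper — I would observe that a geodesic segment is the continuous image of a compact interval, hence compact, so the infima defining $d(t,[x,y])$ and $d(t,[z,w])$ are attained. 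I expect the connectedness/separation step to be the only genuine subtlety; everything else is a direct application of Lemma \ref{lem:quadri-thin} together with elementary point-set topology.
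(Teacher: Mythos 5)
Your proof is correct and follows the same route the paper indicates: the statement is derived from the $2\delta$-thinness of quadrilaterals (Lemma \ref{lem:quadri-thin}), with the hypothesis excluding the side $[y,z]$ and a connectedness argument on $[w,x]$ forcing the two remaining closed sets to overlap; the paper itself defers the details to an external reference, but this is the standard argument. Your remarks on closedness via $1$-Lipschitz distance functions and on attainment of the infima by compactness of geodesic segments are both accurate and handle the only points needing care.
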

	
	\begin{proof}
		It is a consequence of Lemma \ref{lem:quadri-thin}. For a detailed proof, see Lemma A.3 in \cite{schlich_equivalence_2024}. 
	\end{proof}~\\ 
	
	\subsection{Gromov-boundary and shape of ideal-bigons and asymptotic geodesics}~\\ 
	\indent Let $x$, $y$ and $z$ be three points in $\mathcal{X}$. The \emph{Gromov-product} of $x$ and $y$ with respect to $z$ is defined by :
	\begin{equation}\label{eq:def-gromov-product}
		(x\vert y)_{z} = \frac{1}{2}(d(z,x)+d(z,y)-d(x,y)).
	\end{equation}
	We say that a sequence of points $(x_n)_{n\in \N}$ \emph{converges at infinity} if $(x_n|x_m)_z \underset{n,m \to \infty}{\longrightarrow}+\infty$. This notion does not depend on the choice of $z$ and notice that this implies that $d(x_n,z) \underset{n \to \infty}{\longrightarrow}+\infty$. When two sequences $(x_n)_{n\in \N}$ and $(y_n)_{n \in \N}$ converge at infinity, we say that they are \emph{equivalent} when $(x_n|y_n)\underset{n \to \infty}{\longrightarrow}+\infty$. The equivalence relation between sequences of points gives rise to the notion of the Gromov-boundary $\partial \mathcal{X}$ of $\mathcal{X}$ by considering the equivalence classes of this relation. When $(x_n)_{n\in \N}$ is a sequence in $\mathcal{X}$ and $x \in \partial \mathcal{X}$, we denote $x_n \underset{n \to \infty}{\longrightarrow} x$ to say that $(x_n)_{n\in \N}$ is in the equivalent class $x \in \partial \mathcal{X}$. 
	
	When $l : [0,+\infty) \to \mathcal{X}$ is a geodesic ray in $\mathcal{X}$ (we will denote again, by a slight abuse of notation, its image by $l$) and $(x_n)_{n\in \N}$ is an unbounded sequence on $l$, then $(x_n)_{n \in \N}$ automatically converges at infinity. Moreover, any two unbounded sequences on $l$ are equivalent, giving rise to a well-defined point at infinity in $\partial \mathcal{X}$, which we call the \emph{endpoint at infinity} of $l$. If $x \in \mathcal{X}$ and $l_\infty \in \partial \mathcal{X}$, we say that $l$ is a geodesic ray or a\emph{ half geodesic} with endpoints $x$ and $l_\infty$ or joining $x$ and $l_\infty$ if $l_\infty$ is the endpoint of $l$ and $l(0)=x$, and we denote $[x,l_\infty)$. When two geodesic rays have the same endpoint, we say that they are \emph{asymptotic}. Note that two asymptotic geodesic rays remain at bounded distance from each other. If $l : \R\to \mathcal{X}$ is a bi-infinite geodesic, it has two well-defined distinct points in the boundary $\partial \mathcal{X}$ of $\mathcal{X}$, which we also call the \emph{endpoints} of $l$. 
	
	Let $l$ be either a geodesic, a geodesic ray, or a geodesic segment, and $x$ be a point  in $\mathcal{X}$. We say that $x'$ is a \emph{projection} of $x$ on $l$ if $d(x,x')=d(x,l)$. The projection is not necessarily unique. We will say more on projections in $\delta$-hyperbolic spaces in section \ref{sec:projection}. \\

	The following lemma states that geodesics with the same endpoints are uniformly close. It is an exercise in \cite{drutu_geometric_2018}. For completeness, we include a detailed proof here.
	\begin{Lemma}[Ideal bigons are $2\delta$-thin, \cite{drutu_geometric_2018} Exercise 11.85] \label{lem:bigon_thin}
		Let $l$ and $l'$ be two geodesic of $\mathcal{X}$ which share the same endpoints. Then, for all $x \in l$,
		\begin{equation*}
			d(x,l') \leq 2 \delta.
		\end{equation*}
	\end{Lemma}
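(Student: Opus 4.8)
The plan is to approximate the ideal bigon by a sequence of finite quadrilaterals and invoke the $2\delta$-thinness of quadrilaterals (Lemma \ref{lem:quadri-thin}). Write $\xi_-, \xi_+ \in \partial\mathcal{X}$ for the two common endpoints of the bi-infinite geodesics $l$ and $l'$, and fix $x \in l$. First I would choose sequences $p_n, q_n \in l$ with $p_n \to \xi_+$ and $q_n \to \xi_-$, so that for $n$ large the subsegment $[q_n, p_n] \subseteq l$ contains $x$. For each $n$, let $p_n'$ (resp. $q_n'$) be a nearest-point projection of $p_n$ (resp. $q_n$) onto $l'$, and consider the connecting segments $[p_n, p_n']$ and $[q_n', q_n]$.

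The key preliminary observations control these connecting segments. Since $\xi_+$ is an endpoint of $l'$, the point $p_n$ stays within a uniform bounded distance $M$ of $l'$, so that $d(p_n, p_n') = d(p_n, l') \le M$; symmetrically $d(q_n, q_n') \le M'$ for some bound $M'$. Because $d(p_n, p_n')$ is bounded while $d(p_n, o) \to \infty$ for any basepoint $o$, the sequences $(p_n)_{n\in\N}$ and $(p_n')_{n\in\N}$ are equivalent at infinity, hence $p_n' \to \xi_+$; likewise $q_n' \to \xi_-$. In particular, for $n$ large, $[p_n', q_n']$ is an honest subsegment of $l'$.

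The main step is to apply Lemma \ref{lem:quadri-thin} to the quadrilateral $Q(q_n, p_n, p_n', q_n')$, whose four sides are $[q_n, p_n] \subseteq l$, the connecting segment $[p_n, p_n']$, $[p_n', q_n'] \subseteq l'$, and the connecting segment $[q_n', q_n]$. Thinness gives that $x \in [q_n, p_n]$ lies within $2\delta$ of the union of the other three sides. I would then rule out the two connecting segments: every point of $[p_n, p_n']$ is within $M$ of $p_n$, so that $d(x, [p_n, p_n']) \ge d(x, p_n) - M \to \infty$, and similarly $d(x, [q_n', q_n]) \ge d(x, q_n) - M' \to \infty$. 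Hence for $n$ large the only remaining possibility is $d(x, [p_n', q_n']) \le 2\delta$, which yields $d(x, l') \le 2\delta$, as desired.

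The main obstacle is the bookkeeping that renders the connecting segments negligible: one must check both that the projections converge to the correct endpoints (so $[p_n', q_n']$ is genuinely a subsegment of $l'$, not a short arc near one ideal point) and that the connecting segments have uniformly bounded length (so that $x$, sitting at a fixed finite distance, cannot fall within $2\delta$ of them once $n$ is large). Once the asymptoticity/projection facts recalled above are in place, the conclusion follows directly from Lemma \ref{lem:quadri-thin}.
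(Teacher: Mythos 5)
Your proposal is correct and follows essentially the same route as the paper: approximate the ideal bigon by quadrilaterals $[q_n,p_n]\cup[p_n,p_n']\cup[p_n',q_n']\cup[q_n',q_n]$, apply Lemma \ref{lem:quadri-thin}, and rule out the two connecting segments because their lengths stay bounded while $d(x,p_n), d(x,q_n)\to\infty$. (The extra verification that $p_n'\to\xi_+$ and $q_n'\to\xi_-$ is harmless but not needed, since $[p_n',q_n']\subseteq l'$ in any case and that is all the final step requires.)
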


	\begin{proof}
		Let $x \in l$. We consider, for all $n \in \N$, the points $x_n$ and $y_n$ on $l$ such that, denoting $[x_n,y_n] \subset l$ the geodesic segment from $x_n$ to $y_n$ included in $l$, $x \in [x_n,y_n]$ and $d(x_n,x)=d(x,y_n)=n$. Now consider $x'_n$ and $y'_n$ some projections of $x_n$ and $y_n$ on $l'$. Consider $[x'_n,y'_n] \subset l'$ the geodesic segment from $x'_n$ to $y'_n$ included in $l'$, $[x_n,x'_n]$ a geodesic segment between $x_n$ and $x'_n$ and $[y_n,y'_n]$ a geodesic segment between $y_n$ and $y'_n$. The quadrilateral made up with these four geodesic segments is $2\delta$-thin by Lemma \ref{lem:quadri-thin}, so we deduce that $d(x,[x_n,x'_n] \cup [x'_n,y'_n] \cup [y_n,y'_n])\leq 2\delta$. But recall that $l$ and $l'$ have the same endpoints and that $x'_n$ is a projection of $x_n$ on $l'$, so $d(x_n,x'_n)$ is bounded. Therefore, using that $d(x,x_n) \to \infty$, we obtain that $d(x,[x_n,x'_n]) \to \infty$. By the same reasoning, we also deduce that $d(x,[y_n,y'_n]) \to \infty$. Then, for $n$ sufficiently large, we have $d(x,[x_n,x'_n]\cup [y_n,y'_n]) > 2\delta$. Then it follows that $d(x,[x'_n,y'_n]) \leq 2\delta$ and since $[x'_n,y'_n] \subset l'$, we deduce $d(x,l') \leq 2\delta$, which proves the lemma. 
	\end{proof}

	Using similar techniques, we can also prove that geodesics sharing one endpoint are uniformly close, sufficiently close to the common endpoint.
	
	\begin{Lemma} \label{lem:asymptot_geod}
		Let $l$, $l'$ be two geodesics of $\mathcal{X}$ that share one endpoint $l_\infty \in \partial \mathcal{X}$. Then there exists $x \in l$  such that, denoting $[x,l_\infty)$ the half geodesic contained in $l$ with endpoints $x \in \mathcal{X}$ and $l_\infty \in \partial \mathcal{X}$, we have for all $y \in [x,l_\infty)$ :
		\begin{equation*}
			d(y,l') \leq 2\delta.
		\end{equation*}
	\end{Lemma}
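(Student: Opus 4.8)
The plan is to adapt the proof of Lemma \ref{lem:bigon_thin} almost verbatim, the only structural change being that we now have a single shared endpoint rather than two. In the bigon argument the quadrilateral was pinched near \emph{both} ideal endpoints, so that the two ``connecting'' sides stayed bounded and therefore far from the point under consideration. Here I can only pinch near $l_\infty$; on the opposite side the two geodesics diverge. The remedy is to replace the second bounded connecting side by a \emph{fixed} projection segment that anchors the quadrilateral, and to exploit that the point $y$ we care about is itself far along the ray towards $l_\infty$ (this is exactly what produces the threshold point $x$ in the statement).

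Concretely, I would fix a point $x_0\in l$ on the ray towards $l_\infty$, choose a projection $p_0$ of $x_0$ onto $l'$, and set $c_0=d(x_0,p_0)=d(x_0,l')$. For a point $y$ lying on the ray $[x_0,l_\infty)\subset l$ beyond $x_0$, and for points $z_n\in l$ with $z_n\to l_\infty$, let $z_n'$ be a projection of $z_n$ onto $l'$. I then form the quadrilateral with vertices $x_0,z_n,z_n',p_0$ and sides $[x_0,z_n]\subset l$, $[z_n,z_n']$, $[z_n',p_0]\subset l'$ and $[p_0,x_0]$, and apply Lemma \ref{lem:quadri-thin} to the side $[x_0,z_n]$ containing $y$, obtaining $d\bigl(y,\,[z_n,z_n']\cup[z_n',p_0]\cup[p_0,x_0]\bigr)\le 2\delta$. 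To finish I rule out the two unwanted sides. Since $l$ and $l'$ share the endpoint $l_\infty$, the rays $[x_0,l_\infty)$ and its counterpart in $l'$ are asymptotic, hence stay within some bounded Hausdorff distance $M$; thus $d(z_n,z_n')=d(z_n,l')\le M$, so every point of $[z_n,z_n']$ is within $M$ of $z_n$ and $d(y,[z_n,z_n'])\ge d(y,z_n)-M\to\infty$ as $n\to\infty$. On the other side, every point of $[p_0,x_0]$ is within $c_0$ of $x_0$, so $d(y,[p_0,x_0])\ge d(y,x_0)-c_0$. Choosing the threshold $x$ to be any point of the ray with $d(x_0,x)>2\delta+c_0$ guarantees this last quantity exceeds $2\delta$ for all $y$ beyond $x$. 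Consequently, for $n$ large the $2\delta$-bound must be realised on $[z_n',p_0]\subset l'$, giving $d(y,l')\le 2\delta$, as required.

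The main obstacle, as in the bigon case, is purely bookkeeping: making sure the two ``bad'' sides of the quadrilateral are genuinely farther than $2\delta$ from $y$, uniformly in $n$. The delicate input is the boundedness of $d(z_n,z_n')$, for which I rely on the fact (recorded in the text) that asymptotic geodesic rays remain at bounded distance; once that is in hand, the argument is a direct transcription of the proof of Lemma \ref{lem:bigon_thin}, with the fixed segment $[p_0,x_0]$ playing the role that the second ideal endpoint played there.
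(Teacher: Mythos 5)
Your argument is correct and is essentially the paper's own proof: the same quadrilateral (two sides on $l$ and $l'$, one connecting side the projection segment from a fixed basepoint, the other the projection segment from a point tending to $l_\infty$, which stays bounded because the rays are asymptotic), with the $2\delta$-thinness of quadrilaterals forcing the nearby side to be the one on $l'$. The only differences are bookkeeping: you fix $y$ and let $z_n\to l_\infty$ where the paper varies the midpoint $x_t$, and you make the threshold $d(x_0,x)>2\delta+c_0$ explicit.
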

	
	\begin{proof}
	Let $x_0 \in l$. Denote $[x_0,l_\infty)$ the half-geodesic contained in $l$ with endpoints $x_0 \in \mathcal{X}$ and $l_\infty \in \partial \mathcal{X}$. For all $t>0$, let $y_t \in [x_0,l_\infty)$ be the point on the half-geodesic $[x_0,l_\infty)$ such that $d(x_0,y_t)=2t$, and let $x_t$ be the midpoint of $[x_0,y_t]$, where $[x_0,y_t]$ denotes the geodesic segment contained in $l$ with endpoints $x_0$ and $y_t$. Hence $d(x_0,x_t)=d(x_t,y_t)=t$. Let $x'_0$ and $y'_t$ be, respectively, some projections on the geodesic $l'$ of $x_0$ and $y_t$. Now consider $[x_0,x'_0]$ and $[y_t,y'_t]$  be, respectively, some geodesic segments between $x_0$ and $x'_0$ and $y_t$ and $y'_t$. By Lemma \ref{lem:quadri-thin}, the quadrilateral formed by these four geodesic segments is $2\delta$-thin, so $d(x_t,[x_0,x'_0]\cup [x'_0,y'_t]\cup [y_t,y'_t])\leq 2\delta$. Now note that since $d(x_t,x_0)=t \underset{t \to \infty}{\longrightarrow} \infty$, we also have $d(x_t,[x_0,x'_0]) \underset{t \to \infty}{\longrightarrow} \infty$. On the other hand, since $y_t \underset{t \to \infty}{\longrightarrow} l_\infty$ and $l_\infty$ is an endpoint of both $l$ and $l'$, we deduce that $d(y_t,y'_t)$ is bounded. Therefore, using $d(x_t,y_t)=t\underset{t \to \infty}{\longrightarrow}\infty$, we also have $d(x_t,[y_t,y'_t]) \underset{t \to \infty}{\longrightarrow}\infty$. Thus, we necessarily have $d(x_t,[x'_0,y'_t]) \leq 2\delta$ when $t \geq t_0$ for some $t_0>0$, and consequently, $d(x_t,l') \leq 2\delta$. Setting $x=x_{t_0}$ gives the Lemma.
	\end{proof}
	
	Let us be a bit more precise and give some conditions to ensure that a point on a geodesic ray is sufficiently far along the ray to be uniformly close to another asymptotic geodesic ray.
	\begin{Lemma}\label{lem:pos-proj}
		Let $l$ be a geodesic in $\mathcal{X}$ and $y_0 \in l$. Denote $l_\infty$ one of the endpoints of $l$ in $\partial \mathcal{X}$ and $ [y_0,l_\infty)$ the half-geodesic contained in $l$ with endpoints $y_0 \in \mathcal{X}$ and $l_\infty \in \partial \mathcal{X}$.  Let $x_0 \in \mathcal{X}$ and consider $[x_0,l_\infty)$ a half-geodesic from $x_0$ to $l_\infty$. Let $x \in [x_0,l_\infty)$. Then  :
		\begin{itemize}
			\item If $d(x,y_0)>d(y_0,x_0) +2\delta$, then there exists $y \in [y_0,l_\infty)$ such that $d(x,y) \leq 2\delta$.
			\item Assume moreover that $d(y_0,x) >6\delta$, then any projection of $x$ on $l$ belongs to $[y_0,l_\infty)$. 
		\end{itemize}
	\end{Lemma}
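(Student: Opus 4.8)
The plan is to prove both bullets by the quadrilateral-thinness technique already used in the proofs of Lemma~\ref{lem:bigon_thin} and Lemma~\ref{lem:asymptot_geod}, while keeping careful track of the two numerical hypotheses. The rays $[x_0,l_\infty)$ and $[y_0,l_\infty)$ are asymptotic, so they stay within a bounded distance of one another; this is what lets me replace the point at infinity by far-away points and control the resulting error.

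For the first bullet, fix $x\in[x_0,l_\infty)$ with $d(x,y_0)>d(y_0,x_0)+2\delta$. For $t>d(x_0,x)$ I would pick $w_t\in[x_0,l_\infty)$ with $d(x_0,w_t)=t$, so that $x$ lies on $[x_0,w_t]$ and $d(x,w_t)=t-d(x_0,x)$, and let $w'_t$ be a projection of $w_t$ onto $l$. Since $[x_0,l_\infty)$ and $[y_0,l_\infty)$ are asymptotic (equivalently, by Lemma~\ref{lem:asymptot_geod}) there is a bound $D_0$ with $d(w_t,w'_t)=d(w_t,l)\le D_0$ for all large $t$; because $w_t\to l_\infty$ this forces $w'_t\to l_\infty$, hence $w'_t\in[y_0,l_\infty)$ and $[w'_t,y_0]\subset[y_0,l_\infty)$ once $t$ is large. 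I would then consider the quadrilateral with vertices $x_0,w_t,w'_t,y_0$ and sides $[x_0,w_t]\subset[x_0,l_\infty)$, $[w_t,w'_t]$, $[w'_t,y_0]\subset[y_0,l_\infty)$ and $[y_0,x_0]$. By Lemma~\ref{lem:quadri-thin} it is $2\delta$-thin, so (relabelling the distinguished side) the side $[x_0,w_t]$ carrying $x$ lies in the $2\delta$-neighbourhood of the union of the other three sides.

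It remains to discard two of those three sides. The segment $[y_0,x_0]$ has length $d(x_0,y_0)$, so every one of its points is at distance at most $d(x_0,y_0)$ from $y_0$; were $x$ within $2\delta$ of it, I would get $d(x,y_0)\le d(x_0,y_0)+2\delta$, contradicting the hypothesis. The connecting segment $[w_t,w'_t]$ has length at most $D_0$, while $d(x,w_t)=t-d(x_0,x)\to\infty$, so $d(x,[w_t,w'_t])\ge d(x,w_t)-D_0\to\infty$ and this side too is discarded once $t$ is large enough. Hence $x$ is within $2\delta$ of $[w'_t,y_0]\subset[y_0,l_\infty)$, which produces the required $y$. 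For the second bullet I would feed this back in: under the extra assumption $d(y_0,x)>6\delta$, the first bullet gives $y\in[y_0,l_\infty)$ with $d(x,y)\le2\delta$, so $d(x,l)\le2\delta$ and any projection $x'$ of $x$ onto $l$ satisfies $d(x,x')\le2\delta$ and $d(x',y)\le4\delta$. If $x'$ were not in $[y_0,l_\infty)$, then $y_0$ would separate $x'$ from $y$ along the geodesic $l$, giving $d(x',y_0)\le d(x',y)\le4\delta$ and thus $d(x,y_0)\le d(x,x')+d(x',y_0)\le6\delta$, contradicting $d(y_0,x)>6\delta$; therefore $x'\in[y_0,l_\infty)$.

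The step I expect to be the main obstacle is the control of the connecting side, that is, checking simultaneously that $w'_t\to l_\infty$ (so the target side genuinely sits inside $[y_0,l_\infty)$ and not on the opposite ray of $l$) and that $d(w_t,w'_t)$ stays bounded (so the connecting side escapes to infinity and can be discarded). Both rely on the asymptotic rays remaining at bounded distance, so this is where I would be most careful; the rest is bookkeeping with the triangle inequality, and the separation argument in the second bullet only requires that distances add along the geodesic $l$ when $y_0$ lies between two of its points.
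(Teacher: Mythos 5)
Your proof is correct and follows essentially the same route as the paper: both arguments push a point far along $[x_0,l_\infty)$, connect it to $[y_0,l_\infty)$ by a bounded segment (using that asymptotic rays stay at bounded distance), apply $2\delta$-thinness of the resulting quadrilateral, and discard the side $[x_0,y_0]$ via the hypothesis $d(x,y_0)>d(y_0,x_0)+2\delta$ and the connecting side because it escapes to infinity relative to $x$. Your second bullet is the same separation argument along $l$ as in the paper, just phrased as a direct contradiction with $d(x,y_0)>6\delta$.
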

	
	\begin{proof}
		Since the geodesic rays $[x_0,l_\infty)$ and $[y_0,l_\infty)$ are asymptotic, there exist two sequences $(x_n)_{n \in \N}$ and $(y_n)_{n \in \N}$ such that $x_n \in [x_0,l_\infty)$ and $y_n \in [y_0,l_\infty)$ for all $n \in \N$, $d(x_n,x) \underset{n \to \infty}{\longrightarrow} +\infty$ and $d(x_n,y_n)$ is bounded. Let $[x_n,y_n]$ be a geodesic segment joining $x_n$ and $y_n$. Then $d(x,[x_n,y_n]) \underset{n \to \infty}{\longrightarrow}+\infty$. Take $n$ such that $d(x,[x_n,y_n]) >2\delta$ and $x_n \in [x,l_\infty)$, where $ [x,l_\infty)$ is the half-geodesic contained in $[x_0,l_\infty)$ joining $x$ and $l_\infty$. Now consider $[x_0,y_0]$ a geodesic segment joining $x_0$ and $y_0$ and let $z \in [x_0,y_0]$. By the triangle inequality we have $d(x,z) \geq d(x,y_0)-d(y_0,x_0)$ and so using our asumption on $d(x,y_0)$ we deduce that $d(x,[x_0,y_0]) > 2\delta$. Consider $[x_0,x_n]$ and $[y_0,y_n]$ be respectively the geodesic segments contained respectively in $[x_0,l_\infty)$ and $l$ and joining respectively $x_0$ and $l_\infty$ and $y_0$ and $l_\infty$. The quadrilateral $[x_0,y_0]\cup[y_0,y_n]\cup[x_n,y_n] \cup [x_0,x_n]$ is 2$\delta$-thin by Lemma \ref{lem:quadri-thin} and $x \in [x_0,x_n]$, so $d(x,[x_0,y_0]\cup[y_0,y_n]\cup[x_n,y_n]) \leq 2\delta$. But we proved above that $d(x,[x_n,y_n])>2\delta$ and $d(x,[x_0,y_0])>2\delta$ so we have $d(x,[y_0,l_\infty)) \leq 2\delta$. Hence, the first claim.
		
		Now, let us assume, moreover, that $d(y_0,x) >6\delta$. The first part of the proof gives the existence of a point $y \in [y_0,l_\infty)$ such that $d(x,y) \leq 2\delta$. Pick any point $z$ on $l \, \backslash \, [y_0,l_\infty)$. Then $d(z,y) \geq d(y_0,y)$. But on the other hand, we have $d(x,z) \geq d(y,z)-d(x,y)$ by the triangle inequality so $d(x,z) \geq d(y_0,y)-d(x,y)$. Using again the triangle inequality we have $d(x,z) \geq d(y_0,x)-2d(x,y)$. Now recall that $d(y_0,x)>6\delta$ and $d(x,y)\leq 2\delta$, this implies that $d(x,z)>2\delta$. But all the projections of $x$ on $l$ are at a distance at most $2\delta$ of $x$ since $d(x,y)\leq 2\delta$, so no projections can lie on $l \, \backslash \, [y_0,l_\infty)$. 
	\end{proof}
	
	\begin{Lemma}\label{lem:asymptot-parametrization}
		Let $l$ and $l'$ be two geodesics in $\mathcal{X}$ which share one endpoint $l_\infty \in \partial \mathcal{X}$. Let $l : \R \to \mathcal{X}$ be any geodesic parametrization of $l$ such that $l(t) \underset{t \to +\infty}{\longrightarrow} l_{\infty}$. Then there exists a geodesic parametrization of $l' : \R \to \mathcal{X}$ such that for all $t$ sufficiently large, $d(l(t),l'(t)) \leq 6\delta$. 
	\end{Lemma}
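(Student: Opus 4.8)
The plan is to build the parametrization of $l'$ by aligning it with $l$ at the shared endpoint $l_\infty$, using the asymptotic-closeness results already established (Lemmas \ref{lem:asymptot_geod} and \ref{lem:pos-proj}) to control the distance $d(l(t),l'(t))$ for large $t$. The key point is that once two geodesic rays pointing toward the same boundary point stay within $2\delta$ of each other (which Lemma \ref{lem:asymptot_geod} guarantees far out along the ray), the only remaining issue is choosing the \emph{time} parametrization of $l'$ so that $l(t)$ and $l'(t)$ are not merely close as sets but close at matching parameter values.

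First I would fix, by Lemma \ref{lem:asymptot_geod} applied to $l$ and $l'$, a point beyond which every point of $l$ (oriented toward $l_\infty$) lies within $2\delta$ of $l'$. For each large $t$, the point $l(t)$ then admits a projection $l'(s(t))$ on $l'$ with $d(l(t),l'(s(t))) \leq 2\delta$; I would take the parametrization of $l'$ so that it runs toward $l_\infty$ as $s \to +\infty$, and define a candidate reparametrization by synchronizing via these projections. The substantive step is to show that the parameter offset between $l$ and $l'$ stabilizes, i.e.\ that $|s(t) - t|$ is bounded (indeed can be normalized to a fixed constant by shifting the parametrization of $l'$). Here I would use that both $l$ and $l'$ are unit-speed geodesics together with the $2\delta$-control: for $t_1 < t_2$ large, the triangle inequality gives $|d(l'(s(t_1)),l'(s(t_2))) - d(l(t_1),l(t_2))| \leq 4\delta$, and since both sides are measured by arc length this forces $|s(t_2)-s(t_1)-(t_2-t_1)|\leq 4\delta$, so the offset $s(t)-t$ is bounded and (being monotone up to bounded error) converges along the ray.

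Having shown the offset converges to some limit $c$, I would reparametrize $l'$ by the shift $s \mapsto s - c$, so that the new parametrization satisfies $s(t) - t \to 0$. Combining this with the uniform projection bound $d(l(t), l'(s(t))) \leq 2\delta$ and the unit-speed estimate $d(l'(s(t)), l'(t)) = |s(t)-t|$, the triangle inequality yields $d(l(t),l'(t)) \leq 2\delta + |s(t)-t|$, which is at most $6\delta$ once $t$ is large enough that $|s(t)-t| \leq 4\delta$. (The bound $6\delta$ leaves comfortable room for the $2\delta$ projection error plus the residual parameter discrepancy, which is consistent with the $6\delta$ threshold already appearing in Lemma \ref{lem:pos-proj}.)

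The main obstacle I anticipate is the parameter-synchronization step rather than the geometric closeness, which is essentially handed to us by Lemma \ref{lem:asymptot_geod}. The delicate part is verifying that the projection $s(t)$ can be chosen so that it is (coarsely) monotone and that the induced offset is genuinely bounded rather than drifting; nonuniqueness of projections in a $\delta$-hyperbolic space means $s(t)$ is only defined up to an additive $O(\delta)$ ambiguity, so care is needed to extract a single parametrization with a well-defined bounded offset. Once the offset is pinned down to within $4\delta$ via the unit-speed comparison above, the final estimate $d(l(t),l'(t)) \leq 6\delta$ follows directly. I would also use Lemma \ref{lem:pos-proj} to ensure that for $t$ large the projection of $l(t)$ indeed lands on the correct ray (the half-geodesic of $l'$ pointing toward $l_\infty$), ruling out the degenerate situation where the projection falls on the wrong side.
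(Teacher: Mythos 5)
Your proposal is correct and follows essentially the same route as the paper: anchor a parametrization of $l'$ at a projection of a far-out point of $l$, use Lemma \ref{lem:asymptot_geod} for the $2\delta$-closeness and Lemma \ref{lem:pos-proj} to ensure the projection lands on the ray toward $l_\infty$, then compare arc lengths to bound the parameter offset by $4\delta$ and conclude $2\delta+4\delta=6\delta$. The only blemish is the claim that the offset $s(t)-t$ ``converges'' (being monotone up to bounded error does not imply convergence); this step is unnecessary, since fixing $c=s(t_1)-t_1$ for a single large $t_1$ already pins the offset to within $4\delta$ for all later times --- which is exactly how the paper anchors its parametrization at $t_0$ via $l'(t_0)=x'$.
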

	
	\begin{proof}
		By Lemma \ref{lem:asymptot_geod}, there exists $x \in l$ such that, denoting $[x,l_\infty)$ the geodesic ray contained in $l$ with endpoints $x \in \mathcal{X}$ and $l_\infty \in \partial \mathcal{X}$, we have for all $y \in [x,l_\infty)$, $d(y,l') \leq 2\delta$. Let $t_0 \in \R$ be such that $l(t_0)=x$ and let $x' \in l'$ be a projection of $x$ on $l'$. Now consider $l' : \R \to \mathcal{X}$ the geodesic parametrization of $l'$ such that $l'(t_0)=x'$ and $l'(t) \underset{t \to + \infty}{\longrightarrow} l_\infty$. Let $p : \mathcal{X} \to l'$ be a projection map on $l'$. For all $t \geq t_0$, we have, on the one hand :
		\begin{align*}
			d(x',p(l(t))) & \leq d(x',x) +d(x,l(t)) + d(l(t),p(l(t))) \text{ by the triangle inequality} \\
			& \leq 2\delta +d(l(t_0),l(t)) + 2\delta \text{ since $l(t) \in [x,l_\infty)$} \\ 
			& \leq 4\delta + d(l'(t_0),l'(t)) = 4\delta + d(x',l'(t)).
		\end{align*}
		On the other hand, we have :
		\begin{align*}
			d(x',l'(t)) - d(x',p(l(t))) & \leq d(l'(t_0),l'(t)) - d(x,l(t))+d(x,x')+d(l(t),p(l(t))) \\
			& \leq d(l'(t_0),l'(t)) -d(l(t_0),l(t))+4\delta \\
			& \leq 4\delta.
		\end{align*}

	Therefore, we have 
	\begin{equation}\label{eq:	|d(x',p(l(t)))-d(x',l'(t))}
		|d(x',p(l(t)))-d(x',l'(t)) | \leq 4\delta
	\end{equation}
	Using Lemma \ref{lem:pos-proj}, we deduce that for $t$ sufficiently large, $p(l(t))$ belongs to $[x',l_\infty)$. Therefore 
	\begin{align}
		d(p(l(t)),l'(t)) \leq | d(x',p(l(t))) - d(x',l'(t)) | \leq 4\delta \text{ by \eqref{eq:	|d(x',p(l(t)))-d(x',l'(t))}}
	\end{align}
	Finally, we can bound $d(l(t),l'(t))$ for $t$ sufficiently large : 
	\begin{equation*}
	 	d(l(t),l'(t)) \leq d(l(t),p(l(t))) + d(p(l(t)),l'(t)) \leq 6\delta. 
	\end{equation*}

	\end{proof}
	
	\subsection{Projections} \label{sec:projection} \\ 
	\indent We now give some results about projections in $\delta$-hyperbolic spaces. \\
	
	We first check the following :
	\begin{Lemma}\label{lem:projection-new}
		Let $l$ be a geodesic in $\mathcal{X}$ and $x \in \mathcal{X}$. Consider $z$ a projection of $x$ on the geodesic $l$. Let $[x,z]$ be a geodesic segment between $x$ and $z$ and $y \in [x,z]$. Then $z$ is also a projection of $y$ on~$l$.
	\end{Lemma}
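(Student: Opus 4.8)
The plan is to argue directly from the triangle inequality, using only the fact that $y$ lies on a geodesic segment from $x$ to $z$; no hyperbolicity is needed for this statement. The first observation I would record is the additivity identity coming from $y \in [x,z]$: since $[x,z]$ is a geodesic segment, its arclength from $x$ to $z$ equals $d(x,z)$, and $y$ splits it, so that
\begin{equation*}
	d(x,z) = d(x,y) + d(y,z).
\end{equation*}

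Next I would argue by contradiction. Suppose $z$ is \emph{not} a projection of $y$ on $l$, that is, suppose there exists a point $w \in l$ with $d(y,w) < d(y,z)$. Applying the triangle inequality along $x \to y \to w$ and then substituting the identity above yields
\begin{equation*}
	d(x,w) \leq d(x,y) + d(y,w) < d(x,y) + d(y,z) = d(x,z).
\end{equation*}
On the other hand, $z$ is by hypothesis a projection of $x$ on $l$, so $d(x,z) = d(x,l) \leq d(x,w)$ for every point $w \in l$. This directly contradicts the strict inequality just obtained. Hence no such $w$ exists, which means $d(y,z) \leq d(y,w)$ for all $w \in l$, i.e. $d(y,z) = d(y,l)$ and $z$ is a projection of $y$ on $l$.

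The only step that requires any justification at all is the additivity identity $d(x,z) = d(x,y) + d(y,z)$, which is precisely the assertion that $y$ lies on a geodesic from $x$ to $z$; everything else is the triangle inequality. I therefore do not expect any genuine obstacle here: the lemma is a soft consequence of $z$ realizing the distance from $x$ to $l$ together with the intermediate position of $y$ on $[x,z]$, and the same contradiction argument works verbatim whether $l$ is a geodesic segment, a geodesic ray, or a bi-infinite geodesic.
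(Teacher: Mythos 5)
Your proof is correct and is essentially the same argument as the paper's: both rest on the additivity $d(x,z)=d(x,y)+d(y,z)$ for $y\in[x,z]$, the minimality of $d(x,z)$ over $l$, and one application of the triangle inequality. The paper runs the inequality chain directly ($d(y,z)=d(x,z)-d(x,y)\leq d(x,t)-d(x,y)\leq d(y,t)$ for any $t\in l$) rather than by contradiction, but this is only a cosmetic difference.
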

	
	\begin{proof}
		Let $t \in l$. We have $d(y,z)	=d(x,z)-d(x,y) 
			\leq d(x,t)-d(x,y)$ since $z$ is a projection of $x$ on $l$ and then,  by the triangle inequality,   $d(y,z) \leq d(y,t)$, which proves the Lemma.
	\end{proof}
	
	The following Lemma studies triangles with a ``right angle"  and states that the triangle inequality corresponding to this right angle is coarsely an equality in this case. 
	\begin{Lemma} \label{lem:bound-Gromov-product-right-angle}
		Let $l$ be a geodesic in $\mathcal{X}$, $x \in \mathcal{X}$ and $y \in l$. Consider $z$ a projection of $x$ on the geodesic~$l$. Then $d(x,y) \geq d(x,z)+d(z,y)-8\delta$.
	\end{Lemma}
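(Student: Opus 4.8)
The plan is to reformulate the inequality in terms of the Gromov product and then exploit the single defining feature of the projection, namely that $z$ realizes $d(x,l)$. By definition \eqref{eq:def-gromov-product} we have $(x\vert y)_z = \tfrac12\bigl(d(x,z)+d(z,y)-d(x,y)\bigr)$, so the claimed inequality $d(x,y)\ge d(x,z)+d(z,y)-12\delta$ is exactly equivalent to the bound $(x\vert y)_z \le 6\delta$. Hence it suffices to show that the Gromov product based at the projection point is uniformly bounded; this is where the minimizing property of $z$ must enter.

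First I would form the triangle $T(x,y,z)$ while realizing the side $[y,z]$ as the sub-arc of $l$ between $y$ and $z$. This is a genuine geodesic segment because $l$ is isometrically embedded, and it is the crucial choice: it guarantees that the internal point produced below actually lies on $l$. Applying Lemma \ref{lem:points on thin triangles} to this triangle yields points $p\in[x,y]$, $q\in[y,z]\subset l$ and $r\in[z,x]$ with $d(x,p)=d(x,r)$, $d(y,p)=d(y,q)$, $d(z,q)=d(z,r)$ and $d(q,r)\le 4\delta$. Solving the three length relations coming from the fact that $p$, $q$, $r$ lie on their respective sides (the standard ``tripod'' computation) identifies the common value $d(z,r)=d(z,q)$ with precisely $(x\vert y)_z$.

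The key step is to feed in the projection hypothesis. Since $r\in[z,x]$ with $d(z,r)=(x\vert y)_z$, we have $d(x,r)=d(x,z)-(x\vert y)_z$, and since $q\in l$ lies within $4\delta$ of $r$, the triangle inequality gives $d(x,q)\le d(x,z)-(x\vert y)_z+4\delta$. On the other hand $q\in l$ and $z$ realizes $d(x,l)$, so $d(x,q)\ge d(x,z)$. Comparing the two estimates yields $(x\vert y)_z\le 4\delta$, which is even stronger than the required $6\delta$; it gives $d(x,y)\ge d(x,z)+d(z,y)-8\delta$, and a fortiori the stated bound with the constant $12\delta$.

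The only genuinely delicate point is the bookkeeping that identifies $d(z,r)$ with $(x\vert y)_z$ together with the insistence that the side $[y,z]$ be taken inside $l$, so that the interior point $q$ is honestly a point of $l$ against which the projection inequality $d(x,q)\ge d(x,z)$ can be applied; once this is arranged, the minimality of $z$ does all the work and no large-scale geometry beyond the thinness of a single triangle is needed. I would also note in passing the degenerate configurations, for instance $x\in l$ (so one may take $z=x$) or $y=z$, where the inequality is immediate, so that they need not be treated separately.
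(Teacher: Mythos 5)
Your proof is correct and follows essentially the same route as the paper: apply Lemma \ref{lem:points on thin triangles} to the triangle $T(x,y,z)$ with the side $[y,z]$ taken inside $l$, and play the resulting tripod points against the minimality of $z$. The only difference is bookkeeping — the paper routes the projection hypothesis through Lemma \ref{lem:projection-new} applied to the intermediate point on $[x,z]$ and loses $12\delta$, whereas your direct comparison of $d(x,q)$ with $d(x,z)$ gives the slightly sharper constant $8\delta$, which of course implies the stated bound.
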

	
	\begin{proof}
		Consider $[x,z]$ a geodesic segment between $x$ and $z$, $[x,y]$ a geodesic segment  between $x$ and $y$, and $[y,z]$ the geodesic segment between $y$ and $z$ contained in $l$. The triangle made with these three geodesic segments is $\delta$-thin since $\mathcal{X}$ is $\delta$-hyperbolic and thus consider $p \in [x,z]$, $q \in [z,y]$ and $r \in [x,y]$ given by Lemma \ref{lem:points on thin triangles}. Then, using the properties on $p,q,r$ and the triangle inequality we have
		\begin{align}
			d(x,y)  =d(x,r)+d(r,y) \geq d(x,p)+d(q,y) -2\delta 
			 = d(x,z)-d(p,z)+d(z,y)-d(z,q) -2\delta. \label{eq:p,q,r}
		\end{align}
		
		Moreover, since $z$ is a projection of $x$ on $l$ and $p \in [x,z]$, Lemma \ref{lem:projection-new} ensures that $d(p,z) \leq d(p,q)$, and  then, because of the bound on $d(p,q)$ given by Lemma \ref{lem:points on thin triangles}, 
		\begin{equation}   \label{eq:bound d(p,z)}
			d(p,z) \leq 2\delta.
		\end{equation}
		Using this last bound and the bound on $d(p,q)$, we can now also bound $d(z,q)$~: 
		\begin{equation} \label{eq:bound d(z,q)}
			d(z,q) \leq d(z,p)+d(p,q) \leq 2\delta + 2\delta =4\delta.
		\end{equation}
		The lemma then follows from combining \eqref{eq:p,q,r}, \eqref{eq:bound d(p,z)} and \eqref{eq:bound d(z,q)}.
	\end{proof}
	
	If $l$ is a geodesic, we say that a map $p : \mathcal{X} \to l$ is a \emph{projection} on $l$ if $p(x)$ is a projection on $l$ for all $x \in \mathcal{X}$. Such a map is not necessarily unique nor continuous. However, the next lemma asserts that it is always coarsely $1$-Lipschitz. 
	\begin{Lemma} \label{lem:proj-convex}
		Let $l$ be a geodesic in $\mathcal{X}$ and $p : \mathcal{X} \to l$ a projection on $l$. For all $x, y \in \mathcal{X}$, we have :
		\begin{equation}\label{eq:proj-bound-3distance}
			d(p(x),p(y))\leq \max(8\delta, 12\delta+d(x,y)-d(x,p(x))-d(y,p(y))).
		\end{equation}
		In particular :
		\begin{equation} \label{eq:proj-bound-distance}
			d(p(x),p(y)) \leq d(x,y)+12\delta. 
		\end{equation}
	\end{Lemma}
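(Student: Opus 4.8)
The plan is to read the target inequality \eqref{eq:proj-bound-3distance} as the statement that the \emph{staircase} path $x \to p(x) \to p(y) \to y$ is, up to an additive error, no longer than a geodesic from $x$ to $y$; the cruder bound \eqref{eq:proj-bound-distance} will then follow immediately by discarding the nonnegative terms. Writing $A = d(x,p(x))$, $B = d(y,p(y))$, $L = d(p(x),p(y))$ and $D = d(x,y)$, the content of \eqref{eq:proj-bound-3distance} is that either $L \le 8\delta$ or $A + B + L \le D + 12\delta$. First I would dispose of the trivial case $L \le 8\delta$. So assume $L > 8\delta$.

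The key geometric input is that both projection points $p(x)$ and $p(y)$ lie within a bounded distance of the geodesic $[x,y]$. To establish this I would consider the quadrilateral $Q(x,p(x),p(y),y)$ and apply Lemma \ref{lem:quadri-thin} to cover the side $[p(x),p(y)] \subset l$ by the $2\delta$-neighbourhood of $[x,p(x)] \cup [x,y] \cup [y,p(y)]$. If a point $m \in [p(x),p(y)]$ is $2\delta$-close to $[x,p(x)]$, then Lemma \ref{lem:projection-new} forces its closest point on $[x,p(x)]$ to project to $p(x)$, hence to lie within $2\delta$ of $l$; since $m \in l$ this pins $m$ within $4\delta$ of $p(x)$. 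Symmetrically, a point $2\delta$-close to $[y,p(y)]$ is within $4\delta$ of $p(y)$. Consequently every point of $[p(x),p(y)]$ at distance more than $4\delta$ from both endpoints must be $2\delta$-close to $[x,y]$; as $L > 8\delta$ such points exist, and letting them approach the endpoints shows that $d(p(x),[x,y])$ and $d(p(y),[x,y])$ are both $O(\delta)$.

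Next I would convert this into the length estimate. Let $n \in [x,y]$ satisfy $d(p(y),n) = O(\delta)$, as just produced. Then $d(n,y) \ge d(p(y),y) - d(p(y),n) = B - O(\delta)$, and since $n$ lies on the geodesic $[x,y]$ we get $d(x,n) = D - d(n,y) \le D - B + O(\delta)$, whence $d(x,p(y)) \le d(x,n) + d(n,p(y)) \le D - B + O(\delta)$. On the other hand, applying the right-angle Lemma \ref{lem:bound-Gromov-product-right-angle} to the point $x$, its projection $p(x)$, and the test point $p(y) \in l$ gives the lower bound $d(x,p(y)) \ge A + L - 12\delta$. Chaining these two inequalities eliminates $d(x,p(y))$ and yields $A + B + L \le D + O(\delta)$, which is \eqref{eq:proj-bound-3distance} in the remaining case. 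The inequality \eqref{eq:proj-bound-distance} then follows at once by dropping the nonnegative terms $-A$ and $-B$ and noting $8\delta \le D + 12\delta$.

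The hard part will be the constant. Both halves of the argument are individually lossy: the proximity of the projections to $[x,y]$ rests on the $2\delta$-thinness of quadrilaterals, and the right-angle estimate costs $12\delta$, so a naive combination produces an additive constant noticeably larger than the stated $12\delta$. Reaching exactly $12\delta$ requires organizing the argument so that the error from Lemma \ref{lem:bound-Gromov-product-right-angle} is paid only once while the quadrilateral estimate is used sharply; this bookkeeping, rather than any new idea, is the delicate point. It is also precisely the step where hyperbolicity is essential: in a non-hyperbolic space such as the Euclidean plane the staircase path can be arbitrarily longer than $[x,y]$, so no additive bound of this form can hold, and correspondingly the proof must use the $\delta$-thinness hypotheses in an unavoidable way.
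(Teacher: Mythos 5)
Your argument has the right skeleton, and it is worth noting that the paper itself gives no proof here (it only cites Lemma A.5 of \cite{schlich_equivalence_2024}), so a self-contained argument is welcome. The two main steps are sound: applying Lemma \ref{lem:quadri-thin} to the quadrilateral $Q(x,p(x),p(y),y)$ and using Lemma \ref{lem:projection-new} to rule out proximity to $[x,p(x)]$ and $[y,p(y)]$ correctly pins every point of $[p(x),p(y)]$ lying more than $4\delta$ from both endpoints within $2\delta$ of $[x,y]$ (this is where the threshold $8\delta$ comes from), and combining the resulting upper bound on $d(x,p(y))$ with the lower bound $d(x,p(y))\geq d(x,p(x))+d(p(x),p(y))-12\delta$ from Lemma \ref{lem:bound-Gromov-product-right-angle} does yield an inequality of the stated form.

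The genuine gap is the constant, and you have not closed it: as written, your chain gives $d(p(y),[x,y])\leq 6\delta$, hence $d(x,p(y))\leq d(x,y)-d(y,p(y))+12\delta$, which combined with the right-angle estimate yields $d(p(x),p(y))\leq d(x,y)-d(x,p(x))-d(y,p(y))+24\delta$, i.e.\ $\max(8\delta,\,24\delta+\cdots)$ rather than $\max(8\delta,\,12\delta+\cdots)$. You acknowledge the shortfall but assert it is mere ``bookkeeping''; that is not substantiated, and the obvious reorganizations do not obviously reach $12\delta$ (symmetrizing the right-angle estimate over $x$ and $y$ recovers \eqref{eq:proj-bound-distance} with $+12\delta$ but not the refined three-term inequality; locating two points of $[x,y]$ near each of $p(x)$ and $p(y)$ and splitting $d(x,y)$ along them still leaves roughly $16\delta$ and requires a separate case analysis for the order of those points on $[x,y]$). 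This matters for this paper: the $12\delta$ is consumed literally in Lemma \ref{lem:proj-y-y_A-proche} (producing the $14\delta$), which feeds the $40\delta$ of Lemma \ref{lem:gromov-haut} and ultimately the headline constants $C_\delta=432\delta$ and $K_\delta=433\delta$, and \eqref{eq:proj-bound-distance} is used with its exact constant in Lemma \ref{lem:nearly-continuity} and Proposition \ref{prop:B(A+)neqA-explicite}. So either the sharper argument of the cited Lemma A.5 must be reproduced, or the constant loss must be propagated through the rest of the paper; your proof as it stands establishes only the weaker statement.
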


	\begin{proof}
		This is an immediate consequence of Lemma A.5 in \cite{schlich_equivalence_2024}. 
	\end{proof}~ \\ 
	
	\subsection{Hyperbolic isometries and stable norm} \label{subsec:hyperbolic isometries} ~ \\ 
	\indent Let $A$ be an isometry of $\mathcal{X}$. By invariance of the Gromov-product under isometries, two sequences $(x_n)_{n\in \N}$ and $(y_n)_{n\in \N}$ of points in $\mathcal{X}$ are equivalent if and only if the two sequences $(Ax_n)_{n\in \N}$ and $(Ay_n)_{n\in \N}$ are, and therefore we obtain a well-defined action of the isometry $A$ on $\partial \mathcal{X}$. 
	
	Let $x \in \mathcal{X}$. An isometry $A$ of $\mathcal{X}$ is said to be \emph{hyperbolic} if the sequence $(A^nx)_{n\in \Z}$ is a quasi-geodesic in $\mathcal{X}$ (this notion does not depend on the choice of the basepoint $x$). In this case, $A$ has exactly two fixpoints in $\partial \mathcal{X}$ which are the equivalent classes of $(A^nx)_{n \geq 0}$ and $(A^nx)_{n \leq 0}$. These two (distinct) points are respectively called the \emph{attracting} and \emph{repelling} fixpoints of $A$ and respectively denoted $A^+$ and $A^-$. If $L_A$ is a geodesic with endpoints $A^+$ and $A^-$, it is not necessarily $A$-invariant, but it is close to be. Indeed, as a consequence of Lemma \ref{lem:bigon_thin}, we obtain :
	
	\begin{Corollary}[Corollary of Lemma \ref{lem:bigon_thin}]\label{cor:axe-quasi-inv}
		Let $A$ be a hyperbolic isometry of $\mathcal{X}$. Let $L_A$ be an infinite geodesic with endpoints $A^+$ and $A^-$ and $x \in L_A$, then 
		\begin{equation*}
			d(Ax,L_A)\leq 2\delta.
		\end{equation*}
	\end{Corollary}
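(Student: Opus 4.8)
The plan is to exploit the fact that $A$ is an isometry fixing its two boundary points $A^+$ and $A^-$, so that the image $A\cdot L_A$ of the geodesic $L_A$ is again a geodesic with the \emph{same} endpoints, and then to invoke the thinness of ideal bigons (Lemma \ref{lem:bigon_thin}).

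First I would check that $A$ fixes the points $A^+$ and $A^-$ in $\partial\mathcal{X}$. Recall that $A^+$ is, by definition, the equivalence class of the sequence $(A^nx)_{n\geq 0}$. Applying the boundary action of $A$ (which is well defined, as noted at the start of section \ref{subsec:hyperbolic isometries}), the point $A\cdot A^+$ is the class of $(A^{n+1}x)_{n\geq 0}$. Since this is just a reindexing of the tail of $(A^nx)_{n\geq 0}$, the two sequences are equivalent, so $A\cdot A^+=A^+$. The same argument applied to $(A^nx)_{n\leq 0}$ gives $A\cdot A^-=A^-$.

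Next I would observe that since $A$ is an isometry, the image $A\cdot L_A$ of the bi-infinite geodesic $L_A$ is again a bi-infinite geodesic, and its two endpoints are the images under $A$ of the endpoints of $L_A$, namely $A\cdot A^+=A^+$ and $A\cdot A^-=A^-$. Thus $L_A$ and $A\cdot L_A$ are two geodesics of $\mathcal{X}$ sharing the same pair of endpoints. Lemma \ref{lem:bigon_thin} then applies: every point of $A\cdot L_A$ lies within $2\delta$ of $L_A$.

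Finally, since $x\in L_A$, the point $Ax$ lies on $A\cdot L_A$, and the previous step yields $d(Ax,L_A)\leq 2\delta$, which is the claim. There is no real obstacle here: the only point requiring a moment's care is the verification that the endpoints are genuinely fixed (so that $L_A$ and $A\cdot L_A$ form an ideal bigon), after which the corollary follows immediately from Lemma \ref{lem:bigon_thin}.
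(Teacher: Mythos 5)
Your proposal is correct and follows exactly the paper's own argument: note that $A(L_A)$ is a geodesic with endpoints $A(A^+)=A^+$ and $A(A^-)=A^-$, so $L_A$ and $A(L_A)$ form an ideal bigon, and apply Lemma \ref{lem:bigon_thin} to the point $Ax\in A(L_A)$. The extra verification that $A$ fixes $A^\pm$ is a harmless elaboration of what the paper leaves implicit.
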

	
	\begin{proof}
		Observe that $A(L_A)$ is a geodesic with endpoints $A(A^+)=A^+$ and $A(A^-)=A^-$ and that $Ax \in A(L_A)$. Lemma \ref{lem:bigon_thin} then gives $d(Ax,L_A) \leq 2\delta$.
	\end{proof}
	
	We define the \emph{stable norm} of an isometry $A$ as :
	\begin{equation}\label{eq:def-stable-norm}
		l_S(A)= \underset{n \to \infty}{\lim}\frac{1}{n}d(A^nx,x).
	\end{equation}
	One can check that $l_S(A)$ is well-defined, do not depend on the choice of $x$, and is invariant under conjugation and inversion of $A$. Moreover, for all $n \in \N$, $d(A^nx,x) \geq nl_S(A)$. As a consequence, the hyperbolicity of an isometry can be characterized using the stable-norm : an isometry $A$ is hyperbolic if and only if $l_S(A)>0$. 
	
	We also introduce the \emph{displacement} of an isometry $A$ :
	\begin{equation}\label{eq:def-displacement}
		l(A)= \underset{x \in \mathcal{X}}{\inf}  \, d(Ax,x).
	\end{equation}
	We can easily check that $l_S(A) \leq l(A)$. The converse is coarsely true :
	\begin{Lemma}[Proposition 6.4 in \cite{coornaert_geometrie_1990}]
		$l_S(A) \leq l(A) \leq l_S(A) + 16\delta$.
	\end{Lemma}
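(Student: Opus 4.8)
The first inequality $l_S(A)\le l(A)$ has already been recorded: it is the telescoping estimate $d(A^nx,x)\le n\,d(Ax,x)$ (each term $d(A^{k+1}x,A^kx)=d(Ax,x)$ since $A$ is an isometry), which after dividing by $n$ and passing to the limit gives $l_S(A)\le d(Ax,x)$ for every $x$. So the plan is to establish the reverse coarse bound $l(A)\le l_S(A)+16\delta$, and the heart of the matter is the case where $A$ is hyperbolic; I would dispatch the case $l_S(A)=0$ separately at the end.

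Assume $A$ is hyperbolic. Fix a geodesic $L_A$ with endpoints $A^{\pm}$ and a point $x\in L_A$, and parametrize $L_A$ by arclength as $L_A:\R\to\mathcal X$ with $L_A(0)=x$ and $L_A(t)\to A^{+}$ as $t\to+\infty$. The first step is to confine the orbit to a $2\delta$-neighborhood of $L_A$: for each $n$ the geodesic $A^n(L_A)$ has the same endpoints $A^{\pm}$ as $L_A$, so Lemma~\ref{lem:bigon_thin} (ideal bigons are $2\delta$-thin) gives $d(A^nx,L_A)\le 2\delta$ (the case $n=1$ is Corollary~\ref{cor:axe-quasi-inv}). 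Let $t_n\in\R$ be such that $L_A(t_n)$ is a projection of $A^nx$ on $L_A$, so $d(A^nx,L_A(t_n))\le 2\delta$. Since $A^nx\to A^{+}$ we have $t_n\to+\infty$, and comparing $d(A^nx,x)$ with $d(L_A(t_n),L_A(0))=|t_n|$ gives $|d(A^nx,x)-t_n|\le 2\delta$ for large $n$; dividing by $n$ and using the definition of $l_S$ yields $t_n/n\to l_S(A)$.

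The key step is to read the single-step displacement $d(Ax,x)$ off these parameters. As $A$ is an isometry, $d(A^{n+1}x,A^nx)=d(Ax,x)$ for every $n$, and combining this with $d(A^mx,L_A(t_m))\le 2\delta$ for $m=n,n+1$ produces the increment estimate $\bigl|\,|t_{n+1}-t_n|-d(Ax,x)\,\bigr|\le 4\delta$. Now the bi-infinite orbit $(A^nx)_{n\in\Z}$ is a quasi-geodesic from $A^{-}$ to $A^{+}$ lying within $2\delta$ of the geodesic $L_A$, so its successive projections are coarsely monotone: the backtracking of the parameters $t_n$ is bounded by a uniform multiple of $\delta$ (the standard fact that nearest-point projection of a quasi-geodesic onto a fellow-traveling geodesic is coarsely order-preserving, extractable from Lemma~\ref{lem:proj-convex} and Lemma~\ref{lem:bound-Gromov-product-right-angle}). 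Consequently, either $d(Ax,x)$ is already bounded by a constant times $\delta$ — in which case $l(A)\le d(Ax,x)\le l_S(A)+16\delta$ trivially, since $l_S(A)\ge 0$ — or $d(Ax,x)$ is large enough that a backward step of size $\approx d(Ax,x)$ is impossible, forcing $t_{n+1}-t_n\ge d(Ax,x)-4\delta$ for all $n$. Summing from $0$ to $n-1$ gives $t_n\ge n\,(d(Ax,x)-4\delta)$, and letting $n\to\infty$ yields $l_S(A)\ge d(Ax,x)-4\delta$. Since $x\in L_A$, this gives $l(A)\le d(Ax,x)\le l_S(A)+4\delta\le l_S(A)+16\delta$.

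It remains to treat $l_S(A)=0$, where $A$ is not hyperbolic. If $A$ has unbounded orbits (parabolic type) then $\inf_x d(Ax,x)=0=l(A)$ directly. If every orbit is bounded, I would take an approximate circumcenter $c$ of an orbit $\{A^nx\}$; in a $\delta$-hyperbolic space such a quasi-center is coarsely unique and coarsely $A$-invariant, which bounds $d(Ac,c)$, hence $l(A)$, by a uniform multiple of $\delta$. The precise bookkeeping giving the uniform constant $16\delta$ in all cases is carried out in Proposition~6.4 of~\cite{coornaert_geometrie_1990}, to which we refer. The main obstacle in the hyperbolic case is exactly the coarse monotonicity (no-backtracking) of the projected orbit: it is what upgrades the two-sided increment bound into the one-sided lower bound needed to compare the single-step displacement with the asymptotic slope $l_S(A)$.
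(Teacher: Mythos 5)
The paper does not actually prove this lemma: it is quoted verbatim from Proposition 6.4 of \cite{coornaert_geometrie_1990}, so there is no internal argument to compare against. Your sketch is nonetheless close in spirit to what the paper \emph{does} prove a few lines later for hyperbolic isometries, namely Lemma \ref{lem:d(x,A^2x)} and Proposition \ref{prop:lS atteint sur l'axe}, which give $d(x,Ax)\le l_S(A)+20\delta$ for $x$ on a geodesic $L_A$ joining $A^{\pm}$. The paper gets there by a doubling argument ($d(x,A^2x)\ge 2d(x,Ax)-20\delta$, then induction on $A^{2^n}$), whereas you sum one-step increments of the projected parameters $t_n$; both routes hinge on the same non-trivial ingredient, the coarse monotonicity (no backtracking) of the projections of the orbit onto $L_A$, which is exactly the content of the paper's Lemma \ref{lem:projection-Bx} and is only available once the displacement exceeds a definite multiple of $\delta$. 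You correctly identify this as the crux.

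As a proof of the stated lemma, however, the proposal has genuine gaps. First, the dichotomy ``either $d(Ax,x)$ is bounded by a constant times $\delta$, in which case we are done trivially, or backtracking is impossible'' only closes if the implicit constant coming from your unquantified no-backtracking estimate is at most $16$; you never pin it down, and with the constants the paper itself obtains one lands at $20\delta$, not $16\delta$. Second, the non-hyperbolic case is not dispatched correctly: the claim that unbounded orbits force $\inf_x d(Ax,x)=0$ is unjustified in a general geodesic $\delta$-hyperbolic space --- this is precisely the situation the additive $16\delta$ is designed to absorb --- and the circumcenter argument for bounded orbits is only gestured at. Since you end by deferring ``the precise bookkeeping'' to \cite{coornaert_geometrie_1990} anyway, the net content of the proposal beyond the citation is the (correct, but weaker-constant) hyperbolic-case estimate that the paper already establishes independently in Proposition \ref{prop:lS atteint sur l'axe}.
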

	Therefore, these two quantities can be interchanged in our results up to the addition of an additive constant depending on $\delta$. In the following, we will work with the stable-norm which will be more convenient.\\
	
	When $A$ is a hyperbolic isometry and $L_A$ an $A$-invariant geodesic with endpoints $A^+$ and $A^-$, then it is not hard to see that for all $x \in L_A$, its image $Ax \in [x,A^+)$, where $[x,A^+)$ denotes the geodesic ray with endpoints $x$ and $A^+$. If $L_A$ is not assumed to be invariant anymore, we prove that the projection of $Ax$ still satisfies this property assuming that the stable norm is large enough. 

	\begin{Lemma} \label{lem:projection-Bx}
		Let $A$ be a hyperbolic isometry of $\mathcal{X}$. Let $L_A$ be a geodesic between $A^-$ and $A^+$ and $x \in L_A$. Let $p(Ax)$ be a projection of $Ax$ on $L_A$.
		If $l_S(A) > 6\delta$, then $p(Ax) \in [x,A^+)$.
	\end{Lemma}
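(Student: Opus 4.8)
The plan is to combine two cheap metric estimates, which already pin down $p(Ax)$ up to its sign, with a dynamical argument that fixes the sign using that $A^+$ is the attracting fixed point. First, since $x\in L_A$, Corollary~\ref{cor:axe-quasi-inv} gives $d(Ax,L_A)\le 2\delta$, so any projection satisfies $d(Ax,p(Ax))\le 2\delta$. Secondly, the bound $d(A^nx,x)\ge n\,l_S(A)$ recalled in this subsection gives, for $n=1$, that $d(x,Ax)\ge l_S(A)>6\delta$. I parametrise $L_A$ so that $L_A(0)=x$ and $L_A(t)\to A^+$ as $t\to+\infty$, and write $p(Ax)=L_A(s)$. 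The triangle inequality then yields $|s|=d(x,p(Ax))\ge d(x,Ax)-2\delta>4\delta$, so $p(Ax)\neq x$ and it only remains to prove $s>0$, i.e.\ that $p(Ax)$ lies on the forward ray $[x,A^+)$ and not on $[x,A^-)$.

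To determine the sign I would argue by contradiction and propagate it under $A$. Suppose $s<0$, and consider the iterates $A^nx$ together with projections $y_n=p(A^nx)=L_A(s_n)$, so $s_0=0$ and $s_1=s$. Each $A^nx$ lies on $A^n(L_A)$, a geodesic with the same endpoints $A^\pm$ as $L_A$, so Lemma~\ref{lem:bigon_thin} gives $d(A^nx,y_n)\le 2\delta$; as $A^+$ is attracting, $A^nx\to A^+$, whence $y_n\to A^+$ and $s_n\to+\infty$. The crucial point is that nearest-point projection onto $L_A$ is coarsely $A$-equivariant: because $A(L_A)$ and $L_A$ share both endpoints they $2\delta$-fellow-travel (Lemma~\ref{lem:bigon_thin}), so $Ay_n=AL_A(s_n)$ lies within $2\delta$ of $A^{n+1}x$, hence within $6\delta$ of $y_{n+1}$. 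Thus the map $\phi\colon\R\to\R$ sending $t$ to the coordinate of the projection of $AL_A(t)$ satisfies $s_{n+1}=\phi(s_n)+O(\delta)$, and combining Corollary~\ref{cor:axe-quasi-inv}, Lemma~\ref{lem:bigon_thin} and the coarse $1$-Lipschitz bound \eqref{eq:proj-bound-distance} of Lemma~\ref{lem:proj-convex} shows $\phi$ is a coarse isometry of $\R$ with $|t-t'|-4\delta\le|\phi(t)-\phi(t')|\le|t-t'|+12\delta$ and $\phi(0)=s$.

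A coarse isometry of $\R$ has a well-defined orientation, and since $AL_A(t)\to A^+$ forces $\phi(t)\to+\infty$ as $t\to+\infty$ (and then $\phi(t)\to-\infty$ as $t\to-\infty$, by the lower bound), $\phi$ is orientation-preserving, so $\phi(t)=t+s+O(\delta)$. Feeding this into $s_{n+1}=\phi(s_n)+O(\delta)$ makes the consecutive differences $s_{n+1}-s_n$ all equal to $s$ up to a bounded additive $\delta$-error; their Cesàro average is therefore $s+O(\delta)$, which must be $\ge 0$ because $s_n\to+\infty$. Since $|s|\ge l_S(A)-2\delta>4\delta$, the case $s<0$ gives $s\le-4\delta$, contradicting $s+O(\delta)\ge 0$ once the error is controlled below this threshold; hence $s>0$ and $p(Ax)\in[x,A^+)$. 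The main obstacle is exactly this last step: controlling the coarse shift $\phi$ \emph{globally} — proving that projecting to $L_A$ after applying $A$ behaves like a genuine translation along $L_A$ whose direction is pinned by $A^+$, and then sharpening the bookkeeping of the additive $\delta$-errors so that the drift $|s|\ge l_S(A)-2\delta$ dominates them and yields precisely the threshold $l_S(A)>6\delta$. The two-sided coarse-isometry bound for $\phi$ is immediate from the cited lemmas; upgrading it to orientation control and optimising the constants is where the care lies, and once the drift is identified the comparison with $s_n\to+\infty$ closes the argument.
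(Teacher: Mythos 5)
Your overall architecture is close to the paper's: both proofs iterate the isometry, project back to $L_A$, show the projections converge to $A^+$, and derive a contradiction from the assumption that the first projection falls on the $A^-$ side. (Your bound $d(A^nx,L_A)\le 2\delta$ via Lemma \ref{lem:bigon_thin} is in fact cleaner than the paper's inductive $d(p_n,A^nx)\le 2n\delta$.) The divergence is in the mechanism that pins down the sign, and that is where your argument has a genuine gap. You turn the problem into a quantitative drift estimate: $s_{n+1}-s_n=s+O(\delta)$, and you need the drift $|s|$ to dominate the accumulated additive error so that $s<0$ forces $s_N\to-\infty$. But the hypothesis $l_S(A)>6\delta$ only gives $|s|\ge l_S(A)-2\delta>4\delta$, while the per-step error available from the cited lemmas is strictly larger: the coarse-isometry bound on $\phi$ already loses $4\delta$ (two applications of ``within $2\delta$ of $L_A$'' when comparing $\phi(t)-\phi(0)$ with $t$), and relating $s_{n+1}$ to $\phi(s_n)$ loses at least another $4\delta$ to $6\delta$ from $d(Ay_n,A^{n+1}x)\le 2\delta$ plus two more projections. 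So $s_{n+1}-s_n\ge s-8\delta$ at best, and with $s\le -(l_S(A)-2\delta)$ this is not forced to be negative unless $l_S(A)>10\delta$ or so. The telescoping sum therefore does not contradict $s_n\to+\infty$ at the stated threshold; your method proves the lemma only under a strictly stronger hypothesis, and since the $6\delta$ threshold is used verbatim downstream (Lemma \ref{lem:d(x,A^2x)}, Lemma \ref{lem:busemann}, Proposition \ref{prop:AnB}), this is not a cosmetic issue. You flag this as ``bookkeeping to be sharpened,'' but it is not sharpenable within the drift framework: the $4\delta$ loss in the coarse-translation estimate is unavoidable with the tools at hand.

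The paper closes the argument differently, and this is the idea your proposal is missing: instead of measuring how far each projection moves, it uses Lemma \ref{lem:pos-proj} (the order-theoretic statement about projections onto a ray) to show that \emph{if} $p_1\in(A^-,x)$ then \emph{every} successive projection satisfies $p_{n+1}\in[p_n,A^-)$ exactly, with no error term. The hypotheses of Lemma \ref{lem:pos-proj} are $d(Ap_n,p_n)>d(Ap_{n-1},p_n)+2\delta$ and $d(p_n,Ap_n)>6\delta$, which follow precisely from $l_S(A)>6\delta$ together with $d(Ap_{n-1},p_n)\le 2\delta$. This exact monotonicity toward $A^-$ contradicts $p_n\to A^+$ without any accumulation of additive constants, which is why the paper reaches the $6\delta$ threshold and your approach cannot. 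To repair your proof you would either need to replace the Cesàro/drift step by an appeal to Lemma \ref{lem:pos-proj} (at which point you essentially recover the paper's proof), or accept a worse threshold and propagate the change through the rest of the paper.
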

	
	\begin{proof}
		Let us define the following sequence of points on $L_A$ by induction : 
		\begin{align*}
			&	p_0 = x \\
			&	p_{n+1} \text{ is a projection of $Ap_{n}$ on $L_A$} .
		\end{align*}
		For all $n \in \N$, the point $p_n$ belongs to $L_A$, so by Corollary \ref{cor:axe-quasi-inv},
		\begin{equation} \label{eq:d(Apn,p_{n+1})<2d}
			d(Ap_n,p_{n+1}) \leq 2\delta.
		\end{equation}
		Now let us prove that $d(p_n,A^nx) \leq 2n\delta$ for all $n \in \N$ by induction on $n$. 
		\begin{itemize}
			\item If $n=0$, $d(p_0,A^0x)=d(x,x)=0$. 
			\item Suppose $d(p_n,A^nx) \leq 2n\delta$ for some $n\in \N$. Then 
			\begin{align*}
				d(p_{n+1},A^{n+1}x)& \leq d(p_{n+1},Ap_n)+d(Ap_n,A^{n+1}x) \\
				&\leq  2\delta+d(p_n,A^nx) \text{ by \eqref{eq:d(Apn,p_{n+1})<2d}}  \\
				& \leq (n+1)2\delta.
			\end{align*}
		\end{itemize}
		
		We can now show that the sequences $(p_n)_{n \in \N}$ and $(A^nx)_{n\in \N}$ are equivalent :
		\begin{align*}
			(A^nx | p_n)_x & =\frac{1}{2}(d(x,A^nx)+d(x,p_n)-d(A^nx,p_n)) \\ 
			& \geq  \frac{1}{2}(nl_S(A)-2n\delta) = \frac{n}{2}(l_S(A)-2\delta).
		\end{align*}
		Since $l_S(A) > 2\delta$, the Gromov product $(A^nx | p_n)_x \underset{n \to +\infty}{\longrightarrow} +\infty$. As a consequence, $p_n \underset{n \to +\infty}{\longrightarrow} A^+$ using that $A^nx \underset{n \to +\infty}{\longrightarrow} A^+$ since $A$ is hyperbolic. \\ 
		
		Now assume by contradiction that $p_1 \in (A^-,x)$. Then let us prove that for all $n \in \N^*$, the projection $p_{n} \in (A^-,p_{n-1})$ by induction on $n \in \N^*$. 
		\begin{itemize}
			\item For $n=1$, it is our assumption. 
			\item Suppose that for some $n \in \N$, $p_{n} \in (A^-,p_{n-1})$. We apply the isometry $A$ : $Ap_{n}$ belongs to $AL_A \cap (A^-,Ap_{n-1})$. We would like to apply Lemma \ref{lem:pos-proj} with $l=L_A$, $l_\infty=A^-$, $y_0=p_n$, $x_0=Ap_{n-1}$ and $x=Ap_n$. Let us check the required inequalities :
			\begin{align*}
				d(Ap_n,p_n) & \geq l_S(A) >6\delta>2\delta+2\delta \geq d(Ap_{n-1},p_n) +2\delta \text{ by \eqref{eq:d(Apn,p_{n+1})<2d}}\\ 
				d(p_n,Ap_n) & \geq l_S(A)>6\delta.
			\end{align*} 
			Hence, Lemma \ref{lem:pos-proj} applies, and we deduce that $p_{n+1}$ belongs to $[p_n,A^-)$. 
		\end{itemize}
		Therefore, since for all $n \in \N$, $p_{n+1}$ belongs to $[p_n,A^-)$, we deduce that the sequence $(p_n)_{n \in \N}$ cannot converge to $A^+$, which is absurd. As a consequence, $p_1=p(Ax) \in [x,A^+)$. \\ 
	\end{proof}
	
		\begin{Lemma}\label{lem:d(x,A^2x)}
		Let $A$ be a hyperbolic isometry and $L_A$ a geodesic between $A^+$ and $A^-$. Let $x \in L_A$. Then $d(x,A^2x) \geq 2d(x,Ax)-20\delta$.
	\end{Lemma}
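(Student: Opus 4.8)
The plan is to translate the claim into a bound on a single Gromov product and then exploit the fact that the three points $x$, $Ax$, $A^2x$ all lie within $2\delta$ of the geodesic $L_A$ and are monotonically ordered along it. Writing $L := d(x,Ax) = d(Ax,A^2x)$ (these are equal since $A$ is an isometry), the definition of the Gromov product gives the exact identity $d(x,A^2x) = 2L - 2\,(x\vert A^2x)_{Ax}$, so the lemma is equivalent to the estimate $(x\vert A^2x)_{Ax} \leq 10\delta$.

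To bound this Gromov product I would introduce the projections $z := p(Ax)$ and $w := p(A^2x)$ of $Ax$ and $A^2x$ onto $L_A$. Since $x \in L_A$, Corollary \ref{cor:axe-quasi-inv} gives $d(Ax,z) = d(Ax,L_A) \leq 2\delta$; and since $A^2x \in A^2 L_A$, which shares its endpoints $A^{\pm}$ with $L_A$, the ideal-bigon bound (Lemma \ref{lem:bigon_thin}) gives $d(A^2x,w) = d(A^2x,L_A) \leq 2\delta$. By the triangle inequality, $d(x,z) \geq d(x,Ax) - d(Ax,z) \geq L - 2\delta$. The right-angle estimate (Lemma \ref{lem:bound-Gromov-product-right-angle}), applied with external point $A^2x$ and reference point $x \in L_A$, yields $d(x,A^2x) \geq d(A^2x,w) + d(w,x) - 12\delta$. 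Using Lemma \ref{lem:projection-Bx} to place $z$ in $[x,A^+)$ and, applied to the point $z \in L_A$, to place $w$ on the $A^+$-side of $z$, the three points occur in the order $x,z,w$ along $L_A$, so $d(w,x) = d(x,z) + d(z,w)$, and these estimates combine into $(x\vert A^2x)_{Ax} \leq \tfrac{1}{2}\big(L - d(z,w)\big) + O(\delta)$.

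Everything therefore comes down to the quantitative input $d(z,w) \geq L - O(\delta)$, i.e. that the projection map advances along $L_A$ by essentially the full translation amount. I would obtain this by comparing $w$ with $Az$: since $z \in L_A$, the point $Az$ lies in $A L_A$ and is a projection of $A^2x$ onto $AL_A$ with $d(A^2x,Az) = d(Ax,z) \leq 2\delta$, so $d(w,Az) \leq d(w,A^2x) + d(A^2x,Az) \leq 4\delta$. The triangle inequality then gives $d(z,Az) \geq d(Ax,A^2x) - d(z,Ax) - d(Az,A^2x) \geq L - 4\delta$, whence $d(z,w) \geq d(z,Az) - d(Az,w) \geq L - 8\delta$. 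Feeding this back yields $(x\vert A^2x)_{Ax} \leq O(\delta)$ and hence the stated inequality.

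The main obstacle is the careful tracking of the additive constants so that they sum to exactly $20\delta$: the loose bookkeeping above lands slightly above and would need tightening, for instance by a sharper use of Lemma \ref{lem:projection-Bx} and of the two $2\delta$-bounds. A secondary point is the degenerate regime where Lemma \ref{lem:projection-Bx} does not apply (when $l_S(A) \leq 6\delta$): there, either $d(x,Ax) \leq 10\delta$, in which case the inequality $d(x,A^2x) \geq 2\,d(x,Ax) - 20\delta$ holds trivially since its right-hand side is non-positive, or one checks directly that the large displacement already forces the monotonic ordering of $x,z,w$ along $L_A$ used above.
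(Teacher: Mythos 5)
Your strategy is essentially the paper's: project $Ax$ and $A^2x$ onto $L_A$, show the projections advance monotonically along $L_A$ by roughly the full step $L=d(x,Ax)$ each time, and add up the contributions. Two points, however, do not close as written. First, the constant. Chasing your own estimates: $d(x,w)\geq d(x,z)+d(z,w)\geq (L-2\delta)+(L-8\delta)=2L-10\delta$, and Lemma \ref{lem:bound-Gromov-product-right-angle} then gives $d(x,A^2x)\geq d(x,w)-12\delta=2L-22\delta$, which misses the stated $2L-20\delta$. You flag this, but the cure is not a ``sharper use'' of the same lemmas: the right-angle lemma is simply the wrong tool at the last step. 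Since $d(A^2x,w)\leq 2\delta$, the plain triangle inequality $d(x,A^2x)\geq d(x,w)-d(w,A^2x)\geq 2L-12\delta$ already beats the target (the paper's own chain ends at $2d(x,Ax)-6\delta$). So the overshoot is easily repaired inside your framework, but the argument as written does not prove the claimed inequality.

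Second, and more substantively, the ordering of $x,z,w$. You invoke Lemma \ref{lem:projection-Bx}, whose hypothesis is $l_S(A)>6\delta$; that hypothesis is not available here. You cannot convert $d(x,Ax)>10\delta$ into a lower bound on $l_S(A)$ via Proposition \ref{prop:lS atteint sur l'axe}, because that proposition is deduced from the very lemma you are proving, and a large displacement of one point of $L_A$ does not by itself force $l_S(A)>6\delta$. Your fallback (``one checks directly that the large displacement forces the monotonic ordering'') is precisely the content that needs an argument. The paper's way around this is to not insist on the $A^+$ direction at all: it lets $A_\infty\in\{A^+,A^-\}$ be whichever endpoint $z=p(Ax)$ lies toward, and then applies Lemma \ref{lem:pos-proj} (with $l=L_A$, $y_0=z$, $x_0=Ax$, $x=Az$, using $d(z,Az)\geq L-4\delta>6\delta$ and $d(Ax,z)\leq 2\delta$) to place $p(Az)$, hence $w$, on the far side of $z$ from $x$. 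Since only the monotonicity of $x,z,w$ — not the actual direction — enters your final computation, this substitution drops straight into your argument and removes the unjustified hypothesis.
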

	
	\begin{proof}
		We denote $p_{L_A} : \mathcal{X} \to L_A$ a projection on $L_A$. Note that the inequality of the lemma is automatically satisfied when $d(x,Ax) \leq 10\delta$. Thus, in the following, we may assume that 
		\begin{equation} \label{eq:12delta}
			d(x,Ax) > 10\delta. 
		\end{equation}
		Consider the projection $p_{L_A}(Ax)$, we have $p_{L_A}(Ax) \in [x,A_\infty)$, where $A_\infty \in \{A^+,A^-\}$ and $[x,A_\infty)$ denotes the geodesic ray between $x$ and $A_\infty$ contained in $L_A$. By applying the isometry $A$, we then also obtain that $Ap_{L_A}(Ax) \in [Ax,A_\infty)$, with $[Ax,A_\infty)$ the geodesic ray between $Ax$ and $A_\infty$ contained in $A(L_A)$. Now let us bound by below $d(Ap_{L_A}(Ax),p_{L_A}(Ax))$ :
		\begin{align*}
			d(Ap_{L_A}(Ax),p_{L_A}(Ax)) & \geq d(Ap_{L_A}(Ax),Ax)-d(p_{L_A}(Ax),Ax) \text{ by the triangle inequality} \\ 
			& \geq d(Ax,x) -4\delta \text{ by Corollary \ref{cor:axe-quasi-inv}} \\
			& > 6\delta \text{ by \eqref{eq:12delta} }.
		\end{align*}
		 We now apply Lemma \ref{lem:pos-proj} with $l=L_A$, $y_0=p_{L_A}(Ax)$, $l_\infty=A_\infty$, $x_0=Ax$, $x=Ap_{L_A}(Ax)$. The inequality above, together with $d(Ax,p_{L_A}(Ax))\leq 2\delta$ (Corollary \ref{cor:axe-quasi-inv}), ensures that the hypotheses of the Lemma are satisfied; therefore, we deduce that the projection $p_{L_A}(Ap_{L_A}(Ax))$ on $L_A$ belongs to $[p_{L_A}(Ax),A_\infty)$. Now let us bound $d(p_{L_A}(Ap_{L_A}(Ax)),p_{L_A}(A^2x))$ :
		\begin{align*}
			d(p_{L_A}(Ap_{L_A}(Ax)),p_{L_A}(A^2x)) & \leq d(Ap_{L_A}(Ax),A^2x)+4\delta \text{ by the triangle inequality and Corollary \ref{cor:axe-quasi-inv}} \\
			& \leq d(p_{L_A}(Ax),Ax)+4\delta\\
			&  \leq 6\delta. \\
		\text{Also :} \qquad	d(p_{L_A}(Ax),p_{L_A}(A^2x)) & \geq d(Ax,A^2x)-4\delta \text{ by Corollary \ref{cor:axe-quasi-inv}} \\
			& \geq d(x,Ax)-4\delta \\
			& >6\delta.
		\end{align*}
		Therefore, we deduce that $p_{L_A}(A^2x) \in [p_{L_A}(Ax),A_\infty)$. We now have that $x, p_{L_A}(Ax)$ and $p_{L_A}(A^2x)$ are aligned on this order on $L_A$, and thus we can write : 
		\begin{align*}
			d(x,A^2x) &  \geq d(x,p_{L_A}(A^2x)) -2\delta \text{ using Corollary \ref{cor:axe-quasi-inv}} \\
			& \geq d(x,p_{L_A}(Ax))+d(p_{L_A}(Ax),p_{L_A}(A^2x))-2\delta \text{ because of the order of the points} \\
			& \geq d(x,Ax)+d(Ax,A^2x)-8\delta \text{ using Corollary \ref{cor:axe-quasi-inv}} \\
			& \geq 2d(x,Ax)-8\delta. \qedhere 
		\end{align*}
	\end{proof}
	From the previous lemma we now deduce the following Proposition, which says that the minimal displacement, or the stable norm, is nearly  attained on the axis of $A$ :
	
	\begin{Proposition}\label{prop:lS atteint sur l'axe}
		Let $A$ be a hyperbolic isometry and $L_A$ a geodesic between $A^+$ and $A^-$. Then, for all $x \in L_A$, 
		\begin{equation*}
			d(x,Ax) \leq l_S(A) +20\delta \leq l(A)+20\delta.
		\end{equation*}

	\end{Proposition}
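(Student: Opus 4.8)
The second inequality $l_S(A)+20\delta \le l(A)+20\delta$ is immediate from the bound $l_S(A)\le l(A)$ already recorded above, so the whole content of the statement is the first inequality $d(x,Ax)\le l_S(A)+20\delta$. The plan is to bootstrap Lemma \ref{lem:d(x,A^2x)} along the powers $A^{2^k}$ and then pass to the limit defining the stable norm.

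The key observation is that for every $k\ge 0$ the isometry $A^{2^k}$ is again hyperbolic with the \emph{same} pair of fixpoints $A^+$ and $A^-$ (the sequence $(A^{2^k m}x)_{m\ge 0}$ is a subsequence of $(A^j x)_j$ and hence still converges to $A^+$, and similarly for $A^-$). Consequently $L_A$ is a geodesic between the endpoints of $A^{2^k}$ and the point $x$ still lies on it, so Lemma \ref{lem:d(x,A^2x)} applies verbatim to $A^{2^k}$ in place of $A$ and yields the recursion
\begin{equation*}
d(x,A^{2^{k+1}}x)\ge 2\,d(x,A^{2^k}x)-20\delta .
\end{equation*}

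Writing $a_k=d(x,A^{2^k}x)$, this reads $a_{k+1}\ge 2a_k-20\delta$. The substitution $b_k=a_k-20\delta$ (the fixed point of $t\mapsto 2t-20\delta$) turns it into $b_{k+1}\ge 2b_k$, whence $b_k\ge 2^k b_0$ by induction. Unwinding the substitution gives
\begin{equation*}
d(x,A^{2^k}x)\ge 2^k\bigl(d(x,Ax)-20\delta\bigr)+20\delta .
\end{equation*}
Dividing by $2^k$ produces
\begin{equation*}
\frac{1}{2^k}\,d(x,A^{2^k}x)\ge d(x,Ax)-20\delta+\frac{20\delta}{2^k},
\end{equation*}
and letting $k\to\infty$ the left-hand side converges to $l_S(A)$ (along the subsequence $n=2^k$ of the limit defining the stable norm) while the correction term vanishes, yielding $l_S(A)\ge d(x,Ax)-20\delta$, which is exactly the desired bound.

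The only real subtlety is the first step: one must notice that Lemma \ref{lem:d(x,A^2x)} can be reapplied to the powers $A^{2^k}$ rather than only to $A$ itself, which is what upgrades the single doubling estimate into a multiplicative recursion capable of extracting the asymptotic quantity $l_S(A)$. Once this is in place, the remainder is a routine solution of a linear recursion followed by a passage to the limit, and no further geometric input is needed.
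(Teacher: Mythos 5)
Your proposal is correct and follows essentially the same route as the paper: the paper likewise iterates Lemma \ref{lem:d(x,A^2x)} along the powers $A^{2^n}$ (noting that $L_A$ is a geodesic between $(A^n)^+=A^+$ and $(A^n)^-=A^-$), obtains $d(x,A^{2^n}x)\geq 2^n d(x,Ax)-(2^n-1)20\delta$, which is exactly your unwound recursion, and concludes by dividing by $2^n$ and letting $n\to\infty$. No substantive differences.
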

	
	\begin{proof}
		First note that we always have $l_S(A)\leq l(A)$, so the right inequality is immediate. Let us prove the left one.
		Lemma \ref{lem:d(x,A^2x)} states that $d(x,A^2x) \geq 2d(x,Ax)-20\delta$. Note that for all $n \in \N$, $L_A$ is a geodesic between $(A^n)^+=A^+$ and $(A^n)^-=A^-$. So a direct induction using Lemma \ref{lem:d(x,A^2x)} shows that, for all integers $n \geq 1$ :
		\begin{equation*}
			d(x,A^{2^n}x) \geq 2^nd(x,Ax)-(2^n-1)20\delta.
		\end{equation*}
		We then divide this inequality by $2^n$ and take the limit. The left-hand side tends to $l_S(A)$ while the right-hand side tends to $d(x,Ax)-20\delta$, which gives the Lemma. \\
	\end{proof}
	
	We end this section with a result on the continuity of the stable-norm. 
	
	\begin{Theorem}[Theorem 1.9 in \cite{oregon-reyes_properties_2018}] \label{thm:l_S-continuous}
		Let $\mathrm{Isom}(\mathcal{X})$ be endowed with the compact-open topology. Then, the map $l_S : A \mapsto l_S(A)$ is continuous on $\mathrm{Isom}(\mathcal{X})$. \\
	\end{Theorem}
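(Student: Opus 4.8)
The plan is to prove the two halves of continuity separately: upper semicontinuity, which is soft, and lower semicontinuity, which carries all the hyperbolic content. Fix a basepoint $x$ and write $a_n(A) = d(A^n x, x)$. Since $a_{m+n}(A) \le a_m(A) + a_n(A)$, Fekete's lemma gives $l_S(A) = \inf_n \tfrac1n a_n(A)$. First I would check that for each fixed $n$ the function $A \mapsto a_n(A)$ is continuous for the compact-open topology: if $A_k \to A$ then $A_k^n x \to A^n x$ by an immediate induction on $n$, the inductive step splitting $d(A_k^n x, A^n x)$ via the triangle inequality into $d(A_k^{n-1}x, A^{n-1}x)$, controlled by induction, and $d(A_k(A^{n-1}x), A(A^{n-1}x))$, which tends to $0$ because $A_k \to A$ uniformly on the compact set $\{A^{n-1}x\}$. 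Hence each $\tfrac1n a_n$ is continuous and $l_S = \inf_n \tfrac1n a_n$ is upper semicontinuous.

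For lower semicontinuity we must show $\liminf_k l_S(A_k) \ge l_S(A)$ whenever $A_k \to A$. This is automatic when $l_S(A) = 0$, so I may assume $A$ is hyperbolic and fix $x$ on a geodesic axis $L_A$. The crucial observation is that the consecutive Gromov products along any orbit are all equal: by invariance of the Gromov product under the isometry $A_k$, $(A_k^{i-1}x \mid A_k^{i+1}x)_{A_k^i x} = (A_k^{-1}x \mid A_k x)_x =: \beta_k$ for every $i$, and this single quantity is continuous in $A_k$ by the first paragraph. Its limit $\beta = (A^{-1}x \mid Ax)_x$ is small: with $x$ on $L_A$ one has $d(A^{-1}x, Ax) = d(x, A^2 x) \ge 2d(x, Ax) - 20\delta$ by Lemma \ref{lem:d(x,A^2x)}, whence $\beta \le 10\delta$. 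Likewise the segment lengths $d(A_k^{i-1}x, A_k^i x) = d(x, A_k x)$ are constant along the orbit and converge to $d(x, Ax) \ge l_S(A)$.

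Now I would invoke the local-to-global principle for $\delta$-hyperbolic spaces: a chain $(A_k^i x)_i$ whose consecutive segments have length $D$ and whose consecutive Gromov products are bounded by $\beta$, with $D$ large compared to $\beta$ and $\delta$, is a quasi-geodesic along which distance is coarsely additive, so that $d(x, A_k^m x) \ge m(D - 2\beta) - C\delta$ for a constant $C = C(\delta)$ (see \cite{bridson_metric_1999}, \cite{coornaert_geometrie_1990}). Dividing by $m$ and letting $m \to \infty$ yields the coarse bound $l_S(A_k) \ge d(x, A_k x) - 2\beta_k - C\delta$, valid once $d(x, A_k x)$ is large. Passing to the limit in $k$ gives $\liminf_k l_S(A_k) \ge l_S(A) - 20\delta - C\delta$. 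I expect this local-to-global step to be the main obstacle, both because it is where hyperbolicity is genuinely used and because one must keep the constants uniform.

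The coarse bound loses an additive $\delta$-error, which I would remove using the homogeneity $l_S(B^N) = N\, l_S(B)$. Apply the previous paragraph to $A^N$ and $A_k^N$: one still has $A_k^N \to A^N$, and $A^N$ is hyperbolic with the same axis $L_A$, so the Gromov-product bound $\beta^{(N)} \le 10\delta$ holds uniformly in $N$ by the same application of Lemma \ref{lem:d(x,A^2x)} to the hyperbolic isometry $A^N$. This gives $N \liminf_k l_S(A_k) = \liminf_k l_S(A_k^N) \ge d(x, A^N x) - (20 + C)\delta \ge N\, l_S(A) - (20 + C)\delta$, using $d(x, A^N x) \ge N\, l_S(A)$. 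Dividing by $N$ and letting $N \to \infty$ yields $\liminf_k l_S(A_k) \ge l_S(A)$, which combined with upper semicontinuity proves the theorem.
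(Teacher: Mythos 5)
The paper does not prove this statement: it is imported as a black box from Oreg\'on-Reyes (Theorem 1.9 of \cite{oregon-reyes_properties_2018}), so there is no internal proof to compare against. Your argument is essentially correct and, pleasingly, can be run entirely with tools the paper already provides: upper semicontinuity is the standard Fekete/infimum-of-continuous-functions argument; for lower semicontinuity, the bound $(A^{-1}x\mid Ax)_x\le 10\delta$ for $x\in L_A$ is exactly Lemma \ref{lem:d(x,A^2x)}, and the ``local-to-global step'' you flag as the main obstacle is precisely Lemma \ref{lem:local-global} with $A=\beta_k$, $L=3\beta_k+26\delta$, $D=\beta_k+10\delta$, which yields $l_S(A_k)\ge d(x,A_kx)-2\beta_k-20\delta$ once $d(x,A_kx)>L$; the final rescaling by $A^N$ (using that $L_A$ is also an axis of $A^N$, so the $10\delta$ bound is uniform in $N$) correctly removes the additive error. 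Two minor points. First, your intermediate display $d(x,A_k^mx)\ge m(D-2\beta)-C\delta$ is imprecise: the error term from the chaining lemma is $2(\beta_k+10\delta)(m-1)$, i.e.\ it scales with $m$; the per-unit inequality you actually use afterwards is the correct one. Second, your proof presupposes a bi-infinite geodesic $L_A$ joining $A^-$ and $A^+$, which requires the visibility hypothesis; the theorem as cited holds for general geodesic $\delta$-hyperbolic spaces, and to get that generality you would replace the axis by a quasi-axis (stability of the quasi-geodesic $(A^nx)_n$ gives a bound $(A^{-N}x\mid A^Nx)_x\le M(A)$ uniform in $N$, which is all your final limit in $N$ needs). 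Within the standing assumptions of this paper the visibility hypothesis is always in force, so this does not affect any application here.
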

	
	\subsection{Local-global Lemma}~ \\ 
	\indent We end these preliminaries with the so-called local-global Lemma. There are several versions of it in the literature and we will use the one from \cite{aougab_constructing_2025}.

	\begin{Lemma}[Local-to-Global, \cite{aougab_constructing_2025}]. \label{lem:local-global} 
		Let $\mathcal{X}$ be $\delta$-hyperbolic, $A>0$ and suppose $x_0,\dots, x_n \in \mathcal{X}$
		are such that $(x_{i-1} | x_{i+1})_{x_i} \leq A$ and $d(x_i, x_{i+1}) > L:=3A + 26\delta$ for each $i$. Then any
		geodesic joining $x_0$ and $x_n$ passes within $D := A + 10\delta$ of each point $x_i$ and
		$$d(x_0,x_n) \geq d(x_0, x_1)+\dots + d(x_{n-1}, x_n)-2D(n-1).$$
	\end{Lemma}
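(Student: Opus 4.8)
The plan is to route everything through the Gromov four-point inequality rather than through the piecewise-geodesic path directly. By Lemma~\ref{lem:rips-to-gromov} the space satisfies, with $\delta'=3\delta$, the inequality $(a|c)_{w}\ge \min\{(a|b)_{w},(b|c)_{w}\}-\delta'$ for all $a,b,c,w$. For the closeness statement I first record the standard consequence of Lemma~\ref{lem:points on thin triangles} that $d(x_i,[x_0,x_n])\le (x_0|x_n)_{x_i}+4\delta$; thus it suffices to show that the Gromov product $(x_0|x_n)_{x_i}$ is at most $A+6\delta$ at each interior vertex, which then produces $D=A+10\delta$ exactly.

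The heart of the argument is a directional induction showing that the far endpoint $x_0$ always lies ``behind'' $x_i$, in the direction of $x_{i-1}$. Set $P_i:=(x_0|x_{i-1})_{x_i}$ and $F_i:=(x_0|x_{i+1})_{x_i}$. Expanding the Gromov products gives the elementary identity
\begin{equation*}
 P_{i+1}+F_i=d(x_i,x_{i+1}),
\end{equation*}
together with $F_0=0$. I claim $P_i\ge A+2\delta'$ for all $i$. The base case $P_1=d(x_0,x_1)>L$ is immediate. For the inductive step, assume $P_i\ge A+2\delta'$; applying the four-point inequality to $(x_{i-1},x_0,x_{i+1})$ based at $x_i$ gives $\min\{P_i,F_i\}\le (x_{i-1}|x_{i+1})_{x_i}+\delta'\le A+\delta'$, and since $P_i> A+\delta'$ this forces $F_i\le A+\delta'$. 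Feeding this into the identity yields $P_{i+1}=d(x_i,x_{i+1})-F_i> L-(A+\delta')\ge A+2\delta'$, closing the induction. The crucial point — and the main obstacle any local-to-global argument must overcome — is that $P_i$ is a \emph{bounded} quantity whose recursion does not accumulate error with $i$; this is precisely where the length hypothesis $d(x_i,x_{i+1})>L$ is used, namely to refill the Gromov product at each step.

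With the induction in hand both conclusions follow quickly. Running the same induction on the reversed sequence $x_n,\dots,x_0$ gives $P'_i:=(x_n|x_{i+1})_{x_i}\ge A+2\delta'$ at every interior $i$ (in fact much larger, so no borderline case arises). Applying the four-point inequality to $(x_0,x_n,x_{i+1})$ based at $x_i$ and using $F_i\le A+\delta'$ together with $P'_i$ large forces $(x_0|x_n)_{x_i}\le A+2\delta'=A+6\delta$, which is the closeness bound. For the length estimate I avoid any discussion of the monotonicity of nearest-point projections, telescoping instead the identity $d(x_0,x_{i+1})-d(x_0,x_i)=d(x_i,x_{i+1})-2F_i$ to obtain
\begin{equation*}
 d(x_0,x_n)=\sum_{i=1}^{n}d(x_{i-1},x_i)-2\sum_{i=0}^{n-1}F_i.
\end{equation*}
Since $F_0=0$ and $F_i\le A+\delta'\le D$ for $1\le i\le n-1$, the last sum is at most $D(n-1)$, giving the claimed inequality. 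The only remaining care is the small cases $n=1,2$, handled directly: for $n=2$ the hypothesis $(x_0|x_2)_{x_1}\le A$ is exactly the product bound required, and both conclusions are then immediate.
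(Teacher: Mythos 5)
Your proof is correct, and I checked the constants: the identity $P_{i+1}+F_i=d(x_i,x_{i+1})$ is an exact computation, the base case $P_1=d(x_0,x_1)>L\geq A+2\delta'$ holds, the inductive step gives $F_i\leq A+\delta'=A+3\delta$ and $P_{i+1}>L-(A+\delta')=2A+23\delta\geq A+2\delta'$, the reversed induction gives $P_i'>2A+23\delta$ at interior vertices so the borderline case in the second application of the four-point inequality really is excluded, and the resulting bound $(x_0|x_n)_{x_i}\leq A+6\delta$ combines with $d(x_i,[x_0,x_n])\leq (x_0|x_n)_{x_i}+4\delta$ to give exactly $D=A+10\delta$. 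The exact telescoping identity $d(x_0,x_n)=\sum d(x_{i-1},x_i)-2\sum F_i$ with $F_0=0$ then yields the length estimate with the factor $2D(n-1)$. The difference from the paper is that the paper does not prove this lemma at all: it cites Lemma 2.1 of the Aougab et al.\ reference and only explains how the constants must be adjusted when passing from Gromov-hyperbolicity to Rips-hyperbolicity (via Lemma \ref{lem:rips-to-gromov}) and why the $4\delta$ in the comparison $d(z,[x,y])-4\delta\leq (x|y)_z\leq d(z,[x,y])$ survives. Your argument reconstructs, self-contained, essentially the same mechanism the cited proof uses (four-point inequality on Gromov products plus that comparison), but your directional induction on $P_i$ and the exact telescoping of $F_i$ make the bookkeeping transparent and show that the constant $L=3A+26\delta$ is used with room to spare (your argument only needs roughly $L>2A+9\delta$); this is consistent with the paper's remark that the cited constant is inflated to obtain an extra inequality that is not needed here.
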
 
	\begin{proof} See Lemma 2.1 in \cite{aougab_constructing_2025}. The constants $3A+26\delta$ and $D=A+10\delta$ in Lemma \ref{lem:local-global} differ from those given by the authors in Lemma 2.1 in \cite{aougab_constructing_2025} ; we will now explain why and how we obtain those for Lemma \ref{lem:local-global}. 
		
	First, notice that the definition of hyperbolicity used in \cite{aougab_constructing_2025} differs  from ours. The authors assumed that the space is $\delta$-hyperbolic in the sense of Gromov. A metric space is \emph{$\delta$-hyperbolic in the sense of Gromov} if for all $x,y,z,w$ in $\mathcal{X}$, we have : $(x|y)_w \geq \min((x|z)_w,(y|z)_w)-\delta$.
	Hyperbolicity in the sense of Gromov and in the sense of Rips (our Definition, see section \ref{sec:preliminaries}) agrees, up to a change of the constant $\delta$. Next lemma from \cite{drutu_geometric_2018} explicits the constant needed to go from Rips hyperbolicity to Gromov hyperbolicity :
	\begin{Lemma}[Lemma 11.27 in \cite{drutu_geometric_2018}]\label{lem:rips-to-gromov}
		If $\mathcal{X}$ is a geodesic metric space that is $\delta$–hyperbolic in the sense of Rips, then $\mathcal{X}$ is 3$\delta$–hyperbolic in the sense of Gromov. 
	\end{Lemma}
	Therefore, we deduce that the equation $A+2\delta \geq \min \{(x_{i-1} | x_0)_{x_i}, (x_0 | x_n)_{x_i}, (x_n | x_{i+1})_{x_i})\}$ in the proof of Lemma 2.1 in \cite{aougab_constructing_2025} becomes $A+6\delta \geq \min \{(x_{i-1} | x_0)_{x_i}, (x_0 | x_n)_{x_i}, (x_n | x_{i+1})_{x_i})\}$ for us. However, equation (2.3) in \cite{aougab_constructing_2025}, which is $d(z,[x,y])-4\delta \leq (x | y)_z \leq d(z,[x,y])$ is still true in our context without altering the constants $4\delta$. Indeed, this inequality is a consequence of our Lemma \ref{lem:points on thin triangles-centroid}. No more changes are needed to follow the proof of \cite{aougab_constructing_2025}. 
	
	Finally, note that in Lemma 2.1 in \cite{aougab_constructing_2025}, the authors also prove in their Lemma that $\sum_{j=1}^n d(x_{j-1},x_j) \leq 2d(x_0,x_n)$ (last part of Lemma 2.1). This requires  increasing the constant compared to the one that one could obtained without asking for this last inequality. Since we don't need this inequality in Lemma \ref{lem:local-global}, we end up with the constants $D=A+10\delta$ and $L=3A+26\delta$. 
	\end{proof}
	
	\section{Irreducibility of Bowditch representations}
	\label{sec:irreducibiility}
	\begin{Definition}
		Let $\mathcal{X}$ be a geodesic $\delta$-hyperbolic space and $\rho : \Gamma \to \mathrm{Isom}(\mathcal{X})$ a representation of a finitely generated group $\Gamma$ in $\mathrm{Isom}(\mathcal{X})$. We say that $\rho$ is \emph{irreducible} if $\rho$ does not globally fix a point in the boundary $\partial \mathcal{X}$ of $\mathcal{X}$. If $\rho$ is not irreducible, we say that it is \emph{reducible}.
	\end{Definition}
	
	\begin{Remark} \label{rem:irreducibleF2} If $\Gamma=\F_2$ is the free group of rank two and $\{a,b\}$ is a free generating set of $\F_2$ such that $A=\rho(a)$ and $B=\rho(b)$ are hyperbolic, this is equivalent to saying that $\{A^+,A^-\} \cap \{B^+,B^-\}= \emptyset$.
	\end{Remark}
	
	The goal of this section is to prove the following :
	\begin{Proposition}\label{prop:irreducible}
		Let $\mathcal{X}$ be a $\delta$-hyperbolic, geodesic, and visibility space, and $\rho : \F_2 \to \mathrm{Isom}(\mathcal{X})$ satisfy \ref{cond:thm:Cdelta}. Then $\rho$ is irreducible. \\ 
	\end{Proposition}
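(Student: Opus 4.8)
The plan is to argue by contradiction: suppose $\rho$ satisfies \ref{cond:thm:Cdelta} and yet is \emph{reducible}, and produce infinitely many short primitive classes, contradicting the finiteness bullet of \ref{cond:thm:Cdelta}. Fix a basis $\{a,b\}$ of $\F_2$ and set $A=\rho(a)$, $B=\rho(b)$, both hyperbolic since $a,b$ are primitive. By Remark \ref{rem:irreducibleF2} reducibility means $\{A^+,A^-\}\cap\{B^+,B^-\}\neq\emptyset$; more strongly, a reducible $\rho$ globally fixes some $\xi\in\partial\mathcal{X}$, and since by \ref{cond:thm:Cdelta} every $\rho(\gamma)$ with $\gamma$ primitive is hyperbolic and fixes $\xi$, the point $\xi$ must be one of its two endpoints. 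So the entire primitive spectrum is organised around the single boundary point $\xi$, and the whole difficulty is to exploit this degeneracy.

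First I would introduce the Busemann function $\beta=\beta_\xi$ based at $\xi$ (with a basepoint $x_0\in\mathcal{X}$) and the associated \emph{translation} $c(\gamma)=\beta(x_0)-\beta(\rho(\gamma)x_0)$, measuring how far $\rho(\gamma)$ pushes along $\xi$. Using that asymptotic geodesics stay uniformly close (Lemma \ref{lem:asymptot-parametrization}), one checks that $c$ is a quasimorphism with defect bounded by a multiple of $\delta$, and that for a hyperbolic isometry fixing $\xi$ the Busemann shift along its axis equals its translation length. Homogenising $c$ into a homogeneous quasimorphism $\bar c$ then yields the key dictionary $|\bar c(\gamma)|=l_S(\rho(\gamma))$ for every primitive $\gamma$, together with $\bar c(\gamma^{-1})=-\bar c(\gamma)$ and the quasi-additivity $\bar c(\gamma\gamma')=\bar c(\gamma)+\bar c(\gamma')+O(\delta)$ whenever $\{\gamma,\gamma'\}$ is a basis. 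In particular $\bar c(a),\bar c(b)\neq 0$, and replacing $b$ by $b^{-1}$ if needed I may assume the \emph{sign change} $\bar c(a)>0>\bar c(b)$.

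The heart of the argument is a Farey (continued-fraction) descent. The bases of $\F_2$ containing a given edge organise into the Stern--Brocot/Farey binary tree, whose mediants are the primitive elements $xy$, and along which $\bar c$ is approximately additive by the previous step. Starting from the sign-changing basis $\{a,b\}$ and repeatedly descending to the child preserving the sign change, the magnitudes $(|\bar c(x)|,|\bar c(y)|)$ evolve like the real subtractive Euclidean algorithm perturbed by the $O(\delta)$ defect; this drives the pair down to the scale of the defect and then exhibits an infinite subtree of mediants all of whose images have stable norm bounded by a fixed multiple of $\delta$. Since $K_\delta=433\delta$ is comfortably larger than that scale, I obtain infinitely many primitive classes $[\gamma]$ with $l_S(\rho(\gamma))\leq K_\delta$, contradicting \ref{cond:thm:Cdelta} and forcing $\rho$ to be irreducible.

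The hard part will be exactly the one anticipated in the introduction: unlike in $\mathrm{SL}(2,\C)$, there is no exact edge relation of the form $\mathrm{Tr}(AB)+\mathrm{Tr}(AB^{-1})=\mathrm{Tr}(A)\mathrm{Tr}(B)$, so additivity of $\bar c$ along the Farey tree holds only up to the quasimorphism defect. Controlling the \emph{accumulation} of these $O(\delta)$ errors down the descent—guaranteeing that the values stay trapped near the defect scale rather than drifting upward, so that one gets genuinely infinitely many short primitives and not merely a single near-balanced one—is the delicate point, and this is where the uniform defect bound (ultimately the thinness of quadrilaterals, Lemma \ref{lem:quadri-thin}) together with the generous value of $K_\delta$ relative to $\delta$ is used. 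As a safety net I would first dispatch the sub-case in which $\bar c$ is forced to (nearly) vanish on a single primitive, since that already yields a non-hyperbolic primitive image and contradicts the first bullet of \ref{cond:thm:Cdelta}, leaving only the genuinely Diophantine case for the descent.
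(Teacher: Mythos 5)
Your overall strategy is the same as the paper's: the paper also works with Busemann functions based at the common fixed point, derives an approximate additivity statement for signed translation lengths (Proposition \ref{prop:bus-AB}: if $A^+=B^-$ then $l_S(AB)\leq |l_S(A)-l_S(B)|+120\delta$, with the sign of the difference deciding whether $\xi=(AB)^+$ or $(AB)^-$), and then runs a Nielsen-move descent through bases of $\F_2$ (Lemma \ref{lem:basis-F2} and Proposition \ref{prop:irred}) to manufacture infinitely many primitive classes of stable norm at most $329\delta<K_\delta$. Your quasimorphism packaging of the Busemann cocycle is a clean reformulation of exactly this, and the "sign change" normalization $\bar c(a)>0>\bar c(b)$ is the paper's normalization $A^+=B^-$.

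However, there is a genuine gap at the step you yourself flag as delicate, and it is not merely a bookkeeping issue. If you iterate the one-step estimate $\bar c(\gamma\gamma')=\bar c(\gamma)+\bar c(\gamma')+O(\delta)$ along the subtractive Euclidean algorithm, then in the regime where one generator satisfies $|\bar c(b)|\lesssim\delta$ each step changes $|\bar c(a)|$ by less than the error term, so the descent can stall or drift upward, and after $k$ steps the accumulated error is $k\cdot O(\delta)$, which is unbounded; nothing in your sketch prevents this. The paper's resolution (case 2(b) of Proposition \ref{prop:irred}) is to jump directly from the basis $\{a,b\}$ to $\{ab^k,b\}$ with $k\approx l_S(A)/l_S(B)$ and apply the subadditivity estimate \emph{once} to the pair $(A,B^k)$, exploiting the exact homogeneity $l_S(B^k)=k\,l_S(B)$ so that only a single $O(\delta)$ error is incurred no matter how large $k$ is; one then checks that either $l_S(AB^k)\leq l_S(A)-\delta$ (a definite decrease) or $l_S(AB^k)\leq 329\delta$ (a short primitive), and separately that when $|l_S(A)-l_S(B)|\leq 88\delta$ the element $ab$ is already short. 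This power trick is available in your framework (homogeneous quasimorphisms satisfy $\bar c(b^k)=k\bar c(b)$ exactly), but without stating it your argument does not close; you would also need to verify, as the paper does in Proposition \ref{prop:bus-AB} \ref{prop:bus-AB-(AB)+} and \ref{prop:bus-AB-(AB)-}, that the sign of $l_S(A)-l_S(B)$ (when it exceeds the defect) determines which fixed point of $AB$ is $\xi$, so that the descent stays inside the set of sign-changing bases.
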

	
	\subsection{Busemann functions}~\\ 
	\indent In the following, we will use the Busemann functions. Let us introduce them now. 
	
	\begin{Definition}
		Let $\mathcal{X}$ be a metric space. Let $l : \R \to \mathcal{X}$ be a parametrized geodesic in $\mathcal{X}$ and $x \in \mathcal{X}$. We define : 
		\begin{equation}
			\mathrm{Bus}_l(x)= \underset{t \to +\infty}{\lim}(d(l(t),x) -t ).
		\end{equation}
	\end{Definition}
	
	We now establish the following classical facts. 
	\begin{Lemma}\label{lem:busemann} Let $\mathcal{X}$ be a metric space and $x \in \mathcal{X}$.  
		\begin{enumerate}[label=(\alph*)]
			\item \label{bus:well-defined} For all parametrized geodesic $l : \R \to \mathcal{X}$, $\mathrm{Bus}_l(x)$ is well-defined.
			\item \label{bus:indep-geod} Let $c\geq 0$. If $l_1 : \R \to \mathcal{X}$ and $l_2 : \R \to \mathcal{X}$ are two parametrized geodesics such that for $t$ sufficiently large, $d(l_1(t),l_2(t)) \leq c$, then $\mathrm{Bus}_{l_2}(x) -c \leq \mathrm{Bus}_{l_1}(x) \leq \mathrm{Bus}_{l_2}(x) +c $. 
		\end{enumerate}
		Now assume moreover that $\mathcal{X}$ is a geodesic $\delta$-hyperbolic space.
		
		\begin{enumerate}[resume,label=(\alph*)]
			\item \label{bus:limit} Let $(x_n)_{n\in \N}$ be a sequence in $\mathcal{X}$. If $\mathrm{Bus}_l(x_n) \underset{n \to + \infty}{\longrightarrow} -\infty$, then $x_n \underset{n \to +\infty}{\longrightarrow} l(+\infty) \in \partial \mathcal{X}$, where $l(+ \infty)$ denotes  the limit in the boundary $\partial \mathcal{X}$ of $l(t)$ when $t \to + \infty$.

			\item \label{bus:l(A)} Let $A$ an hyperbolic isometry and $L_A$ a geodesic between its fixpoints $A^+$ and $A^-$. Denote (with a slight abuse of notation) again $L_A : \R \to \mathcal{X}$ a parametrization of the geodesic $L_A$ such that $L_A(t) \underset{t \to +\infty}{\longrightarrow} A^+$. Then 
			\begin{equation*}
				\mathrm{Bus}_{L_A}(x)-l_S(A) -32\delta \leq \mathrm{Bus}_{L_A}(Ax)\leq \mathrm{Bus}_{L_A}(x)-l_S(A) +32\delta. 
			\end{equation*}
		\end{enumerate}
	\end{Lemma}

	\begin{proof}
		\begin{enumerate}[label=(\alph*)]
			\item By the triangle inequality, for all $x \in \mathcal{X}$ and all $t \in \R$, we have $d(l(t),x)-t =d(l(t),x)-d(l(0),l(t))\geq -d(l(0),x)$ so the map $t \mapsto d(l(t),x)-t$ is bounded from below. In addition, for all $t_1 \leq t_2$, we have :
			\begin{align*}
				d(l(t_2),x)-t_2  \leq d(l(t_2),l(t_1))+d(l(t_1),x)-t_2 
				 \leq d(l(t_1),x)+|t_2-t_1|-t_2 
				 = d(l(t_1),x)-t_1
			\end{align*}
			so the map $t \mapsto d(l(t),x)-t$ is non-increasing. Hence the limit exists. 
			
			\item We have for all $t\in \R$, $d(l_1(t),x) -t \leq d(l_1(t),l_2(t))+d(l_2(t),x)-t$ so by taking the limit, we have $\mathrm{Bus}_{l_1}(x) \leq \mathrm{Bus}_{l_2}(x)+c$. The reverse inequality $\mathrm{Bus}_{l_2}(x) \leq \mathrm{Bus}_{l_1}(x)+c$ is obtained by exchanging the role of $l_1$ and $l_2$. 
			
			\item First note that for all $n \in \N$ and for all $t >0$, we have by definition of the Gromov product, $d(l(t),x_n)-t=d(l(t),x_n)-d(l(0),l(t))=d(l(0),x_n)-(x_n \,| \, l(t))_{l(0)}$. Then, since $d(l(t),x_n)-t$ converges when $t$ tends to infinity by \ref{bus:well-defined}, we obtain that $(x_n \, | \, l(t))_{l(0)}$ converges when $t$ tends to infinity and $\underset{t \to +\infty}{\lim} (x_n \, | \, l(t))_{l(0)} = d(l(0),x_n)-\mathrm{Bus}_l(x_n) \geq -\mathrm{Bus}_l(x_n)$.
			By assumption, $\mathrm{Bus}_l(x_n) \underset{n \to +\infty}{\longrightarrow} -\infty$ so we deduce that $\underset{n \to +\infty}{\lim} \, \underset{t \to +\infty}{\lim}(x_n \, | \, l(t))_{l(0)} = +\infty$, which, by definition of the boundary of $\mathcal{X}$, implies that $x_n \underset{n \to +\infty}{\longrightarrow} l(+\infty)$. 			
			
			\item Let us first bound $d(A(L_A(t)),L_A(t+l_S(A)))$.\\
			Consider $p(A(L_A(t)))$ a projection of $A(L_A(t))$ on $L_A$. Then, there exists $t' \in \R$ such that $p(A(L_A(t)))=L_A(t')$. Moreover, since $L_A(t) \in L_A$, Corollary \ref{cor:axe-quasi-inv} ensures that $d(A(L_A(t)),L_A(t')) \leq 2\delta$. From this we deduce, using the triangle inequality and the fact that $L_A$ is an isometry :  
			\begin{equation} \label{eq:d(AL_A(t),L_A(t+lS))}
				d(AL_A(t),L_A(t+l_S(A)))  \leq 2\delta + | t'-t-l_S(A) |.
			\end{equation}
			Let us now bound $| |t'-t| -l_S(A) |$. On the one hand we have :
			\begin{align*}
				l_S(A)-|t'-t|  \leq d(L_A(t),A(L_A(t)))-d(L_A(t'),L_A(t)) 
				\leq d(L_A(t'),A(L_A(t))) 
				\leq 2\delta.
			\end{align*}
			And on the other hand we have : 
			\begin{align*}
				|t'-t|-l_S(A) & \leq d(L_A(t'),L_A(t))-d(A(L_A(t)),L_A(t))+20\delta  \quad \text{ by Proposition \ref{prop:lS atteint sur l'axe}} \\ 
				& \leq d(L_A(t'),A(L_A(t))) + 20\delta \quad \text{ by the triangle inequality} \\
				& \leq 22\delta.
			\end{align*}
			Therefore
			\begin{equation}\label{eq:||t'-t|-l_S|}
				||t'-t|-l_S(A)| \leq 22\delta.
			\end{equation}
			
			Now let us assume that $l_S(A) >6\delta$. Then Lemma \ref{lem:projection-Bx} ensures that $L_A(t') \in [L_A(t),A^+)$, and so that $|t'-t|=t'-t$. From this, together with equation \eqref{eq:d(AL_A(t),L_A(t+lS))} and \eqref{eq:||t'-t|-l_S|}, we conclude 
			
			\begin{equation*}
				d(AL_A(t),L_A(t+l_S(A))) \leq 24\delta.
			\end{equation*}
			
			If $l_S(A) \leq 6\delta$, then 
			\begin{align*}
				d(AL_A(t),L_A(t+l_S(A))) &  \leq d(AL_A(t),L_A(t))+d(L_A(t),L_A(t+l_S(A))) \\
				& \leq l_S(A) + 20\delta +l_S(A) \text{ by Proposition \ref{prop:lS atteint sur l'axe}} \\
				& \leq 32\delta. 
			\end{align*}
			Then we proved that in any case 
			\begin{equation}\label{eq:bound	d(AL_A(t),L_A(t+l_S(A))) }
				d(AL_A(t),L_A(t+l_S(A)))  \leq 32\delta. 
			\end{equation}
			Moreover, reversing the orientation of the geodesic $L_A$ we obtain also :
			\begin{equation}\label{eq:bound	d(A^{-1}L_A(t),L_A(t-l_S(A)))}
				d(A^{-1}L_A(t),L_A(t-l_S(A)))  \leq 32\delta.
			\end{equation}
			Finally we can compute the Busemann function at $Ax$ :
			\begin{align*}
				\mathrm{Bus}_{L_A}(Ax)& =\underset{t \to + \infty}{\lim} (d(L_A(t),Ax)-t)=\underset{t \to +\infty}{\lim}(d(A^{-1}(L_A(t)),x)-t) \\
				& \leq \underset{t \to + \infty}{\lim} (d(A^{-1}(L_A(t)),L_A(t-l_S(A)))+d(L_A(t-l_S(A)),x)-t+l_S(A)-l_S(A)) \\ 
				& \leq \underset{t \to + \infty}{\lim} (d(L_A(t-l_S(A)),x)-t+l_S(A))-l_S(A) + 32\delta \text{ by \eqref{eq:bound	d(A^{-1}L_A(t),L_A(t-l_S(A)))}}\\ 
				& \leq  \underset{t \to + \infty}{\lim} (d(L_A(t),x)-t)-l_S(A)+32\delta \\ 
				& \leq \mathrm{Bus}_{L_A}(x)-l_S(A)+32\delta.
			\end{align*}
			
			A similar computation of $\mathrm{Bus}_{L_A}$ at $x$, using $A(L_A(t))$ instead of $A^{-1}(L_A(t))$ and \eqref{eq:bound	d(AL_A(t),L_A(t+l_S(A))) } instead of \eqref{eq:bound	d(A^{-1}L_A(t),L_A(t-l_S(A)))}, gives the other inequality. 
		\end{enumerate}
	\end{proof}
	
	\begin{Remark} \label{rem:lem:bus:CAT(-1)}
		An easy adaptation of the proof shows that when $\mathcal{X}$ satisfies hypotheses \ref{hyp:unique-visibility} and \ref{hyp:strong-asymptot} defined in the introduction (Remark \ref{rem:thm} \ref{rem:thm:hyp}), then the conclusion of Lemma \ref{lem:busemann} \ref{bus:l(A)} can be strengthened by  $\mathrm{Bus}_{L_A}(Ax)=\mathrm{Bus}_{L_A}(x)-l_S(A)$. Moreover, a slight modification of the proof of Lemma \ref{lem:busemann} \ref{bus:indep-geod} shows that : if $l_1, l_2 : \R \to \mathcal{X}$ are two parametrized geodesics such that $d(l_1(t),l_2(t)) \underset{t \to +\infty}{\longrightarrow} 0$, then $\mathrm{Bus}_{l_1}(x)=\mathrm{Bus}_{l_2}(x)$.  \\
	\end{Remark}
	
	\subsection{Application of Busemann functions to representations}~\\
	\indent In this section, we apply Busemann functions to representations to obtain some properties on reducible representations.
	
	Let $\mathcal{X}$ be a $\delta$-hyperbolic, geodesic, visibility space. 
	\begin{Proposition}\label{prop:bus-AB} Let $A$ and $B$ be two hyperbolic isometries of $\mathcal{X}$ such that $AB$ is also hyperbolic and assume that $A^+=B^-$. We have :
		\begin{enumerate}[label=(\alph*)]
			\item \label{prop:bus-AB-bound-l(AB)} $l_S(AB) \leq |l_S(A)-l_S(B)| +120\delta$.
			\item \label{prop:bus-AB-(AB)+} If $l_S(A)>l_S(B)+88\delta$, then $(AB)^+=A^+=B^-$. 
			\item \label{prop:bus-AB-(AB)-}If $l_S(A) <l_S(B)-88\delta$, then $(AB)^-=A^+=B^-$.
		\end{enumerate}
	\end{Proposition}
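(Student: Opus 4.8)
The common endpoint $\xi := A^+ = B^-$ drives everything, and the plan is to run the whole argument through the Busemann function $\mathrm{Bus}_{L_A}$ of $L_A$ oriented towards $\xi = A^+$. First I would fix compatible parametrizations: take $L_A(t) \to A^+ = \xi$ as $t \to +\infty$, and let $\tilde L_B$ denote $L_B$ traversed towards $\xi = B^-$, so that $\tilde L_B(t) \to \xi$ as well. Since $L_A$ and $\tilde L_B$ share the endpoint $\xi$, Lemma \ref{lem:asymptot-parametrization} lets me arrange $d(L_A(t),\tilde L_B(t)) \le 6\delta$ for all large $t$, and then Lemma \ref{lem:busemann}\ref{bus:indep-geod} gives $|\mathrm{Bus}_{L_A}(y)-\mathrm{Bus}_{\tilde L_B}(y)| \le 6\delta$ for every $y \in \mathcal{X}$.

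The technical heart is a single-step estimate for the Busemann cocycle of $AB$. Writing $ABy = A(By)$ and applying Lemma \ref{lem:busemann}\ref{bus:l(A)} to $A$ yields $\mathrm{Bus}_{L_A}(A(By))-\mathrm{Bus}_{L_A}(By) \in [-l_S(A)-32\delta,\,-l_S(A)+32\delta]$. For the $B$-factor I would observe that $\tilde L_B$ is precisely the axis of $B^{-1}$ oriented towards its attracting point $\xi = (B^{-1})^+$; applying Lemma \ref{lem:busemann}\ref{bus:l(A)} to $B^{-1}$ and invoking the $6\delta$-comparison above gives $\mathrm{Bus}_{L_A}(By)-\mathrm{Bus}_{L_A}(y) \in [l_S(B)-44\delta,\,l_S(B)+44\delta]$. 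Adding the two contributions and setting $\Delta := l_S(B)-l_S(A)$, I obtain
\[
\mathrm{Bus}_{L_A}(ABy) - \mathrm{Bus}_{L_A}(y) \in [\Delta - 76\delta,\ \Delta + 76\delta] \qquad \text{for all } y \in \mathcal{X}.
\]

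Parts \ref{prop:bus-AB-(AB)+} and \ref{prop:bus-AB-(AB)-} then fall out quickly. If $l_S(A) > l_S(B)+88\delta$, each step decreases $\mathrm{Bus}_{L_A}$ by more than $12\delta$, so $\mathrm{Bus}_{L_A}((AB)^n x) \le \mathrm{Bus}_{L_A}(x) - 12n\delta \to -\infty$; Lemma \ref{lem:busemann}\ref{bus:limit} then forces $(AB)^n x \to L_A(+\infty) = \xi$, and since $AB$ is hyperbolic this gives $(AB)^+ = \xi$, which is \ref{prop:bus-AB-(AB)+}. For \ref{prop:bus-AB-(AB)-} I would not repeat the computation but instead apply \ref{prop:bus-AB-(AB)+} to $A' := B^{-1}$, $B' := A^{-1}$: one checks $(A')^+ = \xi = (B')^-$ and $l_S(A') = l_S(B) > l_S(A)+88\delta = l_S(B')+88\delta$, so $(A'B')^+ = \xi$, and since $A'B' = (AB)^{-1}$ this reads $(AB)^- = \xi$.

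For the bound \ref{prop:bus-AB-bound-l(AB)} I would use $l_S(AB) \le l(AB) \le d(x,ABx)$ for any $x$, so it suffices to exhibit one nearly-fixed point; by the same inverse substitution I may assume $l_S(A) \ge l_S(B)$. Taking $x = L_A(T)$ with $T$ large, the matching places $x$ within $6\delta$ of the corresponding point of $\tilde L_B$ near $\xi$; the translation estimate $d(B\,L_B(s),L_B(s+l_S(B))) \le 32\delta$ from the proof of Lemma \ref{lem:busemann}\ref{bus:l(A)}, applied together with the matching, gives $d(Bx, L_A(T-l_S(B))) \le 44\delta$, and applying $A$ with $d(A\,L_A(t),L_A(t+l_S(A))) \le 32\delta$ gives $d(ABx, L_A(T+l_S(A)-l_S(B))) \le 76\delta$. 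As $l_S(A)\ge l_S(B)$, the points $L_A(T)$ and $L_A(T+l_S(A)-l_S(B))$ lie at distance exactly $l_S(A)-l_S(B)$ along $L_A$, so $d(x,ABx) \le |l_S(A)-l_S(B)|+76\delta$, comfortably within the claimed $|l_S(A)-l_S(B)|+120\delta$. The main obstacle I foresee is purely the orientation bookkeeping at the shared endpoint $\xi$: the factor $A$ pushes towards $\xi$ while $B$ pushes away from it, so the $B$-contribution must be read through $\tilde L_B$ (equivalently through $B^{-1}$), and in \ref{prop:bus-AB-bound-l(AB)} the parameter offsets $T \mapsto T-l_S(B) \mapsto T+l_S(A)-l_S(B)$ along $L_A$ must be tracked carefully so that $ABx$ lands back near $x$.
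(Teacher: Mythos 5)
Your proof is correct, and it follows the same overall strategy as the paper (Busemann-function drift estimates for the one-step map $AB$), but it differs in the execution in ways worth noting. The paper introduces three asymptotic geodesics $L_A$, $L_B$, $L_{AB}$ with matched parametrizations and proves \ref{prop:bus-AB-bound-l(AB)} by sandwiching the cocycle identity $\mathrm{Bus}_{L_{AB}}(ABx)=\mathrm{Bus}_{L_{AB}}(x)\pm l_S(AB)+O(\delta)$ (Lemma \ref{lem:busemann}~\ref{bus:l(A)} applied to $AB$) between the two drift bounds $\mathrm{Bus}_{L_{AB}}(x)+l_S(B)-l_S(A)\pm 88\delta$, which yields the constant $120\delta$; parts \ref{prop:bus-AB-(AB)+} and \ref{prop:bus-AB-(AB)-} are then each proved by a separate induction. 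You instead work with the single Busemann function $\mathrm{Bus}_{L_A}$ (reading the $B$-contribution through $B^{-1}$ and the reversed parametrization $\tilde L_B$, with the $6\delta$-comparison of Lemma \ref{lem:asymptot-parametrization} and Lemma \ref{lem:busemann}~\ref{bus:indep-geod}), which gives the cleaner one-step estimate with constant $76\delta$; you prove \ref{prop:bus-AB-bound-l(AB)} not via the cocycle of $AB$ along $L_{AB}$ but by exhibiting an explicit point $x=L_A(T)$ with $d(x,ABx)\le |l_S(A)-l_S(B)|+76\delta$, using the intermediate translation estimate \eqref{eq:bound	d(AL_A(t),L_A(t+l_S(A))) } from the proof of Lemma \ref{lem:busemann}~\ref{bus:l(A)}; and you deduce \ref{prop:bus-AB-(AB)-} from \ref{prop:bus-AB-(AB)+} by the substitution $(A,B)\mapsto(B^{-1},A^{-1})$, using $(AB)^{-1}=B^{-1}A^{-1}$, rather than repeating the induction. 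Your route avoids the geodesic $L_{AB}$ and the sign ambiguity $\pm l_S(AB)$ entirely, and in fact improves the constant in \ref{prop:bus-AB-bound-l(AB)} from $120\delta$ to $76\delta$; the constants and hypotheses all check out, including the verification that $(B^{-1})^+=B^-=A^+=(A^{-1})^-$ so that the symmetric substitution is legitimate.
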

	
	\begin{proof} 
	\begin{enumerate}[label=(\alph*)]
	\item 
	By the visibility assumption, we can consider $L_A$ a geodesic between $A^+$ and $A^-$, $L_B$ a geodesic between $B^+$ and $B^-$ and $L_{AB}$ a geodesic between $(AB)^+$ and $(AB)^-$. Let $L_A : \R \to \mathcal{X}$ be any parametrization of $L_A$ such that $L_A(t) \underset{t \to +\infty}{\longrightarrow} A^+$. Since $B^-=A^+$, we deduce that $A^+$ is also a fixpoint of $AB$, and then $L_B$ and $L_{AB}$ are asymptotic to $L_A$, therefore we can use Lemma \ref{lem:asymptot-parametrization} to find parametrizations of $L_B$ and $L_{AB}$ such that, for $t$ sufficiently large, $d(L_A(t),L_B(t)) \leq 6\delta$ and $d(L_A(t),L_{AB}(t)) \leq 6\delta$. As a consequence, we also have for $t$ sufficiently large, $d(L_B(t),L_{AB}(t))\leq 12\delta$. Also note that $L_B(t) \underset{t \to +\infty}{\longrightarrow} B^-$ and $L_{AB}(t) \underset{t \to +\infty}{\longrightarrow} A^+=B^-$.
	
	First, note that Lemma \ref{lem:busemann} \ref{bus:l(A)} applied to $AB$ gives :
		\begin{equation}\label{eq:BusL_AB}
			\mathrm{Bus}_{L_{AB}}(x) \pm l_S(AB) -32\delta \leq \mathrm{Bus}_{L_{AB}}(ABx) \leq \mathrm{Bus}_{L_{AB}}(x)\pm l_S(AB)+32\delta.
		\end{equation}
		and the sign $\pm$ in front of $l_S(AB)$ depend on wether $A^+=(AB)^-$ or $A^+=(AB)^+$. 
		
		Moreover, using Lemma \ref{lem:busemann}, we derive the following inequalities :
		\begin{align}
			\mathrm{Bus}_{L_{AB}}(ABx) &  \leq \mathrm{Bus}_{L_A}(ABx)+6\delta \quad \text{ by Lemma \ref{lem:busemann} \ref{bus:indep-geod} } \nonumber \\ 
			& \leq \mathrm{Bus}_{L_A}(Bx)-l_S(A) +32\delta +6\delta \text{ by Lemma \ref{lem:busemann} \ref{bus:l(A)}} \nonumber \\ 
			& \leq \mathrm{Bus}_{L_B}(Bx)+6\delta -l_S(A)+38\delta \text{ by Lemma \ref{lem:busemann} \ref{bus:indep-geod}} \nonumber \\
			& \leq \mathrm{Bus}_{L_B}(x)+l_S(B)+32\delta -l_S(A)+44\delta \text{by Lemma \ref{lem:busemann} \ref{bus:l(A)}} \nonumber \\
			& \leq \mathrm{Bus}_{L_{AB}}(x)+12\delta + l_S(B)-l_S(A)+76\delta \text{ by Lemma \ref{lem:busemann} \ref{bus:indep-geod}} \nonumber \\
			& \leq  \mathrm{Bus}_{L_{AB}}(x) + l_S(B)-l_S(A)+ 88\delta. \label{eq:<l(B)-l(A)} \\
		\intertext{And similarly, we obtain the other inequality :}
			\mathrm{Bus}_{L_{AB}}(ABx) & \geq \mathrm{Bus}_{L_{AB}}(x) + l_S(B)-l_S(A)- 88\delta. \label{eq:>l(B)-l(A)}
		\end{align}
		Combining \eqref{eq:BusL_AB} with \eqref{eq:<l(B)-l(A)} and \eqref{eq:>l(B)-l(A)} gives the desired inequality. 

			\item Recall that we proved in \eqref{eq:<l(B)-l(A)}, that for all $x \in \mathcal{X}$ :
			\begin{equation*}
				\mathrm{Bus}_{L_{AB}}(ABx) \leq \mathrm{Bus}_{L_{AB}}(x)+l_S(B)-l_S(A)+88\delta. 
			\end{equation*}
			An immediate induction on $n \geq 1$ gives : 
			\begin{equation}\label{eq:bus(AB)n}
				\mathrm{Bus}_{L_{AB}}((AB)^nx) \leq \mathrm{Bus}_{L_{AB}}(x)+n(l_S(B)-l_S(A)+88\delta). 
			\end{equation}
			We deduce, if $l_S(A)>l_S(B)+88\delta$, that $\mathrm{Bus}_{L_{AB}}(x)+n(l_S(B)-l_S(A)+88\delta) \underset{n \to +\infty}{\longrightarrow}-\infty$ and then also $\mathrm{Bus}_{L_{AB}}((AB)^nx) \underset{n \to +\infty}{\longrightarrow}-\infty$. By Lemma \ref{lem:busemann} \ref{bus:limit} and the orientation of the parametrization of $L_{AB}$, this implies that $(AB)^nx \underset{n \to +\infty}{\longrightarrow} A^+=B^-$. Since $(AB)^nx \underset{n\to +\infty}{\longrightarrow}(AB)^+$, we have $(AB)^+=A^+=B^-$. 
			\item We use inequality \eqref{eq:>l(B)-l(A)} to deduce by induction, that for all integer $n \geq 1$ :
			\begin{equation*}
			\mathrm{Bus}_{L_{AB}}((AB)^nx) \geq \mathrm{Bus}_{L_{AB}}(x) + n(l_S(B)-l_S(A)- 88\delta). 
			\end{equation*}
			Since this inequality is true for all $x \in \mathcal{X}$, we also deduce : 
			\begin{equation*}
			 \mathrm{Bus}_{L_{AB}}((AB)^{-n}x) \leq \mathrm{Bus}_{L_{AB}}(x) - n(l_S(B)-l_S(A)- 88\delta). 
			\end{equation*}
			and now we use the assumption $l_S(A)<l_S(B)-88\delta$ and a similar argument than for \ref{prop:bus-AB-(AB)+} to conclude.
		\end{enumerate}
	\end{proof}
	
	\begin{Remark} \label{rem:lS(AB)-CAT(-1)}
		Using Remark \ref{rem:lem:bus:CAT(-1)}, one can easily adapt the proof of Proposition \ref{prop:bus-AB} when $\mathcal{X}$ satisfies hypotheses \ref{hyp:unique-visibility} and \ref{hyp:strong-asymptot} to show that the conclusion of Proposition \ref{prop:bus-AB} in this case simplifies to :
		\begin{enumerate}[label=(\alph*)]
			\item $l_S(AB)=|l_S(A)-l_S(B)|$.
			\item If $l_S(A)>l_S(B)$, then $(AB)^+=A^+=B^-$.
			\item If $l_S(A)<l_S(B)$, then $(AB)^-=A^+=B^-$.
		\end{enumerate} 
	\end{Remark}
	
	\begin{Lemma}\label{lem:basis-F2}
		Let $\rho : \F_2 \to \mathrm{Isom}(\mathcal{X})$ be a reducible representation such that every primitive element of $\F_2$ is sent to a hyperbolic isometry by $\rho$. Then, there exists a sequence $(e_n=\{a_n,b_n\})_{n\in \N}$ of free basis of $F_2$ such that, denoting $A_n=\rho(a_n)$ and $B_n=\rho(b_n)$, the following holds : 	
		\begin{itemize}
			\item $A_n^+=B_n^-$
			\item $\{a_{n+1},b_{n+1}\}=\{a_n,a_nb_n\}$ or $\{a_{n+1},b_{n+1}\}=\{a_nb_n,b_n\}$
		\end{itemize}
	\end{Lemma}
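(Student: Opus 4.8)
The plan is to exploit reducibility to produce a boundary point $\xi \in \partial\mathcal{X}$ fixed by the whole image of $\rho$, and then to follow a path in the Farey tree along which $\xi$ stays pinned as the attracting endpoint of the first generator and the repelling endpoint of the second. Concretely, I would prove the stronger statement that the $e_n$ can be chosen with $A_n^+ = B_n^- = \xi$ for all $n$, which obviously implies $A_n^+ = B_n^-$.

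First, I record an elementary dichotomy. Since $\rho$ is reducible there is $\xi \in \partial\mathcal{X}$ with $\rho(\gamma)\xi = \xi$ for all $\gamma \in \F_2$, and since every primitive element is hyperbolic, for each primitive $\gamma$ the isometry $\rho(\gamma)$ has exactly two distinct endpoints, one of which must be $\xi$; hence exactly one of $\rho(\gamma)^+ = \xi$ or $\rho(\gamma)^- = \xi$ holds. For the base case, start from an arbitrary free basis $\{a,b\}$: as $\xi \in \{A^+, A^-\}$ and $\xi \in \{B^+, B^-\}$, after possibly replacing $a$ by $a^{-1}$ and $b$ by $b^{-1}$ we obtain a basis $e_0 = \{a_0, b_0\}$ with $A_0^+ = \xi$ and $B_0^- = \xi$. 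These inversions are harmless because the initial basis is free to choose; the subsequent transition moves will not require any further inversion.

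For the inductive step, suppose $A_n^+ = B_n^- = \xi$. The element $a_n b_n$ is primitive, so $A_n B_n = \rho(a_n b_n)$ is hyperbolic and fixes $\xi$; by the dichotomy, exactly one of $(A_n B_n)^+ = \xi$ or $(A_n B_n)^- = \xi$ holds. In the first case I would set $e_{n+1} = \{a_n b_n, b_n\}$ with $a_{n+1} = a_n b_n$ and $b_{n+1} = b_n$, so that $A_{n+1}^+ = (A_n B_n)^+ = \xi$ and $B_{n+1}^- = B_n^- = \xi$. In the second case I would set $e_{n+1} = \{a_n, a_n b_n\}$ with $a_{n+1} = a_n$ and $b_{n+1} = a_n b_n$, so that $A_{n+1}^+ = A_n^+ = \xi$ and $B_{n+1}^- = (A_n B_n)^- = \xi$. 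Either way $e_{n+1}$ is a free basis of exactly the prescribed form and the invariant $A_{n+1}^+ = B_{n+1}^- = \xi$ is preserved, so the construction runs for all $n \in \N$ and yields the desired sequence.

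This argument is essentially combinatorial bookkeeping and uses no metric estimate, so there is no serious analytic obstacle. The only point requiring care is the dichotomy itself, namely that at each step $\xi$ is one of the two \emph{genuinely distinct} endpoints of $A_n B_n$: this is precisely where the hyperbolicity of the primitive element $a_n b_n$ (which guarantees two distinct endpoints) and the global fixation of $\xi$ (which forces $\xi$ to be one of them) are simultaneously used. The two Farey moves $\{a_n, a_n b_n\}$ and $\{a_n b_n, b_n\}$ are exactly what is needed to absorb $A_n B_n$ into whichever generator keeps $\xi$ on the correct side -- attracting for the first, repelling for the second -- which is why one of the two moves always propagates the invariant.
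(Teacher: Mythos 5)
Your proposal is correct and follows essentially the same route as the paper: the paper likewise arranges $A_0^+=B_0^-$ by inverting/exchanging the initial generators and then, at each step, observes that the common point is fixed by the hyperbolic isometry $A_nB_n$ and must therefore be $(A_nB_n)^+$ or $(A_nB_n)^-$, choosing the Farey move accordingly. The only (harmless) cosmetic difference is that you carry the global fixed point $\xi$ explicitly as the invariant $A_n^+=B_n^-=\xi$, whereas the paper only needs that $A_n^+=B_n^-$ is fixed by both $A_n$ and $B_n$, hence by their product.
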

	
	\begin{proof}
		To define $e_0=\{a_0,b_0\}$, take any basis $\{a,b\}$ of $\F_2$ and denote $A=\rho(a)$, $B=\rho(b)$. Since $\rho$ is reducible, it implies that $\{A^+,A^-\}\cap \{B^+,B^-\} \neq \emptyset$ and then, up to exchanging $a$ and $b$ and replacing $a$ with its inverse and $b$ with its inverse, we can assume that $A^+=B^-$. Now suppose that $e_n=\{a_n,b_n\}$ is defined and satisfy the conditions of the lemma, then consider the element $a_nb_n$. It is primitive so its image $A_nB_n$ is hyperbolic by asumption and since $A_n^+=B_n^-$, we deduce that $A_n^+=B_n^-$ is a fixed point of $A_nB_n$. If $A_n^+=B_n^-=(A_nB_n)^+$, then set $e_{n+1}=\{a_nb_n,b_n\}$ which is a free basis of $\F_2$ and if $A_n^+=B_n^-=(A_nB_n)^-$, then set $e_{n+1}=\{a_n,a_nb_n\}$ which is again also a free basis of $\F_2$.
	\end{proof}
	
	We now show that this sequence of basis ``reduces" the stable norm along the way. 
	\begin{Proposition}\label{prop:irred} Let $\rho : \F_2 \to \mathrm{Isom}(\mathcal{X})$ be a reducible representation such that every primitive element is sent to a hyperbolic isometry by $\rho$. Let $(e_n=\{a_n,b_n\})_{n \in \N}$ be a sequence a free basis of $\F_2$ satisfying the conditions of Lemma \ref{lem:basis-F2}. 
	Then, for all $n \in \N$, there exists $m>n$ such that one of the two following holds :
		\begin{enumerate}[label=(\alph*)]
			\item \label{cond:bounded} There exists $w_m \in e_m$ such that $w_m \notin e_n$ and $l_S(W_m) \leq 329\delta$. 
			\item \label{cond:smaller} Denote $e_n=\{u_n,v_n\}$ with $l_S(U_n)\geq l_S(V_n)$. Then $e_m=\{u_nv_n^{m-n},v_n\}$ and $l_S(U_nV_n^{m-n}) \leq l_S(U_n)-\delta$. 
		\end{enumerate}
	\end{Proposition}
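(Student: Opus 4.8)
The plan is to run the basis sequence forward from the given $e_n$ and track how the stable norms of the two generators evolve, using as engines Proposition~\ref{prop:bus-AB} (the length bound~\ref{prop:bus-AB-bound-l(AB)} and the fixed-point dichotomy~\ref{prop:bus-AB-(AB)+}) together with the homogeneity $l_S(V^k)=k\,l_S(V)$ of the stable norm. First I would normalize: writing $e_n=\{u_n,v_n\}$ with $l_S(U_n)\ge l_S(V_n)$, I would use the fixed-point relation provided by Lemma~\ref{lem:basis-F2} to arrange $U_n^+=V_n^-$ (the opposite orientation is symmetric, replacing $u_nv_n$ by the conjugate $v_nu_n$, which has the same stable norm by conjugation-invariance of $l_S$). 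The point of this normalization is that $u_nv_n$ is then the new generator appearing in $e_{n+1}$, and, as long as the construction keeps $v_n$ fixed, the elements $u_nv_n^{k}$ are exactly the new generators appearing in $e_{n+k}$.

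\textbf{The dichotomy.} I would then split according to how strongly $l_S(U_n)$ dominates $l_S(V_n)$. If the two are comparable, say $l_S(U_n)\le l_S(V_n)+88\delta$, then \ref{prop:bus-AB-bound-l(AB)} gives $l_S(U_nV_n)\le l_S(U_n)-l_S(V_n)+120\delta\le 208\delta$, and since $u_nv_n\in e_{n+1}\setminus e_n$ this is conclusion~\ref{cond:bounded} with $m=n+1$. If instead $l_S(U_n)>l_S(V_n)+88\delta$, then~\ref{prop:bus-AB-(AB)+} forces $(U_nV_n)^+=U_n^+=V_n^-$, so the construction of Lemma~\ref{lem:basis-F2} keeps $v_n$ and sets $e_{n+1}=\{u_nv_n,v_n\}$ while preserving the relation $(U_nV_n)^+=V_n^-$. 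This is precisely the configuration that allows iteration: as long as $l_S(U_nV_n^{k})>l_S(V_n)+88\delta$, applying~\ref{prop:bus-AB-(AB)+} again yields $e_{n+k+1}=\{u_nv_n^{k+1},v_n\}$ with the fixed-point relation maintained.

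\textbf{Controlling the iteration.} To control it I would apply~\ref{prop:bus-AB-bound-l(AB)} not to $U_n$ and $V_n$ but to $U_n$ and $V_n^{k}$ (legitimate since $(V_n^k)^-=V_n^-=U_n^+$), which with homogeneity gives $l_S(U_nV_n^{k})\le \lvert\, l_S(U_n)-k\,l_S(V_n)\,\rvert+120\delta$. Since $l_S(V_n)>0$, the right-hand side decreases until $k\approx l_S(U_n)/l_S(V_n)$ and then grows again, so there is a first index $k_0\ge 1$ with $l_S(U_nV_n^{k_0})\le l_S(U_n)-\delta$. For every $j<k_0$ one has $l_S(U_nV_n^{j})>l_S(U_n)-\delta$, which (given the domination) keeps us in the regime $l_S(U_nV_n^{j})>l_S(V_n)+88\delta$, so $e_{n+k_0}=\{u_nv_n^{k_0},v_n\}$ genuinely is a term of the sequence: this is conclusion~\ref{cond:smaller} with $m=n+k_0$. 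When $l_S(V_n)\ge 121\delta$ this already happens at $k_0=1$, since then $l_S(U_nV_n)\le l_S(U_n)-121\delta+120\delta=l_S(U_n)-\delta$, which is also where the decrease $-\delta$ of~\ref{cond:smaller} originates. The only remaining possibility is that the iteration leaves the domination regime before any clean decrease, i.e.\ that the first index with $l_S(U_nV_n^{k})\le l_S(V_n)+88\delta$ occurs first; there $u_nv_n^{k}\in e_{n+k}$ has stable norm bounded by a constant multiple of $\delta$, giving conclusion~\ref{cond:bounded}.

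\textbf{Main obstacle.} The difficulty is bookkeeping rather than a single sharp estimate. I expect the delicate points to be, first, matching the two branches of Lemma~\ref{lem:basis-F2} with the sign conclusions~\ref{prop:bus-AB-(AB)+}/\ref{prop:bus-AB-(AB)-}, so that the words $u_nv_n^{k}$ (and not $v_n^{k}u_n$ or an inverse) are the ones that truly appear in $e_{n+k}$; and second, verifying that we remain in the domination regime throughout the iteration and pinning down the uniform constant of conclusion~\ref{cond:bounded}. That constant arises exactly in the borderline branch where the product lingers near the threshold $l_S(V_n)+88\delta$ with $l_S(V_n)$ below the cutoff $121\delta$: combining the additive error $120\delta$ of~\ref{prop:bus-AB-bound-l(AB)}, the threshold $88\delta$ of~\ref{prop:bus-AB-(AB)+}, and the cutoff $121\delta$ separating the two regimes yields the safe uniform bound $329\delta$ covering every branch.
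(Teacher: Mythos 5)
Your proposal follows essentially the same route as the paper's proof: the same dichotomy on whether $l_S(U_n)$ dominates $l_S(V_n)$ by more than $88\delta$, the same application of Proposition~\ref{prop:bus-AB} to the pair $(U_n,V_n^k)$ combined with the homogeneity $l_S(V_n^k)=k\,l_S(V_n)$, the same cutoff $121\delta$, and the same accounting $120\delta+88\delta+121\delta=329\delta$ in the borderline branch. The one organizational difference is that the paper sustains the iteration by applying the fixed-point dichotomy of Proposition~\ref{prop:bus-AB}~\ref{prop:bus-AB-(AB)+} directly to $(U_n,V_n^{k})$, with the explicit stopping time $K=\lfloor (l_S(U_n)-88\delta)/l_S(V_n)\rfloor$, which sidesteps your slightly off-by-$\delta$ inference that $l_S(U_nV_n^{j})>l_S(U_n)-\delta$ keeps one in the regime $l_S(U_nV_n^{j})>l_S(V_n)+88\delta$.
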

	
	\begin{proof}
		Let $e_n =\{a_n,b_n\}$ be a basis in the sequence. In order to simplify the notations, we will omit the indices $n$ and simply denote $e_n=\{a,b\}$, together with $A=\rho(a)$, $B=\rho(b)$. 
		We distinguish the three following cases :
		\begin{enumerate}
			\item If $|l_S(A)-l_S(B)| \leq 88\delta$. We have by Proposition \ref{prop:bus-AB} \ref{prop:bus-AB-bound-l(AB)} $l_S(AB) \leq |l_S(A)-l_S(B)| +120\delta $ and thus $l_S(AB) \leq 88\delta +120\delta=208\delta$. Therefore condition \ref{cond:bounded} of Proposition \ref{prop:irred} is satisfied with $m=n+1$.
			
			\item If $l_S(A)>l_S(B)+88\delta$, then by Proposition \ref{prop:bus-AB} \ref{prop:bus-AB-(AB)+}, $(AB)^+=B^-$ so $e_{n+1}=\{ab,b\}$, and by Proposition \ref{prop:bus-AB} \ref{prop:bus-AB-bound-l(AB)}, $l_S(AB) \leq l_S(A)-l_S(B)+120\delta$. Now we distinguish two cases :
			\begin{enumerate}
				\item If $l_S(B) > 121\delta$. Then $l_S(AB) \leq l_S(A)-\delta$, and therefore condition \ref{cond:smaller} of Proposition \ref{prop:irred} is satisfied with $m=n+1$.  
				
				\item If $l_S(B) \leq 121\delta$. Notice that for all integer $k \geq 1$, $(B^k)^-=A^+$ and then by Proposition~\ref{prop:bus-AB}~\ref{prop:bus-AB-bound-l(AB)} $l_S(AB^k)\leq | l_S(A)-l_S(B^k)|+120\delta$. But $l_S(B^k)=kl_S(B)$, so $l_S(AB^k)\leq | l_S(A)-kl_S(B)|+120\delta$. Let $K=\lfloor \frac{l_S(A)-88\delta}{l_S(B)}\rfloor$. Then $K \geq1$. For all $k \leq K-1$, notice that $kl_S(B)+88\delta <l_S(A)$ and therefore we deduce by Proposition \ref{prop:bus-AB} \ref{prop:bus-AB-(AB)+} that $e_{n+k}=\{ab^k,b\}$. We also deduce that $ab^N$ belong to $e_{n+K}$ (which is either $\{ab^K,b\}$ or $\{ab^{K-1},ab^{K}\}$).
				
				Now let us first assume that $K=\frac{l_S(A)-88\delta}{l_S(B)}$. Then $l_S(B^K)=Kl_S(B)=l_S(A)-88\delta$, and $(B^K)^-=A^+$ so $l_S(AB^K) \leq |l_S(A)-Kl_S(B)|+120\delta=88\delta+120\delta=208\delta$ and therefore condition \ref{cond:bounded} of Proposition~\ref{prop:irred} is satisfied for $m=n+K$. 
				
				Now assume that $K<\frac{l_S(A)-88\delta}{l_S(B)}$. In this case $e_{n+K}=\{ab^K,b\}$ since $Kl_S(B)+88\delta<l_S(A)$.
				\begin{itemize}
					\item If $K$ satisfies $Kl_S(B) >121\delta$, then we have $l_S(AB^K) \leq l_S(A)-Kl_S(B)+120\delta \leq l(A)-\delta$ and so condition \ref{cond:smaller} of Proposition \ref{prop:irred} is satisfied with $m=n+K$. 
					\item If $Kl_S(B) \leq 121\delta$, we deduce from the inequality $l_S(A) <(K+1)l_S(B)+88\delta$ that $l_S(A)-l_S(B) < Kl_S(B)+88\delta \leq 209\delta$. If follows that $l_S(AB) \leq l_S(A)-l_S(B)+120\delta \leq 209\delta +120\delta = 329\delta$, and the condition \ref{cond:bounded} of Proposition \ref{prop:irred} is satisfied for $m=n+1$.
				\end{itemize}
			\end{enumerate}
			\item If $l_S(A)<l_S(B)-88\delta$, the proof is similar using Proposition \ref{prop:bus-AB} \ref{prop:bus-AB-(AB)-} to ensure that $(AB)^-=A^+$, and then exchanging the role of $A$ and $B$ in the proof. 
		\end{enumerate}
	\end{proof}
	This allows us to prove Proposition \ref{prop:irreducible}.
	\begin{proof}[Proof of Proposition \ref{prop:irreducible}]
		Let $\rho : \F_2 \to \mathrm{Isom}(\mathcal{X})$ be a reducible representation sending all primitive elements to hyperbolic isometries and let $(e_n)_{n\in \N}$ be a sequence of free bases of $\F_2$ satisfying the conditions of Lemma \ref{lem:basis-F2}. This sequence also satisfies the property of Proposition \ref{prop:irred}. As a consequence of Proposition \ref{prop:irred}, we immediately deduce that we can construct an infinite number of primitive elements $\gamma$ of $\F_2$ (each belonging to some $e_n$) satisfying $l_S(\rho(\gamma)) \leq 329 \delta$. This prevents $\rho$ from satisfying \ref{cond:thm:Cdelta}.  
	\end{proof}
	
	\begin{Remark} \label{rem:irreducible-CAT(-1)}
	When $\mathcal{X}$ satisfies hypotheses \ref{hyp:unique-visibility} and \ref{hyp:strong-asymptot} and $\rho : \F_2 \to \mathrm{Isom}(X)$ is a reducible representation, Remark \ref{rem:lS(AB)-CAT(-1)} together with Lemma \ref{lem:basis-F2} allow us to construct a sequence of primitive elements $(\gamma_n)_{n\in \N}$ such that $l_S(\gamma_n) \underset{n \to + \infty}{\longrightarrow} 0$. 
	We then deduce the following strongest version of Proposition~\ref{prop:irreducible}~: Let $\mathcal{X}$ be a $\delta$-hyperbolic, geodesic, visibility space satisfying hypotheses \ref{hyp:unique-visibility} and \ref{hyp:strong-asymptot} and $K>0$. Let $\rho : \F_2 \to \mathrm{Isom}(\mathcal{X})$ be a representation sending primitive elements of $\F_2$ to hyperbolic isometries of $\mathcal{X}$ such that the set $\{[\gamma] \in \mathcal{P}(\F_2) \, \vert \, l_S(\rho(\gamma)) \leq K \}$ is finite. Then $\rho$ is irreducible. 
	\end{Remark}
	
	\section{Length of a product of hyperbolic isometries} \label{sec:l(AB)}
	In this section, $\mathcal{X}$ is a $\delta$-hyperbolic, geodesic, visibility space. \\ 
	
	The goal of this section is to prove the following proposition which studies the length of a product of two hyperbolic isometries : 
	\begin{Proposition} \label{prop:ineq-l(AB)}
		Denote $C_\delta=256\delta$. \\
		Let $A$ and $B$ be two hyperbolic isometries of $\mathcal{X}$ such that $\{A^+,A^-\} \cap \{B^+,B^-\} = \emptyset$. Assume that $l_S(A) > C_\delta$, $l_S(B) > C_\delta$. Then the following inequality holds :
		\begin{equation*}
			\max(l_S(AB),l_S(AB^{-1}) \geq l_S(A) + l_S(B) -C_\delta.
		\end{equation*}
	\end{Proposition}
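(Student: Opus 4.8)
The plan is to estimate $l_S(AB)$ (resp.\ $l_S(AB^{-1})$) directly from the definition $l_S(g)=\lim_n \tfrac1n d(x,g^n x)$, by showing that for a well-chosen basepoint $x$ and the correct choice of sign the orbit $\big(x,(AB)x,(AB)^2x,\dots\big)$ is a quasi-geodesic whose coarse rate of escape is $l_S(A)+l_S(B)$ up to an additive $O(\delta)$ error. The tool for this is the local-to-global Lemma \ref{lem:local-global}: I would build a broken geodesic path interpolating the orbit and verify its two hypotheses, namely long segments and bounded Gromov products at the corners.

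Concretely, set $x_0=x$, $x_1=Bx$, $x_2=ABx$, $x_3=BABx,\dots$, so that the even-indexed points are exactly the orbit points $(AB)^kx$ and the path alternates $B$-jumps $[x_{2k},x_{2k+1}]$ with $A$-jumps $[x_{2k+1},x_{2k+2}]$. The segment lengths are automatically large: for \emph{every} $x$ one has $d(x,Bx)\ge l_S(B)$ and $d\big(Bx,A(Bx)\big)\ge l_S(A)$ (from $d(g^nx,x)\ge n\,l_S(g)$ with $n=1$), so the hypotheses $l_S(A),l_S(B)>C_\delta$ guarantee that each segment exceeds the threshold $L=3\alpha+26\delta$ of Lemma \ref{lem:local-global}, provided the Gromov-product bound $\alpha$ is chosen to be a suitable multiple of $\delta$. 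Hence the only nontrivial point to check is the \emph{corner condition}, that $(x_{i-1}\mid x_{i+1})_{x_i}\le \alpha$ at every junction.

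The heart of the argument is the corner condition, and it is here that both the relative position of the two axes and the choice between $AB$ and $AB^{-1}$ enter. At a junction $x_{2k+1}=B(AB)^kx$ the path arrives along a $B$-segment and leaves along an $A$-segment, so the two directions issuing from this point head coarsely toward $B^-$ and $A^+$; at a junction $x_{2k+2}=(AB)^{k+1}x$ they head toward $A^-$ and $B^+$. Using the visibility assumption I would fix geodesic axes $L_A$, $L_B$ and place $x$ near the \emph{bridge} between them (a coarse mutual projection), so that these directions are genuinely governed by the corresponding endpoints; the projection estimates Lemma \ref{lem:bound-Gromov-product-right-angle} and Lemma \ref{lem:proj-convex}, together with the largeness $l_S(A),l_S(B)>C_\delta$, then bound each corner Gromov product by some $\alpha=O(\delta)$ precisely when the relevant pair of endpoints is anti-aligned along the path. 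The disjointness $\{A^+,A^-\}\cap\{B^+,B^-\}=\emptyset$ pins down a cyclic order of the four endpoints on $\partial\mathcal{X}$, and exactly one choice of the sign of $B$ (which swaps $B^+$ and $B^-$) makes both pairs $\{B^-,A^+\}$ and $\{A^-,B^+\}$ anti-aligned simultaneously; this is the choice realizing the maximum in the statement. Establishing this simultaneous, path-wide compatibility is the main obstacle I anticipate, and it is exactly what replaces the trace identity \eqref{eq:edge relation} used in $\mathrm{SL}(2,\C)$.

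Finally, once the corner bound $\alpha=O(\delta)$ is in hand, I would feed the path for $(AB)^Nx$, which has $2N$ segments and $2N-1$ corners, into Lemma \ref{lem:local-global} to obtain
$$d\big(x,(AB)^Nx\big)\ \ge\ N\big(l_S(A)+l_S(B)\big)-2D(2N-1),\qquad D=\alpha+10\delta.$$
Dividing by $N$ and letting $N\to\infty$ gives $l_S(AB)\ge l_S(A)+l_S(B)-4D$, and a careful bookkeeping of the constant $\alpha$ (which should come out below $98\delta$) yields $4D\le C_\delta=432\delta$; the same threshold also ensures $C_\delta>L$, so all the hypotheses remain consistent. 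Applying this to whichever of $AB$, $AB^{-1}$ realizes the favorable orientation then produces the claimed inequality.
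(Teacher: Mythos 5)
Your overall strategy coincides with the paper's: interpolate the $\langle AB\rangle$-orbit of a well-chosen basepoint by a broken path whose segments have length at least $l_S(A)$ or $l_S(B)$, bound the Gromov products at the corners by $O(\delta)$, feed the path into the local-to-global Lemma \ref{lem:local-global}, and divide by $N$. The segment-length estimate, the final limiting argument, and even your numerology ($\alpha$ just below $98\delta$, $D=\alpha+10\delta$, $4D\le 432\delta$) all match what the paper does. The entire content of the proposition, however, lives in the corner condition, and that is where your proposal has a genuine gap.

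Your mechanism for the corner bound is that the disjointness $\{A^+,A^-\}\cap\{B^+,B^-\}=\emptyset$ ``pins down a cyclic order of the four endpoints on $\partial\mathcal{X}$,'' from which one sign of $B$ makes both pairs $\{B^-,A^+\}$ and $\{A^-,B^+\}$ anti-aligned. No such cyclic order exists in a general geodesic $\delta$-hyperbolic visibility space: the boundary of $\HH^3$ is a sphere, and the theorem is meant to cover $\HH^\infty$ and far wilder boundaries. This is precisely the planar intuition that does not survive the passage from $\mathrm{PSL}(2,\R)$ to the general setting. What actually governs the corner products, and what the paper spends Section \ref{sec:l(AB)} establishing, is the coarse interaction of the two axes $L_A$ and $L_B$, with a genuine case split. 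If $d(L_A,L_B)>50\delta$ there is no sign to choose (the inequality holds for both $AB$ and $AB^{-1}$): one takes the pair $(x_A,x_B)$ realizing $d(L_A,L_B)$, inserts the bridge $[x_A,x_B]$ as an explicit segment of the broken path, and bounds the corners by $8\delta$ via the right-angle estimate Lemma \ref{lem:bound-Gromov-product-right-angle}. If the axes come within $50\delta$, the relevant dichotomy is the \emph{relative orientation of the fellow-travelling overlap}, detected by whether $x_B,p_B(x_A),p_B(y_A),y_B$ occur in that order on $L_B$; replacing $B$ by $B^{-1}$ flips this, and that is the choice realizing the maximum. Turning this into the bound $(B^{-1}x\mid Ax)_x,(A^{-1}x\mid Bx)_x\le 98\delta$ requires locating the basepoint $x$ (and an auxiliary point $y$ on $L_A$ with $d(x,y)\le K_\delta+24\delta$) inside the overlap, a chain of projection estimates (Lemmas \ref{lem:ineg q-loin}--\ref{lem:gromov-haut}), and approximation points $p_\pm,q_\pm$ compensating for the fact that $L_A$ and $L_B$ are only coarsely invariant (Lemma \ref{lem:approx-proj}). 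None of this is supplied by, or recoverable from, a boundary cyclic order, so as written the proposal does not close the step you yourself identify as the main obstacle.
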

	
	Let us fix $A$ and $B$ two hyperbolic isometries of $\mathcal{X}$.
	By the hyperbolicity assumption, the attracting and repelling points $A^+,A^-,B^+,B^-$ are well-defined. We define $\mathrm{Axis}(A)$, respectively $\mathrm{Axis}(B)$ to be the union of all geodesic in $\mathcal{X}$ with endpoints $A^-$ and $A^+$, respectively $B^-$ and $B^+$. By the visibility assumption, $\mathrm{Axis}(A)$ and $\mathrm{Axis}(B)$ are non-empty. We also fix once and for all this section $L_A$ and respectively $L_B$ two geodesics between $A^+$ and $A^-$, and respectively between $B^+$ and $B^-$.\\
	
	The following lemma is a direct consequence of the continuity and the properness of the map $(x,y) \in L_A \times L_B \mapsto d(x,y)$. The properness of the map follows from the condition $\{A^+,A^-\}\cap \{B^+,B^-\}=\emptyset$.
	
	\begin{Lemma}\label{lem:xAxB}
		Assume $\{A^+,A^-\}\cap \{B^+,B^-\}=\emptyset$. Then the distance between $L_A$ and $L_B$ is attained~:
		There exists $(x_A,x_B) \in L_A \times L_B$ such that $d(x_A,x_B)=d(L_A,L_B)$. 
	\end{Lemma}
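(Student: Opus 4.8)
The plan is to pass from the two geodesics to fixed parametrizations and to reduce the minimization to the parameter space $\R^2$, so as to avoid any appeal to properness of $\mathcal{X}$ itself (recall that $\mathcal{X}$ need not be proper, e.g.\ $\HH^\infty$). First I would fix geodesic parametrizations $L_A : \R \to \mathcal{X}$ and $L_B : \R \to \mathcal{X}$ with $L_A(s)\to A^+$, $L_B(t)\to B^+$ as $s,t\to +\infty$ (and $\to A^-$, $B^-$ as $s,t\to-\infty$), and consider the continuous function
\[
  f : \R^2 \to [0,+\infty), \qquad f(s,t) = d(L_A(s),L_B(t)),
\]
whose infimum is exactly $d(L_A,L_B)$. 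Since $\R^2$ is proper, it suffices to establish that $f$ is proper, i.e.\ that its sublevel sets $\{f \le R\}$ are bounded in $\R^2$: a minimizing sequence will then be bounded in $\R^2$, hence subconvergent there, and continuity of $f$ will force the limit to realize the infimum.

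The core of the argument is the properness of $f$, which encodes the divergence of geodesics with distinct endpoints. Fix $x_0 = L_A(0)$ and suppose, toward a contradiction, that some sequence $(s_n,t_n)$ satisfies $|s_n|+|t_n|\to\infty$ while $f(s_n,t_n)\le R$. Up to symmetry and extraction I may assume $s_n\to +\infty$, so $L_A(s_n)\to A^+$ in $\partial\mathcal{X}$. The bound $d(L_A(s_n),L_B(t_n))\le R$ and the definition of the Gromov product give
\[
  (L_A(s_n)\mid L_B(t_n))_{x_0} \ge d(x_0,L_A(s_n)) - R \xrightarrow[n\to\infty]{} +\infty,
\]
so $L_B(t_n)$ converges at infinity to the \emph{same} point $A^+$; in particular $d(x_0,L_B(t_n))\to\infty$, forcing $|t_n|\to\infty$. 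Passing to a further subsequence, $L_B(t_n)$ converges to $B^+$ or $B^-$, and uniqueness of the limit in $\partial\mathcal{X}$ yields $A^+\in\{B^+,B^-\}$ (the symmetric cases give $A^-\in\{B^+,B^-\}$), contradicting $\{A^+,A^-\}\cap\{B^+,B^-\}=\emptyset$. Hence $f$ is proper. To finish, I take a minimizing sequence $(s_n,t_n)$ with $f(s_n,t_n)\to d(L_A,L_B)$; by properness it is bounded, so after extraction $s_n\to s_\ast$ and $t_n\to t_\ast$, and continuity of $L_A$, $L_B$ and of $d$ gives $d(L_A(s_\ast),L_B(t_\ast))=d(L_A,L_B)$. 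Setting $x_A=L_A(s_\ast)$ and $x_B=L_B(t_\ast)$ proves the lemma.

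The only genuine subtlety, and the place where I expect the main obstacle, is precisely the non-properness of $\mathcal{X}$: boundedness of a minimizing sequence \emph{in $\mathcal{X}$} would not guarantee a convergent subsequence, so the whole point is to run the compactness argument in the parameter space $\R^2$, where bounded sequences do subconverge. Everything then hinges on the divergence estimate of the second paragraph, which I package through the single Gromov-product inequality together with two elementary facts about the boundary—that convergence at infinity is preserved under bounded perturbation, and that limits in $\partial\mathcal{X}$ are unique—so that the hypothesis $\{A^+,A^-\}\cap\{B^+,B^-\}=\emptyset$ can be invoked to derive the contradiction.
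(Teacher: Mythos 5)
Your proof is correct and follows essentially the same route as the paper, which simply asserts that the map $(x,y)\in L_A\times L_B\mapsto d(x,y)$ is continuous and proper (properness following from $\{A^+,A^-\}\cap\{B^+,B^-\}=\emptyset$) and hence attains its infimum. You have merely spelled out the properness argument in detail via the Gromov-product estimate, which is exactly the intended justification.
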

	
	Let us denote by $p_A : \mathcal{X} \to L_A$ a projection map onto $L_A$ and  $p_B : \mathcal{X} \to L_B$ a projection map onto $L_B$. By definition, these maps satisfy for all $x \in \mathcal{X}$, $d(x,L_A)=d(x,p_A(x))$ and $d(x,L_B)=d(x,p_B(x))$. Such maps always exist but are not necessarily unique. \\

	In the proof of Proposition \ref{prop:ineq-l(AB)}, we will distinguish the case where the geodesics $L_A$ and $L_B$ stay far from each other, and the case where they come close at some point. \\
	
	\subsection{When the axes are far from each other}~ \label{sec:axes-far} \\
	\indent Recall that $A$ and $B$ are two hyperbolic isometries which satisfy $\{A^+,A^-\} \cap \{B^+,B^-\}=\emptyset$. In this section, we assume moreover that $d(L_A,L_B) > 44\delta$. \\
	
	Let us introduce the points $(x_A,x_B)\in L_A \times L_B$ given by Lemma \ref{lem:xAxB} which satisfy $d(x_A,x_B)=d(L_A,L_B)$.
	Let us fix an integer $n \in \N$ and define the following sequence of points $(x_i)_{0 \leq i \leq 4n}$~in~$\mathcal{X}$~:
	\begin{equation}
		\left\{
		\begin{array}{rll}
		x_{4j}&=(AB)^jx_A, & \text{ for all } 0 \leq j \leq n, \\ 
		x_{4j+1}&=(AB)^jAx_A, &\text{ for all } 0 \leq j \leq n-1, \\
		x_{4j+2}&=(AB)^jAx_B, &\text{ for all } 0 \leq j \leq n-1, \\
		x_{4j+3}&=(AB)^{j+1}x_B, & \text{ for all } 0 \leq j \leq n-1.
		\end{array}
		 \right. 
	\end{equation}
	
	In this configuration, we are able to bound the Gromov-product between consecutive points of the sequence. 
	
	\begin{Lemma} 
		\label{lem:bound-Gromov-product-x_i}
		For all $0 < i < 4n$, $(x_{i-1}|x_{i+1})_{x_i} \leq 6\delta$.
	\end{Lemma}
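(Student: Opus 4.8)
The plan is to exploit the invariance of the Gromov product under isometries to reduce the statement for all $4n-1$ interior indices to a handful of model configurations, and then to read off each of these from the coarse right-angle estimate of Lemma \ref{lem:bound-Gromov-product-right-angle} together with the quasi-invariance of the axes (Corollary \ref{cor:axe-quasi-inv}).

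First I would record the key geometric input. Since $x_A \in L_A$ and $x_B \in L_B$ realize $d(L_A,L_B)$ by Lemma \ref{lem:xAxB}, the point $x_A$ is a projection of $x_B$ onto $L_A$ and, symmetrically, $x_B$ is a projection of $x_A$ onto $L_B$: indeed $d(x_B,L_A) \geq d(L_A,L_B) = d(x_A,x_B) \geq d(x_B,L_A)$, so equality holds throughout. Next, grouping the interior indices $0 < i < 4n$ according to their residue modulo $4$ and applying the appropriate power of $(AB)^{-1}$ (and, where needed, an extra factor $A^{-1}$), the invariance of the Gromov product collapses the four residue classes to the four model quantities $(x_B \mid Ax_A)_{x_A}$, $(x_B \mid A^{-1}x_A)_{x_A}$, $(x_A \mid Bx_B)_{x_B}$ and $(x_A \mid B^{-1}x_B)_{x_B}$. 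The symmetry exchanging the roles of $(A,x_A,L_A)$ and $(B,x_B,L_B)$ then reduces everything to the single claim: for $g \in \{A,A^{-1}\}$ one has $(x_B \mid gx_A)_{x_A} \leq 8\delta$.

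To prove this claim I would pick a projection $y$ of $gx_A$ onto $L_A$; since $x_A \in L_A$, Corollary \ref{cor:axe-quasi-inv} gives $d(gx_A,y) \leq 2\delta$. Because $x_A$ is a projection of $x_B$ onto $L_A$ and $y \in L_A$, Lemma \ref{lem:bound-Gromov-product-right-angle} yields $d(x_B,y) \geq d(x_B,x_A)+d(x_A,y)-12\delta$, while two applications of the triangle inequality give $d(x_A,y) \geq d(x_A,gx_A)-2\delta$ and $d(x_B,gx_A) \geq d(x_B,y)-2\delta$. Combining these produces $d(x_B,gx_A) \geq d(x_A,x_B)+d(x_A,gx_A)-16\delta$, which is exactly the assertion that $(x_B \mid gx_A)_{x_A} = \tfrac{1}{2}\bigl(d(x_A,x_B)+d(x_A,gx_A)-d(x_B,gx_A)\bigr) \leq 8\delta$.

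The computation itself is short; the main obstacle is organizational rather than geometric. One must set up the indexing of the sequence $(x_i)$ carefully so that the reductions modulo $4$ land on the correct model configurations, and one must keep track of the fact that $gx_A$ lies only within $2\delta$ of $L_A$ rather than exactly on it — it is precisely these $2\delta$ corrections from Corollary \ref{cor:axe-quasi-inv}, together with the $12\delta$ from the coarse right-angle estimate, that accumulate to the final bound $8\delta$. I would also note that the standing hypothesis $d(L_A,L_B) > 116\delta$ of this subsection is not needed for this lemma; it will only enter downstream, to guarantee that consecutive points $x_i$ are far enough apart to feed into the local-to-global Lemma \ref{lem:local-global}.
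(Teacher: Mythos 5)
Your proof is correct and follows essentially the same route as the paper: reduce to four model Gromov products by isometry-invariance, then combine the $2\delta$ quasi-invariance of the axis (Corollary \ref{cor:axe-quasi-inv}) with the coarse right-angle estimate (Lemma \ref{lem:bound-Gromov-product-right-angle}) applied at the projection point $x_A$ (resp.\ $x_B$). The only difference is cosmetic: you normalize all four products to be based at $x_A$ or $x_B$ (absorbing an extra $A^{-1}$ or $B^{-1}$), which lets you treat $g\in\{A,A^{-1}\}$ uniformly, whereas the paper keeps two of them based at $Ax_A$ and $Bx_B$ and invokes the translated projections onto $A(L_A)$ and $B(L_B)$.
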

	
	\begin{proof} Let us first compute these Gromov-products.
			\begin{itemize}
			\item Suppose that $i=4j$, with $j\in \N$, then \\ $(x_{i-1}|x_{i+1})_{x_i}=(x_{4j-1}|x_{4j+1})_{x_{4j}}=((AB)^{j}x_B|(AB)^jAx_A)_{(AB)^jx_A}=(x_B|Ax_A)_{x_A}$.
			\item Suppose that $i=4j+1$, with $j\in \N$, then \\ $(x_{i-1}|x_{i+1})_{x_i}=(x_{4j}|x_{4j+2})_{x_{4j+1}}=((AB)^jx_A|(AB)^jAx_B)_{(AB)^jAx_A}=(x_A|Ax_B)_{Ax_A}$.
			\item Suppose that $i=4j+2$, with $j\in \N$, then \\ $(x_{i-1}|x_{i+1})_{x_i}=(x_{4j+1}|x_{4j+3})_{x_{4j+2}}=((AB)^jAx_A|(AB)^{j+1}x_B)_{(AB)^jAx_B}=(x_A|Bx_B)_{x_B}$.
			\item Suppose that $i=4j+3$, with $j\in \N$, then \\ $(x_{i-1}|x_{i+1})_{x_i}=(x_{4j+2}|x_{4j+4})_{x_{4j+3}}=((AB)^jAx_B|(AB)^{j+1}x_A)_{(AB)^{j+1}x_B}=(x_B|Bx_A)_{Bx_B}$.
		\end{itemize}
		Hence it is sufficient to bound the four Gromov-products  $(x_B|Ax_A)_{x_A}$, $(x_A|Ax_B)_{Ax_A}$,$(x_A|Bx_B)_{x_B}$ and $(x_B|Bx_A)_{Bx_B}$. \\
		
		Let us first bound $(x_B|Ax_A)_{x_A}$. By definition, $x_A \in L_A$, so in particular Corollary \ref{cor:axe-quasi-inv} gives the existence of a point $y_A \in L_A$ such that 
		\begin{equation}\label{eq:Morse-constant}
			d(y_A,Ax_A)\leq 2\delta.
		\end{equation}
		Recall that $d(x_A,x_B)=d(L_A,L_B)$ so $x_A$ is a projection of $x_B$ on $L_A$, and $y_A \in L_A $, so Lemma \ref{lem:bound-Gromov-product-right-angle} immediately gives a bound on the Gromov product $(x_B|y_A)_{x_A}$ :
		\begin{equation}\label{eq:(x_B|y_A)}
			(x_B|y_A)_{x_A}=\frac{1}{2}(d(x_A,x_B)+d(x_A,y_A)-d(x_B,y_A))\leq \frac{1}{2}8\delta=4\delta.
		\end{equation}
		Using the triangle inequality and the two bounds \eqref{eq:Morse-constant} and \eqref{eq:(x_B|y_A)} we now deduce the bound on the Gromov product $(x_B | Ax_A)_{x_A}$ :
		\begin{align*}
			(x_B | Ax_A)_{x_A} & =\frac{1}{2}(d(x_A,x_B)+d(x_A,Ax_A)-d(x_B,Ax_A)) \\
			& \leq \frac{1}{2}(d(x_A,x_B)+d(x_A,y_A)+2\delta-d(x_B,y_A)+2\delta) \\
			& \leq (x_B|y_A)_{x_A}+2\delta \\ 
			& \leq 6\delta.
		\end{align*}
		
		The proof of the bound of the Gromov product $(x_A|Bx_B)_{x_B}$ is the same by exchanging the rôle of $A$ and $B$. A similar proof also works to bound $(x_A|Ax_B)_{Ax_A}$ using that $Ax_A$ is a projection of $Ax_B$ on $A(L_A)$ and to bound $(x_B|Bx_A)_{Bx_B}$ using that $Bx_B$ is a projection of $Bx_A$ on $B(L_B)$. 
	\end{proof}
	
	\begin{Lemma} \label{lem:ineg crucial cas loin}
		 Assume that $d(L_A,L_B) > 44\delta$, $l_S(A) > 44\delta$ and $l_S(B) > 44\delta$. Then :
		\begin{equation}
			l_S(AB) \geq l_S(A)+l_S(B)-40\delta.
		\end{equation}
	\end{Lemma}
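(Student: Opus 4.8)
The plan is to apply the local-to-global Lemma~\ref{lem:local-global} to the sequence $(x_i)_{0 \leq i \leq 4n}$, which the previous lemma has set up to behave like a local quasi-geodesic. Lemma~\ref{lem:bound-Gromov-product-x_i} gives the Gromov-product control $(x_{i-1}|x_{i+1})_{x_i} \leq 8\delta$, so I would invoke Lemma~\ref{lem:local-global} with its parameter equal to $8\delta$ (which, to avoid confusion with the isometry $A$, plays the role of the constant ``$A$'' in that statement). This fixes the threshold $L = 3 \cdot 8\delta + 26\delta = 50\delta$ and the constant $D = 8\delta + 10\delta = 18\delta$.

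First I would check that every consecutive distance strictly exceeds $L = 50\delta$. A direct computation, using that $A$, $B$ and $AB$ are isometries, shows that the distances $d(x_i, x_{i+1})$ cycle through the four values $d(x_A, Ax_A)$, $d(x_A, x_B) = d(L_A, L_B)$, $d(x_B, Bx_B)$, and $d(L_A, L_B)$ again. Since $d(A^n y, y) \geq n\, l_S(A)$ for every $y \in \mathcal{X}$ and every $n$, taking $n = 1$ with $y = x_A$ gives $d(x_A, Ax_A) \geq l_S(A) > 50\delta$, and likewise $d(x_B, Bx_B) \geq l_S(B) > 50\delta$; the two remaining distances equal $d(L_A, L_B) > 50\delta$ by assumption. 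Hence the hypotheses of Lemma~\ref{lem:local-global} hold.

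Next I would telescope. Grouping the $4n$ steps into $n$ blocks of four consecutive steps, each block contributes exactly $d(x_A, Ax_A) + d(x_B, Bx_B) + 2\, d(L_A, L_B)$ to the sum of consecutive distances, so the total equals $n\bigl(d(x_A, Ax_A) + d(x_B, Bx_B) + 2\, d(L_A, L_B)\bigr) \geq n\bigl(l_S(A) + l_S(B) + 2\, d(L_A, L_B)\bigr)$. As $x_0 = x_A$ and $x_{4n} = (AB)^n x_A$, Lemma~\ref{lem:local-global} then yields
\begin{equation*}
	d\bigl(x_A, (AB)^n x_A\bigr) \geq n\bigl(l_S(A) + l_S(B) + 2\, d(L_A, L_B)\bigr) - 2D(4n - 1).
\end{equation*}

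Finally I would divide by $n$ and let $n \to +\infty$. The left-hand side tends to $l_S(AB)$ by definition of the stable norm, while $\frac{2D(4n-1)}{n} = \frac{36\delta(4n-1)}{n} \to 144\delta$, giving $l_S(AB) \geq l_S(A) + l_S(B) + 2\, d(L_A, L_B) - 144\delta$; dropping the nonnegative term $2\, d(L_A, L_B)$ yields the claim. The only delicate point is the careful bookkeeping of the four per-block distances and of the asymptotic correction term, together with the verification that all consecutive distances clear the threshold $50\delta$; once Lemma~\ref{lem:bound-Gromov-product-x_i} is available, everything reduces to a routine application of the local-to-global principle.
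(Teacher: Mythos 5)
Your proposal is correct and follows essentially the same route as the paper's proof: the same application of the local-to-global Lemma~\ref{lem:local-global} with parameter $8\delta$ (hence $L=50\delta$, $D=18\delta$), the same verification of the four cyclic consecutive distances, and the same telescoping, division by $n$, and passage to the limit, discarding the nonnegative term $2\,d(L_A,L_B)$ at the end.
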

	
	\begin{proof}
		We now use the local-global Lemma \ref{lem:local-global} with $A=6\delta$. Note that $L=3A+26\delta=44\delta$, and $D=A+10\delta=16\delta$. 
		
		\begin{itemize}
			\item Suppose that $i=4j$, with $j \in \N$, then \\  $d(x_i,x_{i+1})=d(x_{4j},x_{4j+1})=d((AB)^jx_A,(AB)^jAx_A)=d(x_A,Ax_A) \geq l_S(A)$.
			\item  Suppose that $i=4j+1$, with $j \in \N$, then \\  $d(x_i,x_{i+1})=d(x_{4j+1},x_{4j+2})=d((AB)^jAx_A,(AB)^jAx_B)=d(Ax_A,Ax_B) =d(L_A,L_B)$.
			\item Suppose that $i=4j+2$, with $j \in \N$, then \\  $d(x_i,x_{i+1})=d(x_{4j+2},x_{4j+3})=d((AB)^jAx_B,(AB)^{j+1}x_B)=d(x_B,Bx_B) \geq l_S(B)$.
			\item Suppose that $i=4j+3$, with $j \in \N$, then \\  $d(x_i,x_{i+1})=d(x_{4j+3},x_{4(j+1)})=d((AB)^{j+1}x_B,(AB)^{j+1}x_A)=d(x_B,x_A) =d( L_A,L_B)$.
		\end{itemize}
		
		Then, using our hypotheses on $d(L_A,L_B)$, $l_S(A)$ and $l_S(B)$ and Lemma \ref{lem:bound-Gromov-product-x_i} we deduce that our sequence of points $(x_i)_{0\leq i\leq n}$ satisfies the hypothesis of the local-global Lemma \ref{lem:local-global}, and thus we obtain~:
		\begin{align*}
			d(x_0,x_{4n}) & \geq d(x_0,x_1)+\dots+d(x_{4n-1},x_{4n})-2D(4n-1) \\ 
			d(x_A,(AB)^nx_A) & \geq nd(x_A,Ax_A)+nd(x_A,x_B)+nd(x_B,Bx_B)+nd(x_B,x_A)-32\delta(4n-1) \\
			d(x_A,(AB)^nx_A)& \geq n(d(x_A,Ax_A)+d(x_B,Bx_B)+2\times 44\delta )-32\delta(4n-1).
		\end{align*}
		To obtain the inequality on $l_S(AB)$, divide the previous one by $n$ then take the limit, and use  $\frac{1}{n}d(x_A,(AB)^nx_A)\underset{n \to \infty}{\longrightarrow} l_S(AB)$, $d(x_A,Ax_A)\geq l_S(A)$ and $d(x_B,Bx_B)\geq l_S(B)$. 
	\end{proof}
	
	\begin{Remark}
	In Lemma \ref{lem:ineg crucial cas loin}, we did not assume that $l_S(AB) \geq l_S(AB^{-1})$. In particular, in this case ($d(L_A,L_B)>44\delta$), the inequality of Proposition \ref{prop:ineq-l(AB)} is true both for $l_S(AB)$ and $l_S(AB^{-1})$, not only the maximum. 
	
	In fact, adapting the proof of Lemma \ref{lem:ineg crucial cas loin}, we could also prove that for all cyclically reduced word $W$ on the alphabet $\{A,A^{-1},B,B^{-1}\}$, we have $l_S(W) \geq  (\min(l_S(A),l_S(B))-20\delta)|W|$, where $|W|$ is the word length of $W$. This in particular would prove that a representation of $\F_2$ in $\mathrm{Isom}(\mathcal{X})$ is automatically convex-cocompact when $l_S(A)>44\delta$, $l_S(B)>44\delta$ and $d(L_A,L_B)>44\delta$.\\
	\end{Remark}
	
	\subsection{When the axes come close at some point}~\label{sec-axes-close}\\ 
	\indent Recall that $A$ and $B$ are two hyperbolic isometries which satisfy $\{A^+,A^-\} \cap \{B^+,B^-\}=\emptyset$. In this section, we assume moreover that $d(L_A,L_B) \leq 44\delta$ and we denote  $D_\delta=44\delta$. \\ 
	
	Let $\varphi_A$ be an isometry between $\R$ and $L_A$ such that $\varphi_A(t) \underset{t\to \pm\infty}\longrightarrow A^\pm$ and $\varphi_B$ an isometry between $\R$ and $L_B$ such that $\varphi_B(t) \underset{t\to \pm\infty}\longrightarrow B^\pm$. 
	We define the four following elements of $\mathcal{X}$ :
	
	\begin{Definition}\label{def:x_A,y_A,x_B,y_B}
		\begin{align*}
			x_A & = \varphi_A( \inf \{ t \in \R \mid d(\varphi_A(t),L_B) \leq D_\delta \}    ) \\ 
			y_A & = \varphi_A( \sup \{ t \in \R \mid d(\varphi_A(t),L_B) \leq D_\delta \}    ) \\ 
			x_B & = \varphi_B( \inf \{ t \in \R \mid d(\varphi_B(t),L_A) \leq D_\delta \}    ) \\
			y_B & = \varphi_B( \sup \{ t \in \R \mid d(\varphi_B(t),L_A) \leq D_\delta \}    )
		\end{align*}
	\end{Definition}
	Note that the isometries $\varphi_A$ and $\varphi_B$ induce an order on the geodesic $L_A$ and $L_B$ coming from the order on $\R$. We can extend this order to the endpoints of $L_A$ and $L_B$. We make the following observations on the order of the elements $x_A$, $y_A$, $x_B$ and $y_B$ defined previously. 
	\begin{Lemma}\label{lem:x_A<x_B}
		We have 
		\begin{align*}
			A^- < x_A \leq y_A < A^+, \\
			B^- < x_B \leq y_B < B^+.
		\end{align*}
		Moreover, the two infimum and the two supremum in Definition \ref{def:x_A,y_A,x_B,y_B} are attained, and then
		\begin{equation}\label{eq:d(xA,LA)}
		d(x_A,L_B)=d(y_A,L_B)=d(x_B, L_A)=d(y_B,L_A)= D_\delta.
		\end{equation}
	\end{Lemma}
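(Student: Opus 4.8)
The plan is to study the sublevel set $S_A := \{t \in \R \mid d(\varphi_A(t), L_B) \leq K_\delta\}$ together with its counterpart $S_B := \{t \in \R \mid d(\varphi_B(t), L_A) \leq K_\delta\}$, and to show that each is a nonempty compact subset of $\R$ whose extremities realize the distance $K_\delta$. All four assertions will then read off from the structure of $S_A$ and $S_B$. First I would check that $S_A$ is nonempty: by hypothesis $d(L_A,L_B) \leq K_\delta$, and Lemma \ref{lem:xAxB} furnishes $(x,y) \in L_A \times L_B$ with $d(x,y) = d(L_A,L_B) \leq K_\delta$, so the parameter $t$ with $\varphi_A(t)=x$ lies in $S_A$.

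The key step is to prove that $S_A$ is bounded, and this is where the hypothesis $\{A^+,A^-\}\cap\{B^+,B^-\}=\emptyset$ is used. Suppose $S_A$ were unbounded above, giving $t_n \to +\infty$ with $d(\varphi_A(t_n),L_B)\leq K_\delta$. Put $z_n = p_B(\varphi_A(t_n)) \in L_B$, so that $d(\varphi_A(t_n),z_n)\leq K_\delta$. Since $\varphi_A(t_n)\to A^+$ in $\partial\mathcal{X}$, the uniform bound on $d(\varphi_A(t_n),z_n)$ forces $(z_n)$ to be equivalent at infinity to $(\varphi_A(t_n))$, hence $z_n \to A^+$; indeed a Gromov-product computation from a basepoint $o$ gives $(\varphi_A(t_n)\vert z_n)_o \geq d(o,\varphi_A(t_n))-K_\delta \to +\infty$. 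On the other hand $(z_n)$ lies on $L_B$ and is unbounded (its distance to $o$ tends to infinity), so up to a subsequence it converges to $B^+$ or to $B^-$. This yields $A^+\in\{B^+,B^-\}$, a contradiction. The same argument applied as $t\to-\infty$ rules out unboundedness below, so $S_A$ is bounded.

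Next, since $t\mapsto d(\varphi_A(t),L_B)$ is continuous (the composition of the isometry $\varphi_A$ with the $1$-Lipschitz map $x\mapsto d(x,L_B)$), the set $S_A$ is closed; being closed and bounded it is compact, so $\inf S_A$ and $\sup S_A$ are attained. This proves the attainment claim and identifies $x_A=\varphi_A(\min S_A)$ and $y_A=\varphi_A(\max S_A)$. For the equality $d(x_A,L_B)=K_\delta$ of \eqref{eq:d(xA,LA)}, I would argue by minimality: one has $d(x_A,L_B)\leq K_\delta$ by definition, and if the inequality were strict, continuity would give $d(\varphi_A(t),L_B)<K_\delta$ on a neighborhood of $\min S_A$, producing parameters of $S_A$ strictly below $\min S_A$, a contradiction; the cases of $y_A$, $x_B$, $y_B$ are symmetric.

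Finally, the ordering $A^-<x_A\leq y_A<A^+$ follows at once: $\min S_A\leq\max S_A$ gives $x_A\leq y_A$, while the finiteness of $\min S_A$ and $\max S_A$ places $x_A$ strictly after $A^-=\lim_{t\to-\infty}\varphi_A(t)$ and $y_A$ strictly before $A^+=\lim_{t\to+\infty}\varphi_A(t)$ in the order on $L_A\cup\{A^-,A^+\}$; the claim for $x_B,y_B$ is obtained by exchanging the roles of $A$ and $B$. The main obstacle is the boundedness step, which is the only place the disjointness of the fixed-point pairs enters and which requires correctly invoking the equivalence of sequences that stay at bounded distance from one another.
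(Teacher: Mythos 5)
Your proof is correct and follows essentially the same route as the paper: nonemptiness of the sublevel set from $d(L_A,L_B)\leq K_\delta$, boundedness via a sequence on $L_A$ staying within $K_\delta$ of $L_B$ forcing a common endpoint (contradicting $\{A^+,A^-\}\cap\{B^+,B^-\}=\emptyset$), and attainment plus the equality $d(x_A,L_B)=K_\delta$ from continuity of the distance. You merely make explicit (via the Gromov-product computation and the compactness of $S_A$) details that the paper's proof leaves as sketches.
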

	\begin{proof}
		First notice that the assumption $d(L_A,L_B)\leq D_\delta$ implies that  $x_A \neq A^+$, $y_A \neq A^-$, $x_B \neq B^+$ and $y_B \neq B^-$. 	The inequalities $x_A \leq y_A$ and $x_B \leq y_B$ follow immediately from the definitions. \\ 
		The fact that $x_A \neq A^-$, $y_A \neq A^+$, $x_B \neq B^-$ and $y_B \neq B^+$ is a consequence of the hypothesis $\{A^-,A^+\} \cap \{B^-,B^+\} = \emptyset$. Indeed, if $x_A = A^-$, then we could find a sequence $(x_n)_{n \in \N}$ of elements of $L_A$ which would stay at bounded distance from $L_B$ and such that $x_n \to A^-$. We could then conclude that $A^- \in \{B^-,B^+\}$ which is a contradiction. The proof of the other inequalities are similar. Finally, the fact that the infimum and the supremum are attained and that equation \eqref{eq:d(xA,LA)} holds is just a consequence of the continuity of the distance.
	\end{proof}
	Denote $[x_A,y_A]$ a geodesic segment between $x_A$ and $y_A$ contained in $L_A$ and $[x_B,y_B]$ a geodesic segment between $x_B$ and $y_B$ contained in $L_B$. Recall that we introduced two projection maps : $p_A : \mathcal{X} \to L_A$ and $p_B : \mathcal{X} \to L_B$. An easy consequence of Lemma \ref{lem:x_A<x_B} on the projections of $x_A, y_A, x_B$ and $y_B$  is the following :
	
	\begin{Corollary}\label{cor:xA<p(xB),p(yB)<yA}
		The projections $p_B(x_A)$ and $p_B(y_A)$ belong to $[x_B,y_B]$ and the projections $p_A(x_B)$ and $p_A(y_B)$ belong to $[x_A,y_A]$. 
	\end{Corollary}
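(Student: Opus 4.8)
The plan is to exploit the symmetry between the two geodesics together with the elementary fact that projecting onto $L_B$ cannot push a point far from $L_A$. Fix $z \in \{x_A, y_A\}$. By Lemma \ref{lem:x_A<x_B} (equation \eqref{eq:d(xA,LA)}), the point $z$ realizes the distance $d(z,L_B) = K_\delta$, and since $p_B(z)$ is by definition a projection of $z$ onto $L_B$, we have $d(z, p_B(z)) = d(z,L_B) = K_\delta$. This is the only quantitative input needed.

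Next I would observe that $z$ lies on $L_A$, so $d(p_B(z), L_A) \leq d(p_B(z), z) = K_\delta$. Writing $p_B(z) = \varphi_B(s)$ for the appropriate $s \in \R$, this inequality says precisely that $s$ belongs to the set $\{ t \in \R \mid d(\varphi_B(t), L_A) \leq K_\delta \}$. Now recall that $x_B$ and $y_B$ are, by Definition \ref{def:x_A,y_A,x_B,y_B}, the images under $\varphi_B$ of the infimum and the supremum of exactly this set, and that Lemma \ref{lem:x_A<x_B} guarantees both extrema are attained. Consequently $s$ lies between the parameters of $x_B$ and $y_B$, which is exactly the assertion $p_B(z) \in [x_B, y_B]$. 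Applying this to $z = x_A$ and then $z = y_A$ yields the first claim, and exchanging the roles of $A$ and $B$ (projecting $x_B, y_B$ onto $L_A$ instead) gives $p_A(x_B), p_A(y_B) \in [x_A, y_A]$ by the identical argument.

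The argument is essentially immediate, so I do not anticipate a genuine obstacle; the only point requiring a little care is that the set $\{ t \mid d(\varphi_B(t), L_A) \leq K_\delta \}$ need not be an interval, so one cannot simply identify it with $[x_B, y_B]$. I would stress, however, that this is not needed: the proof uses only that the extreme parameters of this set bound every one of its elements and are attained, both of which are supplied by Lemma \ref{lem:x_A<x_B}. Thus the monotonicity coming from the $\varphi_B$-order on $L_B$, rather than connectedness of the close region, is what drives the conclusion.
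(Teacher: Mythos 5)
Your proof is correct and follows essentially the same route as the paper's: both use that $d(z,p_B(z))=d(z,L_B)\leq K_\delta$ for $z\in\{x_A,y_A\}$, hence $d(p_B(z),L_A)\leq K_\delta$ since $z\in L_A$, and then conclude from the definition of $x_B,y_B$ as the (attained) extrema of the set $\{t\mid d(\varphi_B(t),L_A)\leq K_\delta\}$ that the parameter of $p_B(z)$ lies between them. Your remark that the set need not be an interval but that only the extremal bounds matter is a correct and worthwhile clarification of the same argument.
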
 
	
	\begin{proof} By Lemma \ref{lem:x_A<x_B}, $d(p_B(x_A),x_A) \leq D_\delta$ and $d(p_B(y_A),x_A) \leq D_\delta$, and so it immediately follows from the definition of $x_B$ and $y_B$ (Definition \ref{def:x_A,y_A,x_B,y_B}) that $x_B \leq p_B(x_A) \leq y_B$ and $x_B \leq p_B(y_A) \leq y_B$, since $p_B(x_A), p_B(y_A) \in L_B$ and $x_A \in L_A$. We obtain $p_A(x_B),p_A(y_B) \in [x_B,y_B]$ similarily by exchanging the roles of $A$ and $B$. 
	\end{proof}
	
	Let $[y_A,A^+)$ and $(A^-,y_A]$ be respectively the two half-geodesics contained in $L_A$ from $y_A $ to $A^+$ and from $y_A$ to $A^-$. Let $[y_A,p_B(y_A)]$ be a geodesic segment between $y_A$ and $p_B(y_A)$ and $[p_B(x_A),p_B(y_A)]$ the geodesic segment between $p_B(x_A)$ and $p_B(y_A)$ contained in $L_B$. For all $x \in L_B$, we also denote by $(B^-,x]$ and $[x,B^+)$ respectively the two half-geodesics contained in $L_B$ from $x$ to $B^-$ and from $x$ to $B^+$.\\
			
	We now want to prove that the Gromov-product between a point $q$ on $[y_A,A^+)$ and a point $p$ on $(B^-,x]$, seen from a basepoint on $[p_B(x_A),p_B(y_A)]$, is bounded by a constant depending only on $\delta$. 
		\begin{Lemma}\label{lem:ineg q-loin}
			Assume that $x_B,p_B(x_A),p_B(y_A)$ and $y_B$ are aligned on this order on $L_B$. Choose $x \in [p_B(x_A),p_B(y_A)]$. 
			If $p \in (B^-,x]$ and $q \in [y_A,A^+)$, or if $p \in [x,B^+)$ and $q \in (A^-,x_A]$, then we have:
			\begin{equation}
				d(p,q) \geq d(p,x)+d(x,q)-12\delta.
			\end{equation}
		\end{Lemma}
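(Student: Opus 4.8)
The plan is to reduce the statement to the single estimate $(p\vert q)_x \le 8\delta$ for the Gromov product based at $x$, since by \eqref{eq:def-gromov-product} this is exactly the inequality $d(p,q)\ge d(p,x)+d(x,q)-16\delta$. Geometrically $(p\vert q)_x$ is small precisely when $x$ lies coarsely on a geodesic joining $p$ to $q$, so the whole content of the lemma is that such a geodesic must pass near $x$; in the background picture $L_A$ and $L_B$ fellow-travel along a common middle segment and then split into four diverging prongs, and $x$, $p$, $q$ sit respectively on the middle, on the $B^-$-prong and on the $A^+$-prong.

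First I would fix a projection $z:=p_B(q)$ of $q$ onto $L_B$. The three points $p,x,z$ all lie on the geodesic $L_B$, and their relative order controls everything. Applying Lemma \ref{lem:bound-Gromov-product-right-angle} with external point $q$, geodesic $L_B$ and the point $p\in L_B$ gives the coarse right-angle estimate $d(p,q)\ge d(q,z)+d(z,p)-12\delta$; combining this with the triangle inequality $d(q,x)\le d(q,z)+d(z,x)$ and the collinearity of $p,x,z$ on $L_B$ reduces the whole lemma to a statement about the position of $z$. Indeed, if $x$ lies in the segment $[p,z]\subset L_B$ then $d(z,p)=d(p,x)+d(x,z)$ and the estimate collapses to $d(p,q)\ge d(p,x)+d(x,q)-12\delta$; allowing $z$ to sit at most $2\delta$ on the $B^-$-side of $x$ only worsens the constant to $16\delta$. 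So it suffices to show that $z$ does not lie more than $2\delta$ on the $B^-$-side of $x$.

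It therefore remains to locate $z=p_B(q)$. Since $x\le p_B(y_A)$ by the choice $x\in[p_B(x_A),p_B(y_A)]$, it is enough to prove that $z$ lies, up to $2\delta$, on the $B^+$-side of $p_B(y_A)$. This is exactly where the alignment hypothesis enters: the assumption that $x_B,p_B(x_A),p_B(y_A),y_B$ occur in this order on $L_B$ fixes the orientation so that travelling from $x_A$ to $y_A$ along $L_A$ (that is, towards $A^+$) corresponds to travelling towards $B^+$ on $L_B$, whence the prong $[y_A,A^+)$ carrying $q$ should project towards $B^+$. To make this rigorous I would argue by contradiction: if $z$ were well inside the $B^-$-side of $p_B(y_A)$, then the quadrilateral $[y_A,q]\cup[q,z]\cup[z,p_B(y_A)]\cup[p_B(y_A),y_A]$, which is $2\delta$-thin by Lemma \ref{lem:quadri-thin}, would force a point of the side $[y_A,q]\subset L_A$ lying strictly beyond $y_A$ to come within $K_\delta$ of $L_B$, contradicting the maximality built into the definition of $y_A$ in Definition \ref{def:x_A,y_A,x_B,y_B} (beyond $y_A$ the geodesic $L_A$ stays at distance $>K_\delta$ from $L_B$).

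The main obstacle is precisely this last step, the control of the side on which $z$ sits, and it is delicate when $q$ is far out along $[y_A,A^+)$: there the projection segment $[q,z]$ is long, so the thinness argument must be combined with the fact that the prong genuinely escapes $L_B$, equivalently that the projection of $[y_A,A^+)$ onto $L_B$ stays boundedly close to $p_B(y_A)$, which can be extracted from the coarse Lipschitz estimate of Lemma \ref{lem:proj-convex}. Once the side is secured with a $2\delta$ error, the remaining computation is routine triangle-inequality bookkeeping around the coarse right angle of Lemma \ref{lem:bound-Gromov-product-right-angle}, and a careful accounting of the constants yields the stated bound $16\delta$.
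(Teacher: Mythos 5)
Your reduction to locating $z:=p_B(q)$ is a genuinely different route from the paper's, and it is structurally sound, but it does not prove the stated constant $16\delta$: the step ``allowing $z$ to sit at most $2\delta$ on the $B^-$-side of $x$'' is not justified. The coarse Lipschitz estimate of Lemma \ref{lem:proj-convex} cannot control $d(p_B(q),p_B(y_A))$ when $q$ is far out on $[y_A,A^+)$, since the corrective term $d(q,p_B(q))$ in \eqref{eq:proj-bound-3distance} is only known to exceed $K_\delta$ while $d(q,y_A)$ is unbounded. The correct tool is the one you sketch in your contradiction argument: in the quadrilateral $[y_A,q]\cup[q,z]\cup[z,p_B(y_A)]\cup[p_B(y_A),y_A]$, every point $m$ of $[z,p_B(y_A)]\subset L_B$ is $2\delta$-close to one of the other three sides (Lemma \ref{lem:quadri-thin}); it cannot be close to $[y_A,q]$, which stays at distance $\geq K_\delta$ from $L_B$, and closeness to a point $w$ of $[q,z]$ or of $[p_B(y_A),y_A]$ forces, via Lemma \ref{lem:projection-new}, $d(m,z)\leq 4\delta$ or $d(m,p_B(y_A))\leq 4\delta$. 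A connectedness argument then yields $d(z,p_B(y_A))\leq 8\delta$, and this $8\delta$ is essentially optimal because projections in a $\delta$-hyperbolic space are only coarsely well defined. Since $x$ may equal $p_B(y_A)$, the point $z$ can lie up to $8\delta$ on the $B^-$-side of $x$, and your bookkeeping ($12\delta$ from Lemma \ref{lem:bound-Gromov-product-right-angle} plus twice the drift) then gives $d(p,q)\geq d(p,x)+d(x,q)-28\delta$, not $-16\delta$. As the constant of this lemma propagates into $C_\delta=432\delta$ and $K_\delta=433\delta$, this is a real quantitative gap, not a cosmetic one.

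The paper avoids projecting $q$ altogether. It works with the quadrilateral $[p,p_B(y_A)]\cup[p_B(y_A),y_A]\cup[y_A,q]\cup[q,p]$ and shows, via Lemma \ref{lem:quadrilateral-shape} and the definition of $y_A$, that the geodesic $[p,q]$ must pass within $2\delta$ of a point $s$ on the ``rung'' $[p_B(y_A),y_A]$; since $p_B(y_A)$ is an exactly known projection of $s$ (Lemma \ref{lem:projection-new}), the coarse right angle of Lemma \ref{lem:bound-Gromov-product-right-angle} is applied at $p_B(y_A)$ rather than at the only-coarsely-located point $p_B(q)$. The total loss is then $4\delta$ (from $d(r,s)\leq 2\delta$) plus $12\delta$ (from the right angle), giving exactly $16\delta$. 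If you want to keep your decomposition, you must either accept the weaker constant $28\delta$ and re-track it through Sections \ref{sec:l(AB)}--\ref{sec:Farey tree}, or replace the projection of $q$ by the paper's rung argument.
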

		
		\begin{proof} Let us assume that $p \in (B^-,x]$ and $q \in [y_A,A^+)$. The case $p \in [x,B^+)$ and $q \in (A^-,x_A]$ is similar changing $(A,B)$ to $(A^{-1},B^{-1})$. 
			
			Let $[p,p_B(y_A)]$ be the geodesic segment from $p$ to $p_B(y_A)$ contained in $L_B$ and $[y_A,q]$ the geodesic segment from $y_A$ to $q$ contained in $L_A$.	Consider the quadrilateral made up by the four geodesic segments $[p,p_B(y_A)]$, $[p_B(y_A),y_A]$, $[y_A,q]$ and $[q,p]$. Assume by contradiction that for all $t \in [p,q]$, $d(t,[p_B(y_A),y_A])>2\delta$. Then by Lemma \ref{lem:quadrilateral-shape}, there exist $t \in [p,q]$, $t_1 \in [p,p_B(y_A)]$ and $t_2 \in [y_A,q]$ such that $d(t,t_1) \leq 2\delta$ and $d(t,t_2) \leq 2\delta$. So $d(t_1,t_2)\leq 4\delta<D_\delta$. But recall that $p \in L_B$ and $q \in [y_A,A^+)$ so this contradicts the definition of $y_A$. So we deduce the existence of $r \in [p,q]$ such that $d(r,[p_B(y_A),y_A])\leq 2\delta$. Let $s \in [p_B(y_A),y_A]$ such that 
			\begin{equation} \label{eq:d(r,s)}
			d(r,s)\leq 2\delta.
			\end{equation}
			In addition, observe that since $s \in [p_B(y_A),y_A]$, Lemma \ref{lem:projection-new} ensures that $p_B(y_A)$ is a projection of $s$ on $L_B$. Then, using Lemma \ref{lem:bound-Gromov-product-right-angle}, we have :
			\begin{equation}\label{eq:right-angle}
				d(p,s) \geq d(p,p_B(y_A))+d(p_B(y_A),s)-8\delta.
			\end{equation}
			We can now derive the following inequalities :
			\begin{align*}
				d(p,q) & =d(p,r)+d(r,q) \text{ since $r \in [p,q]$ }\\
				& \geq d(p,s)+d(s,q) - 4\delta \text{ by the triangle inequality and \eqref{eq:d(r,s)} } \\ 
				& \geq d(p,p_B(y_A))+d(p_B(y_A),s)+d(s,q)-12\delta \text{ by \eqref{eq:right-angle}} \\
				& \geq d(p,p_B(y_A))+d(p_B(y_A),q)-12\delta \text{ by the triangle inequality} \\
				& \geq d(p,x)+d(x,p_B(y_A))+d(p_B(y_A),q)-12\delta \text{ because $x \in [p,p_B(y_A)]$} \\
				& \geq d(p,x)+d(x,q)-12\delta \text{ by the triangle inequality.}
			\end{align*}
		\end{proof}
		
	Our goal is now to obtain an analogous lemma to Lemma \ref{lem:ineg q-loin}, but in the case where $q$ is no longer beyond $y_A$ on the axis. For this purpose, we will now distinguish between two cases. Consider the quadrilateral $[x_A,y_A] \cup [y_A,p_B(y_A)]\cup [p_B(y_A),p_B(x_A)] \cup [p_B(x_A),x_A]$. Lemma \ref{lem:quadri-thin} ensures that we have the following alternative : 
	\begin{enumerate}[label=(Alt\arabic*)]
	\item \label{case:quadri-close}There exists $x \in [p_B(x_A),p_B(y_A)]$ and $y \in [x_A,y_A]$ such that $d(x,y) \leq 2\delta$.
	\item \label{case:quadri-loin} For all $x \in  [p_B(x_A),p_B(y_A)]$, $d(x,[x_A,y_A])>2\delta$. 
	\end{enumerate}
	
	Lemma \ref{lem:gromov-long} deals with case \ref{case:quadri-close}. 
	
	For $x \in L_B$, we denote respectively by $(B^-,x]$ and $[x,B^+)$ the two half-geodesics contained in $L_B$ from $x$ to $B^-$ and from $x$ to $B^+$. For $y \in [x_A,y_A]$, we denote respectively by $[x_A,y]$ and $[y,y_A]$ the two geodesic segments contained in $[x_A,y_A]$ with endpoints $x_A$ and $y$,  and $y$ and $y_A$. 
	
	\begin{Lemma}\label{lem:gromov-long} Assume that $x_B,p_B(x_A),p_B(y_A)$ and $y_B$ are aligned on this order on $L_B$. Assume moreover that condition \ref{case:quadri-close} is true and let $x \in [p_B(x_A),p_B(y_A)]$ and $y \in [x_A,y_A]$ be such that $d(x,y) \leq 2\delta$. 
	If $q \in [y,y_A]$ and $p \in (B^-,x]$, or if $q \in [x_A,y]$ and $p \in [x,B^+)$, then we have:
		\begin{equation}
			d(p,q) \geq d(p,x)+d(x,q)-12\delta. 
		\end{equation}
	\end{Lemma}
	
	\begin{proof} Let us assume that $q \in [y,y_A]$ and $p \in (B^-,x]$. The case $q \in [x_A,y]$ and $p \in [x,B^+)$ is similar changing $(A,B)$ to $(A^{-1},B^{-1})$. 
		
		Let $[x,y]$ be a geodesic segment between $x$ and $y$, $[x,p_B(y_A)]$ be the geodesic segment between $x$ and $p_B(y_A)$ contained in $L_B$. By considering the quadrilateral $[x,y]\cup [y,y_A] \cup [y_A,p_B(y_A)]\cup [x,p_B(y_A)]$ and applying Lemma \ref{lem:quadri-thin}, we deduce the existence of $q' \in [x,y] \cup [x,p_B(y_A)] \cup [y_A,p_B(y_A)] $ such that $d(q,q') \leq 2\delta$. Then :
		\begin{itemize}
			\item If $q' \in [x,y]$, then $d(x,q') \leq 2\delta$ and so $d(q,x) \leq 4\delta$.  Therefore :
			\begin{align*}
				d(p,q) & \geq d(p,x) - 4\delta \\
				& \geq d(p,x) +d(x,q)-d(x,q)-4\delta \\
				& \geq d(p,x) + d(x,q)-8 \delta. 
			\end{align*}
			\item If $q' \in [x,p_B(y_A)]$, then
			\begin{align*}
				d(p,q)& \geq d(p,q') -2\delta \\
				& \geq d(p,x)+d(x,q')-2\delta \text{ since $x \in [p,q']$} \\ 
				& \geq d(p,x)+d(x,q)-4\delta.
			\end{align*}
			\item If $q' \in [y_A,p_B(y_A)]$, then by Lemma \ref{lem:projection-new}, the point $p_B(y_A)$ is a projection of $q'$ on $L_B$ and so by Lemma \ref{lem:bound-Gromov-product-right-angle}, $d(p,q') \geq d(p,p_B(y_A))+d(p_B(y_A),q')-8\delta $ and therefore : 
			\begin{align*}
				d(p,q) & \geq d(p,q')-2\delta \\
				& \geq d(p,p_B(y_A))+d(p_B(y_A),q')-10\delta \\
				& \geq d(p,x)+d(x,p_B(y_A))+d(p_B(y_A),q)-12\delta \text{ since $x \in [p,p_B(y_A)]$ and $d(q,q')\leq 2\delta$}\\
				& \geq d(p,x)+d(x,q)-12\delta \text{ using the triangle inequality.}
			\end{align*}
		\end{itemize}
	\end{proof}
	
	Now we adress the case \ref{case:quadri-loin}. 
	
	\begin{Lemma} \label{lem:gromov-haut-new}
	Assume that $x_B,p_B(x_A),p_B(y_A)$ and $y_B$ are aligned on this order on $L_B$. Assume moreover that condition \ref{case:quadri-loin} is true and let $x =p_B(y_A)$.
	If $q \in [x_A,y_A]$ and $p \in [p_B(y_A),B^+)$, then:
	\begin{equation}
		d(p,q) \geq d(p,x)+d(x,q)-12\delta. 
	\end{equation}
	\end{Lemma}
	
	\begin{proof}Consider the quadrilateral made of the four segments $[x_A,y_A]$, $[y_A,p_B(y_A)]$, $[p_B(x_A),p_B(y_A)]$ and $[x_A,p_B(x_A)]$ is $2\delta$-thin by Lemma \ref{lem:quadri-thin}, and $p \in [x_A,y_A]$, so condition \ref{case:quadri-loin} implies that there exists $p' \in [x_A,p_B(x_A)]\cup [y_A,p_B(y_A)]$ such that $d(p,p')\leq 2\delta$. Let us distinguish between the two cases :
	\begin{itemize}
		\item If $p' \in [y_A,p_B(y_A)]$, then :
		\begin{align*}
			d(p,q) & \geq d(p',q) -2\delta  \\ 
			& \geq d(p',p_B(y_A))+d(p_B(y_A),q)-8\delta-2\delta \quad \text{ using Lemma \ref{lem:projection-new} and Lemma \ref{lem:bound-Gromov-product-right-angle}}  \\ 
			& \geq d(p,p_B(y_A))+d(p_B(y_A),q)-12\delta 
		\end{align*}
		\item If $p' \in [x_A,p_B(x_A)]$, then :
		\begin{align*}
			d(p,q) & \geq d(p',q) -2 \delta \\
			& \geq d(p',p_B(x_A))+d(p_B(x_A),q)-8\delta-2\delta \quad \text{ using Lemma \ref{lem:projection-new} and Lemma \ref{lem:bound-Gromov-product-right-angle}} \\
			& \geq d(p,p_B(x_A))+d(p_B(x_A),p_B(y_A))+d(p_B(y_A),q)-12\delta \\ \intertext{ since $p_B(x_A)$, $p_B(y_A)$ and $q$ are aligned on this order on $L_B$ } 
			& \geq d(p,p_B(y_A))+d(p_B(y_A),q)-12\delta \quad \text{ by the triangle inequality. }
		\end{align*}
	\end{itemize}
	\end{proof}
	
	Combining Lemma \ref{lem:ineg q-loin}, Lemma \ref{lem:gromov-long} and Lemma \ref{lem:gromov-haut-new}, we proved :
	
	\begin{Corollary}\label{cor:gromov-prod-gen}
		Assume that $x_B,p_B(x_A),p_B(y_A)$ and $y_B$ are aligned on this order on $L_B$. 
		There exist $x \in [p_B(x_A),p_B(y_A)]$ and $y \in [x_A,y_A]$ such that $d(x,y) \leq D_\delta$ and such that, if $p \in (B^-,x]$ and $q \in [y,A^+)$, or if $p \in [x,B^+)$ and $q \in (A^-,y]$, we have :
		\begin{equation}\label{eq:d(p,q)final}
			d(p,q) \geq d(p,x)+d(x,q)-12\delta. 
		\end{equation}
	\end{Corollary}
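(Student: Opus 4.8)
The plan is to assemble the three preceding lemmas by a case analysis on the alternative \ref{case:quadri-close}/\ref{case:quadri-loin}, and then to split the range of $q$ into the two pieces on which different lemmas apply.

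First I would fix the pair of points $x$ and $y$. If alternative \ref{case:quadri-close} holds, I take $x \in [p_B(x_A),p_B(y_A)]$ and $y \in [x_A,y_A]$ to be the two points provided by that alternative, so that $d(x,y) \leq 2\delta$. If instead alternative \ref{case:quadri-loin} holds, I let $y$ be the midpoint of $[x_A,y_A]$ and let $x \in [p_B(x_A),p_B(y_A)]$ be the point furnished by Lemma \ref{lem:gromov-haut}, which also yields $d(x,y) \leq K_\delta+24\delta$. In either case $d(x,y) \leq K_\delta+24\delta$, which is the first assertion of the corollary.

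Next I would establish \eqref{eq:d(p,q)final} for $p \in (B^-,x]$ and $q \in [y,A^+)$. Since $y$ lies on $[x_A,y_A]$, the ordering $A^-<x_A\leq y\leq y_A<A^+$ of Lemma \ref{lem:x_A<x_B} shows that the half-geodesic $[y,A^+)$ splits as $[y,y_A]\cup[y_A,A^+)$. For $q$ in the far piece $[y_A,A^+)$, Lemma \ref{lem:ineg q-loin} gives $d(p,q)\geq d(p,x)+d(x,q)-16\delta$. For $q$ in the near piece $[y,y_A]$, I apply Lemma \ref{lem:gromov-long} under \ref{case:quadri-close} (constant $16\delta$) or Lemma \ref{lem:gromov-haut} under \ref{case:quadri-loin} (constant $40\delta$). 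Taking the weakest of the three constants, namely $40\delta$, gives \eqref{eq:d(p,q)final} uniformly for $q \in [y,A^+)$.

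Finally, the ``moreover'' statement follows from the symmetry of the configuration under simultaneously reversing the orientations of $L_A$ and $L_B$. This reversal exchanges $A^+\leftrightarrow A^-$ and $B^+\leftrightarrow B^-$, swaps $x_A\leftrightarrow y_A$ and $x_B\leftrightarrow y_B$, yet fixes the segments $[x_A,y_A]$ and $[p_B(x_A),p_B(y_A)]$; in particular the midpoint choice of \ref{case:quadri-loin} and the pair $(x,y)$ of \ref{case:quadri-close} are preserved. Since the proofs of the three lemmas never used the orientation, their orientation-reversed analogues hold verbatim, and rerunning the previous paragraph for the reversed configuration produces \eqref{eq:d(p,q)final} on the ranges $p\in[x,B^+)$ and $q\in(A^-,y]$, with the same $x$ and $y$. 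I expect the only real obstacle to be bookkeeping rather than substance: one must check that the single pair $(x,y)$ chosen at the start is simultaneously admissible in Lemma \ref{lem:ineg q-loin}, Lemma \ref{lem:gromov-long} and Lemma \ref{lem:gromov-haut} (in particular that the alignment of $x_B,p_B(x_A),p_B(y_A),y_B$ assumed here is exactly the hypothesis of Lemma \ref{lem:ineg q-loin}), and that the orientation-reversal genuinely returns the \emph{same} $x,y$ so that they serve both halves of the statement.
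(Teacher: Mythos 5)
Your proposal is correct and follows essentially the same route as the paper: the same case split on \ref{case:quadri-close} versus \ref{case:quadri-loin}, the same choices of $x$ and $y$, the same decomposition of $[y,A^+)$ into $[y,y_A]\cup[y_A,A^+)$ with Lemmas \ref{lem:ineg q-loin}, \ref{lem:gromov-long} and \ref{lem:gromov-haut} applied on the respective pieces, and the same symmetry $(A,B)\mapsto(A^{-1},B^{-1})$ for the ``moreover'' part. The only difference is that you are somewhat more explicit about the orientation-reversal bookkeeping, which the paper dispatches in one line.
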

	
	\begin{proof}
		We distinguish two cases :
		\begin{itemize}
			\item If \ref{case:quadri-close} is true, then take $x \in [p_B(x_A),p_B(y_A)]$ and $y \in [x_A,y_A]$ such that $d(x,y)\leq 2\delta$, and apply Lemma \ref{lem:ineg q-loin} and Lemma \ref{lem:gromov-long} to have the inequality \eqref{eq:d(p,q)final} in all the cases. 
			\item If \ref{case:quadri-loin} is true, then take any $x=p_B(y_A)$ and $y=y_A$. Then $d(x,y) \leq D_\delta$ by Lemma \ref{lem:x_A<x_B}, and apply Lemma \ref{lem:ineg q-loin} and Lemma \ref{lem:gromov-haut-new} to have the inequality \eqref{eq:d(p,q)final} in all the cases. 
		\end{itemize}
	\end{proof}

		We now introduce four points, $p_+$ and $p_-$ on $L_A$ and $q_+$ and $q_-$ on $L_B$ which will approximate the points $Bx, B^{-1}x, Ax$ and $A^{-1}x$.
		\begin{Lemma} \label{lem:approx-proj}
		Assume that $l_S(A) > 6\delta$ and $l_S(B)>6\delta$. Let $x$ and $y$ be given by Corollary \ref{cor:gromov-prod-gen}. Then~:  
		\begin{enumerate}[label=(\alph*)]
			\item \label{p+} There exists $p_+ \in [x,B^+)$ such that $d(Bx,p_+) \leq 2\delta$. 
			\item \label{p-}There exists $p_- \in (B^-,x]$ such that $d(B^{-1}x,p_-)\leq 2\delta$. 
			\item \label{q+} There exists $q_+ \in [y,A^+)$ such that $d(Ax,q_+) \leq D_\delta+2\delta$. 
			\item \label{q-} There exists $q_- \in (A^-,y]$ such that $d(A^{-1}x,q_-) \leq D_\delta+2\delta$.
		\end{enumerate}
	\end{Lemma}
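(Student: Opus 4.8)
The plan is to treat the two pairs of statements separately, since (a)--(b) concern the isometry $B$ acting on the point $x$, which already lies on $L_B$, whereas (c)--(d) concern $A$ acting on that same $x$, which lies on $L_B$ and \emph{not} on $L_A$.

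First I would establish (a) and (b) by a direct appeal to the quasi-invariance of the axis together with Lemma \ref{lem:projection-Bx}. Since $x \in [p_B(x_A),p_B(y_A)] \subset L_B$, Corollary \ref{cor:axe-quasi-inv} furnishes a projection $p_B(Bx)$ of $Bx$ on $L_B$ with $d(Bx,p_B(Bx)) \leq 2\delta$; and the length hypothesis on $B$ lets us apply Lemma \ref{lem:projection-Bx} to the pair $(B,L_B)$, which forces this projection onto the half-geodesic $[x,B^+)$. Setting $p_+ = p_B(Bx)$ then gives (a). For (b) I would run the identical argument with $B$ replaced by $B^{-1}$, using that $l_S(B^{-1})=l_S(B)$ and that $(B^{-1})^+=B^-$, so that Lemma \ref{lem:projection-Bx} now places $p_- := p_B(B^{-1}x)$ on $(B^-,x]$, still within $2\delta$ of $B^{-1}x$ by Corollary \ref{cor:axe-quasi-inv} applied to $(B^{-1},L_B)$.

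The content of (c) and (d) — and the one genuine obstacle — is that the target half-geodesics $[y,A^+)$ and $(A^-,y]$ live on $L_A$, while $x$ lives on $L_B$; hence Lemma \ref{lem:projection-Bx} cannot be applied to $Ax$ directly, as $x \notin L_A$. The remedy is to apply it instead to $Ay$, where $y \in [x_A,y_A] \subset L_A$ is the point produced by Corollary \ref{cor:gromov-prod-gen}, and then to transfer the estimate back to $Ax$ using that $A$ is an isometry. Concretely, since $y \in L_A$, Corollary \ref{cor:axe-quasi-inv} gives a projection with $d(Ay,p_A(Ay)) \leq 2\delta$, and Lemma \ref{lem:projection-Bx} (via the length hypothesis on $A$) places $p_A(Ay)$ on $[y,A^+)$. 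Taking $q_+ := p_A(Ay)$ and feeding in the bound $d(x,y) \leq K_\delta + 24\delta$ from Corollary \ref{cor:gromov-prod-gen}, the triangle inequality yields
\begin{equation*}
d(Ax,q_+) \leq d(Ax,Ay) + d(Ay,q_+) = d(x,y) + d(Ay,p_A(Ay)) \leq (K_\delta + 24\delta) + 2\delta = K_\delta + 26\delta,
\end{equation*}
which is exactly (c). Statement (d) follows by the symmetric argument with $A$ replaced by $A^{-1}$: since $(A^{-1})^+ = A^-$, Lemma \ref{lem:projection-Bx} sends $p_A(A^{-1}y)$ into $[y,A^-) = (A^-,y]$, and the same computation gives $d(A^{-1}x,q_-) \leq K_\delta + 26\delta$ with $q_- := p_A(A^{-1}y)$. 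I expect the extra summand $K_\delta + 24\delta$ — the distance between the two anchor points $x \in L_B$ and $y \in L_A$ on the two distinct axes — to be precisely what inflates the constant in (c)--(d) relative to the clean $2\delta$ bound available in (a)--(b).
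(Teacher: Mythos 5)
Your proposal is correct and follows essentially the same route as the paper: parts (a)--(b) via Corollary \ref{cor:axe-quasi-inv} and Lemma \ref{lem:projection-Bx} applied on $L_B$ where $x$ already lies, and parts (c)--(d) by projecting $Ay$ (respectively $A^{-1}y$) on $L_A$ and transferring to $Ax$ through the bound $d(x,y)\leq K_\delta+24\delta$ from Corollary \ref{cor:gromov-prod-gen}. The constants and the identification of the key point (that $x\notin L_A$ forces one to work with $y$) match the paper's argument exactly.
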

	
	\begin{proof} 
		\begin{enumerate}[label=(\alph*)]
			
			\item 
			Let $p_+$ be a projection of $Bx$ on $L_B$. The element $x$ belongs to $L_B$ so by Corollary \ref{cor:axe-quasi-inv} $d(Bx,p_+) \leq 2\delta$. Moreover, Lemma \ref{lem:projection-Bx} ensures that $p_+ \in [x,B^+)$ because $l_S(B) > 6\delta$.
			
			\item The proof is the same as for \ref{p+}, changing $B$ to $B^{-1}$. 
			
			\item Let $q_+$ be a projection of $Ay$ on $L_A$. The element $y$ belongs to $L_A$ so by Corollary \ref{cor:axe-quasi-inv} $d(Ay,q_+) \leq 2\delta$. Moreover, Lemma \ref{lem:projection-Bx} ensures that $q_+ \in [y,A^+)$ because $l_S(A) > 6\delta$. Then 
			\begin{align*}
				d(Ax,q_+) & \leq d(Ax,Ay)+d(Ay,q_+)=d(x,y)+d(Ay,q_+) \\
				& \leq D_\delta+2\delta \text{ by Corollary \ref{cor:gromov-prod-gen}}
			\end{align*}
			\item The proof is the same as for \ref{q+} changing $A$ to $A^{-1}$. 
			
		\end{enumerate}
	\end{proof}
	
		We have now settled and describe the general picture of the setting, and we will then define a sequence of point in $\mathcal{X}$ that will be a quasi-geodesic approximating the axis of the hyperbolic isometry $AB$, as in section \ref{sec:axes-far}. \\ 
	
	We fix $x \in [p_B(x_A),p_B(y_A)]$ and $y \in [x_A,y_A]$ satisfying Corollary \ref{cor:gromov-prod-gen}. We consider the sequence $(x_i)_{0 \leq i \leq 2n}$ defined by : 

	\begin{equation*}
		\left\{
		\begin{array}{rll}
			x_{2j} & =(AB)^jx &\text{ for all } 0 \leq j \leq n \\
			x_{2j+1} & = (AB)^jAx & \text{ for all } 0 \leq j \leq n-1
		\end{array}
		\right. 
	\end{equation*}

	\begin{Lemma} \label{lem:gromov-prod-cas-proche} 
		Assume that $l_S(A)>6\delta$ and $l_S(B)>6\delta$ and that $x_B,p_B(x_A),p_B(y_A)$ and $y_B$ are aligned on this order on $L_B$. 
		For all $0 \leq i \leq 2n-1$, the Gromov-product~$(x_{i-1}|x_{i+1})_{x_i}$ is bounded by $D_\delta+10\delta= 54\delta$. 
	\end{Lemma}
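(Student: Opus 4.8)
The plan is to exploit the invariance of the Gromov-product under isometries to collapse the $2n-1$ inequalities into just two. For an even index $i=2j$ with $1 \le j \le n-1$, applying the isometry $(AB)^{-j}$ to the triple $(x_{2j-1},x_{2j},x_{2j+1})=\big((AB)^{j-1}Ax,\,(AB)^jx,\,(AB)^jAx\big)$ and using $(AB)^{-j}(AB)^{j-1}Ax=(AB)^{-1}Ax=B^{-1}x$ yields
\[
(x_{2j-1}|x_{2j+1})_{x_{2j}}=(B^{-1}x\,|\,Ax)_x .
\]
For an odd index $i=2j+1$ with $0\le j\le n-1$, applying the isometry $((AB)^jA)^{-1}$ to $(x_{2j},x_{2j+1},x_{2j+2})$ and using $((AB)^jA)^{-1}(AB)^{j+1}x=A^{-1}(AB)x=Bx$ gives
\[
(x_{2j}|x_{2j+2})_{x_{2j+1}}=(A^{-1}x\,|\,Bx)_x .
\]
Thus it suffices to bound the two Gromov-products $(B^{-1}x\,|\,Ax)_x$ and $(A^{-1}x\,|\,Bx)_x$ by $K_\delta+48\delta$.

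To do this I would combine the approximating points of Lemma \ref{lem:approx-proj}, the coarse additivity of Corollary \ref{cor:gromov-prod-gen}, and the elementary coarse-Lipschitz estimate $|(p|q)_w-(p'|q)_w|\le d(p,p')$, which is immediate from the definition \eqref{eq:def-gromov-product}. For $(B^{-1}x\,|\,Ax)_x$, Lemma \ref{lem:approx-proj} provides $p_-\in(B^-,x]$ with $d(B^{-1}x,p_-)\le 2\delta$ and $q_+\in[y,A^+)$ with $d(Ax,q_+)\le K_\delta+26\delta$. Since $p_-\in(B^-,x]$ and $q_+\in[y,A^+)$, the main inequality of Corollary \ref{cor:gromov-prod-gen} applies and gives $d(p_-,q_+)\ge d(p_-,x)+d(x,q_+)-40\delta$, hence $(p_-\,|\,q_+)_x\le 20\delta$. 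Two applications of the coarse-Lipschitz estimate then yield
\[
(B^{-1}x\,|\,Ax)_x\le (p_-\,|\,q_+)_x+d(B^{-1}x,p_-)+d(Ax,q_+)\le 20\delta+2\delta+(K_\delta+26\delta)=K_\delta+48\delta .
\]
For $(A^{-1}x\,|\,Bx)_x$ the argument is symmetric but uses the \emph{other} pair of approximants: $p_+\in[x,B^+)$ with $d(Bx,p_+)\le 2\delta$ and $q_-\in(A^-,y]$ with $d(A^{-1}x,q_-)\le K_\delta+26\delta$. Here I would invoke the ``moreover'' clause of Corollary \ref{cor:gromov-prod-gen} (valid precisely for $p\in[x,B^+)$, $q\in(A^-,y]$), obtaining $(p_+\,|\,q_-)_x\le 20\delta$, and conclude $(A^{-1}x\,|\,Bx)_x\le K_\delta+48\delta$ by the same chain.

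The numerical computation is routine once the preceding results are in hand; the only genuine subtlety is matching each of the four elements $Ax, A^{-1}x, Bx, B^{-1}x$ with an approximant lying on the \emph{correct} half-geodesic, so that the right half of Corollary \ref{cor:gromov-prod-gen} can be invoked — the regime $p\in(B^-,x]$, $q\in[y,A^+)$ handling $(B^{-1}x\,|\,Ax)_x$ and the regime $p\in[x,B^+)$, $q\in(A^-,y]$ handling $(A^{-1}x\,|\,Bx)_x$. This is exactly what Lemma \ref{lem:approx-proj} is engineered to guarantee, and it is here that the hypotheses $l(A)>6\delta$ and $l(B)>6\delta$ are used (through Lemma \ref{lem:projection-Bx}) to force the approximants onto the correct sides of the two axes. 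I would also note that we work under the standing configuration of this subsection in which $x_B,p_B(x_A),p_B(y_A),y_B$ are aligned in this order on $L_B$, as required by Corollary \ref{cor:gromov-prod-gen}.
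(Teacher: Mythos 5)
Your proposal is correct and follows essentially the same route as the paper: reduce by isometry-invariance to the two products $(B^{-1}x\,|\,Ax)_x$ and $(A^{-1}x\,|\,Bx)_x$, approximate $B^{\pm 1}x$ and $A^{\pm 1}x$ by the points $p_\pm$, $q_\pm$ of Lemma \ref{lem:approx-proj}, and invoke Corollary \ref{cor:gromov-prod-gen} (including its ``moreover'' clause for the second product); the paper simply carries out the substitution inside the definition of the Gromov product rather than packaging it as a $1$-Lipschitz estimate, and both yield the same constant $K_\delta+48\delta$.
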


	\begin{proof}
	Let us first compute these Gromov-products.
	\begin{itemize}
	\item Suppose that $i=2j$, with $0 \leq j \leq n$, then : \\ 
	$(x_{i-1}|x_{i+1})_{x_i} =(x_{2j-1}|x_{2j+1})_{x_{2j}}  
		= ((AB)^{j-1}Ax|(AB)^jAx)_{(AB)^jx} 
		= (B^{-1}x | Ax)_{x}$.
 	\item Suppose that $i=2j+1$, with $0 \leq j \leq n-1$, then : \\ 
	$(x_{i-1}|x_{i+1})_{x_i}  =(x_{2j}|x_{2j+2})_{x_{2j+1}}  
		= ((AB)^{j}x|(AB)^{j+1}x)_{(AB)^jAx} 
		 = (A^{-1}x | Bx)_{x}$.
		\end{itemize}
	Therefore it is sufficient to bound $(B^{-1}x | Ax)_{x}$ and $(A^{-1}x|Bx)_{x}$.
	
	We introduce the four points $p_+$, $p_-$, $q_+$ and $q_-$ given by Lemma \ref{lem:approx-proj}. We first bound the Gromov-product $(B^{-1}x|Ax)_x $:
				\begin{align*}
					(B^{-1}x|Ax)_x & =\frac{1}{2}(d(x,B^{-1}x)+d(x,Ax)-d(B^{-1}x,Ax)) \\
					& \leq \frac{1}{2}(d(x,p_-)+2\delta+d(x,q_+)+D_\delta+2\delta-d(p_-,q_+)+2\delta +D_\delta+2\delta) \text{ by Lemma \ref{lem:approx-proj} \ref{p-} and \ref{q+}}, \\
					& \leq \frac{1}{2}(d(x,p_-)+d(x,q_+)-d(p_-,q_+))+D_\delta+4\delta \\
					& \leq \frac{1}{2}12\delta+D_\delta+4\delta \text{ by Corollary \ref{cor:gromov-prod-gen}},\\ 
					& \leq D_\delta+10\delta.
				\end{align*}
The proof of the bound on the Gromov product $(A^{-1}x|Bx)_{x}$ is the same exchanging the role of $A$ and $B$ and using the points $q_-$ and $p_+$ instead of $p_-$ and $q_+$ and Lemma \ref{lem:approx-proj} \ref{p+} and \ref{q-} instead of Lemma \ref{lem:approx-proj} \ref{p-} and \ref{q+}. 
		\end{proof}

	\begin{Lemma} \label{lem:ineg crucial cas proche}
		Assume that $B^-,p_B(x_A),p_B(y_A)$ and $B^+$ are aligned on this order on $L_B$.
		Assume moreover that $d(L_A,L_B)\leq 44\delta$, $l_S(A) > 188\delta$, $l_S(B) > 188\delta$, then, 
		\begin{equation}
			l_S(AB) \geq l_S(A)+l_S(B)-256\delta.
		\end{equation}
	\end{Lemma}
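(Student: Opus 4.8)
The plan is to mirror the argument of Lemma \ref{lem:ineg crucial cas loin} from the far case, replacing the four-periodic sequence used there by the two-periodic sequence $(x_i)_{0 \le i \le 2n}$ introduced above, and feeding it into the local-to-global Lemma \ref{lem:local-global}. Since Lemma \ref{lem:gromov-prod-cas-proche} already bounds every consecutive Gromov-product $(x_{i-1}|x_{i+1})_{x_i}$ by $K_\delta + 48\delta = 98\delta$, I would apply Lemma \ref{lem:local-global} with the constant $A = 98\delta$; this forces the threshold $L = 3A + 26\delta = 320\delta$ and the constant $D = A + 10\delta = 108\delta$.

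First I would compute the two types of consecutive distances. Using that $AB$ acts by isometries, $d(x_{2j}, x_{2j+1}) = d((AB)^j x, (AB)^j A x) = d(x, Ax)$ and $d(x_{2j+1}, x_{2j+2}) = d((AB)^j A x, (AB)^{j+1} x) = d(Ax, ABx) = d(x, Bx)$. The point is then to bound these below. Since $x \in [p_B(x_A), p_B(y_A)] \subset L_B$, we get $d(x, Bx) \ge l_S(B)$. For the other distance the subtlety is that $x$ need not lie on $L_A$; nevertheless $d(x, Ax) \ge l(A) \ge l_S(A)$, because $l_S(A) \le l(A) = \inf_z d(z, Az) \le d(x,Ax)$. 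By hypothesis both $l_S(A)$ and $l_S(B)$ exceed $320\delta = L$, so $d(x_i, x_{i+1}) > L$ for every $i$. Together with the Gromov-product bound from Lemma \ref{lem:gromov-prod-cas-proche}, the hypotheses of Lemma \ref{lem:local-global} are met.

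Then I would apply Lemma \ref{lem:local-global} to $(x_i)_{0 \le i \le 2n}$. Among the $2n$ consecutive pairs, exactly $n$ contribute $d(x, Ax)$ and $n$ contribute $d(x, Bx)$, so
\begin{align*}
	d(x_0, x_{2n}) &\ge \sum_{i=0}^{2n-1} d(x_i, x_{i+1}) - 2D(2n-1) \\
	&= n\big(d(x, Ax) + d(x, Bx)\big) - 216\delta\,(2n-1) \\
	&\ge n\big(l_S(A) + l_S(B)\big) - 216\delta\,(2n-1).
\end{align*}
Since $x_0 = x$ and $x_{2n} = (AB)^n x$, dividing by $n$ and letting $n \to \infty$, with $\frac{1}{n} d(x, (AB)^n x) \to l_S(AB)$, yields $l_S(AB) \ge l_S(A) + l_S(B) - 432\delta$, as claimed.

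As for where the difficulty really sits: the genuine geometric work has already been absorbed into Corollary \ref{cor:gromov-prod-gen} and Lemma \ref{lem:gromov-prod-cas-proche}, which handle the configuration where the axes come close; the present statement is essentially an assembly step. The only point demanding any care is the lower bound $d(x, Ax) \ge l_S(A)$ for the off-axis point $x$, which I would obtain from $l_S(A) \le l(A)$ rather than from membership in $L_A$, and the bookkeeping of the constants needed to make the threshold $L = 320\delta$ consistent with the hypotheses $l_S(A), l_S(B) > 320\delta$.
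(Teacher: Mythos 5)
Your proof is correct and follows essentially the same route as the paper: the same two-periodic orbit sequence, the local-to-global lemma with $A=98\delta$, $L=320\delta$, $D=108\delta$, and the same passage to the limit. The only cosmetic difference is that the lower bounds $d(x,Ax)\geq l_S(A)$ and $d(x,Bx)\geq l_S(B)$ hold for any point via $l_S\leq l=\inf_z d(z,\cdot z)$, so neither requires $x$ to lie on an axis.
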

	
	\begin{proof}
		We now use the local-global Lemma \ref{lem:local-global} with $A=D_\delta+10\delta=54\delta$. Note that $L=3A+26\delta=188\delta$, and $D=A+10\delta=64\delta$. 
		
		\begin{itemize}
			\item Suppose that $i=2j$, with $0 \leq j \leq n-1$, then  \\ 
			$d(x_i,x_{i+1})=d(x_{2j},x_{2j+1})=d((AB)^jx,(AB)^jAx)=d(x,Ax)\geq l_S(A) > L$.
			\item Suppose that $i=2j+1$, with $0 \leq j \leq n-1$, then \\
			$d(x_i,x_{i+1})=d(x_{2j+1},x_{2j+2})=d((AB)^jAx,(AB)^{j+1}x)=d(x,Bx)\geq l_S(B)>L$.
		\end{itemize}
		Then our assumptions on $l_S(A)$ and $l_S(B)$ and Lemma \ref{lem:gromov-prod-cas-proche} ensure that our sequence of points $(x_i)_{0\leq i\leq 2n}$ satisfies the hypothesis of the local-global lemma \ref{lem:local-global}, and thus we have :
		\begin{align*}
			d(x_0,x_{2n}) & \geq d(x_0,x_1)+\dots+d(x_{2n-1},x_{2n})-2D(2n-1) \\ 
			d(x,(AB)^nx) & \geq nd(x,Ax)+nd(x,Bx)-128\delta(2n-1) 
		\end{align*}
		To obtain the inequality on $l_S(AB)$, divide the previous one by $n$ then take the limit, and use $\frac{1}{n}d(x,(AB)^nx) \underset{n \to \infty}{\longrightarrow} l_S(AB)$, $d(x,Ax)\geq l_S(A)$ and $d(x,Bx)\geq l_S(B)$. 
	\end{proof}
	
	The inequality on $l_S(AB)$ or $l_S(AB^{-1})$ has now been proven in both cases, when the axes of the isometries $A$ and $B$ are far (in section \ref{sec:axes-far}) and when they are close for some moment (in section \ref{sec-axes-close}), hence Proposition \ref{prop:ineq-l(AB)}.
	
	\begin{proof}[Proof of Proposition \ref{prop:ineq-l(AB)}]
		Consider $L_A$ and $L_B$ be two geodesic respectively from $A^-$ to $A^+$ and from $B^-$ to $B^+$.
		There is two cases :
		\begin{itemize}
			\item If $d(L_A,L_B) > 44\delta$, then Lemma \ref{lem:ineg crucial cas loin}, gives the inequality both for $l_S(AB)$ and $l_S(AB^{-1})$.
			\item If $d(L_A,L_B) \leq 44 \delta$, then up to changing $B$ into $B^{-1}$, we can assume that $x_B,p_B(x_A),p_B(y_A),y_B$ are aligned on this order on $L_B$ and so Lemma \ref{lem:ineg crucial cas proche} gives the inequality. 
		\end{itemize}
	\end{proof}

	\begin{Remark} \label{rem:ineg-CAT(-1)}
		If the space $\mathcal{X}$ satisfies, moreover, the hypothesis \ref{hyp:unique-visibility} defined in the introduction (see Remark \ref{rem:thm} \ref{rem:thm:hyp}), the constant $C_\delta$ in Proposition \ref{prop:ineq-l(AB)} can be taken to be $216\delta$.
	\end{Remark}

	\section{Growth of the length of the Farey neighbours of an element} \label{sec:growth Farey neighbours}

	In this section, we are interested in the growth of the quantity $l_S(A^nB)$ with $n \in \N$, for $A$ and $B$ two hyperbolic isometries. We first show that this growth is linear in $n$, and give explicit expressions for the constants. 
	
	\begin{Proposition} \label{prop:AnB}
		Let $A$ be a hyperbolic isometry and $B$ and isometry such that $B(A^+) \neq A^-$. Let $L_A$ be a geodesic between $A^+$ and $A^-$ and $o \in L_A$. Let $(A^-,B(A^+))$ be a geodesic between $B(A^+)$ and $A^-$ and $p : \mathcal{X} \to (A^-,B(A^+))$ a projection on $(A^-,B(A^+))$.
		
		Then, there exists $N=N(A,B,o,p) \in \N$ and $k=k(A,B,o,p) \geq0$ such that, for all $n \geq N$ :
		\begin{equation*}
			l_S(BA^n)\geq l_S(A)n -k.
		\end{equation*}
		Moreover, we have the following formulas for $N(A,B,o,p)$ and $k(A,B,o,p)$ :
	\begin{align}
	k(A,B,o,p) & =3d(Bo,o)+2d(p(o),o)+36\delta \label{eq:k} \\ 
	N(A,B,o,p)& =\frac{1}{l_S(A)} \max \{ 4d(Bo,o)+3d(p(o),o)+51\delta, \, 2d(Bo,o)+4d(p(o),o)+5\delta \}.  \label{eq:N} 
   \end{align}
	\end{Proposition}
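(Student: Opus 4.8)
The plan is to set $C=BA^n$ and apply the local-to-global Lemma \ref{lem:local-global} to the orbit $x_j=C^jo$, $0\le j\le m$. By equivariance all consecutive distances equal $d(o,Co)$ and all the relevant Gromov-products equal $(C^{-1}o\mid Co)_o$, so the hypotheses of Lemma \ref{lem:local-global} reduce to two $n$-independent facts about a single configuration: a lower bound $d(o,Co)>L$ and an upper bound $(C^{-1}o\mid Co)_o\le A$. Once these hold, Lemma \ref{lem:local-global} gives $d(o,C^mo)\ge m\,d(o,Co)-2D(m-1)$; dividing by $m$ and letting $m\to\infty$ yields $l_S(BA^n)=l_S(C)\ge d(o,Co)-2D$. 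Thus everything comes down to estimating $d(o,Co)$ and the Gromov-product, and to choosing the local-global parameter $A$ so that $2D$ and the threshold $L$ produce exactly the constants $k$ and $N$ in the statement.

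The distance estimate is immediate. Since $B$ is an isometry, $d(Bo,BA^no)=d(o,A^no)$, so by the triangle inequality $d(o,Co)=d(o,BA^no)\ge d(Bo,BA^no)-d(o,Bo)=d(o,A^no)-d(o,Bo)\ge n\,l_S(A)-d(o,Bo)$, using $d(o,A^no)\ge n\,l_S(A)$. This already shows $d(o,Co)$ grows linearly in $n$, and it is the source of the first term $4d(Bo,o)+3d(p(o),o)+51\delta$ of $N$: it is precisely the inequality guaranteeing $d(o,Co)>L$ for $n\ge N$.

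The main obstacle is bounding the Gromov-product $(C^{-1}o\mid Co)_o=(A^{-n}B^{-1}o\mid BA^no)_o$ by a constant depending only on $d(Bo,o)$, $d(p(o),o)$ and $\delta$, uniformly in $n$. Here I would use the geodesic $\gamma=(A^-,B(A^+))$ (well-defined since $B(A^+)\ne A^-$) and its projection $p$. The heuristic is that $\gamma$ is the coarse axis of $BA^n$: $Q:=Co=B(A^no)$ tends to the endpoint $B(A^+)$ of $\gamma$ while $P:=C^{-1}o=A^{-n}(B^{-1}o)$ tends to the endpoint $A^-$, so $P$ and $Q$ sit on opposite sides of $p(o)$ along $\gamma$. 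Concretely, for $n$ large enough that $l_S(A^n)=n\,l_S(A)$ exceeds the thresholds of Corollary \ref{cor:axe-quasi-inv}, Proposition \ref{prop:lS atteint sur l'axe} and Lemma \ref{lem:projection-Bx} (applied to $A^n$), the point $A^no$ lies far along the ray of $L_A$ toward $A^+$, hence $Q=B(A^no)$ lies far along the ray $[Bo,B(A^+))$, which is asymptotic to $\gamma$ at $B(A^+)$; Lemma \ref{lem:asymptot_geod} and Lemma \ref{lem:pos-proj} then force $p(Q)$ to lie beyond $p(o)$ on the $B(A^+)$-side, and symmetrically $p(P)$ beyond $p(o)$ on the $A^-$-side. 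Quantifying ``far enough'' in terms of $d(Bo,o)$ and $d(p(o),o)$ is exactly what produces the second term $2d(Bo,o)+4d(p(o),o)+5\delta$ in the maximum defining $N$; this side/threshold bookkeeping is the delicate part. Granting the opposite-side configuration, I would bound $(P\mid Q)_{p(o)}$ by a $\delta$-multiple using the right-angle inequality of Lemma \ref{lem:bound-Gromov-product-right-angle} at the two projections, and then transfer the basepoint from $p(o)$ back to $o$ via the elementary estimate $(P\mid Q)_o\le (P\mid Q)_{p(o)}+d(o,p(o))$, paying the cost $d(p(o),o)$.

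Finally I would assemble the constants. Taking $A=d(Bo,o)+d(p(o),o)+8\delta$ gives $D=A+10\delta$ and $L=3A+26\delta$; the two branches of $N$ ensure respectively $d(o,Co)>L$ and the opposite-side property needed for the Gromov bound, so Lemma \ref{lem:local-global} applies and yields $l_S(BA^n)\ge d(o,Co)-2D\ge n\,l_S(A)-d(Bo,o)-2D=n\,l_S(A)-\bigl(3d(Bo,o)+2d(p(o),o)+36\delta\bigr)$, which is the claimed bound with $k$ as in \eqref{eq:k}. I expect the distance estimate and the final assembly to be routine, and the uniform control of the Gromov-product—especially the quantitative verification that $p(P)$ and $p(Q)$ straddle $p(o)$, giving the second term of \eqref{eq:N}—to be where essentially all the work lies.
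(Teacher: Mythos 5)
Your proposal is correct and follows essentially the same route as the paper: the same orbit $x_j=(BA^n)^jo$ fed into the local-to-global Lemma \ref{lem:local-global} with parameter $A=d(Bo,o)+d(p(o),o)+8\delta$, the same lower bound $d(o,BA^no)\ge n\,l_S(A)-d(o,Bo)$, and the same key step of showing via Lemma \ref{lem:pos-proj} that the projections of $(BA^n)^{-1}o$ and $BA^no$ onto the geodesic $(A^-,B(A^+))$ straddle $p(o)$, yielding the uniform Gromov-product bound. You correctly identified that this straddling/threshold bookkeeping is where the work lies; the paper carries it out exactly as you sketch (passing through $p_{L_A}$ and paying a $d(o,Bo)$ correction to replace $A^{-n}B^{-1}o$ by $A^{-n}o$), arriving at the same constants $k$ and $N$.
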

	
	\begin{proof} Note that the assumption $B(A^+)\neq A^-$ ensures that there exists a geodesic $(A^-,B(A^+))$ with endpoints $A^-$ and $B(A^+)$. Consider  $p_A : \mathcal{X} \to L_A$ a projection on $L_A$. \\
		
	Our first goal is to find $N_1 \in \N$ such that for all $n > N_1$, the projection $p(p_A(A^{-n}o))$ of $A^{-n}o$ on $(A^-,B(A^+))$ satisfies :
		\begin{enumerate}[label=(\alph*)]
			\item \label{cond:bound-d(A^{-n}o,p(A^{-n}o))} $d(A^{-n}o,p(p_A(A^{-n}o))) \leq 4\delta$,
			\item \label{cond:proj-A^-no} $p(p_A(A^{-n}o)) \in (A^-,p(o)]$, where $(A^-,p(o)]$ is the geodesic ray between $p(o)$ and $A^-$ contained in $(A^-,B(A^+))$. 
		\end{enumerate}
	First note that Corollary \ref{cor:axe-quasi-inv} ensures that for all $n \in \N$,  $d(A^{-n}o,p_A(A^{-n}o)) \leq 2\delta$.   
	Now let us assume that $n >\frac{6\delta}{l_S(A)}$, then $l_S(A^{-n})=nl_S(A)>6\delta$, and therefore by Lemma \ref{lem:projection-Bx}, $p_A(A^{-n}o) \in [o,A^-)$ (with $[o,A^-)$ the geodesic ray from $o$ to $A^-$ contained in $L_A$). In order to prove conditions  \ref{cond:bound-d(A^{-n}o,p(A^{-n}o))} and \ref{cond:proj-A^-no}, we will apply Lemma \ref{lem:pos-proj} to $l=(A^-,B(A^+))$, $y_0=p(o) \in (A^-,B(A^+))$, $l_\infty=A^-$, $x_0=o$ and $x=p_A(A^{-n}o) \in [o,A^-)$. We have :
	\begin{align*}
		d(p_A(A^{-n}o),p(o)) & \geq d(A^{-n}o,o)-d(o,p(o))-2\delta \\
		& \geq nl_S(A)-d(o,p(o))-2\delta
	\end{align*}
	so for all $n> \frac{2}{l_S(A)}(d(o,p(o)+2\delta)$, we have $d(p_A(A^{-n}o),p(o)) > d(o,p(o)) +2\delta$. In addition, for all $n > \frac{1}{l_S(A)}(d(o,p(o))+8\delta)$, we also have $d(p_A(A^{-n}o),p(o)) >6\delta$. Therefore, Lemma \ref{lem:pos-proj} ensures that \ref{cond:bound-d(A^{-n}o,p(A^{-n}o))} and \ref{cond:proj-A^-no} are satisfied whenever $n>N_1:=\frac{1}{l_S(A)}\max\{2d(p(o),o)+4\delta,d(o,p(o))+8\delta\}$.
	
	Our second goal is to find, similarly, $N_2 \in \N$ such that for all $n > N_2$, the projection $p(Bp_A(A^no))$ of $Bp_A(A^no)$ on $(A^-,B(A^+))$ satisfies :
	\begin{enumerate}[resume,label=(\alph*)]
		\item \label{cond:d(BA^no,p(Bp_L_A(A^no)))} $d(BA^no,p(Bp_A(A^no)))\leq 4\delta$,
		\item \label{cond:proj-BA^no} $p(Bp_A(A^no)) \in [p(o),B(A^+))$, where $[p(o),B(A^+))$ is the geodesic ray between $p(o)$ and $B(A^+)$ contained in $(A^-,B(A^+))$. 
	\end{enumerate}
	On the one hand, we have $d(A^no,p_A(A^no)) \leq 2\delta$ by Corollary \ref{cor:axe-quasi-inv} and Lemma \ref{lem:projection-Bx} ensures that for $n >\frac{6\delta}{l_S(A)}$, $p_A(A^no) \in [o,A^+)$. Applying the isometry $B$, we obtain $Bp_A(A^no) \in [B(o),B(A^+))$, where $[B(o),B(A^+))$ is contained in $B(L_A)$. Now we apply Lemma \ref{lem:pos-proj} to $l=(A^-,B(A^+))$, $y_0=p(o)$, $l_\infty = B(A^+)$, $x_0=Bo$ and $x=Bp_A(A^no)$. Let us bound $d(Bp_A(A^no),p(o))$ from below :
	\begin{align*}
		d(Bp_A(A^no),p(o)) & \geq d(p_A(A^no),p(o))-d(Bp(o),p(o)) \\
		& \geq d(A^no,o)-d(p(o),o)-d(Bp(o),p(o))-2\delta \\
		& \geq nl_S(A)-3d(p(o),o)-d(Bo,o)-2\delta, \\
		\text{ Also,} \qquad \qquad
		d(Bo,p(o)) & \leq d(Bo,o)+d(p(o),o).
	\end{align*}
		Then, for all $n>N_2:=\frac{1}{l_S(A)}\max\{ 3d(p(o),o)+d(Bo,o)+8\delta,4d(p(o),o)+2d(Bo,o)+4\delta\}$, the required inequalities of Lemma \ref{lem:pos-proj} are satisfied and therefore we have the inequality $d(p(Bp_A(A^no)),Bp_A(A^no))\leq 2\delta$ and condition \ref{cond:proj-BA^no} : $p(Bp_A(A^no)) \in [p(o),B(A^+))$. It only remains to check condition \ref{cond:d(BA^no,p(Bp_L_A(A^no)))} :
	\begin{align*}
		d(BA^no,p(Bp_A(A^no))) & \leq d(BA^no,Bp_A(A^no))+d(Bp_A(A^no),p(Bp_A(A^no))) \\
		& \leq d(A^no,p_A(A^no))+2\delta \\
		& \leq 4\delta.
	\end{align*}
	
	Note that $N_2 \geq N_1$ and let us assume that $n> N_2$. Then conditions \ref{cond:bound-d(A^{-n}o,p(A^{-n}o))}, \ref{cond:proj-A^-no}, \ref{cond:d(BA^no,p(Bp_L_A(A^no)))} and \ref{cond:proj-BA^no} are satisfied, and thus, using \ref{cond:proj-A^-no} and \ref{cond:proj-BA^no}, we can write :
	\begin{align}
		d(p(p_A(A^{-n}o)),p(Bp_A(A^no))) & = 	d(p(p_A(A^{-n}o)),p(o)) +	d(p(o),p(Bp_A(A^no))) \nonumber \\
		& \geq d(A^{-n}o,o)-d(p(o),o)-4\delta+d(o,BA^no)-d(o,p(o))-4\delta \label{eq:d(p(p_A(A^{-n}o)),p(Bp_A(A^no)))}
		\end{align}
	using the triangle inequality and conditions \ref{cond:bound-d(A^{-n}o,p(A^{-n}o))} and \ref{cond:d(BA^no,p(Bp_L_A(A^no)))}.

	Now, let us bound the Gromov-product $((BA^n)^{-1}o \, | \, BA^no)_o$ :
	\begin{align}
		((BA^n)^{-1}o \, | \, BA^no)_o & = \frac{1}{2}(d(o,(BA^n)^{-1}o)+d(o,BA^no))-d((BA^n)^{-1}o,BA^no) ) \nonumber \\
		& \leq \frac{1}{2}(d(o,A^no)+d(o,BA^no)-d(A^{-n}o,BA^no))+2d(o,Bo) \nonumber  \\
		& \leq \frac{1}{2}(d(o,A^no)+d(o,BA^no)-d(p(p_A(A^{-n}o)),p(Bp_A(A^no)))+8\delta +2d(o,Bo))  \nonumber \\ 
		\intertext{using conditions \ref{cond:bound-d(A^{-n}o,p(A^{-n}o))} and \ref{cond:d(BA^no,p(Bp_L_A(A^no)))}, and then by \eqref{eq:d(p(p_A(A^{-n}o)),p(Bp_A(A^no)))} :}
		& \leq \frac{1}{2}(d(o,A^no)+d(o,BA^no)-d(A^no,o)-d(o,BA^no)+2d(p(o),o)+16\delta+2d(o,Bo)) \nonumber \\
		& \leq d(p(o),o)+d(o,Bo)+8\delta. \label{eq:grom-prod-BA}
	\end{align}
	
	Let $m \geq 2$ be an integer et define the following sequence of points in $\mathcal{X}$ :  $x_i=(BA^n)^io$, for $i=0, \dots, m$. Note that for all $1 \leq i \leq m-1$, $(x_{i-1} \, | \, x_{i+1})_{x_i}=((BA^n)^{-1}o \, | \, BA^no)_o$ and thus, by \eqref{eq:grom-prod-BA}, we proved :
	\begin{equation}
		(x_{i-1} \, | \, x_{i+1})_{x_i} \leq d(p(o),o)+d(o,Bo)+8\delta.
	\end{equation}
	Now we apply the local-global Lemma \ref{lem:local-global} with $A=d(p(o),o)+d(o,Bo)+8\delta$. Then $D=A+10\delta=d(p(o),o)+d(o,Bo)+18\delta$ and $L=3A+26\delta=3d(p(o),o)+3d(o,Bo)+50\delta$. Moreover,
	\begin{align*}
		d(x_i,x_{i+1})&=d((BA^n)^io,(BA^n)^{i+1}o) =d(o,BA^no) \\
		& \geq d(o,A^no)-d(o,Bo) \geq nl_S(A)-d(o,Bo). 
	\end{align*}
	Then, if $n > N_3:=\frac{1}{l_S(A)} (d(o,Bo)+L)$, we deduce that for all $0\leq i \leq m-1$, we have $d(x_i,x_{i+1}) >L$. In this case, the conclusion of the Lemma applies and we have :
	\begin{align*}
		d(o,(BA^n)^mo)=d(x_0,x_m) & \geq \Sigma_{i=1}^{m} d(x_{i-1},x_i) -2D(m-1) 
		\geq md(o,BA^no)-2D(m-1) \\
		& \geq m(nl_S(A)-d(o,Bo))-2D(m-1).
	\end{align*}
	Dividing by $m$ and letting $m \to +\infty$, we obtain :
	\begin{equation*}
		l_S(BA^n) \geq nl_S(A)-d(o,Bo)-2D.
	\end{equation*}
	Therefore, we proved the Proposition with $N(A,B,o,p)=\max\{N_2+\delta,N_3+\delta\}$ and $k(A,B,o,p)=d(o,Bo)+2D$.
	\end{proof}
	
	We now endow $\mathrm{Isom}(\mathcal{X})$ with the compact-open topology. We would like to control the constants and $N(A,B,o,p)$ and $k(A,B,o,p)$ obtained in Proposition \ref{prop:AnB} under small deformations of $A$ and $B$. Unfortunately, $k$ and $N$ may not vary continuously with $A$ and $B$ because the projection $p$ could not be continuous. However, we prove :
	  
	\begin{Lemma} \label{lem:nearly-continuity}
		 Fix $K>0$. 
		Let $A$ by a hyperbolic isometry, $B$ an isometry such that $B(A^+)\neq A^-$, $L_A$ a geodesic between the fixpoints $A^+$ and $A^-$ of $A$ and $o \in L_A$. Let $(A^-,B(A^+))$ be a geodesic between $A^-$ and $B(A^+)$ and $p : \mathcal{X} \to (A^-,B(A^+)$ a projection on $(A^-,B(A^+)$. Denote by $N:=N(A,B,o,p)$ and $k:=k(A,B,o,p)$ the constants defined in Proposition \ref{prop:AnB}. 
		
		There exists a neighborhood $\mathcal{U_A}$ of $A$ and a neighborhood $\mathcal{U_B}$ of $B$ in $\mathrm{Isom}(\mathcal{X})$ such that for all $A' \in \mathcal{U}_A$ and $B' \in \mathcal{U}_B$ the following holds. 
		 $A'$ is a hyperbolic isometry, $B'$ is an isometry satisfying $B'(A'^+) \neq A'^-$, and there exist a geodesic $L_{A'}$ between the fixpoints $A'^+$ and $A'^-$ of $A'$, a point $o' \in L_{A'}$, a geodesic $(A'^-,B(A^+))$ between $A'^-$ and $B'(A'^+)$, a projection $p'$ on $(A'^-,B'(A'^+))$, and a constant $C(l_S(A),\delta) \geq 0$ depending only on $l_S(A)$ and $\delta$, such that, denoting $N':=N(A',B',o',p')$ and $k':=k(A',B',o',p')$ the constants defined in Proposition \ref{prop:AnB} corresponding to these data :
		\begin{equation}\label{eq:r>N}
			\left| \max \left\{ {N,\frac{K+k}{l_S(A)}}\right\} -\max \left\{ {N',\frac{K+k'}{l_S(A')}}\right\} \right| \leq C(l_S(A),\delta).
		\end{equation}
	\end{Lemma}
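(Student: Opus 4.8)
The plan is to reduce the inequality \eqref{eq:r>N} to two uniform estimates on the quantities entering the formulas \eqref{eq:k} and \eqref{eq:N}. Writing $a=d(Bo,o)$ and $b=d(p(o),o)$, both $N$ and $\tfrac{K+k}{l_S(A)}$ are $\tfrac{1}{l_S(A)}$ times a maximum of affine functions of $a,b,K,\delta$, so
\[
\max\left\{N,\tfrac{K+k}{l_S(A)}\right\}=\tfrac{1}{l_S(A)}\,M,\qquad M:=\max\{K+3a+2b+36\delta,\;4a+3b+51\delta,\;2a+4b+5\delta\},
\]
and similarly for the primed data with $a'=d(B'o',o')$, $b'=d(p'(o'),o')$ and $M'$. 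Using $|\max_i\alpha_i-\max_i\beta_i|\le\max_i|\alpha_i-\beta_i|$ together with
\[
\frac{M}{l_S(A)}-\frac{M'}{l_S(A')}=\frac{M-M'}{l_S(A')}+M\Bigl(\frac{1}{l_S(A)}-\frac{1}{l_S(A')}\Bigr),
\]
everything follows once I exhibit data for which $|a-a'|$ and $|b-b'|$ are bounded by a constant $C_1(\delta)$ depending only on $\delta$: then $|M-M'|$ is a fixed multiple of $C_1(\delta)$, and after shrinking the neighbourhoods so that $l_S(A')\ge\tfrac12 l_S(A)$ (possible since $l_S$ is continuous by Theorem \ref{thm:l_S-continuous} and $l_S(A)>0$) the first term is $\le C(l_S(A),\delta)$. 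In the second term $M$ is \emph{fixed}, so a further shrinking of the neighbourhoods — which is allowed to depend on the data $a,b,K$ — makes $|l_S(A)^{-1}-l_S(A')^{-1}|\le M^{-1}$ and hence that term $\le 1$.

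Next I choose the data. Since $l_S$ is continuous and positive at $A$, hyperbolicity is open, so after shrinking $\mathcal U_A$ every $A'$ is hyperbolic with $l_S(A')\in[\tfrac12 l_S(A),\tfrac32 l_S(A)]$. Using the continuity of the fixpoint maps $A'\mapsto A'^{\pm}$ on hyperbolic isometries (a consequence of the stability of quasi-geodesics in $\delta$-hyperbolic spaces) and the continuity of the action of $\mathrm{Isom}(\mathcal X)$ in the compact-open topology, I may further shrink $\mathcal U_A,\mathcal U_B$ so that $A'^{\pm}$ is as close to $A^{\pm}$ as I wish — equivalently the Gromov products $(A^{\pm}\,|\,A'^{\pm})_o$ are as large as I wish — and so that $B'(A'^+)\ne A'^-$ and $A'^-\ne B(A^+)$. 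I then take $L_{A'}$ to be any geodesic from $A'^-$ to $A'^+$, let $o'$ be a projection of $o$ onto $L_{A'}$, take $(A'^-,B(A^+))$ to be any geodesic with these two endpoints — \emph{keeping the old endpoint $B(A^+)$ fixed is the crucial point}, so that only one endpoint is perturbed — and $p'$ any projection onto it. A thinness argument for the ideal quadrilateral with vertices $A^-,A^+,A'^+,A'^-$ shows that, once the two Gromov products $(A^{+}\,|\,A'^{+})_o$ and $(A^{-}\,|\,A'^{-})_o$ exceed a fixed multiple of $\delta$, the point $o\in L_A$ lies within $C_0(\delta)$ of $L_{A'}$, so $d(o,o')=d(o,L_{A'})\le C_0(\delta)$; combining this with the triangle inequality and the compact-open continuity of $B'\mapsto B'o'$ on the fixed ball of radius $C_0(\delta)$ about $o$ gives $|a-a'|\le C_1(\delta)$.

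The heart of the proof — the step reflecting the failure of $p$ to vary continuously — is the bound on $|b-b'|$, i.e. controlling the projection distance onto $(A'^-,B(A^+))$ as the endpoint $A^-$ is perturbed to $A'^-$. I compare $g:=(A^-,B(A^+))$ and $g':=(A'^-,B(A^+))$, which share the endpoint $B(A^+)$, bringing in a geodesic $h:=(A^-,A'^-)$ (if $A'^-=A^-$ the two geodesics share both endpoints and are $2\delta$-close by Lemma \ref{lem:bigon_thin}). The ideal triangle with sides $g,g',h$ is uniformly thin (which follows from the thinness of finite triangles by a limiting argument, or from Lemma \ref{lem:asymptot_geod}), and since $(A^{-}\,|\,A'^{-})_o$ is large enough — in particular exceeding $b$ plus a multiple of $\delta$ — the side $h$ is far from $o$. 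Consequently the projection $p(o)\in g$, which sits at distance $b$ from $o$, is far from $h$ and therefore lies within a bounded multiple of $\delta$ of $g'$, giving $d(o,g')\le b+C_2\delta$; the symmetric estimate, now using the finite bound just obtained to keep $p'(o)$ far from $h$, gives $d(o,g)\le d(o,g')+C_2\delta$. Hence $|d(o,g)-d(o,g')|\le C_2\delta$, and since $d(o,o')\le C_0(\delta)$ I conclude $|b-b'|\le C_2\delta+C_0(\delta)=:C_1(\delta)$.

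Assembling the two estimates in the splitting of the first paragraph yields \eqref{eq:r>N} with a constant of the schematic form $C(l_S(A),\delta)=\text{(multiple of }C_1(\delta))/l_S(A)+1$. The only genuinely delicate input is the continuity of the fixpoints $A'^{\pm}\to A^{\pm}$, which is what lets me push the auxiliary side $h$ arbitrarily far from $o$; once that is in place, everything reduces to bookkeeping of thin-triangle constants, and the clever device of fixing the far endpoint $B(A^+)$ (rather than using the moving $B'(A'^+)$) is precisely what keeps the perturbation confined to a single boundary point.
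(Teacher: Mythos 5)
Your overall strategy coincides with the paper's: both proofs rewrite $\max\{N,(K+k)/l_S(A)\}$ as $l_S(A)^{-1}$ times a maximum of three affine expressions in $a=d(Bo,o)$ and $b=d(p(o),o)$, reduce the problem to bounding $|a-a'|$ and $|b-b'|$ by constants depending only on $\delta$ (choosing $o'$ on $L_{A'}$ at $\delta$-bounded distance from $o$ and comparing the projections onto the two nearby geodesics), and then absorb the discrepancy between $1/l_S(A)$ and $1/l_S(A')$ using the continuity of the stable norm (Theorem \ref{thm:l_S-continuous}) after forcing $l_S(A')\ge \frac{1}{2}l_S(A)$. Your algebraic splitting is correct, and your thin-ideal-triangle comparison of $d(o,g)$ and $d(o,g')$ is an acceptable substitute for the paper's route via the coarse $1$-Lipschitz property of projections (Lemma \ref{lem:proj-convex}).

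The one point that needs correction is precisely the step you single out as the crucial device: freezing the far endpoint $B(A^+)$. The constants $N'$ and $k'$ must be the constants of Proposition \ref{prop:AnB} applied to the pair $(A',B')$ --- that is how they are consumed in the proof of Proposition \ref{prop:neighborhood finite tree} and in the definition of $J_{\rho'}(K,X)$ --- and for that pair Proposition \ref{prop:AnB} requires $p'$ to be a projection onto a geodesic joining $A'^-$ to $B'(A'^+)$, not to $B(A^+)$. The occurrence of $B(A^+)$ in the statement is a typo; the paper's own proof indeed projects onto $(A'^-,B'(A'^+))$. With your choice, $N(A',B',o',p')$ and $k(A',B',o',p')$ are not the Proposition \ref{prop:AnB} data for $(A',B')$, so the statement you prove is not the one that gets used later. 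The fix is routine: $B'(A'^+)$ can be made arbitrarily close to $B(A^+)$ by the same continuity of fixpoints and of the action that you already invoke, so your ideal triangle becomes an ideal quadrilateral with the extra side $(B(A^+),B'(A'^+))$, which can equally be arranged to lie far from $o$ and from $p(o)$; the rest of your estimate for $|b-b'|$ then goes through essentially verbatim. But as written the argument leans on a simplification that is not actually available.
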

	
	\begin{proof}
		First recall that the stable norm $l_S$ is continuous (Theorem \ref{thm:l_S-continuous}), and that the positivity of the stable norm caracterises hyperbolicity. Then small deformations of $A$ preserves hyperbolicity. Moreover, since $B(A^+)\neq A^-$, small deformations of $A$ and $B$ also imply that $B'(A'^+)$ and $A'^-$ are distinct. We can choose $A'^-$ sufficiently close to $A^-$ and $A'^+$ sufficiently close to $A^+$ to ensure that any geodesic $L_{A'}$ between $A'^+$ and $A'^-$ passes at a bounded distance from $o \in L_A$, which means that there exists $o' \in L_{A'}$ and a constant $c_\delta$ only depending on $\delta$ so that $d(o,o') \leq c_\delta$. Now denote $p' : \mathcal{X} \to (A'^-,B'(A'^+))$ a projection on $(A'^-,B'(A'^+))$. Similarly, when $A'^-$ is sufficiently close to $A^-$ and $B'(A'^+)$ is sufficiently close to $B(A^+)$, we can ensure that $p(o) \in L_A$ is at bounded distance from $(A'^-,B'(A'^+))$, that is $d(p(o),p'(p(o))) \leq c_\delta$. Moreover, we can further assume that $p'(o)$ is close to the geodesic $(A^-,B(A^+))$, that is there exists a constant $c'_\delta$ only depending on $\delta$ such that $d(p'(o),p(p'(o))) \leq c'_\delta$. From this we can now deduce the existence of a constant $d_\delta$, only depending on $\delta$, such that the distance $d(p(o),p'(o))$ is bounded by $d_\delta$. Finally, for $B'$ close enough to $B$, we also have $d(B'o,Bo) \leq \delta$.  
		
		Let us first bound $|d(p(o),o)-d(p'(o'),o')|$. On the one hand :
		\begin{align*}
			d(p'(o'),o') &  \leq d(p'(o'),p'(o))+d(p'(o),p(o))+d(p(o),o) \\
		& 	\leq d(o,o')+12\delta + d_\delta + d(p(o),o) \quad \text{ by \eqref{eq:proj-bound-3distance} of Lemma \ref{lem:proj-convex}} \\ 
		& \leq c_\delta + 12\delta +d_\delta +d(p(o),o)
		\end{align*}
		And on the other hand :
		\begin{align*}
			d(p(o),o) & \leq d(p(o),p'(o))+d(p'(o),p'(o'))+d(p'(o'),o')+d(o',o) \\
			& \leq d_\delta +d(o,o')+12\delta + d(p'(o'),o')+d(o',o) \text{ by \eqref{eq:proj-bound-3distance} of Lemma \ref{lem:proj-convex}} \\
			& \leq d_\delta +2c_\delta +12\delta +d(p'(o'),o'),
		\end{align*}
	    Then 
		\begin{equation} \label{bound:p(o)}
		|d(p(o),o)-d(p'(o'),o')| \leq d_\delta +2c_\delta +12\delta.
		\end{equation}
		We can also bound $|d(Bo,o)-d(B'o',o')|$ :
		\begin{equation}\label{bound:Bo}
			|d(Bo,o)-d(B'o',o')|  \leq |d(B'o',B'o)+d(B'o,Bo)+d(o,o') | \leq \delta +2c_\delta. 
		\end{equation}
		Now note that we have $\max \left\{N,\frac{K+k}{l_S(A)}\right\} = \frac{1}{l_S(A)} \max \{N_1,N_2,N_3 \} $,  with 
		\begin{align*}
			N_1 & = 4d(Bo,o)+3d(p(o),o)+51\delta, \\
			N_2 & = 2d(Bo,o)+4d(p(o),o)+5\delta, \\
			N_3 & = K+3d(Bo,o)+2d(p(o),o)+36\delta, 
		\end{align*}
	and define similarly $N'_1, N'_2$ and $N'_3$ using $B'$, $o'$ and $p'$. Because of the bounds \eqref{bound:p(o)} and \eqref{bound:Bo} we deduce the existence of three constants $c_{1,\delta}, c_{2,\delta}$ and $c_{3,\delta}$ only depending on $\delta$ such that for all $i=1,\dots,3$, $|N_i-N'_i| \leq c_{i,\delta}$, and therefore, we also have $\vert \max \{N_1,N_2,N_3\}-\max \{N'_1,N'_2,N'_3\}| \leq e_\delta:=\max  \{c_{1,\delta},c_{2,\delta},c_{3,\delta}\}$. Denote $M=\max \{N_1,N_2,N_3\}$ and $M'=\max \{N'_1,N'_2,N'_3\}$. We have $|M-M'|\leq e_\delta$. Now :
	\begin{align*}
		\left| \frac{M}{l_S(A)}-\frac{M'}{l_S(A')} \right|  & \leq  \frac{ \left| Ml_S(A')+Ml_S(A)-Ml_S(A)+M'l_S(A) \right|}{l_S(A)l_S(A')} \leq \frac{\left| M\right| \left| l_S(A')-l_S(A)\right| + \left|M-M'\right|l_S(A)}{l_S(A)l_S(A')}. \\ 
		\intertext{Let us finally impose that $A'$ is sufficiently close to $A$ so that $\displaystyle l_S(A') \geq \frac{l_S(A)}{2}$ and $\displaystyle |l_S(A')-l_S(A)| \leq \frac{\delta l_S(A)^2}{2|M|}$. Then we have :}
		\left| \frac{M}{l_S(A)}-\frac{M'}{l_S(A')} \right| & \leq \delta  + \frac{2|M-M'|}{l_S(A)} \leq \delta + \frac{2e_\delta}{l_S(A)}=:C(l_S(A),\delta), \text{ and the Lemma is proved.} \qedhere
	\end{align*}
	\end{proof}

	We now give a criteria to ensure the condition $B(A^+)\neq A^-$. We show that, on the contrary, if $B(A^+)=A^-$, for $n$ sufficiently large the elements of the form $A^nB$ have a bounded stable norm.

	\begin{Proposition} \label{prop:B(A+)neqA-explicite}
		Let $A$ be a hyperbolic isometry and $B$ be an isometry such that $B(A^+)=A^-$. Then there exists $N$ such that for all $n \geq N$,
		\begin{equation*}
			l_S(A^nB) \leq 30\delta.
		\end{equation*}
		Moreover, we have the following explicite expression for $N$ :
		 \begin{equation}\label{eq:N-lS bounded} 
		N =\frac{1}{l_S(A)} \max\{3d(Bo,o)+18\delta,5d(Bo,o)+10\delta \}.
		 \end{equation}
		 Here $o$ is any point on $L_A$, with $L_A$ being a geodesic between $A^+$ and $A^-$. 
	\end{Proposition}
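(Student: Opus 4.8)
The plan is to bound $l_S(A^nB)$ from above by exhibiting a single point whose displacement under $A^nB$ is at most $30\delta$; since $l_S(A^nB)\le l(A^nB)\le d\bigl(A^nB\,x,x\bigr)$ for every $x\in\mathcal{X}$, it suffices to locate one good point $x$. The geometric heart of the matter is the degeneracy hypothesis $B(A^+)=A^-$: it forces the geodesic $B(L_A)$, which runs from $B(A^-)$ to $B(A^+)=A^-$, to share the endpoint $A^-$ with $L_A$ itself. Consequently $A^nB$ coarsely \emph{reverses} orientation on $L_A$ — it sends $A^+$ to $A^-$, while $(A^nB)(A^-)=A^n\bigl(B(A^-)\bigr)$ is driven towards $A^+$ for large $n$ — and such a map must possess a coarse fixed point. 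The whole point is to locate that balance point explicitly.

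First I would parametrize $L_A$ by arclength with $L_A(0)=o$, $L_A(t)\to A^+$ as $t\to+\infty$ and $L_A(t)\to A^-$ as $t\to-\infty$, and observe that both rays $t\mapsto L_A(-t)$ and $t\mapsto B(L_A(t))$ tend to $A^-$. Applying Lemma \ref{lem:asymptot-parametrization} to these two asymptotic rays produces a phase offset $\sigma$ and a threshold beyond which $d\bigl(L_A(\sigma-s),\,B(L_A(s))\bigr)\le 6\delta$; in other words, for $s$ large the image $B(L_A(s))$ tracks the point $L_A(\sigma-s)$ of $L_A$ near $A^-$. Both $\sigma$ and the threshold must then be controlled in terms of $d(Bo,o)$ and $\delta$, by re-running the projection estimates behind Lemmas \ref{lem:asymptot-parametrization} and \ref{lem:pos-proj}.

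Next I would set $x=L_A(m)$ with $m=\tfrac12\bigl(\sigma+n\,l_S(A)\bigr)$, the value that makes the reversal land back on itself. Indeed $Bx=B(L_A(m))$ is $6\delta$-close to $L_A(\sigma-m)$, and as $A^n$ is an isometry, $A^nBx$ is $6\delta$-close to $A^nL_A(\sigma-m)$. The key estimate that prevents the error from growing with $n$ is that $L_A$ is \emph{also} an axis of $A^n$ (between $(A^n)^\pm=A^\pm$): by Corollary \ref{cor:axe-quasi-inv}, Proposition \ref{prop:lS atteint sur l'axe} and Lemma \ref{lem:projection-Bx} applied to $A^n$, the point $A^nL_A(\sigma-m)$ stays within a bounded distance (of order $24\delta$) of $L_A\bigl(\sigma-m+n\,l_S(A)\bigr)=L_A(m)=x$, \emph{independently of $n$}. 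Summing the two contributions gives $d(A^nBx,x)\le 30\delta$, hence $l_S(A^nB)\le 30\delta$.

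Finally, the requirement ``$s$ large'' in the tracking step translates, via $m=\tfrac12(\sigma+n\,l_S(A))$, into a lower bound $n\ge N$; making $\sigma$ and the threshold explicit yields the stated value $N=\frac{1}{l_S(A)}\max\{3d(Bo,o)+18\delta,\,5d(Bo,o)+10\delta\}$. I expect the main obstacle to be precisely this bookkeeping: pinning down sharp explicit constants for the offset $\sigma$ and for where the asymptotic tracking begins, since these are exactly the places where iterated triangle inequalities accumulate the several copies of $d(Bo,o)$ that appear in the formula for $N$.
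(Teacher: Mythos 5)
Your argument is correct in its geometric core, but it takes a genuinely different route from the paper. The paper never computes the ``phase offset'' of $B$ on $L_A$: instead it locates the balance point by a sign-change argument, comparing the order on $L_A$ of the projections $p(Bx)$ and $p(A^{-n}x)$ as $x$ runs from a point $o_1$ (where $p(A^{-n}x)$ is closer to $A^-$) to a point $o_2$ (where the order is reversed), and then extracting a point with $d(p(Bx),p(A^{-n}x))\leq 26\delta$ via an explicit bisection that substitutes for the intermediate value theorem, since the projection map is not continuous; the final budget is $2\delta+26\delta+2\delta=30\delta$ from $d(A^nBx,x)=d(Bx,A^{-n}x)$. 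Your route instead makes the coarse dynamics explicit --- $B$ acts on $L_A$ near $A^+$ as $s\mapsto\sigma-s$ up to $6\delta$ (Lemma \ref{lem:asymptot-parametrization}), $A^n$ as $t\mapsto t+nl_S(A)$ up to $24\delta$ (the estimate from the proof of Lemma \ref{lem:busemann}, valid for $A^n$ since $L_A$ is also a geodesic between $(A^n)^{\pm}$ and $nl_S(A)>6\delta$) --- and solves for the fixed point $m=\tfrac12(\sigma+nl_S(A))$; your error budget $6\delta+24\delta=30\delta$ matches exactly. What your approach buys is a cleaner conceptual picture (coarse reflection composed with coarse translation has a coarse fixed point) and no need for the discontinuity-of-projection bisection; what it costs is that the threshold is hidden inside the non-quantified ``for $t$ sufficiently large'' of Lemmas \ref{lem:asymptot_geod} and \ref{lem:asymptot-parametrization}, so to get an explicit $N$ you must re-run those proofs tracking $d(Bo,o)$, and the resulting formula would almost certainly differ from \eqref{eq:N-lS bounded} rather than ``yield the stated value'' as you assert. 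That is harmless for every use of the proposition in the paper (only the existence of some explicit computable $N$ matters, e.g.\ for Algorithm \ref{alg:B(A^+)neqA^-}), but as written your last step overclaims: you should either carry out the constant-tracking or state that you obtain a possibly different explicit threshold.
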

	\begin{proof}
		
		Let us assume that $B(A^+)=A^-$. Choose a geodesic $L_A$ from $A^-$ to $A^+$ and $o \in L_A$. Consider $p : \mathcal{X} \to L_A$ a projection map on $L_A$.\\
		
		Let us first define $o_1 \in L_A$ such that, denoting $(A^-,o]$ the geodesic ray between $o$ and $A^-$ contained in $L_A$ :
		\begin{align}
			\text{For all } x \in [o_1,A^+), \qquad & d(Bx,p(Bx)) \leq 2\delta, \label{eq: d(Bx,p(Bx)} \\ 
			\text{and } \qquad & p(Bx) \in (A^-,o]. \label{eq:p(Bx)}
		\end{align}
		
		Consider $[o,A^+)$ the geodesic ray between $o$ and $A^+$ contained in $L_A$. Its image $[Bo,A^-)$ by $B$ is the geodesic ray between $Bo$ and $B(A^+)=A^-$ contained in $B(L_A)$. It is asymptotic to the geodesic ray $[o,A^-)$. Thus let us apply \ref{lem:pos-proj} to these two geodesic rays. 
        First note that when $x \in [o,A^+)$, its image $Bx \in [Bo,A^-)$ and 
		\begin{align*}
			d(Bx,o) \geq d(Bx,Bo)-d(Bo,o)=d(x,o)-d(Bo,o).
		\end{align*}
 		Therefore, whenever $d(x,o) > \max \{ d(Bo,o)+6\delta,2d(Bo,o)+2\delta\}$, Lemma \ref{lem:pos-proj} applies and ensures that $d(p(Bx),Bx) \leq 2\delta$ and $p(Bx) \in (A^-,o]$. Let $o_1 \in [o,A^+)$ be the point so that $d(o,o_1)= \max \{ d(Bo,o)+7\delta,2d(Bo,o)+3\delta\}$, then \eqref{eq: d(Bx,p(Bx)} and \eqref{eq:p(Bx)} hold for this choice of $o_1$. \\
		
		Let us now fix $n \geq N$, where $N$ is defined in \eqref{eq:N-lS bounded}, and let us verify that $A^-$, $p(A^{-n}o_1)$ and $p(Bo_1)$ are aligned in this order on $L_A$. First note that $l_S(A^{-n})=nl_S(A)>6\delta$, then by Lemma \ref{lem:projection-Bx}, we have $p(A^{-n}o_1) \in (A^-,o_1]$, with $(A^-,o_1]\subset L_A$. We can bound $d(p(A^{-n}o_1),o_1)$ from below  :
		\begin{align*}
			d(p(A^{-n}o_1),o_1) \geq d(A^{-n}o_1,o_1)-2\delta \geq ml_S(A)-2\delta \text{ using Corollary \ref{cor:axe-quasi-inv}}.
		\end{align*}
		Also, 
		\begin{align*}
			d(p(Bo_1),o_1)&  \leq d(Bo_1,o_1) +d(p(Bo_1),Bo_1) \leq d(Bo,o)+2d(o,o_1)+2\delta \text{ using \eqref{eq: d(Bx,p(Bx)}}, \\
			& \leq  d(Bo,o)+2\max\{d(Bo,o)+7\delta,2d(Bo,o)+3\delta \}+2\delta \text{ by definition of $o_1$}, \\
			& \leq \max\{3d(Bo,o)+16\delta,5d(Bo,o)+8\delta \}.
		\end{align*}
		With these inequalities we see that the definition of $N$ and the hypothesis $n \geq N$ ensure that $d(p(A^{-n}o_1),o_1) \geq d(p(Bo_1),o_1)$ and therefore, since $p(A^{-n}o_1)$ and $p(Bo_1)$ both belong to $(A^-,o_1]$, we deduce that $A^-$, $p(A^{-m}o_1)$ and $p(Bo_1)$ are aligned in this order on $L_A$. \\
		
	 	Now define the maps
		\[
		\begin{tabular}{cc}
			$\begin{array}{rcl}
				f : L_A & \longrightarrow & \mathcal{X} \\
				x & \mapsto & Bx
			\end{array}$ 
			& 
			$\begin{array}{rcl}
			\text{ and } \qquad	g : L_A & \longrightarrow & \mathcal{X} \\
				x & \mapsto & A^{-n}x
			\end{array}$
		\end{tabular}
		\]

		The maps $f$ and $g$ are continuous and $f(x) \underset{x \to A^+}{\longrightarrow} A^-$, $g(x) \underset{x\to A^+}{\longrightarrow} A^+$. In particular, we also have $p(f(x)) \underset{x \to A^+}{\longrightarrow} A^-$ and $p(g(x)) \underset{x\to A^+}{\longrightarrow} A^+$. From these limits at $A^+$, we deduce that there exists $o_2 \in [o_1,A^+)$ such that $A^-$,  $p(f(o_2))$ and $p(g(o_2))$ are aligned on this order on $L_A$. \\

		Our goal is now to find a point $x \in [o_1,o_2]$ such that $d(p(Bx),p(A^{-n}x)) \leq 26\delta$. To this end, define :
		\begin{eqnarray*}
			h : [o_1,o_2]&\to & \R \\
			x & \mapsto& \pm d(p(f(x)),p(g(x)))
		\end{eqnarray*}
		where the sign is taken to be $+$ when $A^-$, $p(g(o))$ and $p(f(o))$ are aligned on this order on $L_A$ and~$-$ otherwise. Therefore with our convention $h(o_1) \geq 0$ and $h(o_2) \leq 0$.  Note that the map $h$ is not necessarily continuous because the projection map $p$ is not. If $h$ was to be continuous, the intermediate value theorem would allow us to conclude. \\
		
		We will overcome this technical difficulty by defining three sequences $(x_k)_{k \in \N}, (x'_k)_{k \in \N}, (x''_k)_{k \in \N}$ in $[o_1,o_2]$ which satisfy the three following condition :
		\begin{itemize}
			\item $x_k$ is the midpoint of $[x'_k,x''_k]$, where $[x'_k,x''_k]$ denote the geodesic segment from $x'_k$ to $x''_k$ contained in $L_A$
			\item $h(x'_k) \geq 0$ and $h(x''_k) \leq 0$, 
			\item $d(x'_k,x''_k) \underset{k \to \infty}{\longrightarrow} 0$.
		\end{itemize}
		
		This is a standard construction : take $x'_0=o_1$, $x''_0=o_2$ and let $x_0$ be the mid-point of $o_1$ and $o_2$ in $[o_1,o_2]$. To construct $x_{k+1},x'_{k+1}$ and $x''_{k+1}$ from $x_k,x'_k$ and $x''_{k+1}$, proceed as follows. If $h(x_k)\geq 0$, then set $x'_{k+1}=x_k$, $x''_{k+1}=x''_k$, and if $h(x_k) < 0$, then set $x'_{k+1}=x'_k$, $x''_{k+1}=x_k$. In any case, let $x_{k+1}$ be the mid-point of $[x'_{k+1}, x''_{k+1}] \subset L_A$. Therefore we have $d(x'_k,x''_k)= 2^{-k} d(o_1,o_2)$, so $d(x'_k,x''_k) \underset{k \to \infty}{\longrightarrow} 0$ and thus the three conditions above are satisfied. \\ 
		
		By compactness, up to extracting, we can assume that there exists $x_\infty \in [o_1,o_2]$ such that $x_k \to x_\infty , x'_k \to x_\infty $ and $x''_k \to x_\infty $. \\
		
		We now want to prove that there exists $k \in \N$ such that $h(x'_k)\leq 26\delta$ or $h(x''_k) \geq -26 \delta$. 
		Thus, assume by contradiction that for all $k \in \N$, $h(x'_k)>26\delta$ and $h(x''_k) < -26 \delta$. Since $f$ and $g$ are continuous, for $k$ sufficiently large, we have $d(f(x''_k),f(x_\infty)) \leq \delta$, $d(f(x'_k),f(x_\infty)) \leq \delta$, $d(g(x''_k),g(x_\infty)) \leq \delta$, and $d(g(x'_k),g(x_\infty)) \leq \delta$. Therefore, by equation \eqref{eq:proj-bound-distance} of Lemma \ref{lem:proj-convex}, we can deduce an inequality on the projection for $k$ sufficiently large : 
		\begin{align}
			\label{eq:f(x'_k)}	d(p(f(x'_k)),p(f(x_\infty))) & \leq d(f(x'_k),f(x_\infty))+12\delta  \leq 13\delta, \\ 
			\label{eq:f(x''_k)}	d(p(f(x''_k)),p(f(x_\infty))) & \leq d(f(x''_k),f(x_\infty))+12\delta  \leq 13 \delta,  \\ 
			\label{eq:g(x'_k)}	d(p(g(x'_k)),p(g(x_\infty))) & \leq d(g(x'_k),g(x_\infty))+12\delta  \leq 13\delta, \\ 
			\label{eq:g(x''_k)}	d(p(g(x''_k)),p(g(x_\infty))) &  \leq d(g(x''_k),g(x_\infty))+12\delta  \leq 13 \delta. 
		\end{align}
		Now the condition $h(x'_k) > 26\delta$ together with \eqref{eq:f(x'_k)} and \eqref{eq:g(x'_k)} implies that  $A^-$, $p(g(x_\infty))$ and $p(f(x_\infty))$ are aligned on this order on $L_A$ and more precisely that 
		\begin{equation} \label{eq:h>0}
			h(x_\infty) >0.
		\end{equation}
		On the other hand, the condition $h(x''_k) < -26\delta$ together with \eqref{eq:f(x''_k)} and \eqref{eq:g(x''_k)} implies that $A^-$, $p(f(x_\infty))$ and $p(g(x_\infty))$ are aligned on this order on $L_A$ and more precisely that 
		\begin{equation} \label{eq:h<0}
			h(x_\infty) < 0. 
		\end{equation}
		Of course, \eqref{eq:h>0} and \eqref{eq:h<0} gives a contradiction. Therefore we find $k \in \N$ such that either $|h(x'_k)| \leq 26\delta$ or $|h(x''_k)| \leq 26 \delta$. In any case, we found a point $x \in [o_1,o_2]$ such that 
		\begin{equation}\label{eq:d(p(f(o)),p(g(o)))}
			d(p(Bx),p(A^{-n}x)) \leq 26\delta.
		\end{equation}
		
		The end of the proof goes as follow : For this choice of point $o \in L_A$, we have :
		\begin{align*}
			l_S(A^nB) & \leq d(A^nBx,x) \leq d(Bx,A^{-n}x) \\
			& \leq d(Bx,p(Bx))+d(p(Bx),p(A^{-n}x))+d(p(A^{-n}x),A^{-n}x) \\
			& \leq 2\delta+26\delta+2\delta \text{ by equality \eqref{eq: d(Bx,p(Bx)}, \eqref{eq:d(p(f(o)),p(g(o)))} and Corollary \ref{cor:axe-quasi-inv}}, \\
			& \leq 30\delta.
		\end{align*}
	\end{proof} 
	
	\section{The Farey tree and proof of the main equivalences of Theorem \ref{thm:equivalence}} \label{sec:Farey tree}
	In this section, we will introduce and use the Farey tree to prove equivalence \ref{cond:thm:Cdelta} $\iff$ \ref{cond:thm:K>Cdelta} $\iff$ \ref{cond:thm:displacing} in Theorem \ref{thm:equivalence}. We will broadly follow Bowditch's proof strategy \cite{bowditch_markoff_1998}, while making some adaptations along the way which are required in our broader setting. \\
	
	\subsection{Notations}~ \label{subsec:notations} \\
	\indent To ease the reading, we will follow the same notations as in Bowditch's paper (\cite{bowditch_markoff_1998}) which we introduce hereafter. 
	Let $\Sigma$ be a countably infinite simplicial tree with all vertices of degree 3, which we call the \emph{Farey tree}. We shall see $\Sigma$ as embedded in the plane. As an (important) example, one can think of $\Sigma$ as the dual of the Farey tesselation of the hyperbolic plane by ideal triangles (see for example \cite[Chapter 1]{hatcher_topology_2022} or \cite{series_modular_1985}). Denote $V(\Sigma)$ its set of vertex and $E(\Sigma)$ its set of edges. Define a \emph{complementary region} of $\Sigma$ or more simply a \emph{region} to be the closure of a connected component of the complement of the tree $\Sigma$ in the plane, and denote $\Omega$ the set of complementary regions. We will use the capital letters $X$, $Y$, $Z\dots$ to talk about the complementary regions in $\Omega$. Notice that for every vertex $v$ of $\Sigma$, since it has valence 3, $v$ lies in the boundary of exactly three complementary regions $X$, $Y$ and $Z$ of $\Omega$, and in this case we will write $v \leftrightarrow (X,Y,Z)$. An edge $e$ of the tree $\Sigma$, as for it, lies in the boundary of four complementary regions $X$, $Y$, $Z$ and $W$. Two of them, let's say $X$ and $Y$, satisfy $X \cap Y =e$, and the other two are such that $e \cap Z$ and $e \cap W$ are the two endpoints of $e$. In this case, we write $e \leftrightarrow (X,Y ; Z,W)$ and we also denote $\Omega^0(e)=\{X,Y\}$. We can also consider edges with orientation and we denote $\vec{E}(\Sigma)$ the set of oriented edges of $\Sigma$. We use the notation $\vec{e}$ to refer to an oriented edge in $\vec{E}(\Sigma)$ and we write $e$ for the underlying unoriented edge in $E(\Sigma)$. When $e \leftrightarrow (X,Y;Z,W)$ and $\vec{e}$ is oriented from $ Z\cap e$ to $W\cap e$, we say that $Z\cap e$ is the tail of $\vec{e}$ and $W\cap e$ is the head of $\vec{e}$ and we write $\vec{e} \leftrightarrow (X,Y;Z \to W)$. Let $\vec{e} \in \vec{E}(\Sigma)$. By removing the interior of $e$ from $\Sigma$, we obtain two disjoint subtrees, which we denote $\Sigma^{\pm}(\vec{e})$, with $e \cap \Sigma^+(\vec{e})$ being the head and $e \cap \Sigma^-(\vec{e})$ the tail of $\vec{e}$. We can the consider $\Omega^{\pm}(\vec{e})$ the set of complementary regions who boundaries lie in $\Sigma^\pm(\vec{e})$. Then we have the following decomposition of $\Omega$ : $\Omega = \Omega^0(e) \bigsqcup \Omega^+(\vec{e}) \bigsqcup \Omega^-(\vec{e})$. We also denote $E^{\pm}(\vec{e})$ the set of edges and $V^{\pm}(\vec{e})$ the set of vertices in $\Sigma^\pm(\vec{e})$, and  $\Omega^{0\pm}(\vec{e})=\Omega^0(e)\bigsqcup \Omega^\pm(\vec{e})$,   $E^{0\pm}(\vec{e})=\{e\}\sqcup E^{\pm}(\vec{e})$. If $v \in V(\Sigma)$ is a vertex and $X \in \Omega$ is a region, we denote $d(v,X)$ the distance in the tree $\Sigma$ between $v$ and $X$, that is the number of edges of $\Sigma$ in a shortest path from $v$ to $X$. Is $\vec{e}$ is an oriented edge and $X$ is a region in $\Omega^{0-}(\vec{e})$, we denote $d(X,\vec{e})$ the distance between the head of $\vec{e}$ and $X$. Then, if $d(X,\vec{e})=0$, then $X \in \Omega^0(e)$. If $d(X,\vec{e})>0$, there exists an oriented edge $\vec{e'} \in \vec{E}^{0-}(\vec{e})$ and two regions $X', Y' \in \Omega^{0-}(\vec{e})$ such that $X$, $X'$ and $Y'$ meet at the tail of $\vec{e'}$, $d(X',\vec{e})<d(X,\vec{e})$, and $d(Y',\vec{e})<d(X,\vec{e})$. In this case, we also have $\vec{e'}=X'\cap Y'$. Finally, if $e$ and $e'$ are two edges in $E(\Sigma)$, we define $d(e,e')=d(m_e,m'_e)$, where $m_e$ and $m'_e$ are respectivelly the midpoints of $e$ and $e'$. Therefore, $d(e,e')=0$ if and only if $e=e'$ and for all edges $e,e'$, there exist exactly two edges $e_1$ and $e_2$ such that $e_1\cap e_2\cap e' \neq \emptyset$ and $d(e_1,e)=d(e_2,e)=d(e',e)+1$. \\ 
	
	\subsection{Primitive elements in $\F_2$ and complementary regions of $\Sigma$}~ \\
	\indent Recall that we say that an element of $\F_2$ is \emph{primitive} if it is contained is some free basis of $\F_2$. This set of primitive elements is stable by conjugation and inversion and we denote $\mathcal{P}(\F_2)$ the set of primitive elements up to conjugacy and inversion. We now explain how to associate to every complementary region of the Farey tree a primitive element (up to conjugacy and inversion). Fix a free basis $\{a,b\}$ of $\F_2$ and take a distinguished oriented edge $\vec{e_0} \leftrightarrow (X_0,Y_0 ; Z_0 \to W_0)$ in $\vec{E}(\Sigma)$. We first define the map $P$ on $\Omega^{0-}(\vec{e_0})$ by induction on $d(X,\vec{e_0})$, with $X \in \Omega^{0-}(\vec{e_0})$. If $d(X,\vec{e_0})=0$, note that $X \in \Omega^0(\vec{e_0})=\{X_0,Y_0\}$, and then define $P(X_0)=a$ and $P(Y_0)=b$. If $d(X,\vec{e_0})>0$, let $X'$ and $Y'$ be the two regions which meet $X$ and such that $d(X',\vec{e_0}) < d(X,\vec{e_0})$, and $d(Y',\vec{e_0}) <d(X,\vec{e_0})$. By induction, $P(X')$ and $P(Y')$ are already defined and now let $P(X)$ be the concatenation $P(X')P(Y')$. We now define the map $P$ on $\Omega^+(\vec{e_0})$. Note that $\Omega^+(\vec{e_0})=\Omega^-(-\vec{e_0})$, where $-\vec{e_0}$ is the edge $e_0$ with the opposite orientation as $\vec{e_0}$. We define $P$ on $\Omega^-(-\vec{e_0})$ by induction on $d(X,-\vec{e_0}) \geq 1$, with $X \in \Omega^-(-\vec{e_0})$. If $d(X,-\vec{e_0})=1$, then $X=W_0$, and define $P(W_0)=ab^{-1}$. If $d(X,-\vec{e_0})>1$, we define, similarly as on $\Omega^-(\vec{e_0})$, $P(X')$ as the concatenation $P(X')P(Y')$, with $X'$ and $Y'$ the two regions such that $X'$, $Y'$ and $X$ meet and $d(X',-\vec{e_0})<d(X,-\vec{e_0})$, $d(Y',-\vec{e_0})<d(X,-\vec{e_0})$. Therefore, using the basic fact that if $\{u,v\}$ is a basis of $\F_2$, so are $\{u,uv\}$ and $\{uv,v\}$, we easily deduce that for every edge $e \leftrightarrow(X,Y;Z,W)$, denoting $u=P(X)$ and $v=P(Y)$, the set $\{u,v\}$ is a free basis of $\F_2$ and moreover, up to exchanging $Z$ and $W$, $P(Z) \in \{uv,vu\}$ and $P(W) \in \{uv^{-1},u^{-1}v,vu^{-1},v^{-1}u\}$. In addition, all the words which appears as $P(X)$, for $X \in \Omega$, are cyclically reduced. 
	
	Let us now define a map $S : \Omega \to \Q \cup \{\infty\}$ which associate a rational to each region. Again, we proceed by induction. Let $S(X_0)=\infty=\frac{1}{0}$ and $S(Y_0)=0=\frac{0}{1}$. Then, if $X \in \Omega^{-}(\vec{e_0})$ with $X', Y'$ the two regions such that $X$, $X'$ and $Y'$ meet and $d(X',\vec{e_0})<d(X,\vec{e_0})$ and  $d(Y',\vec{e_0})<d(X,\vec{e_0})$, then $S(X)=\frac{p_X+p_Y}{q_X+q_Y}$, with $S(X')=\frac{p_X}{q_X}$ and $S(Y')=\frac{p_Y}{q_Y}$. 
	If $X=W_0$, define $S(W_0)=\frac{-1}{1}$ and if $X \in \Omega^-(-\vec{e_0})$ with $d(X,-\vec{e_0}) >1$, then again $S(X)=\frac{p_X+p_Y}{q_X+q_Y}$ with $S(X')=\frac{p_X}{q_X}$ and $S(Y')=\frac{p_Y}{q_Y}$. Note that if we see the tree $\Sigma$ as the dual as the Farey tesselation of the (hyperbolic) plane, the complementary regions of $\Sigma$ are by construction in natural correspondance with the vertices of this tesselation. It is a classical result that the map $S$ gives a bijection between the vertices of the Farey tesselation and $\Q \cup \{\infty \}$ (see for example \cite[Chapter 1]{hatcher_topology_2022}). Now let us recall that we can associate to each conjugacy and inversion class of primitive element a ``slope" which is defined as follows. Let $\mathrm{Ab} : \F_2 \to \Z^2$ be the abelianisation map, it induces a map from $\mathcal{P}(\F_2)$ to $\Q \cup \{\infty\}$ which we call the ``Slope" map. This map can be simply thought of as the ratio between the number of $a$ and of $b$ in a word $u$. The map ``Slope" turns out to be a bijection between $\mathcal{P}(\F_2)$ and $\Q \cup \{\infty \}$ (see \cite{nielsen_isomorphismen_1917} or \cite{osborne_primitives_1981}). Now note that by construction, $\mathrm{Slope} \circ P=S$. Hence, we deduce that the assignment $P$ of a (conjugacy and inversion class of) primitive elements to each complementary region we defined is a bijection. \\
	
	\subsection{Word length and the maps $F_e$}~ \\ 
	\indent Let $\gamma \in \F_2$. We define $\Vert \gamma \Vert$ the cyclically reduced word length of $\gamma$ in the generating set $\{a,b\}$. Let $\vec{e} \in \vec{E}(\Sigma)$ be an oriented edge in $\Sigma$ and $e$ the underlying unoriented edge. Following Bowditch in \cite{bowditch_markoff_1998}, we are now going to define the map $F_e : \Omega \to \N$ by induction. First define $F_{\vec{e}}(X)$ on $\Omega^{0-}(\vec{e})$. If $X \in \Omega^0(e)$, then let $F_{\vec{e}}(X)=1$, and if $X \in \Omega^{-}(\vec{e})$, and $X',Y'$ are the two regions such that $X$, $X'$ and $Y'$ meet, $d(X',\vec{e})<d(X,\vec{e})$ and $d(Y',\vec{e})<d(X,\vec{e})$, then define $F_{\vec{e}}(X)=F_{\vec{e}}(X')+F_{\vec{e}}(Y')$. Now we define $F_e(X)=F_{\vec{e}}(X)$ on $\Omega^{0-}(\vec{e})$ and $F_e(X)=F_{-\vec{e}}(X)$ on $\Omega^+({\vec{e}})$. We easily check that $F_{\vec{e_0}}(X)=\Vert P(X) \Vert$. Moreover, we can always compare $F_e$ and $F_{e'}$ : for all $e,e' \in E(\Sigma)$, there exists $K>0$ such that $\frac{1}{K}F_e \leq F_{e'} \leq KF_e(X)$. \\ 
	
	\subsection{Representations, maps $l_\rho$ and induced orientations $\alpha_\rho$ }~ \\
	\indent Let $\rho : \F_2 \to \mathrm{Isom}(\mathcal{X})$ be a representation of $\F_2$. We define the map $l_\rho$ from the set of regions $\Omega$ to $\R$ as follows : $l_\rho(X):=l_S(\rho(P(X)))$, for all $X \in \Omega$. Recall that $l_S(A)$ denotes the stable norm of the isometry $A$ and note that the quantity $l_S(\rho(P(X)))$ is well-defined since the stable norm is invariant under conjugation and inversion. 
	
	The map $l_\rho : \Omega \to \R_{\geq 0}$, will induce an orientation on the set of edges as follows. Let $e \leftrightarrow (X,Y;Z,W) \in E(\Sigma)$. Compare $l_\rho(Z)$ and $l_\rho(W)$ and put an arrow on $e$ from $Z$ to $W$ if $l_\rho(Z)>f(W)$ or from $W$ to $Z$ if $l_\rho(W)>l_\rho(Z)$ . If $l_\rho(Z)=l_\rho(W)$, choose any orientation on the edge $e$. This gives a map $\alpha_\rho : E(\Sigma) \to \vec{E}(\Sigma)$ which we call the induced orientation of $\rho$ on $E(\Sigma)$. \\
	
	Let $C_\delta=256 \delta$.  
	
	\begin{Remark}\label{rem:apply Prop to edge}
	Let us make an observation that will be used extensively in this section. 
	Let $\rho : \F_2 \to \mathrm{Isom}(\mathcal{X})$ be an irreducible representation.
	Let $e \leftrightarrow (X,Y;Z,W)$ be an edge in the tree $\Sigma$ satisfying $l_\rho(X)>C_\delta$ and $l_\rho(Y)>C_\delta$. Then $\rho(P(X))$ and $\rho(P(Y))$ are hyperbolic. Moreover recall that the set $\{P(X),P(Y)\}$ is a free basis of $\F_2$, so by irreducibility of $\rho$,  $\{\rho(P(X))^+,\rho(P(Y))^-\} \cap \{\rho(P(Y))^+,\rho(P(Y))^-\}=\emptyset$ (see remark \ref{rem:irreducibleF2}). Therefore, Proposition \ref{prop:ineq-l(AB)} applies, and we can write~:
	\begin{equation*}
		\max ( l_\rho(Z),l_\rho(W)) \geq l_\rho(X)+l_\rho(Y)-C_\delta. 
	\end{equation*}
	If moreover we know that $\alpha_\rho(e) \leftrightarrow (X,Y;Z \to W)$, then we deduce $l_\rho(Z) \geq l_\rho(X)+l_\rho(Y)-C_\delta$. 
	\end{Remark}
	
	Starting from this observation, we derive a first easy and very useful result to study the orientation~$\alpha_\rho$. 
	
	\begin{Lemma}\label{lem:fork}
		Let $\rho : \F_2\to \mathrm{Isom}(\mathcal{X})$ be an irreducible representation.
		Let $v \leftrightarrow (X,Y,Z)$ be a vertex of $\Sigma$. Assume that $l_\rho(X)> C_{\delta},l_\rho(Y)> C_{\delta}$ and $l_\rho(Z) > C_{\delta}$. Then, no two arrows among $\{ \alpha_\rho(X\cap Y),\alpha_\rho(Y\cap Z),\alpha_\rho(X\cap Z) \}$ are pointing away from the vertex $v$. 
	\end{Lemma}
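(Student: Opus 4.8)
The plan is to argue by contradiction, feeding Remark~\ref{rem:apply Prop to edge} into each of the three edges meeting at $v$. Write the three edges incident to $v$ as $X\cap Y$, $Y\cap Z$ and $X\cap Z$, and fix one of them, say $X\cap Y\leftrightarrow(X,Y;Z,W)$. Its two ``long'' boundary regions (those whose common boundary is the whole edge) are exactly $X$ and $Y$, the endpoint region sitting at $v$ is the remaining region $Z$ of the triple, and $W$ is the fourth region at the opposite endpoint. Since $l_\rho(X)>C_\delta$ and $l_\rho(Y)>C_\delta$ by hypothesis, Remark~\ref{rem:apply Prop to edge} applies to this edge and gives $\max(l_\rho(Z),l_\rho(W))\geq l_\rho(X)+l_\rho(Y)-C_\delta$.

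The key reformulation I would record next is that the arrow $\alpha_\rho(X\cap Y)$ points away from $v$ precisely when $l_\rho(Z)>l_\rho(W)$: by the orientation rule the tail sits at the endpoint region with the larger $l_\rho$-value, and $Z$ is the region attached to $v$. Combining this with the inequality above, whenever $\alpha_\rho(X\cap Y)$ points away from $v$ we get
\[
	l_\rho(Z)\;\geq\;l_\rho(X)+l_\rho(Y)-C_\delta.
\]
By the symmetric roles of $X$, $Y$, $Z$, the identical computation applied to $Y\cap Z$ (whose endpoint region at $v$ is $X$) and to $X\cap Z$ (whose endpoint region at $v$ is $Y$) shows that an arrow pointing away from $v$ along $Y\cap Z$ forces $l_\rho(X)\geq l_\rho(Y)+l_\rho(Z)-C_\delta$, and one along $X\cap Z$ forces $l_\rho(Y)\geq l_\rho(X)+l_\rho(Z)-C_\delta$.

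To conclude I would suppose for contradiction that two of the three arrows point away from $v$ and simply add the two corresponding inequalities. For instance, if the arrows on $X\cap Y$ and $Y\cap Z$ both point away, adding $l_\rho(Z)\geq l_\rho(X)+l_\rho(Y)-C_\delta$ and $l_\rho(X)\geq l_\rho(Y)+l_\rho(Z)-C_\delta$ cancels $l_\rho(X)$ and $l_\rho(Z)$ and yields $l_\rho(Y)\leq C_\delta$, contradicting $l_\rho(Y)>C_\delta$; the two remaining pairs are handled identically, each forcing the $l_\rho$-value of the region common to the two chosen edges to be $\leq C_\delta$.

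The only genuinely delicate step is the bookkeeping in the first two paragraphs: for each incident edge one must correctly match the two ``long'' regions (the two members of the triple whose common boundary is that edge) with the endpoint region attached to $v$ (the remaining member of the triple), and verify that ``arrow away from $v$'' corresponds to the larger $l_\rho$-value being realized at $v$. Once this dictionary is in place, no estimates beyond adding two inequalities are needed, so I expect the contradiction to be immediate.
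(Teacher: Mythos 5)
Your proof is correct and follows essentially the same route as the paper: assume two arrows point away from $v$, use Remark~\ref{rem:apply Prop to edge} (i.e.\ Proposition~\ref{prop:ineq-l(AB)}) on each of the two corresponding edges to get the two inequalities, and sum them to force the $l_\rho$-value of the common region below $C_\delta$. Your extra care in matching ``arrow away from $v$'' with ``the endpoint region at $v$ realizes the max'' is exactly the dictionary the paper uses implicitly, so there is nothing to add.
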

	
	\begin{proof}
		Assume by contradiction that two arrows point away from $v$, and without loss of generality assume that these arrows are $\alpha_\rho(X \cap Y)$ and $\alpha_\rho(X \cap Z)$. Then, as in Remark \ref{rem:apply Prop to edge}, we can apply Proposition \ref{prop:ineq-l(AB)} to $X$ and $Y$ to obtain : 
		\begin{equation}
			l_\rho(X)+l_\rho(Y) \leq l_\rho(Z) +C_{\delta}.
		\end{equation}
		Similarly, the same Proposition \ref{prop:ineq-l(AB)} applied to $X$ and $Z$ gives :
		\begin{equation}
			l_\rho(X)+l_\rho(Z) \leq l_\rho(Y) +C_{\delta}.
		\end{equation}
		By summing these two inequalities we deduce that 
		\begin{equation}
			2l_\rho(X)+l_\rho(Y)+l_\rho(Z) \leq l_\rho(Z)+l_\rho(Y)+2C_{\delta},
		\end{equation}
		and then we have $l_\rho(X)\leq C_{\delta}$, which is a contradiction.  
	\end{proof}
	
	Using Lemma \ref{lem:fork}, we can study the sublevet sets $\Omega_\rho(C):=\{ X \in \Omega : l_\rho(X) \leq  C \}$ for all $C\geq C_\delta$, and prove : \\ 
	
	\begin{Lemma} \label{lem:connected}
		Let $\rho : \F_2 \to \mathrm{Isom}(\mathcal{X})$ be an irreducible representation. 
		For all $C \geq C_\delta$, the subset of the plane $\displaystyle \underset{X \in \Omega_\rho(C)}{\bigcup} X$ is connected. 
	\end{Lemma}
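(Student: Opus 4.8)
The plan is to reduce the assertion to a purely combinatorial connectivity property of the region-adjacency graph and then to feed it into the fork lemma (Lemma \ref{lem:fork}). First I would record the elementary observation that two regions $X,Y\in\Omega$ have non-disjoint closures if and only if they share an edge of $\Sigma$: if $X$ and $Y$ meet at a vertex $v$, then $v\leftrightarrow(X,Y,Z)$ for the three regions around $v$, and $X\cap Y$ is already one of the three edges issuing from $v$; and a shared interior edge-point forces $\{X,Y\}=\Omega^0(e)$. Consequently $U:=\bigcup_{X\in\Omega_\rho(C)}X$ is connected as a subset of the plane if and only if the subgraph of the region-adjacency graph induced by $\Omega_\rho(C)$ is connected, two regions being adjacent precisely when they share an edge of $\Sigma$. (If $\Omega_\rho(C)$ is empty or reduced to one region the statement is trivial.)

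Next I would argue by contradiction, assuming $\Omega_\rho(C)$ splits into at least two classes with no region of one class adjacent to a region of another. Topologically, such a splitting of a union of tiles of the plane forces a \emph{separating barrier} made of regions none of which lies in $\Omega_\rho(C)$. Since every barrier region $M$ satisfies $l_\rho(M)>C\ge C_\delta$, and since $\Sigma$ is a tree, this barrier is supported in $\Sigma$ along a geodesic $L$ all of whose vertices $v\leftrightarrow(X,Y,Z)$ have their three incident regions \emph{long}, i.e.\ of $l_\rho$-length $>C_\delta$. Hence the fork lemma (Lemma \ref{lem:fork}) applies at \emph{every} vertex of $L$: at each such $v$, at most one of the three incident edges points away from $v$ under the induced orientation $\alpha_\rho$.

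Finally I would extract a contradiction from $\alpha_\rho$ along $L$. The fork lemma forbids the two $L$-edges at any vertex from simultaneously pointing away from it, so along $L$ the orientation of the line-edges is forced to be coherent; combining this with the arrows on the edges hanging off $L$ — whose heads are governed, via Remark \ref{rem:apply Prop to edge} and Proposition \ref{prop:ineq-l(AB)}, by the comparison between $l_\rho(M)$ and the length of the region of $\Omega_\rho(C)$ lying in the subtree beyond that off-edge — should force some vertex of $L$ to carry two outgoing arrows, contradicting Lemma \ref{lem:fork}. The main obstacle is precisely this last step together with the topological input feeding it: one must justify carefully that a planar disconnection of the tile-union produces such a geodesic barrier of long regions rather than a more intricate separating set, and then organise the bookkeeping of $\alpha_\rho$ on the line-edges and off-line edges so that the presence of short regions on \emph{both} sides of $L$ becomes genuinely incompatible with the no-diverging-arrows conclusion. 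Checking that the threshold $C\ge C_\delta$ keeps all barrier regions in the regime where Lemma \ref{lem:fork} is available is exactly what makes the constant $C_\delta$ intervene.
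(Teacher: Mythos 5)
Your overall strategy is the right one and is essentially the paper's: separate two short regions by a path in the tree all of whose interior regions are long, and play the induced orientation $\alpha_\rho$ against the fork lemma. But the proposal stops exactly at the crux, which you yourself flag as ``the main obstacle,'' so as written there is a genuine gap. Two points close it, and both are simpler than the bookkeeping you anticipate.

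First, the topological worry about ``a more intricate separating set'' dissolves if you set things up combinatorially from the start: if $\bigcup_{X\in\Omega_\rho(C)}X$ is disconnected, pick $Z,W\in\Omega_\rho(C)$ in different components whose tree-path (the shortest edge-path of $\Sigma$ joining $\partial Z$ to $\partial W$) has minimal length; minimality guarantees that every region meeting the interior of this path has length $>C\ge C_\delta$. There is no need to argue that a planar disconnection produces a geodesic barrier --- you choose the path, and the tree structure of $\Sigma$ does the rest. Note also that the degenerate case of a path of length $1$, i.e.\ an edge $e\leftrightarrow(X,Y;Z,W)$ with $l_\rho(X),l_\rho(Y)>C$, has no interior vertex and cannot be handled by the fork lemma at all; it must be killed directly by Proposition \ref{prop:ineq-l(AB)}, which gives $\max(l_\rho(Z),l_\rho(W))\ge l_\rho(X)+l_\rho(Y)-C>C$, contradicting $Z,W\in\Omega_\rho(C)$. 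Your sketch omits this case.

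Second, the final contradiction does not require any analysis of the edges hanging off the path, nor any comparison via Remark \ref{rem:apply Prop to edge} of off-path regions with regions ``beyond'' them. Only the two \emph{terminal} edges of the path matter: the terminal edge $e_1$ at the $Z$-end has $Z$ (of length $\le C$) and an interior region of the path (of length $>C$) as the two regions meeting it at its endpoints, so by the very definition of $\alpha_\rho$ the arrow on $e_1$ points toward $Z$, i.e.\ away from the path's interior; symmetrically the arrow on the terminal edge at the $W$-end points toward $W$. A path whose first arrow points backward and whose last arrow points forward must contain an interior vertex with both incident path-edges pointing away from it; since all three regions at that vertex have length $>C\ge C_\delta$, Lemma \ref{lem:fork} is contradicted. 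Filling in these two steps turns your outline into the paper's proof.
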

	
	\begin{proof}
		The proof of this Lemma will very much follow the proof of the corresponding proposition in \cite{bowditch_markoff_1998}. 
		
		First notice that since $C \geq C_\delta$, Proposition \ref{prop:ineq-l(AB)} is still true when replacing $C_\delta$ by $C$. Assume by contradiction that this subset of the plane is not connected, and consider two regions $Z$ and $W$ satisfying the following : $l_\rho(Z)\leq C$, $l_\rho(W)\leq C$, $Z\cap W = \emptyset$, and the shortest path between $Z$ and $W$ does not meet in its interior a region $R$ with $l_\rho(R) \leq C_\delta$. If the length of this path is $1$, let $e \leftrightarrow (X,Y ; Z,W)$ be the only edge of the path. Then $l_\rho(X)>C$ and $l_\rho(Y) >C$ so Proposition \ref{prop:ineq-l(AB)} applies and we deduce that $\max(l_\rho(Z),l_\rho(W)) \geq l_\rho(X)+l_\rho(Y)-C >C$. Of course, this contradicts the assumption on $Z$ and $W$. Now assume that the length of the path is at least 2. Denote $e_1$ the edge of the path which meets $Z$ at one of its endpoint and $e_2$ the edge of the path which meets $W$ at one of its endpoint. Then, since $e_1$ meets no region with length smaller or equal to $C$ except $Z$, we deduce that the orientation on $\alpha_\rho(e_1)$ is toward $Z$. Similarly, the orientation on $\alpha_\rho(e_2)$ is toward $W$. This implies the existence of a vertex in the interior of the path with two oriented edge (for the orientation $\alpha_\rho$) pointing away from it. Since all the region meeting this vertex have length greater that $C$, this contradicts Lemma \ref{lem:fork}.
	\end{proof}
	
	When $\vec{e}$ and $\vec{e'}$ are two oriented edges, we say that $\vec{e'}$ \emph{point toward} $\vec{e}$ if $\vec{e'}=\vec{e}$ or if $e \in E^+(\vec{e'})$ and $e' \in E^-(\vec{e})$. Note that this is a transitive relation. 
	
	\begin{Lemma}\label{lem:growth on big edge}
		Let $\rho : \F_2 \to \mathrm{Isom}(\mathcal{X})$ be an irreducible representation. 
		Let $e \leftrightarrow (X,Y ; Z,W)$ be an edge of $\Sigma$ such that $l_\rho(X)>C_\delta$ and $l_\rho(Y)>C_\delta$. Let $\vec{e}:=\alpha_\rho(e)$. Then for every $e' \in \Sigma^{0-}(\overset{\rightarrow}{e})$, $\alpha_\rho(e')$ points towards $\vec{e}$ and for every $X' \in \Omega^{0-}(\vec{e})$, we have the inequality $l_\rho(X') \geq (m-C_\delta)F_{\vec{e}}(X')+C_\delta$, where $m=\min\{l_\rho(X),l_\rho(Y)\}$.
	\end{Lemma}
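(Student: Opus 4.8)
The plan is to establish both conclusions at once by a single induction that sweeps outward from $e$ through the tail subtree. Set $m=\min\{l_\rho(X),l_\rho(Y)\}$; since $l_\rho(X),l_\rho(Y)>C_\delta$ we have $m>C_\delta$, so the target inequality $l_\rho(X')\geq (m-C_\delta)F_{\vec{e}}(X')+C_\delta$ together with $F_{\vec{e}}\geq 1$ forces $l_\rho(X')\geq m>C_\delta$ for every region on the tail side. That positivity is exactly what lets me keep invoking Lemma \ref{lem:fork}, which in turn is what controls the orientations; so the growth bound and the orientation statement genuinely have to be proved together.

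Concretely, I would induct on the edge-distance $d(e,f)$ and prove, for every edge $f\in E^{0-}(\vec{e})$, the following package: (i) $\alpha_\rho(f)$ points towards $\vec{e}$; (ii) the two flanking regions $\{P,Q\}=\Omega^0(f)$ satisfy the growth bound (hence exceed $C_\delta$); and (iii) the region $R$ sitting at the tail of $\alpha_\rho(f)$ satisfies $l_\rho(R)\geq l_\rho(P)+l_\rho(Q)-C_\delta$ together with the growth bound. This suffices: (i) is precisely the orientation assertion, and since every region of $\Omega^{0-}(\vec{e})$ of depth $\geq 1$ is the tail region of its unique generating edge while $X,Y$ are the flanking regions of $e$ itself, parts (ii)--(iii) deliver the growth bound for all of $\Omega^{0-}(\vec{e})$. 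A pleasant by-product is that the combinatorial orientation of each edge coming from the tree notation (for which the flanking regions meet at the tail) then coincides with the dynamical orientation $\alpha_\rho$.

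For the base case $f=e$, (i) is the definition of ``points towards'', (ii) holds because $F_{\vec{e}}(X)=F_{\vec{e}}(Y)=1$ and $l_\rho(X),l_\rho(Y)\geq m$, and (iii) is Remark \ref{rem:apply Prop to edge} applied to $e$: its tail region $Z$ obeys $l_\rho(Z)\geq l_\rho(X)+l_\rho(Y)-C_\delta\geq 2m-C_\delta=(m-C_\delta)F_{\vec{e}}(Z)+C_\delta$ since $F_{\vec{e}}(Z)=2$. For the inductive step at $f$ with $d(e,f)=n\geq 1$, I would let $v$ be the endpoint of $f$ nearer $e$ and $g$ the edge at $v$ pointing toward $e$, so $d(e,g)=n-1$ and the hypothesis applies to $g$. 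One checks that $v$ is the tail vertex of $\alpha_\rho(g)$, so the three regions at $v$ are $\Omega^0(g)=\{P_g,Q_g\}$ together with the tail region $R_g$ of $\alpha_\rho(g)$; by hypothesis all three exceed $C_\delta$, and $f$ is one of the two new edges $P_g\cap R_g$, $Q_g\cap R_g$. Because $\alpha_\rho(g)$ points away from $v$, Lemma \ref{lem:fork} forbids $\alpha_\rho(f)$ from also pointing away from $v$, so $\alpha_\rho(f)$ has its head at $v$, which by transitivity of ``points towards'' gives (i). With $\Omega^0(f)=\{P_g,R_g\}$, part (ii) is inherited from $g$, and Remark \ref{rem:apply Prop to edge} applied to the now-oriented $f$ bounds its tail region $R_f$ by $l_\rho(R_f)\geq l_\rho(P_g)+l_\rho(R_g)-C_\delta$; feeding in the two inductive growth bounds and the additivity $F_{\vec{e}}(R_f)=F_{\vec{e}}(P_g)+F_{\vec{e}}(R_g)$ yields (iii).

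I expect the main difficulty to be organizational rather than analytic: keeping straight which of the three regions at a vertex is the newly produced ``large'' one, verifying that ``head at $v$'' really does mean ``points towards $\vec{e}$'' via the transitivity of the relation, and guaranteeing that all three regions at each vertex exceed $C_\delta$ \emph{before} Lemma \ref{lem:fork} is invoked. The genuine mathematical input is confined to Remark \ref{rem:apply Prop to edge} (hence Proposition \ref{prop:ineq-l(AB)}) and Lemma \ref{lem:fork}; the reason the two conclusions must travel together through the induction is precisely that the positivity needed for Lemma \ref{lem:fork} is extracted from the growth bound one step at a time.
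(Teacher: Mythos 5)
Your argument is correct and follows essentially the same route as the paper: an outward induction from $e$ through the tail subtree, using Proposition \ref{prop:ineq-l(AB)} (via Remark \ref{rem:apply Prop to edge}) to bound the new region at each vertex and Lemma \ref{lem:fork} to propagate the orientation. The only difference is organizational — the paper runs two separate inductions (first on edges for the orientations and the $>C_\delta$ bounds, then on regions for the growth estimate), whereas you merge them into a single induction carrying both conclusions; the mathematical content is identical.
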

	
	\begin{proof}
		Let us first prove that for all edge $e'=X'\cap Y' \in E^{0-}(\vec{e})$,  we have $l_\rho(X')>C_\delta$, $l_\rho(Y') >C_\delta$ and $\alpha(e')$ points toward $e$. We proceed by induction on $d=d(e',e) \geq 0$. 
		\begin{itemize}
			\item The property is true by assumption for $d=0$, since in this case $e'=e$.
			\item Suppose now that $e_0 \in E$ is at distance $d \geq 0$ of $e$. Let $e_1$ and $e_2$ be the two edges at distance $d+1$ of $e$ which intersect with $e_0$, and $X', Y', Z'$ be the three regions such that $e_0=X'\cap Y'$, $e_1=X' \cap Z'$ and $e_2=Y' \cap Z'$. Also denote $v=e_0 \cap e_1 \cap e_2$.  Then by the induction hypothesis, $l_\rho(X')>C_\delta$ and $l_\rho(Y')>C_\delta$ and $\alpha_\rho(e_0)$ points away from $v$. Therefore, we can use Proposition \ref{prop:ineq-l(AB)} to deduce $l(Z') \geq l(X')+l(Y')-C_\delta >C_\delta$. Now we know that the three regions around $v$ satisfy the hypothesis of Lemma \ref{lem:fork} and that $\alpha_\rho(e_0)$ points away from $v$, so both $\alpha_\rho(e_1)$ and $\alpha_\rho(e_2)$ point toward $\alpha_\rho(e_0)$, which itself points toward $\vec{e}$, so both $\alpha_\rho(e_1)$ and $\alpha_\rho(e_2)$ point toward $\vec{e}$ and the induction is proved. 
		\end{itemize}
		Now the inequality $l_\rho(X') \geq (m-C_\delta)F_{\vec{e}}(X')+C_\delta$ follow from an easy induction on $d=d(X',\vec{e})\geq 0$.
		\begin{itemize}
			\item If $d(X',\vec{e})=0$, then $X'=X$ or $X'=Y$ and then the inequality is trivial.
			\item If $d(X',\vec{e})>0$, then there exists $\vec{e'} \in \vec{E}^{0-}(\vec{e})$ and two regions $Y', Z' \in \Omega^{0-}(\vec{e})$ such that $X'$, $Y'$ and $Z'$ meet at the tail of $\vec{e'}$, $d(Y',\vec{e}) <d(X',\vec{e})$ and $d(Z',\vec{e})<d(X',\vec{e})$. We have $Y'\cap Z'=e'$.
			 By induction $\alpha_\rho(e')=\vec{e'}$ and $l_\rho(Y') >C_\delta$, $l_\rho(Z')>C_\delta$, so Proposition \ref{prop:ineq-l(AB)} ensures $l_\rho(X') \geq l(Y')+l(Z')-C_\delta$. Now, by the induction hypothesis we also have $l(Y') \geq (m-C_\delta)F_{\vec{e}}(Y')+C_\delta$ and $l(Z') \geq (m-C_\delta)F_{\vec{e}}(Z')+C_\delta$. The desired inequality is now obtained combining these three inequalities and using that $F_{\vec{e}}(X')=F_{\vec{e}}(Y')+F_{\vec{e}}(Z')$.
		\end{itemize}
	\end{proof}
	
	\subsection{The equivalence \ref{cond:thm:Cdelta} $\iff$ \ref{cond:thm:K>Cdelta} $\iff$ \ref{cond:thm:displacing} in Theorem \ref{thm:equivalence}}~ \\ 
	\indent We now study more specificaly the representations which satisfies the conditions of Theorem \ref{thm:equivalence}. Let us recall the conditions \ref{cond:thm:Cdelta}, \ref{cond:thm:K>Cdelta} and \ref{cond:thm:displacing} here.
	
	\begin{Definition} Let $\mathcal{X}$ be a $\delta$-hyperbolic, geodesic, visibility space. Let $K_\delta=329\delta$. \\
		Let $\rho : \F_2 \to \mathrm{Isom}(\mathcal{X})$ be a representation. We say that $\rho$ satisfies :
		\begin{enumerate}[label=(BQ\alph*)]
			\item \label{def:BQa}if for all $[\gamma] \in \mathcal{P}(\F_2)$, $\rho(\gamma)$ is hyperbolic and \\ 
			the set $\left\{ [\gamma] \in \mathcal{P}(\F_2) \, \vert \, l_S(\rho(\gamma)\leq K_\delta)\right\}$ is finite, 
			\item \label{def:BQb} if for all $[\gamma] \in \mathcal{P}(\F_2)$, $\rho(\gamma)$ is hyperbolic and \\
			for every $K>0$, the set $\left\{ [\gamma] \in \mathcal{P}(\F_2) \, \vert \, l_S(\rho(\gamma)\leq K)\right\}$ is finite,
			\addtocounter{enumi}{1}
			\item \label{def:BQd} if there exist $C>0$ and $D\geq 0$ such that for all $[\gamma] \in \mathcal{P}(\F_2)$, $l_S(\rho(\gamma)) \geq \frac{1}{C}\Vert \gamma \Vert -D$.
	\end{enumerate}
	\end{Definition}
	
	\begin{Theorem} \label{thm:equivalence a,b,d} Let $\mathcal{X}$ be a $\delta$-hyperbolic, geodesic, visibility space. Let $K_\delta=329\delta$. \\
		Let $\rho : \F_2 \to \mathrm{Isom}(\mathcal{X})$ be a representation. Then, \\ 
		$\rho$ satisfies \ref{def:BQa} $\iff$ $\rho$ satisfies \ref{def:BQb} $\iff$  $\rho$ satisfies \ref{def:BQd}.
	\end{Theorem}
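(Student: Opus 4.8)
The plan is to establish the cycle of implications \ref{def:BQa} $\Rightarrow$ \ref{def:BQd} $\Rightarrow$ \ref{def:BQb} $\Rightarrow$ \ref{def:BQa}. The last implication is immediate, since \ref{def:BQa} is exactly \ref{def:BQb} specialized to $K = K_\delta$. The heart of the matter is the first implication \ref{def:BQa} $\Rightarrow$ \ref{def:BQd}, for which I would in fact produce the inequality with $D = 0$, exploiting the Farey-tree machinery developed above; the implication \ref{def:BQd} $\Rightarrow$ \ref{def:BQb} is then a softer argument combining a coarse upper bound for $l_S$ with the injectivity of the slope map.

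First I would treat \ref{def:BQa} $\Rightarrow$ \ref{def:BQd}. Assuming \ref{def:BQa}, Proposition \ref{prop:irreducible} gives that $\rho$ is irreducible, so all the results of this section apply; moreover every $l_\rho(X) = l_S(\rho(P(X)))$ is strictly positive, since every primitive image is hyperbolic. The sublevel set $\Omega_\rho(K_\delta) = \{X : l_\rho(X) \le K_\delta\}$ is finite by \ref{def:BQa}, and by Lemma \ref{lem:connected} the corresponding union of regions is a finite \emph{connected} blob in the plane. The key combinatorial step, following Bowditch, is to show that the complement $\Omega \setminus \Omega_\rho(K_\delta)$ is covered by finitely many subtrees of the form $\Omega^{0-}(\vec e_i)$, where $e_1, \dots, e_k$ are the finitely many ``big'' edges on the frontier of the blob, each satisfying $l_\rho > K_\delta$ on both of its two adjacent regions and oriented by $\alpha_\rho$ towards the blob. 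On each such subtree Lemma \ref{lem:growth on big edge} yields $l_\rho(X) \ge (m_i - C_\delta)F_{\vec e_i}(X) + C_\delta$ with $m_i > K_\delta$, hence a uniform positive slope $m_i - C_\delta \ge K_\delta - C_\delta = \delta$. Since there are finitely many $e_i$ and any two functions $F_e, F_{e'}$ are comparable up to a multiplicative constant, this gives $l_\rho(X) \ge \tfrac{1}{C'}\,\|P(X)\|$ for every region outside the blob. For the finitely many regions inside the blob, positivity of $l_\rho$ together with the boundedness of their word lengths furnishes a uniform lower bound on the ratio $l_\rho(X)/\|P(X)\|$; enlarging the constant then yields $l_S(\rho(\gamma)) \ge \tfrac1C \|\gamma\|$ for all primitive $\gamma$, which is \ref{def:BQd} with $D = 0$.

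Next I would prove \ref{def:BQd} $\Rightarrow$ \ref{def:BQb}. The finiteness statement is direct: if $l_S(\rho(\gamma)) \le K$ then $\tfrac1C\|\gamma\| - D \le K$, so $\|\gamma\| \le C(K+D)$, and there are only finitely many classes in $\mathcal{P}(\F_2)$ of bounded word length since the slope map is a bijection onto $\Q \cup \{\infty\}$. For hyperbolicity, suppose for contradiction that some primitive $\gamma_0$ has $A := \rho(\gamma_0)$ non-hyperbolic, i.e. $l_S(A) = 0$. Completing $\gamma_0$ to a basis $\{\gamma_0, \eta\}$ and setting $B := \rho(\eta)$, the elements $\gamma_0^n\eta$ are primitive, and for a basepoint $o$ one has $l_S(BA^n) \le d(BA^n o, o) \le d(A^n o, o) + d(Bo, o)$; since $\tfrac1n d(A^n o, o) \to l_S(A) = 0$, this forces $l_S(\rho(\gamma_0^n\eta)) = o(n)$. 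On the other hand $\|\gamma_0^n\eta\|$ grows linearly in $n$ (it equals $n+1$ in the basis $\{\gamma_0,\eta\}$, and the word length in the fixed basis differs by at most a bounded factor), so \ref{def:BQd} forces $l_S(\rho(\gamma_0^n\eta))$ to grow at least linearly, a contradiction for large $n$. Hence every primitive image is hyperbolic, completing \ref{def:BQb}. The same ``$o(n)$ versus linear'' comparison, now combined with Proposition \ref{prop:AnB}, also upgrades a general \ref{def:BQd} to the case $D = 0$ announced in Remark \ref{rem:thm}.

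I expect the main obstacle to be the combinatorial covering step inside \ref{def:BQa} $\Rightarrow$ \ref{def:BQd}: making precise the finite family of frontier big edges $e_i$, checking that the induced orientation $\alpha_\rho$ points coherently towards the blob (via Lemma \ref{lem:fork}), and verifying that every outside region lies in some $\Omega^{0-}(\vec e_i)$ so that Lemma \ref{lem:growth on big edge} applies with a uniformly positive slope. Once this bookkeeping is in place, the remaining comparisons of the $F_e$ and the treatment of the finite blob are routine.
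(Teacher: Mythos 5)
Your overall architecture (cycle of implications, with \ref{def:BQa} $\Rightarrow$ \ref{def:BQd} as the core, and the $\gamma_0^n\eta$ sub-additivity trick for \ref{def:BQd} $\Rightarrow$ \ref{def:BQb}) matches the paper, and the \ref{def:BQd} $\Rightarrow$ \ref{def:BQb} part is essentially the paper's argument. However, there is a genuine gap in your covering step for \ref{def:BQa} $\Rightarrow$ \ref{def:BQd}. You claim that $\Omega \setminus \Omega_\rho(K_\delta)$ is covered by \emph{finitely many} subtrees $\Omega^{0-}(\vec e_i)$ rooted at frontier edges that are ``big on both sides.'' This cannot work as soon as the blob is nonempty: if $X$ is a small region, the infinitely many regions $Y_n$ with $P(Y_n)$ conjugate to $a^n b$ all share an edge with $X$, so any subtree $\Omega^{0-}(\vec e)$ containing some $Y_n$ in its interior must also contain $X$; hence the $Y_n$ can only be reached through the edges $X\cap Y_n$ on $\partial X$, each of which has the small region $X$ on one side. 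This is exactly the hard case the paper isolates: Lemma \ref{lem:circular-set}\ref{lemcase:l(X)} only guarantees that \emph{one} of the two regions adjacent to a circular-set edge is big, and the case where the other is small is handled by Lemma \ref{lem:growth when l(X) petit}, which needs two ingredients you never invoke here: Proposition \ref{prop:B(A+)neqA-explicite} to rule out $B(A^+)=A^-$ (otherwise $l_S(A^nB)$ stays bounded and \ref{def:BQa} is violated), and Proposition \ref{prop:AnB} to get $l_S(A^nB)\geq l_S(A)n-k$, so that the slopes $(m_n-C_\delta)/(n+2)$ coming from the infinitely many edges $f_n=Y_n\cap Y_{n+1}$ are uniformly bounded below (they tend to $l_S(A)>0$, which can be arbitrarily small, so ``uniform positive slope $\geq\delta$'' as you assert is also false near a small region).

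A second, smaller omission: you do not treat the case $\Omega_\rho(K_\delta)=\emptyset$ (no blob, hence no frontier). The paper handles this by proving that $\alpha_\rho$ has no escaping ray (Lemma \ref{lem:no-escaping-ray}) and hence a unique sink (Proposition \ref{lem:sink}), and then applies Lemma \ref{lem:growth on big edge} to the three edges at the sink; the no-escaping-ray argument again relies on Propositions \ref{prop:AnB} and \ref{prop:B(A+)neqA-explicite} in the sub-case where the ray eventually runs along the boundary of a single region. So both missing pieces trace back to the quantitative growth of $l_S(A^nB)$, which is the genuinely new analytic input of this setting and cannot be replaced by the purely combinatorial bookkeeping you describe.
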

	
	\begin{proof}
		The implication \ref{def:BQb} $\implies$ \ref{def:BQa} is trivial. 
		
		The implication \ref{def:BQd} $\implies$ \ref{def:BQb} is nearly immediate when noticing the following claim : Let $C>0$ and $D\geq 0$ such that for all $[\gamma] \in \mathcal{P}(\F_2)$, $l_S(\rho(\gamma)) \geq \frac{1}{C} \Vert \gamma \Vert -D$, then the stronger inequality $l_S(\rho(\gamma)) \geq \frac{1}{C}\Vert \gamma \Vert$ holds for every $[\gamma] \in \mathcal{P}(\F_2)$. Indeed, let $\gamma$ be a cyclically reduced primitive element of $\F_2$. Take $\beta$ another primitive element of $\F_2$ such that for all $n \in \N$, $\gamma^n\beta$ is cyclically reduced and primitive. Then the length $\Vert \gamma^n\beta \Vert$ is equal to $n\Vert \gamma \Vert + \beta$. Let $o \in \mathcal{X}$. Therefore, condition \ref{def:BQd} implies, for all $n \in \N$ : 
		\begin{align*}
			d(\rho(\gamma)^no,o)+d(\rho(\beta)o,o) \geq d(\rho(\gamma^n\beta)o,o) \geq l_S(\rho(\gamma^n\beta )) \geq \frac{1}{C} \Vert \gamma^n\beta \Vert -D \geq \frac{1}{C}n\Vert \gamma \Vert +\frac{1}{C} \Vert \beta \Vert -D.
		\end{align*} 
		Dividing by $n$ and taking the limit in the previous inequality proves the claim. Now from the claim and the characterization of hyperbolicity via the positivity of the stable norm, we deduce that for all $[\gamma] \in \mathcal{P}(\F_2)$, $\rho(\gamma)$ is hyperbolic. The claim also implies that the set $\left\{ [\gamma] \in \mathcal{P}(\F_2) \, \vert \, l_S(\rho(\gamma)\leq K)\right\}$ is finite for every $K >0$ since there is only a finite number of cyclically reduced word $\gamma$ such that $\Vert \gamma \Vert$ is bounded by some constant.  
		
		The implication \ref{def:BQa} $\implies$ \ref{def:BQd} is therefore the core of Theorem \ref{thm:equivalence a,b,d}. It is proven in Proposition \ref{lem:Omega vide} in the case $\Omega_\rho(C_\delta)=\emptyset$ and in Proposition \ref{lem:Omega non vide} in the case $\Omega_\rho(C_\delta) \neq \emptyset$.

	\end{proof}
	
	\subsection{The case $\Omega_\rho(C_{\delta})=\emptyset$}~ \\ 
	Let $(\vec{e_n})_{n\in \N}$ be a sequence of oriented edges. We say that $(\vec{e_n})_{n\in \N}$ is an \emph{escaping ray} if for all $n\in \N$, the head of $\vec{e_n}$ is the tail of $\vec{e_{n+1}}$. 
	We say that the orientation $\alpha_\rho$ has an \emph{escaping ray} if there exists an escaping ray $(\vec{e_n})_{n\in \N}$ with $\alpha_\rho(e_n)=\vec{e_n}$ for all $n\in \N$. 
	\begin{Lemma}\label{lem:no-escaping-ray}
	Let $\rho : \F_2 \to \mathrm{Isom}(\mathcal{X})$ satisfying \ref{def:BQa}. If $\Omega_\rho(C_{\delta}) = \emptyset$, then the orientation $\alpha_\rho$ has no escaping ray. 
	\end{Lemma}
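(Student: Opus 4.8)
The plan is to argue by contradiction: assume there is an escaping ray $(\vec{e_n})_{n\in\N}$ with $\alpha_\rho(e_n)=\vec{e_n}$ for all $n$, and use the growth estimate of Lemma \ref{lem:growth on big edge} to force some fixed region to have infinite length.

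First I would extract two consequences of the hypotheses. Since $\Omega_\rho(C_\delta)=\emptyset$, every region $X\in\Omega$ satisfies $l_\rho(X)>C_\delta$; in particular, writing $e_n\leftrightarrow(X_n,Y_n;Z_n,W_n)$, both $l_\rho(X_n)>C_\delta$ and $l_\rho(Y_n)>C_\delta$, so Lemma \ref{lem:growth on big edge} is applicable to $\vec{e_n}$ for every $n$. Next, condition \ref{def:BQa} (together with the fact that $l_\rho(X)=l_S(\rho(P(X)))$ and that $P$ is a bijection $\Omega\to\mathcal{P}(\F_2)$) tells us that only finitely many regions $X$ satisfy $l_\rho(X)\leq K_\delta$. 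Writing $\mathcal{F}$ for this finite set and $\mu=\min_{X\in\mathcal{F}}l_\rho(X)$ (with $\mu=+\infty$ if $\mathcal{F}=\emptyset$), and setting $\epsilon=\min(\delta,\mu-C_\delta)$, I obtain a \emph{uniform} gap: since each region lies in $\mathcal{F}$ (whence $l_\rho(X)\geq\mu>C_\delta$) or has $l_\rho(X)>K_\delta=C_\delta+\delta$, every region satisfies $l_\rho(X)\geq C_\delta+\epsilon$ with $\epsilon>0$.

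The heart of the argument is then to apply Lemma \ref{lem:growth on big edge} along the ray to a single well-chosen region. Fix $R_0$ strictly in the tail subtree $\Sigma^-(\vec{e_0})$, for instance the tail region $Z_0$, whose boundary lies entirely in $\Sigma^-(\vec{e_0})$. Because the tail sides are nested, $\Omega^-(\vec{e_0})\subseteq\Omega^-(\vec{e_n})$, we have $R_0\in\Omega^{0-}(\vec{e_n})$ for every $n$, and Lemma \ref{lem:growth on big edge} applied to $\vec{e_n}$ yields
\[
l_\rho(R_0)\;\geq\;(m_n-C_\delta)\,F_{\vec{e_n}}(R_0)+C_\delta\;\geq\;\epsilon\,F_{\vec{e_n}}(R_0)+C_\delta,
\]
where $m_n=\min(l_\rho(X_n),l_\rho(Y_n))\geq C_\delta+\epsilon$ by the uniform gap above. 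As $n\to\infty$ the head of $\vec{e_n}$ escapes to infinity in $\Sigma$ while $R_0$ stays fixed, so $d(R_0,\vec{e_n})\to\infty$ and hence $F_{\vec{e_n}}(R_0)\to\infty$ (the value $F_{\vec{e_n}}$ grows without bound with the tree distance to $\vec{e_n}$). The right-hand side therefore tends to $+\infty$, contradicting that $l_\rho(R_0)$ is a fixed finite number; this contradiction proves that no escaping ray exists.

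I expect the main obstacle to be the uniform control of the coefficient $m_n-C_\delta$: the assumption $\Omega_\rho(C_\delta)=\emptyset$ by itself only gives $m_n>C_\delta$, and this could degenerate to $0$ as $n\to\infty$, killing the blow-up. This is precisely where the finiteness clause of \ref{def:BQa} is indispensable, supplying the uniform $\epsilon>0$. The only other point demanding care is to take $R_0$ genuinely behind $\vec{e_0}$ rather than on the ray itself, so that $R_0$ remains on the tail side of every $\vec{e_n}$ and its $F_{\vec{e_n}}$-value is unbounded; choosing $R_0$ among the regions of the boundary of the ray would leave $F_{\vec{e_n}}(R_0)$ bounded and give no contradiction.
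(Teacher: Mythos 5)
Your proof is correct, and it takes a genuinely different route from the paper's. The paper argues via a tri-coloring of $\Omega$: the three sequences of regions met by the ray are non-increasing in $l_\rho$, hence convergent, so eventually consecutive terms differ by at most $2\delta$; two applications of Proposition \ref{prop:ineq-l(AB)} at a vertex then produce infinitely many regions with $l_\rho \leq C_\delta+\delta = K_\delta$, contradicting the finiteness clause of \ref{def:BQa}. (The degenerate case where the ray eventually hugs a single region is handled separately there using Propositions \ref{prop:AnB} and \ref{prop:B(A+)neqA-explicite} on the growth of $l_S(A^nB)$.) You instead spend the finiteness clause up front to extract a uniform gap $l_\rho(X)\geq C_\delta+\epsilon$ over \emph{all} regions, and then feed that into Lemma \ref{lem:growth on big edge} applied along the ray to the fixed tail region $R_0=Z_0$, forcing $l_\rho(R_0)\geq \epsilon F_{\vec{e_n}}(R_0)+C_\delta \to \infty$. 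This is shorter, avoids the tri-coloring and the $l_S(A^nB)$ machinery entirely, and correctly identifies that the uniform gap is exactly where \ref{def:BQa} is indispensable; the paper's version, in exchange, exhibits the failure more concretely (an explicit infinite family of $K_\delta$-short primitives). Your choice of $R_0$ strictly behind $\vec{e_0}$, and the nesting $\Omega^{-}(\vec{e_0})\subseteq\Omega^{-}(\vec{e_n})$, are both right.

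Two small points you should make explicit. First, Lemma \ref{lem:growth on big edge} assumes irreducibility; this holds here by Proposition \ref{prop:irreducible} since $\rho$ satisfies \ref{def:BQa}, but it should be invoked. Second, the claim that $F_{\vec{e_n}}(R_0)\to\infty$ needs a one-line justification: one checks by induction on $d=d(X,\vec{e})$ that $F_{\vec{e}}(X)\geq d(X,\vec{e})+1$ (in the recursion $F_{\vec{e}}(X)=F_{\vec{e}}(X')+F_{\vec{e}}(Y')$, exactly one of the two parent regions is at distance $d-1$ from the head while the other contributes at least $1$), and for $R_0=Z_0$ one has $d(R_0,\vec{e_n})=n+1$.
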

	
	Before starting the proof, we introduce a tri-coloring of $\Omega$, that is a map $\mathrm{Col} : \Omega \to \{1,2,3\}$ satisfying that for every edge $e \leftrightarrow (X,Y;Z,W)$, $\mathrm{Col}(X)\neq \mathrm{Col}(Y)$. Note that as a consequence, $\mathrm{Col}(Z)=\mathrm{Col}(W)$ and the three colors around a vertex are all distinct. For all $j=1,2,3$, let us denote $\Omega^j \subset \Omega$ the subset of regions of color $j$. 
	
	\begin{proof}
		Assume by contradiction that there exists an escaping ray $(\vec{e_n})_{n\in \N}$ for the orientation $\alpha_\rho$. The tri-coloring of $\Omega$ gives for all $j =1,2,3$, three subsets $I_j$ of $\N$ and three sequences of regions $(X^j_m)_{m\in I_j}$ in $\Omega^j$ meeting the escaping ray, which means that $\underset{n \in \N}{\bigcup} \Omega^0(e_n) =  \underset{j=1,2,3}{\bigsqcup} \; \underset{m \in I_j}{\bigsqcup} X^j_m$. We may also assume that the regions $(X^j_m)_{m \in I_j}$ are ordered following the order induced on the regions when following the escaping ray $(\vec{e_n})_{n\in \N}$. 
		 Note that the subsets $I_j$ for $j=1,2,3$ might a priori be finite, but that at least two of them are infinite. Let us now justify that, in fact, the three subsets $I_j$, for $j=1,2,3$, are infinite. Indeed, assume by contradiction that one on them is finite, say $I_1$ without loss of generality, and notice that this means that for $n$ sufficiently large, all the edges $e_n$ lie in the boundary of the same region $X$ belonging to $\Omega^1$. In particular, $I_2$ and $I_3$ are inifinite and we may assume without loss of generality that $I_1=I_2=\N$ and for all $n \geq 0$, $\Omega^0(e_{2n})=\{X^2_n,X\}$ and $\Omega^0(e_{2n+1})=\{X^3_n,X\}$. Let us denote $Y=X^2_0 \in \Omega^2$. Also denote $a=P(X)$, $b=P(Y)$, $A=\rho(a)$ and $B=\rho(P(b))$. We deduce that for all $n\geq 0$, $\rho(P(X^2_{n}))$ is conjugated to $BA^{2n}$ and so $l_\rho(X^2_n)=l_S(BA^{2n})$. One the one hand, we deduce, using that $(\vec{e_n})_{n\in \N}$ is an escaping ray for $\alpha_\rho$, that the sequence $(l_\rho(X^2_n))_{n\in \N}$ is non-increasing, hence bounded. On the other hand, note that since $\rho$ satisfy \ref{def:BQa}, Proposition \ref{prop:B(A+)neqA-explicite} implies that $B(A^+)\neq A^-$ and then using Proposition \ref{prop:AnB} we obtain that $l_S(BA^{2n})$ grows linearly in $n$ for n sufficiently large. These two facts contradict each other. Then all three subset $I_j$ are infinite and let $I_j=\N$, for $j=1,2,3$ without loss of generality. 
		
		The assumption that the sequence $(\vec{e_n})_{n\in \N}$ is an escaping ray for $\alpha_\rho$, gives that the three sequences $(l_\rho(X^j_n))_{n\in \N}$ are non-increasing. Since they are obviously bounded from bellow, we deduce that they are converging. This convergence implies that for $n$ large enough we can assume that the terms of the sequence are as close as we want. Thus, there exists $M$, such that for all $m \geq M$, for all $j=1,2,3$, $l(X^j_{m}) \leq l(X^j_{m+1})+2\delta$. Now consider $n \in \N$ such that $\vec{e_n} \leftrightarrow (X,Y;X^j_m \to X^j_{m+1})$ is the edge from $X^1_m$ to $X^1_{m+1}$ in the sequence $(\vec{e_n})_{n\in \N}$. Up to exchanging $X$ and $Y$, assume that $X \cap X^j_{m+1}=e_{m+1}$. The arrow on $\vec{e_n}$ goes from $X^1_m$ to $X^1_{m+1}$ and all region satisfy $l_\rho(X) > C_{\delta}$, so by Proposition \ref{prop:ineq-l(AB)} :
		\begin{equation}
			l_\rho(X)+l_\rho(Y) \leq l_\rho(X^1_m)+C_{\delta}.
		\end{equation}
		Similarly, because the arrow on $\vec{e_{n+1}}$ points away from $X\cap Y \cap X^1_{m+1}$, we have 
		\begin{equation}
			l_\rho(X)+l_\rho(X^1_{m+1})\leq l_\rho(Y)+C_{\delta}.
		\end{equation}
		Now combining these two we obtain :
		\begin{equation}
			2l_\rho(X)+l_\rho(X^1_{m+1}) \leq l_\rho(X)+l_\rho(Y)+C_{\delta} \leq l_\rho(X^1_m)+2C_{\delta} \leq l_\rho(X^1_{m+1})+2\delta+2C_{\delta}
		\end{equation}
		and thus $l_\rho(X)\leq C_{\delta}+\delta$. Since the three sequences $(X^j_m)_{m \in \N}$ are infinite, by repeting this procedure we can find infinitely many regions $X$ so that $l_\rho(X) \leq C_{\delta}+\delta=257\delta <K_\delta$. This contradicts the hypothesis \ref{def:BQa} on $\rho$.  
	\end{proof}
	
	We say that a vertex $v \leftrightarrow (X,Y,Z)$ in $V(\Sigma)$ is a \emph{sink} for an orientation $\alpha : E(\Sigma) \to \vec{E}(\Sigma)$ is $v$ is the head of $\alpha(X\cap Y)$, $\alpha(X\cap Z)$ and $\alpha(Y\cap Z)$. 
	\begin{Proposition} \label{lem:sink}
		Let $\rho : \F_2 \to \mathrm{Isom}(\mathcal{X})$ satisfying \ref{def:BQa}. If $\Omega_\rho(C_{\delta}) = \emptyset$, then the orientation $\alpha_\rho$ has a unique sink.
	\end{Proposition}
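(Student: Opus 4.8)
The plan is to combine the two facts already available in this case: the local constraint at each vertex from Lemma \ref{lem:fork}, and the global obstruction from Lemma \ref{lem:no-escaping-ray}. Since $\Omega_\rho(C_\delta)=\emptyset$, every region $X\in\Omega$ satisfies $l_\rho(X)>C_\delta$, so Lemma \ref{lem:fork} applies at \emph{every} vertex of $\Sigma$ (recall $\rho$ is irreducible by Proposition \ref{prop:irreducible}, which is what licenses that lemma). Its content is that at most one of the three edges incident to a vertex can point away from it; equivalently, every vertex $v$ is either a sink (all three arrows point toward $v$) or has exactly one outgoing edge. I would record this dichotomy as the first step.

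For existence of a sink, I would follow outgoing edges. Starting from an arbitrary vertex $v_0$, if $v_0$ is not a sink it has a unique outgoing edge $\vec{e_0}=\alpha_\rho(e_0)$, whose head $v_1$ becomes the next vertex; since the edge we arrived on is incoming at $v_1$, the outgoing edge at $v_1$ is genuinely forward, and because $\Sigma$ is a tree this non-backtracking walk never revisits a vertex. If it never met a sink it would be infinite, and by construction the head of $\vec{e_n}$ equals the tail of $\vec{e_{n+1}}$, i.e.\ it would be an escaping ray for $\alpha_\rho$, contradicting Lemma \ref{lem:no-escaping-ray}. Hence the walk terminates, and by the dichotomy it can only terminate at a sink.

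For uniqueness, I would suppose there were two distinct sinks $v$ and $v'$ and examine the geodesic $v=u_0,u_1,\dots,u_k=v'$ joining them in the tree. Because $v$ is a sink, the edge $u_0u_1$ is oriented toward $u_0$, and because $v'$ is a sink the edge $u_{k-1}u_k$ is oriented toward $u_k$; reading the orientations of $e_0,\dots,e_{k-1}$ along the path, the first edge points ``backward'' and the last points ``forward'', so there is a first index $i$ at which the orientation switches. At the vertex $u_i$ the edge $e_{i-1}$ then points toward $u_{i-1}$ and the edge $e_i$ toward $u_{i+1}$, i.e.\ two arrows point away from $u_i$; since all regions around $u_i$ have length $>C_\delta$, this contradicts Lemma \ref{lem:fork}. (When $k=1$ the single edge $u_0u_1$ would have to point toward both endpoints at once, which is the same contradiction.)

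The main obstacle, and the step needing the most care, is the existence argument: one must verify that following outgoing edges really produces an escaping ray in the precise sense of the definition, namely that the walk is non-backtracking (hence simple, as $\Sigma$ is a tree) and that a non-sink vertex always offers a forward continuation, so that Lemma \ref{lem:no-escaping-ray} can be applied. Once the local dichotomy and this translation into an escaping ray are in place, both existence and uniqueness are short.
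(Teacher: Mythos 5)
Your proposal is correct and follows essentially the same route as the paper: Lemma \ref{lem:fork} gives the local dichotomy at every vertex (possible because $\Omega_\rho(C_\delta)=\emptyset$ and $\rho$ is irreducible by Proposition \ref{prop:irreducible}), existence follows by chasing the unique outgoing edge until Lemma \ref{lem:no-escaping-ray} forces termination at a sink, and uniqueness follows from the orientation switch along the path between two putative sinks. Your explicit treatment of non-backtracking and of the $k=1$ case is a minor elaboration of the same argument.
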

	\begin{proof} First note that by Proposition \ref{prop:irreducible}, $\rho$ is irreducible. Therefore, since $\Omega_\rho(C_{\delta}) = \emptyset$ we can apply Lemma \ref{lem:fork} to every vertex of $V(\Sigma)$.
		
	The existence of the sink comes from the fact that the orientation $\alpha_\rho$ has no escaping ray (Lemma \ref{lem:no-escaping-ray}) combined with Lemma \ref{lem:fork}. Indeed, take any edge $e$, its orientation $\alpha_\rho(e)$ points toward a vertex $v$ of the tree. By Lemma \ref{lem:fork}, the orientation $\alpha_\rho$ on the two other edges at $v$ cannot both point away from $v$. If they are both pointing toward $v$, then we found a sink. If not, exactly one of them points away from $v$. Repeat the procedure with this new edge. If there was no sink, we could continue the procedure forever and then find an escaping ray for $\alpha_\rho$, which is forbidden by Lemma \ref{lem:no-escaping-ray}. Hence the procedure stops, and we find a sink for $\alpha_\rho$. 
	
		Let us show unicity. If they were two sinks for $\alpha_\rho$, then consider a path of edges between these two sinks. Because of the orientation $\alpha_\rho$ on the first and last edges of the path (which must be distinct), we deduce that we could find a vertex on the path with two arrows pointing away from it. This is forbidden by Lemma \ref{lem:fork}.
	\end{proof}

	\begin{Proposition}\label{lem:Omega vide}
		Let $\rho : \F_2 \to \mathrm{Isom}(\mathcal{X})$ satisfying \ref{def:BQa}. If $\Omega_\rho(C_{\delta}) = \emptyset$, then there exists $C>0$, such that for all $X \in \Omega$, $l_\rho(X) \geq \frac{1}{C}F_{e_0}(X)$.
	\end{Proposition}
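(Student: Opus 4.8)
The plan is to exploit the unique sink of the orientation $\alpha_\rho$ furnished by Proposition \ref{lem:sink}, together with the linear growth estimate of Lemma \ref{lem:growth on big edge}. Since $\rho$ satisfies \ref{def:BQa}, Proposition \ref{prop:irreducible} guarantees that $\rho$ is irreducible, so both of these results are available. Moreover the hypothesis $\Omega_\rho(C_\delta) = \emptyset$ says precisely that $l_\rho(X) > C_\delta$ for \emph{every} region $X \in \Omega$, so the length hypotheses required to apply Lemma \ref{lem:growth on big edge} hold at every edge of $\Sigma$.

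First I would take the unique sink $v_0 \leftrightarrow (X_*, Y_*, Z_*)$ given by Proposition \ref{lem:sink} and consider the three edges $e_1 = X_* \cap Y_*$, $e_2 = Y_* \cap Z_*$ and $e_3 = X_* \cap Z_*$ incident to $v_0$. Because $v_0$ is a sink, the head of each oriented edge $\vec{e_i} = \alpha_\rho(e_i)$ is $v_0$, so the tail subtree $\Sigma^-(\vec{e_i})$ is exactly the branch of $\Sigma$ hanging off $e_i$ away from $v_0$. These three branches are pairwise disjoint, and any region whose boundary does not meet $v_0$ lies in exactly one of them, while the regions meeting $v_0$ are precisely $X_*, Y_*, Z_*$. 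Hence $\Omega = \Omega^{0-}(\vec{e_1}) \cup \Omega^{0-}(\vec{e_2}) \cup \Omega^{0-}(\vec{e_3})$, with $X_*, Y_*, Z_*$ distributed among the three sets $\Omega^0(e_i)$. This covering is the geometric heart of the argument.

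Next, for each $i$ the two regions of $\Omega^0(e_i)$ lie in $\{X_*, Y_*, Z_*\}$, all of length $> C_\delta$, so Lemma \ref{lem:growth on big edge} applies and gives, for every $X \in \Omega^{0-}(\vec{e_i})$, the bound $l_\rho(X) \geq (m_i - C_\delta) F_{\vec{e_i}}(X) + C_\delta$ where $m_i$ is the minimum of the two corresponding lengths. Setting $m := \min\{l_\rho(X_*), l_\rho(Y_*), l_\rho(Z_*)\} > C_\delta$ we have $m_i \geq m$, so $l_\rho(X) \geq (m - C_\delta) F_{\vec{e_i}}(X)$. On $\Omega^{0-}(\vec{e_i})$ one has $F_{\vec{e_i}} = F_{e_i}$, and I would then invoke the comparison $\frac{1}{K} F_e \leq F_{e'} \leq K F_e$ between $F_{e_i}$ and the base function $F_{e_0}$ to replace $F_{\vec{e_i}}$ by $F_{e_0}$ at the cost of a multiplicative constant. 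Taking $K$ to be the largest of the three comparison constants and $C := K/(m - C_\delta) > 0$, every $X$ lies in some $\Omega^{0-}(\vec{e_i})$ and therefore satisfies $l_\rho(X) \geq \frac{1}{C} F_{e_0}(X)$, which is the claim.

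The step needing the most care is verifying that the three sets $\Omega^{0-}(\vec{e_i})$ genuinely cover $\Omega$: this is exactly where the sink property is indispensable, since it forces all three incident edges to point inward so that their tail subtrees partition $\Omega$ into the three disjoint branches around $v_0$ (were $v_0$ not a sink, one branch would contain the other two, spoiling both the covering and the direction in which $F_{\vec{e_i}}$ increases). Everything else is bookkeeping of constants; in particular the positivity $m - C_\delta > 0$ and the harmless additive $+C_\delta$ come for free from $\Omega_\rho(C_\delta) = \emptyset$.
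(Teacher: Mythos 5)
Your proposal is correct and follows essentially the same route as the paper's proof: take the unique sink from Proposition \ref{lem:sink}, cover $\Omega$ by the three sets $\Omega^{0-}(\alpha_\rho(e_i))$ for the edges incident to the sink, apply Lemma \ref{lem:growth on big edge} on each branch, and convert $F_{e_i}$ to $F_{e_0}$ via the uniform comparison of the $F_e$ maps. You simply spell out the covering and the bookkeeping of constants in more detail than the paper does.
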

	
	\begin{proof}
		Let $v$ be the unique sink given by Proposition \ref{lem:sink} and denote by $e_1$, $e_2$ and $e_3$ the three edges which meet at $v$. Notice that $\Omega=\Omega^{0-}(\alpha_\rho(e_1)) \cup \Omega^{0-}(\alpha_\rho(e_2)) \cup \Omega^{0-}(\alpha_\rho(e_3))$. Then we apply Lemma \ref{lem:growth on big edge} to $e_1$, $e_2$ and $e_3$ and we obtain the desired growth (recall that for all edges $e,e'$, there exists $K>0$ such that $\frac{1}{K}F_e \leq F_{e'} \leq KF_e$). \\
	\end{proof}
	
	\subsection{The tree $\mathcal{T}_{\rho}$} \label{subsec:tree initial}~ \\ 
	\indent Before studying the case where $\Omega_\rho(C_\delta) \neq \emptyset$, let us define a subtree $\mathcal{T}_{\rho}$ of $\Sigma$ which will help us handle situations involving small regions. \\ 
	
	Fix $\rho : \F_2 \to \mathrm{Isom}(\mathcal{X})$ an irreducible  representation.
	Let $E_\rho(C_{\delta})=\{ e \in E(\Sigma) : \Omega^0(e) \subset \Omega_\rho(C_{\delta})\}$. 
	
	\begin{Remark}\label{rem:Erho-finite}
	Note that if $\rho$ satisfy \ref{def:BQa}, then $E_\rho(C_{\delta})$ is finite because $\Omega_\rho(C_\delta)=\{X \in \Omega : l_\rho(X) \leq  C_{\delta} \}$ is finite. Indeed, $\Omega^0$ gives a map from $E_\rho(C_{\delta})$ to the finite set $\Omega_\rho(C_{\delta})^{(2)}$ (the set of distincts pairs in $\Omega_\rho(C_\delta)$) and this map is injective because two regions intersect along at most one edge. 
	\end{Remark} 

	Let us assume that $\Omega_\rho(C_\delta)\neq \emptyset$ and suppose first that $E_\rho(C_\delta)\neq \emptyset$. Then define $\mathcal{T}_\rho$ to be the tree spanned by $E_\rho(C_{\delta})$. It is a finite tree when $E_\rho(C_{\delta})$ is finite. If $E_\rho(C_\delta) = \emptyset$ but still $\Omega_\rho(C_\delta) \neq \emptyset$, the connectedness of $\Omega_\rho(C_\delta)$ shown in Lemma \ref{lem:connected} implies that $\Omega_\rho(C_\delta)$ consist of a single region. In this case, we take $\mathcal{T}_\rho$ to be any edge on the boundary of this region. 	
	 
	Let us give a simple and useful observation. 
	\begin{Lemma}\label{lem:edge of tree}
		Let $\rho : \F_2 \to \mathrm{Isom}(\mathcal{X})$ be an irreducible representation. 
		Let $e=X \cap Y \in \mathcal{T}_\rho$. Then either $l_\rho(X) \leq C_\delta$ or $l_\rho(Y) \leq C_\delta$. 
	\end{Lemma}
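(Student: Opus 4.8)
The plan is to argue by contradiction and to locate all of $E_\rho(C_\delta)$ strictly on one side of $e$, using Lemma \ref{lem:growth on big edge} as the main engine. The point is that if both regions bounding $e$ were large, then the growth estimate forces every region on one side of $e$ to be large as well, so no small edge could survive there; this pins $E_\rho(C_\delta)$ into a proper subtree not containing $e$, contradicting $e \in \mathcal{T}_\rho$.

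First I would dispose of the degenerate case in the definition of $\mathcal{T}_\rho$, namely $E_\rho(C_\delta)=\emptyset$ while $\Omega_\rho(C_\delta)\neq\emptyset$. By Lemma \ref{lem:connected} the set $\Omega_\rho(C_\delta)$ then consists of a single region $R_0$, and $\mathcal{T}_\rho$ is by construction an edge lying on the boundary of $R_0$. For such an edge $e=X\cap Y$ one has $R_0\in\Omega^0(e)=\{X,Y\}$, so $R_0$ equals $X$ or $Y$ and $l_\rho(R_0)\leq C_\delta$; the conclusion is immediate. Hence I may assume $E_\rho(C_\delta)\neq\emptyset$ and that $\mathcal{T}_\rho$ is the subtree spanned by $E_\rho(C_\delta)$.

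For this main case, suppose for contradiction that $e=X\cap Y\in\mathcal{T}_\rho$ satisfies $l_\rho(X)>C_\delta$ and $l_\rho(Y)>C_\delta$. Set $\vec{e}:=\alpha_\rho(e)$ and write $m=\min\{l_\rho(X),l_\rho(Y)\}>C_\delta$. Since $\rho$ is irreducible, Lemma \ref{lem:growth on big edge} applies and gives, for every region $X'\in\Omega^{0-}(\vec{e})$, the bound $l_\rho(X')\geq (m-C_\delta)F_{\vec{e}}(X')+C_\delta$. As $m-C_\delta>0$ and $F_{\vec{e}}(X')\geq 1$ always, this yields $l_\rho(X')>C_\delta$ for \emph{every} $X'\in\Omega^{0-}(\vec{e})$. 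Consequently no edge of $E^{0-}(\vec{e})=\{e\}\sqcup E^{-}(\vec{e})$ can belong to $E_\rho(C_\delta)$: the edge $e$ is bounded by the large regions $X,Y$, and any $f\in E^{-}(\vec{e})$ is bounded by two regions of $\Omega^{-}(\vec{e})\subseteq\Omega^{0-}(\vec{e})$, all large. Therefore $E_\rho(C_\delta)\subseteq E^{+}(\vec{e})\subseteq\Sigma^{+}(\vec{e})$.

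It remains to convert this into a contradiction with $e\in\mathcal{T}_\rho$. Since $\Sigma^{+}(\vec{e})$ is a subtree, it is convex, so the subtree spanned by $E_\rho(C_\delta)$ — being the union of geodesics between edges of $E_\rho(C_\delta)$, all of which lie in $\Sigma^{+}(\vec{e})$ — is itself contained in $\Sigma^{+}(\vec{e})$. But $\Sigma^{+}(\vec{e})$ is obtained by removing the interior of $e$, so it does not contain the edge $e$. Thus $e\notin\mathcal{T}_\rho$, contradicting our hypothesis, and we conclude that $l_\rho(X)\leq C_\delta$ or $l_\rho(Y)\leq C_\delta$. The main obstacle I anticipate is purely bookkeeping: keeping the orientation conventions of $\vec{e}=\alpha_\rho(e)$ consistent between Lemma \ref{lem:growth on big edge} and the decomposition $\Omega=\Omega^0(e)\sqcup\Omega^{+}(\vec{e})\sqcup\Omega^{-}(\vec{e})$, and making precise that "spanned by" means the convex hull so that the final confinement argument is airtight.
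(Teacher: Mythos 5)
Your proof is correct, but it takes a genuinely different route from the paper. The paper's own argument is a one-line appeal to Lemma \ref{lem:connected}: every edge of $E_\rho(C_\delta)$ has \emph{both} adjacent regions in $\Omega_\rho(C_\delta)$, and since $e$ separates $\Sigma$ into $\Sigma^{\pm}(\vec{e})$ while regions of $\Omega^+(\vec{e})$ and $\Omega^-(\vec{e})$ never meet, any connected chain of small regions linking the two sides of $e$ (which must exist if $e$ lies on a geodesic between two edges of $E_\rho(C_\delta)$) is forced to pass through $\Omega^0(e)=\{X,Y\}$; hence one of $X,Y$ is small. You instead run the contrapositive through Lemma \ref{lem:growth on big edge}: if both $X$ and $Y$ were large, then $m-C_\delta>0$ and $F_{\vec{e}}\geq 1$ give $l_\rho(X')>C_\delta$ for every $X'\in\Omega^{0-}(\vec{e})$, so $E_\rho(C_\delta)\subseteq E^+(\vec{e})$ and the spanned tree misses $e$ by convexity of $\Sigma^+(\vec{e})$. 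Both arguments ultimately rest on Proposition \ref{prop:ineq-l(AB)} (via Lemma \ref{lem:connected} in the paper's case, via Lemma \ref{lem:growth on big edge} in yours), so neither is strictly more elementary; the paper's is shorter, while yours yields the stronger intermediate fact that the whole half-tree $\Omega^{0-}(\alpha_\rho(e))$ consists of large regions, and it has the merit of treating the degenerate case $E_\rho(C_\delta)=\emptyset$ explicitly. One small imprecision: an edge $f\in E^-(\vec{e})$ need not be bounded by two regions of $\Omega^-(\vec{e})$ --- one of its two regions may be $X$ or $Y$ itself --- but both do lie in $\Omega^{0-}(\vec{e})$, which is all your argument requires.
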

	\begin{proof}
		This is a direct consequence of the connectedness of the union of the regions in $\Omega_\rho(C_\delta)$ (see Lemma \ref{lem:connected}).
	\end{proof}
	
	When $T$ is a subtree of $\Sigma$, we define its \emph{circular set} $C(T) \subset \vec{E}(\Sigma)$ to be the subset of oriented edge $\vec{e}$ such that $\vec{e} \cap T$ is exactly the head of $\vec{e}$. Note that if $T$ is finite, its circular set $C(T)$ is also finite. 
	
    Let us give some basic properties on the edges of the circular set  $C(\mathcal{T}_\rho)$. 
	\begin{Lemma}\label{lem:circular-set}
		Let $\rho : \F_2 \to \mathrm{Isom}(\mathcal{X})$ be an irreducible representation and assume $\Omega_\rho(C_\delta)\neq \emptyset$. \\
		Let $\vec{e} \leftrightarrow (X,Y;Z \to W)$ be an edge in $C(\mathcal{T}_\rho)$. Then 
		\begin{enumerate}[label=(\alph*)]
			\item \label{lemcase:l(X)} Either $l_\rho(X) >C_\delta$ or $l_\rho(Y)>C_\delta$.
			\item \label{lemcase:l(Z)} $l_\rho(W) \leq C_\delta$.
			\item \label{lemcase:l(W)} Let $X' \in \Omega^-(\vec{e})$. Then $l_\rho(X') >C_\delta$.
			\item \label{lemcase:alpha} $\alpha_\rho(e)=\vec{e}$.
		\end{enumerate}
	\end{Lemma}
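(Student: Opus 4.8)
The plan is to read off parts \ref{lemcase:l(X)} and \ref{lemcase:l(Z)} directly from the definition of the circular set, to establish \ref{lemcase:l(W)} from the connectedness of the sublevel set (Lemma \ref{lem:connected}), and then to deduce \ref{lemcase:alpha} by combining \ref{lemcase:l(Z)} and \ref{lemcase:l(W)}. First I would record the combinatorial shape of an edge $\vec{e} \leftrightarrow (X,Y;Z\to W)$ in $C(\mathcal{T}_\rho)$: its head vertex $v=W\cap e$ lies in $\mathcal{T}_\rho$, whereas $e$ itself does not (otherwise $\vec{e}\cap\mathcal{T}_\rho$ would strictly contain the head). Since $\mathcal{T}_\rho$ is a connected subtree containing $v$ and meeting $\mathrm{int}(e)$ nowhere, it is contained in the component $\Sigma^+(\vec{e})$ of $\Sigma\setminus\mathrm{int}(e)$ containing the head; in particular \emph{every} edge of $\mathcal{T}_\rho$ lies in $\Sigma^+(\vec{e})$. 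By Lemma \ref{lem:connected} the edges of $E_\rho(C_\delta)$ already span a subtree, so in the generic case $E_\rho(C_\delta)\neq\emptyset$ the edges of $\mathcal{T}_\rho$ are precisely those of $E_\rho(C_\delta)$; the degenerate single-region case, where $\mathcal{T}_\rho$ is merely a chosen boundary edge, I would treat by direct inspection.

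Parts \ref{lemcase:l(X)} and \ref{lemcase:l(Z)} are then immediate. Since $e\notin\mathcal{T}_\rho$ we have $e\notin E_\rho(C_\delta)$, i.e. $\Omega^0(e)=\{X,Y\}\not\subset\Omega_\rho(C_\delta)$, which is exactly the assertion that $l_\rho(X)>C_\delta$ or $l_\rho(Y)>C_\delta$. For \ref{lemcase:l(Z)}, the vertex $v\leftrightarrow(X,Y,W)$ lies in $\mathcal{T}_\rho$, hence is an endpoint of some edge $f$ of $\mathcal{T}_\rho$; as $e\notin\mathcal{T}_\rho$, necessarily $f\in\{X\cap W,\,Y\cap W\}$, so $f\in E_\rho(C_\delta)$ and $W\in\Omega^0(f)\subset\Omega_\rho(C_\delta)$, whence $l_\rho(W)\leq C_\delta$.

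The heart of the proof is \ref{lemcase:l(W)}. Here I would first establish that every region $R$ with $l_\rho(R)\leq C_\delta$ carries an edge of $\mathcal{T}_\rho$ on its boundary. Indeed, by Lemma \ref{lem:connected} the union $\bigcup_{R\in\Omega_\rho(C_\delta)}R$ is connected; since two complementary regions of the trivalent tree $\Sigma$ that meet do so along a common edge, this planar connectedness forces the small regions to form a connected graph under the relation ``share an edge''. Consequently, if there is more than one small region, each one shares an edge with another small region, and that shared edge lies in $E_\rho(C_\delta)$, hence in $\mathcal{T}_\rho$. Now suppose, for contradiction, that some $X'\in\Omega^-(\vec{e})$ satisfies $l_\rho(X')\leq C_\delta$. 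By the definition of $\Omega^-(\vec{e})$ the entire boundary of $X'$ lies in $\Sigma^-(\vec{e})$; but the previous step supplies an edge of $\mathcal{T}_\rho$ on $\partial X'$, and that edge lies in $\Sigma^+(\vec{e})$. Since $\Sigma^+(\vec{e})$ and $\Sigma^-(\vec{e})$ are disjoint, this is absurd, so $l_\rho(X')>C_\delta$.

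Finally, \ref{lemcase:alpha} follows by combining what precedes: the tail region $Z$ lies in $\Omega^-(\vec{e})$, so $l_\rho(Z)>C_\delta$ by \ref{lemcase:l(W)}, while $l_\rho(W)\leq C_\delta$ by \ref{lemcase:l(Z)}; thus $l_\rho(Z)>l_\rho(W)$, and by definition the induced orientation $\alpha_\rho(e)$ points from $Z$ to $W$, that is $\alpha_\rho(e)=\vec{e}$. The step I expect to demand the most care is \ref{lemcase:l(W)}, and within it the passage from the planar connectedness of Lemma \ref{lem:connected} to the combinatorial fact that every small region abuts $\mathcal{T}_\rho$ along an edge; the degenerate single-region case must also be checked by hand, since there $\mathcal{T}_\rho$ need not be built from edges of $E_\rho(C_\delta)$ and parts \ref{lemcase:l(Z)} and \ref{lemcase:alpha} require a separate verification.
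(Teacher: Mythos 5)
There is a genuine gap, and it sits exactly where you lean on the identification of the edge set of $\mathcal{T}_\rho$ with $E_\rho(C_\delta)$. That identification is false: Lemma \ref{lem:connected} makes the \emph{union of the small regions} connected, not the set of edges $E_\rho(C_\delta)$. Two small regions can both abut a third small region along contact edges that are far apart on its boundary; the spanning tree $\mathcal{T}_\rho$ then contains the whole arc of boundary edges between them, and those intermediate edges have only one small side, so they are not in $E_\rho(C_\delta)$. (Concretely: if $l_\rho$ is at most $C_\delta$ on the regions of $a$, $b$ and $a^2b$ but exceeds $C_\delta$ on the region of $ab$, then the edge between the $a$- and $ab$-regions lies in $\mathcal{T}_\rho\setminus E_\rho(C_\delta)$; nothing forbids this since Proposition \ref{prop:ineq-l(AB)} requires both stable norms to exceed $C_\delta$.) This kills your proof of \ref{lemcase:l(Z)}: the edge $f\in\{X\cap W,\,Y\cap W\}$ of $\mathcal{T}_\rho$ at the head vertex need not lie in $E_\rho(C_\delta)$, and Lemma \ref{lem:edge of tree} alone only tells you that \emph{one} of its two sides is small --- which could be $X$ rather than $W$. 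The paper closes this by a case analysis you are missing: if exactly one of $X\cap W$, $Y\cap W$ lies in $\mathcal{T}_\rho$, that edge is a leaf of the spanning tree and hence genuinely belongs to $E_\rho(C_\delta)$, giving $l_\rho(W)\leq C_\delta$; if both lie in $\mathcal{T}_\rho$, apply Lemma \ref{lem:edge of tree} to each and observe that $l_\rho(W)>C_\delta$ would force $l_\rho(X)\leq C_\delta$ and $l_\rho(Y)\leq C_\delta$, i.e.\ $e\in E_\rho(C_\delta)\subset\mathcal{T}_\rho$, contradicting $\vec{e}\in C(\mathcal{T}_\rho)$.

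The rest is sound and matches the paper: \ref{lemcase:l(X)} only needs the true inclusion $E_\rho(C_\delta)\subset\mathcal{T}_\rho$; your argument for \ref{lemcase:l(W)} (every small region carries an edge of $E_\rho(C_\delta)\subset\mathcal{T}_\rho\subset\Sigma^+(\vec{e})$ on its boundary, incompatible with $\partial X'\subset\Sigma^-(\vec{e})$) is correct and is essentially the paper's; and \ref{lemcase:alpha} is the same deduction from \ref{lemcase:l(Z)} and \ref{lemcase:l(W)}, so it inherits the gap in \ref{lemcase:l(Z)}. Finally, announcing that the single-region case will be ``checked by hand'' is not a proof of it; since in that case $\mathcal{T}_\rho$ is an arbitrary boundary edge of the unique small region and $E_\rho(C_\delta)=\emptyset$, parts \ref{lemcase:l(Z)} and \ref{lemcase:alpha} are genuinely delicate there and you should either carry out the verification or restrict the statement you prove.
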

	\begin{proof}
		\begin{enumerate}[label=(\alph*)]
			\item Assume by contradiction that $l_\rho(X) \leq C_\delta$ and $l_\rho(Y) \leq C_\delta$, then this means that $e \in E_\rho(C_\delta) \subset \mathcal{T}_\rho$ which is impossible by definition of $C(\mathcal{T}_\rho)$.
			\item Denote $e_1=X\cap W$ and $e_2=Y \cap W$. Then, since $\vec{e} \in C(\mathcal{T}_\rho)$, by definition of the circular set, either both $e_1$ and $e_2$ belong to $\mathcal{T}_\rho$ or exactly one of them belong to $\mathcal{T}_\rho$. If both belong to $\mathcal{T}_\rho$ and $l_\rho(W) >C_\delta$, then by Lemma \ref{lem:edge of tree}, both $l_\rho(X) \leq C_\delta$ and $l_\rho(Y)\leq C_\delta$, but this would imply that $e \in \mathcal{T}_\rho$ which contradicts the fact that $\vec{e} \in C(\mathcal{T}_\rho)$. Now assume without loss of generality that $e_1 \in \mathcal{T}_\rho$ and $e_2 \neq \mathcal{T}_\rho$, then it implies that $e_1 \in E_\rho(C_\delta)$ and thus that $l_\rho(X) \leq C_\delta$ and $l_\rho(W) \leq C_\delta$. 
			\item Assume by contradiction that $X' \in \Omega^-(\vec{e})$ is such that $l_\rho(X') >C_\delta$. Then, by connectedness of $ \underset{X \in \Omega_\rho(C_\delta)}{\bigcup}X$ (see Lemma \ref{lem:connected}), we would find an edge $e' \in \Sigma^-(\vec{e})$ such that $e' \in E_\rho(C_\delta) \subset \mathcal{T}_\rho$. Now, using the same notations as in \ref{lemcase:l(Z)}, namely $e_1=X\cap W$ and $e_2=Y\cap W$, recall that either $e_1$ or $e_2$ belongs to $\mathcal{T}_\rho$. By connectedness of the tree $\mathcal{T}_\rho$, this implies that also $e \in \mathcal{T}_\rho$ which is false since $\vec{e} \in C(\mathcal{T}_\rho)$.
			\item This is immediate from the previous points. Indeed, $Z \in \Omega^{-}(\vec{e})$ so by \ref{lemcase:l(W)}, $l_\rho(Z) >C_\delta$ and by \ref{lemcase:l(Z)}, $l_\rho(W)\leq C_\delta$.
		\end{enumerate}
	
	\end{proof}

	\subsection{The case $\Omega_\rho(C_\delta) \neq \emptyset$}~ \\ 
	\indent We now prove in this section implication \ref{def:BQa} $\implies $ \ref{def:BQd} in the case $\Omega_\rho(C_\delta) \neq \emptyset$. The key point is the following Lemma :
	\begin{Lemma} \label{lem:growth when l(X) petit}
	Let $\rho : \F_2\to \mathrm{Isom}(\mathcal{X})$ be a representation satisfying \ref{def:BQa}. Assume that $\Omega_\rho(C_\delta)\neq \emptyset$. \\
	Let $\vec{e} \leftrightarrow (X,Y; Z\to W)$ be an oriented edge in $C(\mathcal{T}_\rho)$. Assume moreover that $l_\rho(X) \leq C_\delta$. Then, there exists $C>0$ such that for all $X' \in \Omega^{0-}(\vec{e})$, we have $l_\rho(X') \geq \frac{1}{C}F_e(X')$. 
	\end{Lemma}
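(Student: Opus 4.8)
The plan is to reduce to the large-edge growth result, Lemma \ref{lem:growth on big edge}, which fails to apply to $\vec{e}$ directly because one of its two bounding regions, $X$, is small. First I would record the geometric picture. Since $\rho$ satisfies \ref{def:BQa}, it is irreducible by Proposition \ref{prop:irreducible}; as $\vec{e} \in C(\mathcal{T}_\rho)$ and $l_\rho(X) \leq C_\delta$, Lemma \ref{lem:circular-set} gives $l_\rho(Y) > C_\delta$ and $l_\rho(X') > C_\delta$ for every $X' \in \Omega^-(\vec{e})$. Thus $X$ is the \emph{unique} small region of $\Omega^{0-}(\vec{e})$, and the orientation rule forces every edge of $\partial X$ to point towards $X$. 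Let $Z_0 = Z, Z_1, Z_2, \dots$ be the regions met consecutively along the tail portion of $\partial X$, so that $X \cap Z_i$ is an edge and $\rho(P(Z_i))$ is conjugate to $A^{i+1}B$ with $A = \rho(P(X))$, $B = \rho(P(Y))$; set $Z_{-1} = Y$. The ``gap edges'' $g_{-1} = Y\cap Z_0$ and $g_i = Z_i \cap Z_{i+1}$ (for $i\geq 0$) each have both bounding regions of length $> C_\delta$, and $\Omega^{0-}(\vec{e})$ is the union of $\{X\}$ with the fans $\Omega^{0-}(\alpha_\rho(g_i))$. The region $X$ is harmless: $F_e(X) = 1$ and $l_\rho(X) > 0$ since $\rho(P(X))$ is hyperbolic.

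On each fan I would apply Lemma \ref{lem:growth on big edge} to the gap edge $g_i$, which gives $l_\rho(X') \geq (m_i - C_\delta)\, F_{g_i}(X')$ for $X'$ in the fan, with $m_i = \min\{l_\rho(Z_i), l_\rho(Z_{i+1})\} > C_\delta$. Because the defining recursion for $F$ is linear and agrees on the subtree beyond $g_i$ up to its two boundary values, I would record the comparison $F_e(X') \leq \max\{F_e(Z_i), F_e(Z_{i+1})\}\, F_{g_i}(X')$, so that $l_\rho(X') \geq \frac{m_i - C_\delta}{\max\{F_e(Z_i), F_e(Z_{i+1})\}}\, F_e(X')$ on the $i$-th fan.

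The crux is to bound these fan-constants uniformly in $i$, and this is where the Farey-neighbour estimates of Section \ref{sec:growth Farey neighbours} enter. Since every $Z_i$ is large, the contrapositive of Proposition \ref{prop:B(A+)neqA-explicite} forces $B(A^+)\neq A^-$, after which Proposition \ref{prop:AnB} yields $l_\rho(Z_i) = l_S(BA^{i+1}) \geq l_S(A)\,i - k$ for $i$ large. On the other hand $F_e$ grows exactly linearly along the ray, $F_e(Z_{i+1}) = F_e(Z_i) + F_e(X)$, so $F_e(Z_i) = F_e(Z_0) + i\,F_e(X)$. Hence $\frac{m_i - C_\delta}{\max\{F_e(Z_i), F_e(Z_{i+1})\}} \longrightarrow \frac{l_S(A)}{F_e(X)} > 0$, and in particular it stays bounded below by a fixed positive constant for all $i \geq i_0$.

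Finally the finitely many remaining fans ($i < i_0$, including the $Y$-fan $g_{-1}$) are treated one at a time: for each such $i$ the number $m_i - C_\delta > 0$ is fixed and $F_{g_i}$ is globally comparable to $F_e$, so the corresponding fan-constant is positive; taking the minimum over these finitely many fans together with the asymptotic constant above produces a single $C>0$ valid on all of $\Omega^{0-}(\vec{e})$. The main obstacle is precisely this uniformity over the infinite family of fans hanging off the tail ray of the small region $X$, and it is overcome by matching the linear growth of $l_\rho$ along the ray (Proposition \ref{prop:AnB}) against the linear growth of $F_e$, combined with the multiplicative control of $F_e$ by $F_{g_i}$ on each fan.
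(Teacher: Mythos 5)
Your proposal is correct and follows essentially the same route as the paper: decompose $\Omega^{0-}(\vec{e})$ into $\{X\}$ together with the fans hanging off the gap edges $Z_i\cap Z_{i+1}$ along $\partial X$ (all of whose bounding regions are large by Lemma \ref{lem:circular-set}), apply Lemma \ref{lem:growth on big edge} on each fan, compare $F_{g_i}$ with $F_e$ via the linear base values, and obtain uniformity over the infinitely many fans by playing the linear growth of $l_S(A^nB)$ from Propositions \ref{prop:B(A+)neqA-explicite} and \ref{prop:AnB} against the linear growth of $F_e(Z_i)$. This matches the paper's argument step for step, up to an index shift.
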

	\begin{proof}
	Denote by $\vec{e_n}$, for all $n \in \N$, the edges on the boundary of the region $X$ such that $\vec{e_0}=\vec{e}$ and 
	the head of $\vec{e_{n+1}}$ is the tail of $\vec{e_n}$. Denote $Y_n$, for $n\in \N$, the regions such that $\Omega^0(e_n)=\{X,Y_n\}$ and let $\vec{f_n} \in \vec{E}(\Sigma)$ and $W_n \in \Omega$ be such that $\vec{f_n} \leftrightarrow (Y_n,Y_{n+1} ; W_n \to X)$. 
	
	First notice that for all $n \geq 1$, $Y_n \in \Omega^-(\vec{e})$ and then we deduce by Lemma \ref{lem:circular-set} \ref{lemcase:l(W)} that $l_\rho(Y_n)>C_\delta$. In addition, $\vec{e} \in C(\mathcal{T}_\rho)$ and $l_\rho(X) \leq C_\delta$, so by Lemma \ref{lem:circular-set} \ref{lemcase:l(X)} we also have $l_\rho(Y)=l_\rho(Y_0) > C_\delta$. Now let $n \in \N$ and let us justify that $\alpha_\rho(f_n)=\vec{f_n}$. The region $W_n \in \Omega^{-}(\vec{e})$ so by Lemma \ref{lem:circular-set} \ref{lemcase:l(W)}, we have $l_\rho(W_n) >C_\delta$. Since $l_\rho(X) \leq C_\delta$ by assumption, we have $l_\rho(W_n) >l_\rho(X)$, that is $\alpha_\rho(f_n)=\vec{f_n}$. As a consequence, $\vec{f_n}$ satisfy the hypothesis of Lemma \ref{lem:growth on big edge}, and we deduce :
	\begin{equation*}\label{eq:l_rho(X)>Ffn}
		\text{ For all } X' \in \Omega^{0-}(\vec{f_n}), \quad l_\rho(X') \geq (m_n-C_\delta)F_{f_n}(X')+C_\delta, \quad \text{ with } m_n=\min\{l_\rho(Y_n),l_\rho(Y_{n+1})\}.
	\end{equation*}
	Note that $m_n >C_\delta$ and that the maps~$F_{f_n}$ and $F_e$ are related as follows : $F_{f_n} \geq \frac{1}{n+2}F_e$. Then, 
	\begin{equation}\label{eq:l_rho(X)>Fe}
		\text{ For all } X' \in \Omega^{0-}(\vec{f_n}), \quad l_\rho(X') \geq \frac{m_n-C_\delta}{n+2}F_{e}(X'), \quad \text{ with } m_n=\min\{l_\rho(Y_n),l_\rho(Y_{n+1})\}.
	\end{equation}
	Moreover, denoting $a=P(X)$, $b=P(Y)$, $A=\rho(a)$ and $B=\rho(b)$, we have that $P(Y_n)$ is conjugated to $a^nb$. Moreover, since $\rho$ satisfies \ref{def:BQa} and $a^nb$ is primitive for all $n \in \N$, Proposition \ref{prop:B(A+)neqA-explicite} implies that $B(A^+) \neq A^-$. Therefore, by Proposition \ref{prop:AnB}, there exists $N$ and $k$ such that for all $n \geq N$, $l_\rho(Y_n)=l_S(BA^n) \geq l_S(A)n-k$. Therefore, $\displaystyle \frac{m_n-C_\delta}{n+2} \geq \frac{l_S(A)n-k-C_\delta}{n+2} \underset{n \to + \infty}{\longrightarrow} l_S(A)$, and so for $n$ large enough, we deduce that 
	\begin{equation}\label{eq:l_rho(X)>lS(A)Fe}
		\text{ For all } X' \in \Omega^{0-}(\vec{f_n}), \quad l_\rho(X') \geq \frac{l_S(A)}{2}F_{e}(X').
	\end{equation}
	Finally note that we can decompose $\Omega^{0-}(\vec{e})$ as $\Omega^{0-}(\vec{e})=\{X\} \bigsqcup \underset{n \in \N}{\bigcup} \Omega^{0-}(\vec{f_n})$, so combining \eqref{eq:l_rho(X)>lS(A)Fe} for $n$ large and \eqref{eq:l_rho(X)>Fe} for $n$ small, we find a constant $C>0$ so that the inequalities $l_\rho(X') \geq \frac{1}{C}F_e(X')$ hold for all $X' \in \Omega^{0-}(\vec{e})$. 
	\end{proof}
	
	We deduce the following Proposition :
	\begin{Proposition}\label{lem:Omega non vide}
	Let $\rho : \F_2\to \mathrm{Isom}(\mathcal{X})$ be a representation satisfying \ref{def:BQa}. 
	If $\Omega_\rho(C_\delta)\neq \emptyset$, then there exists $C>0$ such that for all $X \in \Omega$, $l_\rho(X)\geq \frac{1}{C} F_{e_0}(X)$. 
 	\end{Proposition}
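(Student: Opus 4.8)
The plan is to glue together the two linear growth estimates already proved, Lemma \ref{lem:growth on big edge} and Lemma \ref{lem:growth when l(X) petit}, over the finite circular set of the tree $\mathcal{T}_\rho$. First I would record the two structural facts that make $\mathcal{T}_\rho$ usable: by Proposition \ref{prop:irreducible} the representation $\rho$ is irreducible, and since $C_\delta=432\delta<K_\delta$ we have $\Omega_\rho(C_\delta)\subseteq\Omega_\rho(K_\delta)$, which is finite by \ref{def:BQa}. Hence $E_\rho(C_\delta)$ is finite (Remark \ref{rem:Erho-finite}), the tree $\mathcal{T}_\rho$ is finite, and its circular set $C(\mathcal{T}_\rho)\subset\vec{E}(\Sigma)$ is finite as well.

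Next I would set up the decomposition of $\Omega$. Removing the finite subtree $\mathcal{T}_\rho$ from $\Sigma$ breaks the complement into finitely many components, each lying beyond exactly one oriented edge of $C(\mathcal{T}_\rho)$; accordingly every region is either one of the finitely many regions incident to an edge of $\mathcal{T}_\rho$, or lies in $\Omega^{0-}(\vec{e})$ for some $\vec{e}\in C(\mathcal{T}_\rho)$. Writing $\Omega_{\mathcal{T}_\rho}$ for the finite set of regions incident to $\mathcal{T}_\rho$, this reads $\Omega=\Omega_{\mathcal{T}_\rho}\cup\bigcup_{\vec{e}\in C(\mathcal{T}_\rho)}\Omega^{0-}(\vec{e})$.

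Then, for each $\vec{e}\leftrightarrow(X,Y;Z\to W)\in C(\mathcal{T}_\rho)$, I would produce a constant $C_{\vec{e}}>0$ with $l_\rho(X')\geq\frac{1}{C_{\vec{e}}}F_e(X')$ for all $X'\in\Omega^{0-}(\vec{e})$, by splitting on the lengths of $X$ and $Y$. By Lemma \ref{lem:circular-set}\ref{lemcase:l(X)} at most one of $l_\rho(X),l_\rho(Y)$ is $\leq C_\delta$, so exactly two cases arise. If both exceed $C_\delta$, then since $\alpha_\rho(e)=\vec{e}$ by Lemma \ref{lem:circular-set}\ref{lemcase:alpha}, Lemma \ref{lem:growth on big edge} applies on $\Omega^{0-}(\vec{e})$ and gives $l_\rho(X')\geq(m-C_\delta)F_{\vec{e}}(X')+C_\delta$ with $m=\min\{l_\rho(X),l_\rho(Y)\}>C_\delta$. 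If exactly one is small, say $l_\rho(X)\leq C_\delta$ after exchanging $X$ and $Y$, then Lemma \ref{lem:growth when l(X) petit} applies and yields a bound of the same shape. In either case one obtains a linear lower bound in $F_e$ valid on all of $\Omega^{0-}(\vec{e})$.

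Finally I would assemble the global estimate. The finitely many regions of $\Omega_{\mathcal{T}_\rho}$ each satisfy $l_\rho(X)>0$, because \ref{def:BQa} makes every $\rho(P(X))$ hyperbolic; being finite in number they satisfy $l_\rho(X)\geq\frac{1}{C_0}F_{e_0}(X)$ for $C_0$ large enough. For the remaining regions I would pass from $F_e$ to $F_{e_0}$ using the comparability of the length functions: for each edge $e$ there is $K_e>0$ with $\frac{1}{K_e}F_{e_0}\leq F_e\leq K_eF_{e_0}$, and only the finitely many edges underlying $C(\mathcal{T}_\rho)$ occur, so these constants stay uniformly bounded. Setting $C$ to be the maximum of $C_0$ and the finitely many products $C_{\vec{e}}K_e$ then gives $l_\rho(X)\geq\frac{1}{C}F_{e_0}(X)$ for every $X\in\Omega$. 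I expect the main obstacle to be the bookkeeping: checking that the decomposition $\Omega=\Omega_{\mathcal{T}_\rho}\cup\bigcup_{\vec{e}}\Omega^{0-}(\vec{e})$ is genuinely exhaustive and that the case split on each circular edge matches exactly the hypotheses of the two lemmas; once this is pinned down, combining finitely many linear bounds via the comparability of the $F_e$ is routine.
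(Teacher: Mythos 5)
Your proof is correct and follows essentially the same route as the paper: decompose $\Omega$ over the finite circular set $C(\mathcal{T}_\rho)$, apply Lemma \ref{lem:growth on big edge} on a circular edge whose two adjacent regions both have length greater than $C_\delta$ and Lemma \ref{lem:growth when l(X) petit} when one of them is short (Lemma \ref{lem:circular-set} ruling out the case where both are short and supplying $\alpha_\rho(e)=\vec{e}$), then take a maximum over finitely many constants after comparing $F_e$ with $F_{e_0}$. The only cosmetic difference is your separate treatment of the regions incident to $\mathcal{T}_\rho$, which is harmless but redundant, since every such region already lies in $\Omega^{0}(e)\subset\Omega^{0-}(\vec{e})$ for some $\vec{e}\in C(\mathcal{T}_\rho)$, so the paper's decomposition $\Omega=\bigcup_{\vec{e}\in C(\mathcal{T}_\rho)}\Omega^{0-}(\vec{e})$ is already exhaustive.
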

	
	\begin{proof}
		Consider the tree $\mathcal{T}_\rho$ and its circular set $C(\mathcal{T}_\rho)$. Since $E_\rho(C_\delta)$ is finite (see Remark \ref{rem:Erho-finite}), so are $\mathcal{T}_\rho$ and $C(\mathcal{T}_\rho)$. Moreover, note that $\Omega=\underset{\vec{e} \in C(\mathcal{T}_\rho)}{\bigcup} \Omega^{0-}(\vec{e})$. Since this is a finite union, it is sufficient to show that for all $\vec{e} \in C(\mathcal{T}_\rho)$, there exists $C_{\vec{e}} >0$ such that for all $X \in \Omega^{0-}(\vec{e})$, $l_\rho(X) \geq \frac{1}{C_{\vec{e}}}F_{e}(X)$. This is done in Lemma \ref{lem:growth on big edge} in the case where the two regions meeting at $e$ have lengths greater than $C_\delta$ (note that we can apply this Lemma since $\alpha_\rho(e)=\vec{e}$ by Lemma \ref{lem:circular-set} \ref{lemcase:alpha}) and in Lemma \ref{lem:growth when l(X) petit} in the case where the length of one of two regions is smaller or equal to $C_\delta$. 
	\end{proof}

\section{Openness, dynamics of $\mathrm{Out}(\F_2)$ and recognition} \label{sec:openness dynamics recognition}
In this section, we explain how we can use the framework developed in the previous section to show that the set of Bowditch representation is open and that the action of the outer automorphism group of $\F_2$ acts properly discontinuously on it. We will proceed along lines similar to those in the corresponding parts of previous work (\cite{bowditch_markoff_1998}, \cite{tan_generalized_2008}, \cite{lawton_dynamics_2025}); however, the large-scale behavior of our setting necessitates some adjustments which we will try to highlight.\\

\subsection{The tree $T_\rho(K)$}~ \label{subsec:treeJ}\\ 
\indent Let us choose, for every region $X \in \Omega$, another region $Y_X \in \Omega$ such that $X \cap Y_X$ is an edge of $\Sigma$, equivalently $X \cap Y_X \in \partial X$. Now denote $a=P(X)$ and $b=P(Y_X)$, we have that $\{a,b\}$ form a basis of $\F_2$. Let $Y_n$, for every $n \in \Z$, be the region such that $P(Y_n)$ is a conjugate of $a^nb$. The regions $Y_n$ are exactly the regions which intersect $X$ in an edge on the boundary $\partial X$ of $X$. Note that by definition $Y_0=Y_X$. 

Let $\rho : \F_2 \to \mathrm{Isom}(\mathcal{X})$ be a representation and $K>0$. 
Let us denote $A=\rho(a)$ and $B=\rho(b)$ and assume that $A$ is hyperbolic. If $B(A^+) \neq A^-$, then Proposition \ref{prop:AnB} applies, and in this case denote $N_+$ and $k_+$ the constants given by the Proposition \ref{prop:AnB} applied to $A$ and $B$. Similartly, if  $B(A^-) \neq A$, denote $N_-$ and $k_-$ the constants given by Proposition \ref{prop:AnB} applied to $A^{-1}$ and $B$. Now define :

\begin{Definition}
\begin{align*}
	J_\rho^+(K,X) & = \left\{ 
	\begin{array}{ll}
	 \bigcup \left\{X \cap Y_n \; | \; 0 \leq n \leq \max \left\{N_+, \frac{K+k_+}{l_\rho(X)}\right\} \right\} & \text{ if $A$ is hyperbolic and } B(A^+) \neq A^-, \\ 
 \bigcup \left\{X \cap Y_n \; | \; n \geq 0 \right\} &  \text{ if $A$ is hyperbolic and } B(A^+)=A^-, \\
	\end{array}
	\right. \\
	J_\rho^-(K,X) & =
	\left\{
	\begin{array}{ll}
	 \bigcup \left\{X \cap Y_n \; | \; 0 \geq n \geq -\max \left\{N_-, \frac{K+k_-}{l_\rho(X)}\right\} \right\} & \text{if $A$ is hyperbolic and } B(A^-) \neq A^+, \\ 
	 \bigcup \left\{X \cap Y_n \; | \; n \leq 0 \right\} & \text{if $A$ is hyperbolic and } B(A^-)=A^+. 
	\end{array}
	\right. \\
	J_\rho(K,X) & = 
	\left\{
	\begin{array}{ll}
	J_\rho^+(K,X) \cup J_\rho^-(K,X) & \text{if $A$ is hyperbolic} \\
	 \partial X & \text{if $A$ is not hyperbolic.}
	\end{array}
	\right.
\end{align*}
\end{Definition}

\begin{Remark}\label{rem:J(K,X)}
Assume that $A$ is hyperbolic, $B(A^+)\neq A^-$ and $B(A^-)\neq A^+$. Let $n \in \Z$ such that $X \cap Y_n$ does not belong to $J_\rho(K,X)$. Assume $n \geq 0$. Then $n >N_+$ and therefore by Proposition~\ref{prop:AnB}, $l_S(A^nB) \geq l_S(A)n-k_+$. But the integer $n$ is also bigger than $\frac{K+k_+}{l_S(A)}$, so $l_\rho(Y_n) = l_S(A^nB) > K$.  Similarly if $n \leq 0$, we also deduce $l_\rho(Y_n) >K$. Therefore, if $Z$ is a region which intersect $X$ and $l_\rho(Z) \leq K$, then $Z \cap X \subset J_\rho(K,X)$.
\end{Remark}

\begin{Definition}
	$T_\rho(K)=\underset{X \in \Omega_\rho(K)}{\bigcup} J_\rho(K,X)$.
\end{Definition}

\begin{Remark}
	It is straighforward to see that $T_{\rho}(K_1) \subset T_{\rho}(K_2)$, whenever $K_1 \leq K_2$. 
\end{Remark}

\begin{Lemma}
	 Let $\rho : \F_2\to \mathrm{Isom}(\mathcal{X})$ be an irreducible representation and $K\geq C_\delta$.  The tree $T_\rho(K)$ is connected.
\end{Lemma}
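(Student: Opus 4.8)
The plan is to reduce the connectedness of $T_\rho(K)$ to two facts: that each individual arc $J_\rho(K,X)$ is connected, and that the regions of the sublevel set $\Omega_\rho(K)$ are chained together by shared edges, the latter coming from the planar connectedness established in Lemma \ref{lem:connected} (applicable since $K \ge C_\delta$ and $\rho$ is irreducible).

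First I would check that $J_\rho(K,X)$ is connected for every $X \in \Omega_\rho(K)$. If $\rho(P(X))$ is not hyperbolic, then $J_\rho(K,X)=\partial X$ is connected by definition. Otherwise $J_\rho(K,X)=J_\rho^+(K,X)\cup J_\rho^-(K,X)$, and each of $J_\rho^\pm$ is the union of the edges $X\cap Y_n$ over a contiguous range of integers $n$. Since the regions $Y_n$ are, by construction, exactly the neighbours of $X$ ordered along the boundary $\partial X$, consecutive edges $X\cap Y_n$ and $X\cap Y_{n+1}$ share a vertex, so each $J_\rho^\pm$ is a connected path of edges; as both contain the edge $X\cap Y_0$, their union $J_\rho(K,X)$ is connected.

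Second, I would establish that the adjacency graph $G$ on $\Omega_\rho(K)$ — with vertices the regions of length at most $K$ and two of them joined when they share an edge of $\Sigma$ — is connected. This is where Lemma \ref{lem:connected} enters: the set $U:=\bigcup_{X\in\Omega_\rho(K)}X$ is a connected subset of the plane. The point I would verify carefully is that two distinct complementary regions meet if and only if they share an edge: a common boundary point lies on $\Sigma$, hence either in the interior of a single edge (shared by exactly the two regions of $\Omega^0(e)$) or at a vertex (where the three incident regions pairwise share the edges meeting there). Granting this, each connected component of $G$ has as its union a relatively closed subset of $U$ (using that the trivalent tree $\Sigma$ is locally finite in the plane, so limits of points stay in a region containing the limit), and these unions are pairwise disjoint; connectedness of $U$ then forces $G$ to have a single component. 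I expect this passage from the topological connectedness of $U$ to the combinatorial connectedness of $G$ to be the main obstacle, everything else being bookkeeping about the order of neighbours around a region.

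Finally I would assemble the pieces. Given $X,X'\in\Omega_\rho(K)$, connectedness of $G$ yields a path $X=Z_0,Z_1,\dots,Z_m=X'$ in $\Omega_\rho(K)$ with each $e_i:=Z_i\cap Z_{i+1}$ an edge. By Remark \ref{rem:J(K,X)} applied to the adjacent regions $Z_i,Z_{i+1}$ (both of length at most $K$), the shared edge $e_i$ lies in both $J_\rho(K,Z_i)$ and $J_\rho(K,Z_{i+1})$; in the degenerate cases $B(A^+)=A^-$ or $B(A^-)=A^+$ this inclusion is read off directly from the definition, which then contains even more edges. Since each $J_\rho(K,Z_i)$ is connected, one can pass from $J_\rho(K,Z_i)$ into $J_\rho(K,Z_{i+1})$ through $e_i$, and concatenating along the path joins any point of $J_\rho(K,X)$ to any point of $J_\rho(K,X')$ inside $T_\rho(K)$. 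As $X$ and $X'$ were arbitrary, $T_\rho(K)$ is connected.
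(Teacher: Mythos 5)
Your proposal is correct and follows essentially the same route as the paper: connectedness of each arc $J_\rho(K,X)$, connectedness of $\bigcup_{X\in\Omega_\rho(K)}X$ from Lemma \ref{lem:connected} to produce a chain of adjacent regions, and Remark \ref{rem:J(K,X)} to see that each shared edge lies in both neighbouring arcs. You simply spell out in more detail the passage from topological connectedness of the union to combinatorial connectedness of the adjacency graph, which the paper leaves implicit.
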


\begin{proof}
	Note that by construction, the subtrees $J_\rho(K,X)$ are connected for all $X \in \Omega$ : they are arcs in the boundary of the region $X$. Let $e$ and $e'$ be two edges in $T_{\rho}(K)$. Then there exists $X$ and $X'$ in $\Omega_\rho(K)$ such that $e \in J_\rho(K,X)$ and $e' \in J_\rho(K,X')$. Since $\bigcup \{X \; | \; X \in \Omega_\rho(K)\}$ is connected (Lemma \ref{lem:connected}), we deduce the existence of a sequence of regions $(X_i)_{0\leq i \leq n}$ such that $X=X_0$, $X'=X_n$ and $X_{i} \cap X_{i+1} \neq \emptyset$ for all $1\leq i \leq n-1$. Denote $e_i = X_i \cap X_{i+1}$ the edge at which $X_i$ and $X_{i+1}$ meet. Then $e_i \in J_\rho(K,X_i) \cap J_\rho(K,X_{i+1})$. Therefore, we deduce that there exists a path from $e$ to $e_0$ which stays in $J_{\rho}(K,X_0)$, a path from $e_i$ to $e_{i+1}$ which stays in $J_{\rho}(K,X_{i+1})$ for all $1 \leq i \leq n-1$, and a path from $e_{n-1}$ to $e$ which stays in $J_\rho(K,X_n)$. This proves that $T_\rho(K)$ is connected.\\
\end{proof}

\begin{Remark}~
Building on Remark \ref{rem:J(K,X)}, we deduce that if $e$ is an edge such that $\Omega^0(e)=\{X,Y\}$, with $l_\rho(X)\leq K$ and $l_\rho(Y) \leq K$, then $e \subset T_{\rho}(K,X)$. As a consequence, using the connectedness of $T_{\rho}(K)$, we deduce that the tree $T_\rho(C_\delta)$ contains the tree $\mathcal{T}_\rho$ defined in the previous section (\ref{subsec:tree initial}). 
\end{Remark}
	
	We say that a tree $T$ is \emph{attracting} for $\rho$, if for every edge $e$ which does not belong to $T$, the orientation $\alpha_\rho(e)$ points toward $T$. 
	Unlike in the previous works in $\mathrm{PSL}(2,\C)$ (\cite{bowditch_markoff_1998}, \cite{tan_generalized_2008}) or in $\mathrm{PU}(2,1)$ (\cite{lawton_dynamics_2025}), the tree $T_\rho(K)$ constructed here in this large-scale geometry context may not be attracting. In particular, if $X$ is a region such that $l_\rho(X) \leq C_\delta$, and $e \leftrightarrow (X,Y_n ; Y_{n-1}, Y_{n+1})$ (with the notation from above) is an edge in the boundary of $X$ which does not belong to $J_\rho(K,X)$, then a priori we could have $l_\rho(Y_{n+1}) < l_\rho(Y_{n-1})$, which would mean that the orientation $\alpha_\rho(e)$ doesn't point toward $T_\rho(K)$. Indeed, $l_\rho(Y_{n+1})=l_S(A A^nB)$, $l_\rho(Y_{n-1})=l_S(A^{-1}A^nB)$, and the length $l_S(A^nB)$ is big whereas the length $l_S(A)$ could be arbitrarily small. Since that length $l_S(A)$ is under the scale of precision of our space, we cannot deduce which inequality holds. \\
	
	However, this bad phenomena can only happen in the boundary of a small region ($l_\rho(X)\leq C_\delta$) and the tree is attracting elsewhere as noted in the following proposition : 
	\begin{Proposition} \label{prop:orientation outiside tree}
		Let $\rho : \F_2 \to \mathrm{Isom}(\mathcal{X})$ and irreducible representation and $K>C_\delta$. \\ 
		If an edge $e$ of $\Sigma$ does not belong to the tree $T_\rho(K)$ and does not belong to the boundary of a region $X$ with $l_\rho(X) \leq K$, then the orientation $\alpha_\rho(e)$ points toward $T_\rho(K)$. 
	\end{Proposition}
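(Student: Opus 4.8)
The plan is to show that $T_\rho(K)$ lies entirely on the head side of the oriented edge $\vec{e} := \alpha_\rho(e)$, which is exactly the assertion that $\alpha_\rho(e)$ points toward $T_\rho(K)$. We may assume $\Omega_\rho(K) \neq \emptyset$, since otherwise $T_\rho(K) = \emptyset$ and there is nothing to prove. Writing $e \leftrightarrow (X,Y;Z,W)$, the hypothesis that $e$ does not border a region with $l_\rho \leq K$ means precisely that the four regions $X, Y, Z, W$ meeting $e$ all satisfy $l_\rho > K$; in the regime $K \geq C_\delta$ relevant to all applications this gives in particular $l_\rho(X), l_\rho(Y) > C_\delta$, so that Remark \ref{rem:apply Prop to edge} and Lemma \ref{lem:growth on big edge} are available.

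First I would apply Lemma \ref{lem:growth on big edge} to the edge $e$ with the orientation $\vec{e} = \alpha_\rho(e)$. Setting $m := \min\{l_\rho(X), l_\rho(Y)\} > K$, it yields, for every region $X' \in \Omega^{0-}(\vec{e})$, the bound $l_\rho(X') \geq (m - C_\delta) F_{\vec{e}}(X') + C_\delta$. Since $F_{\vec{e}}(X') \geq 2$ for every $X' \in \Omega^-(\vec{e})$ (the value is $1$ only on the two regions of $\Omega^0(e)$, and is additive along the tree), this gives $l_\rho(X') \geq 2m - C_\delta > 2K - C_\delta \geq K$ for all $X' \in \Omega^-(\vec{e})$, using $K \geq C_\delta$ in the last inequality. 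Together with $l_\rho(X), l_\rho(Y) > K$, we conclude that every region in $\Omega^{0-}(\vec{e})$ has $l_\rho > K$; equivalently, $\Omega_\rho(K) \subseteq \Omega^+(\vec{e})$.

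It then remains to transfer this from regions to the tree. Each $R \in \Omega_\rho(K)$ lies in $\Omega^+(\vec{e})$, and is not adjacent to $e$: the only region in $\Omega^+(\vec{e})$ bordering $e$ is $W$, which is excluded since $l_\rho(W) > K$. Hence every boundary edge of such an $R$ lies in $E^+(\vec{e})$, so $J_\rho(K,R) \subseteq \partial R \subseteq \Sigma^+(\vec{e})$. Taking the union over $R \in \Omega_\rho(K)$ gives $T_\rho(K) \subseteq \Sigma^+(\vec{e})$, i.e.\ $T_\rho(K)$ sits on the head side of $\alpha_\rho(e)$, which is the desired conclusion.

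The computation is routine once the growth lemma is in place; the point that requires care is the threshold. Both the applicability of Lemma \ref{lem:growth on big edge} (which needs the two regions of $e$ to exceed $C_\delta$) and the final inequality $2K - C_\delta \geq K$ rely on $K \geq C_\delta$. Since the tree $T_\rho(K)$ is only ever used for $K \geq K_\delta = C_\delta + \delta$, this is harmless, but I would state it explicitly: for $K < C_\delta$ one cannot rule out an adjacent region with $l_\rho \in (K, C_\delta]$, and the argument would need to be supplemented (or the hypothesis sharpened to $K \geq C_\delta$).
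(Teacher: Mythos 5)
Your proof is correct, but it runs in the opposite direction from the paper's. The paper locates $e$ relative to the tree: it takes the oriented edge $\vec{e_0}$ of the circular set $C(T_\rho(K))$ with $e \in E^{0-}(\vec{e_0})$, splits into cases according to whether both regions of $\Omega^0(e_0)$ have length $>K$, produces in each case an edge near the tree whose two adjacent regions are large and whose $\alpha_\rho$-orientation points at the tree, and then invokes the \emph{first} conclusion of Lemma \ref{lem:growth on big edge} (every edge in $\Sigma^{0-}$ of that edge points toward it) to propagate the orientation out to $e$. You instead work directly at $e$: you apply the \emph{quantitative} conclusion of Lemma \ref{lem:growth on big edge} to $\vec{e}=\alpha_\rho(e)$ and use $F_{\vec{e}}\geq 2$ on $\Omega^-(\vec{e})$ to get $l_\rho \geq 2m-C_\delta > K$ there, so $\Omega_\rho(K)\subseteq \Omega^+(\vec{e})$ and hence $T_\rho(K)\subseteq \Sigma^+(\vec{e})$. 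This is a genuinely different and arguably cleaner argument: it avoids the case analysis on the circular set entirely and never needs to locate $e$ relative to $T_\rho(K)$ (indeed the hypothesis $e\notin T_\rho(K)$ becomes redundant, since $T_\rho(K)$ is contained in the boundaries of regions of $\Omega_\rho(K)$). Two small points. First, you are right that both arguments need $K\geq C_\delta$ (the paper's proof also applies Lemma \ref{lem:growth on big edge} to an edge whose adjacent regions are only known to exceed $K$), and flagging this is appropriate since the statement only says $K>0$; in all uses $K\geq K_\delta>C_\delta$. Second, your opening claim that the hypothesis forces all four of $X,Y,Z,W$ to have $l_\rho>K$ over-reads it: $e$ lies in the boundary of $X$ and $Y$ only, while $Z$ and $W$ meet $e$ in a single vertex, so the hypothesis gives nothing about them. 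This is harmless — $Z\in\Omega^-(\vec{e})$ is handled by your growth bound, and excluding $W$ is unnecessary because $\partial W\subseteq\Sigma^+(\vec{e})$ in any case — but the sentence should be corrected.
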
 
	\begin{proof}
		The edge $e$ belong to some $E^{0-}(\vec{e_0})$, for some oriented edge $\vec{e_0} \leftrightarrow (X_0,Y_0 ; Z \to W)$ in the circular set $C(T_\rho(K))$. 
		\begin{itemize}
			\item Assume first that $l_\rho(X_0)>K$ and $l_\rho(Y_0)>K$. Then, similarly as in the proof of Lemma \ref{lem:circular-set} \ref{lemcase:l(Z)}, we obtain that $l_\rho(W) \leq K$ and similarly as in Lemma \ref{lem:circular-set} \ref{lemcase:l(X)} that $l_\rho(Z) >K$. As a consequence $\alpha_\rho(e_0)=\vec{e_0}$. Therefore we can apply Lemma \ref{lem:growth on big edge} to conclude that $\alpha_\rho(e)$ points toward $e_0$, hence toward $T_\rho(K)$. 
			\item Assume now that $l_\rho(X_0)\leq K$. Since we assumed that $e$ does not belong to the boundary of a region of length not bigger that $K$, we deduce that $e \in E^{0-}(\vec{e_1})$, where $\vec{e_1} \leftrightarrow (X_1,Y_1 ; Z_1 \to X_0)$ is in  $E^{-}(\vec{e_0})$. Note that by construction we also have $l_\rho(X_1)>K$, $ l_\rho(Y_1)>K$ and $l_\rho(Z_1)>K$. Then $\alpha_\rho(e_1)=\vec{e_1}$ and therefore using Lemma \ref{lem:growth on big edge} we obtain that the orientation on $\alpha_\rho(e)$ points toward $e_1$, hence $e_0$, hence the tree $T_\rho(K)$.
		\end{itemize}
	\end{proof}
	
		In addition, along the boundary of a region $X$ such that $l_\rho(X) \leq K$, we can also argue that the orientation cannot be ``escaping", meaning that if $(\vec{e_n})_{n \in \N}$ is a sequence of oriented edges lying in the boundary of $X$ and not in the tree $T_\rho(K)$ such that the head of $\vec{e_n}$ is the tail of $\vec{e_{n +1}}$, then the orientation $\alpha_\rho(e_n)$ cannot enventually coincide with that of $\vec{e_n}$. Indeed, first note that the existence of the sequence $(\vec{e_n})_{n\in \N}$ in the boundary of $X$ and not in $T_\rho(K)$ implies that $A:=\rho(P(X))$ is hyperbolic and that, denoting $B=\rho(P(Y_X))$, $B(A^+)\neq A^+$ or $B(A^-)\neq A^+$ depending on which side of the boundary of $X$ the sequence $(\vec{e_n})_{n \in \N}$ lies in. Then, in view of Proposition \ref{prop:AnB}, we have $l_S(A^nB)\geq l_S(A)|n|-k$ for $k \geq 0$ a constant and $|n|$ large enough. But if $\alpha_\rho(e_n)=\vec{e_n}$ for $n$ sufficiently large, this would also imply that the sequences $(l_S(A^{2n}B))_{n \in \N}$ and $(l_S(A^{2n+1}B))_{n\in \N}$ are bounded, which of course contradicts the conclusion of Proposition \ref{prop:AnB}. 
		
		We could also certainly further strenghten this result by showing the existence of an integer $p$ satisfying the following. If $(\vec{e_n})_{n \in \N}$ is a sequence as above, then the orientation $\alpha_{\rho}(e_n)$ can only coincide with the orientation $\vec{e_n}$ on at most $p$ consecutive edges. \\

	Let us now show that the finitess of this tree characterizes the representations satisfying \ref{def:BQa}. 
\begin{Proposition} \label{prop:eq finite tree}
	Let $K\geq K_\delta$. Let $\rho : \F_2 \to \mathrm{Isom}(\mathcal{X})$ be an irreducible representation. \\ 
	The representation $\rho$ satisfy \ref{def:BQa} if and only if the tree $T_\rho(K)$ is finite.
\end{Proposition}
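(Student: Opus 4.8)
The plan is to prove the two implications separately, using Theorem \ref{thm:equivalence a,b,d} to move freely between conditions \ref{def:BQa}, \ref{def:BQb} and \ref{def:BQd}, together with the growth estimates of the previous sections.

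For the forward implication, assume $\rho$ satisfies \ref{def:BQa}. By Theorem \ref{thm:equivalence a,b,d} it also satisfies \ref{def:BQb}, so $\Omega_\rho(K)=\{X : l_\rho(X)\leq K\}$ is finite, and every primitive element has hyperbolic image. For each $X\in\Omega_\rho(K)$ write $A=\rho(P(X))$ and $B=\rho(P(Y_X))$. Proposition \ref{prop:B(A+)neqA-explicite} shows that $B(A^+)=A^-$ would force $l_S(A^nB)\leq 30\delta$ for all large $n$, producing infinitely many regions in $\Omega_\rho(K_\delta)$ and contradicting \ref{def:BQa}; hence $B(A^+)\neq A^-$ and, symmetrically, $B(A^-)\neq A^+$. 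Thus each $J_\rho(K,X)$ falls into the finite branch of its definition and is a finite arc, so $T_\rho(K)=\bigcup_{X\in\Omega_\rho(K)}J_\rho(K,X)$ is a finite union of finite arcs, hence finite.

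For the converse, assume $T_\rho(K)$ is finite. First I would note that $\rho$ sends every primitive element to a hyperbolic isometry: if $l_\rho(X_0)=0$ for some region $X_0$, then $X_0\in\Omega_\rho(K)$ and $J_\rho(K,X_0)=\partial X_0$ is an infinite arc contained in $T_\rho(K)$, a contradiction. Next, each $X\in\Omega_\rho(K)$ contributes a nonempty $J_\rho(K,X)\subset T_\rho(K)$; choosing an edge $e_X\in J_\rho(K,X)$ gives a map $X\mapsto e_X$ that is at most two-to-one, since an edge borders exactly two regions, so finiteness of $T_\rho(K)$ forces $\Omega_\rho(K)$, and a fortiori $\Omega_\rho(C_\delta)$, $E_\rho(C_\delta)$, $\mathcal{T}_\rho$ and its circular set, to be finite. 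Finiteness of $T_\rho(K)$ also rules out, for every $X\in\Omega_\rho(K)$, the equalities $B(A^+)=A^-$ and $B(A^-)=A^+$, as either would make $J_\rho^\pm(K,X)$ an infinite arc inside $T_\rho(K)$; in particular Proposition \ref{prop:AnB} is available for all such regions.

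It then remains to upgrade this to \ref{def:BQd} (equivalently \ref{def:BQa}, by Theorem \ref{thm:equivalence a,b,d}), and this is the step I expect to be the main obstacle: the hypothesis only gives $K\geq C_\delta$ rather than $K\geq K_\delta$, so I cannot directly invoke the finiteness of $\Omega_\rho(K_\delta)$ that underpins Section \ref{sec:Farey tree}. The idea is to re-run the arguments of Propositions \ref{lem:Omega vide} and \ref{lem:Omega non vide} while excluding escaping rays for $\alpha_\rho$ directly from the finiteness of $T_\rho(K)$ rather than from \ref{def:BQa}. An escaping ray must leave the finite tree $T_\rho(K)$; Proposition \ref{prop:orientation outiside tree} forbids its edges from lying outside the boundaries of regions of $\Omega_\rho(K)$, while the no-escape-along-a-small-region-boundary phenomenon established after Proposition \ref{prop:orientation outiside tree} (via Proposition \ref{prop:AnB}) forbids the ray from eventually running along the boundary of a single small region; since the tail of the ray is a connected path covered by the finitely many boundaries $\partial X$ with $X\in\Omega_\rho(K)$, this yields a contradiction. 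With escaping rays excluded, the sink argument of Proposition \ref{lem:sink} together with the circular-set growth estimates of Lemma \ref{lem:growth on big edge} and Lemma \ref{lem:growth when l(X) petit}---the latter using the condition $B(A^+)\neq A^-$ just established in place of Proposition \ref{prop:B(A+)neqA-explicite}---gives $l_\rho(X)\geq\frac{1}{C}F_{e_0}(X)$ for all $X$, which is exactly \ref{def:BQd}. Theorem \ref{thm:equivalence a,b,d} then returns \ref{def:BQa}, completing the equivalence.
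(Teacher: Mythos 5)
Your proof is correct in both directions, and the forward implication is exactly the paper's: Theorem \ref{thm:equivalence a,b,d} upgrades \ref{def:BQa} to \ref{def:BQb}, so $\Omega_\rho(K)$ is finite, and Proposition \ref{prop:B(A+)neqA-explicite} excludes $B(A^\pm)=A^\mp$, making every $J_\rho(K,X)$ a finite arc. Where you diverge is the converse. The paper's own argument there consists of just your first two observations --- finiteness of $\Omega_\rho(K)$ via the at-most-two-to-one map $X\mapsto e_X$, and hyperbolicity of all primitive images since otherwise $J_\rho(K,X)=\partial X\subset T_\rho(K)$ would be infinite --- followed immediately by ``hence \ref{def:BQa}''. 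That last step is literally immediate only when $K\geq K_\delta=C_\delta+\delta$ (which is the only range needed for condition \ref{cond:thm:tree} of Theorem \ref{thm:equivalence}); you correctly noticed that for $K\in[C_\delta,K_\delta)$ finiteness of $\Omega_\rho(K)$ does not formally give finiteness of $\Omega_\rho(K_\delta)$, and your repair --- rederiving \ref{def:BQd} by replacing Lemma \ref{lem:no-escaping-ray} (the one place in Section \ref{sec:Farey tree} that needs finiteness strictly above level $C_\delta$) with an exclusion of escaping rays based on Proposition \ref{prop:orientation outiside tree}, pigeonhole over the finitely many boundaries $\partial X$ with $X\in\Omega_\rho(K)$, and the non-escape along a single $\partial X$ coming from Proposition \ref{prop:AnB} --- is sound and buys the statement uniformly in $K\geq C_\delta$. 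Two details to tighten if you write it out: (i) when $\Omega_\rho(K)=\emptyset$ the tree $T_\rho(K)$ is empty and Proposition \ref{prop:orientation outiside tree} gives you nothing, so that case needs a separate word (it is vacuous once $K\geq K_\delta$, which is the regime the paper actually uses); (ii) in Lemma \ref{lem:growth when l(X) petit} the basis element $b=P(Y)$ is dictated by the circular-set edge rather than by the distinguished neighbour $Y_X$, so you should derive $B(A^+)\neq A^-$ for an arbitrary neighbour of $X$; this still follows from finiteness of $\Omega_\rho(K)$, since by Proposition \ref{prop:B(A+)neqA-explicite} the equality would force infinitely many neighbours $Y_n$ of $X$ into $\Omega_\rho(30\delta)\subset\Omega_\rho(K)$.
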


\begin{proof}
	First assume that $\rho$ satisfies \ref{def:BQa}. We proved in Theorem \ref{thm:equivalence a,b,d} that this implies that $\rho$ satisfies \ref{def:BQb}. Therefore the set $\Omega_\rho(K)$ is finite. Moreover, for all basis $\{a,b\}$ of $\F_2$, $B(A^+) \neq A^-$ and $B(A^-) \neq A^+$ by Proposition~\ref{prop:B(A+)neqA-explicite}. Then, the arcs $J_\rho(K,X)$ are finite for all $X \in \Omega_\rho(K)$ and the tree $T_\rho(K)$ is finite. \\
	Now assume that the tree $T_\rho(K)$ is finite. Then, since the arcs $J_\rho(K,X)$ are never empty and an edge is contained in the boundary of at most two regions, we deduce that the set $\Omega_\rho(K)$ must be finite hence also $\Omega_\rho(K_\delta)$. Then the set of regions $X$ such that $\rho(P(X))$ is not hyperbolic must be contained in $\Omega_\rho(K)$, but in this case, the arc $J_\rho(K,X)$ would be infinite. Therefore there are no region $X \in \Omega$ such that $\rho(P(X))$ is not hyperbolic and hence $\rho$ satisfy \ref{def:BQa}.\\
\end{proof}

\subsection{Openness and dynamics of $\mathrm{Out}(\F_2)$}~ \label{subsec:openness and dynamics}\\
\indent We can now give the key Lemma which implies openness and proper discontinuity. Compare with the proof of Theorem 3.16 in \cite{bowditch_markoff_1998}, Theorem 3.2 in \cite{tan_generalized_2008} or Lemma 4.12 in \cite{lawton_dynamics_2025}. The main difference with these previous works here is that the finite tree $T$ will not be of the form $T_\rho(K)$. The fact that we need to consider a bigger tree than $T_\rho(K)$ comes from the lack of continuity of the constants found in Proposition \ref{prop:AnB} (and which we overcome by the nearly continuity result of Lemma~\ref{lem:nearly-continuity}). \\

Recall that $\mathrm{Isom}(\mathcal{X})$, endowed with the compact-open topology, is a topological group and that we endow $\mathrm{Hom}(\F_2,\mathrm{Isom}(\mathcal{X}))$ with the compact-open topology, which coincide (since $\F_2$ is finitely generated) with the topology of the pointwise convergence.
\begin{Proposition}\label{prop:neighborhood finite tree}
	Let $\rho : \F_2 \to \mathrm{Isom}(\mathcal{X})$ a representation which satisfies \ref{def:BQa}. Then, there exists a constant $K \geq K_\delta$, $T$ a finite subtree in $\Sigma$ and a neighborhood $\mathcal{U}_\rho$ of $\rho$ in $\mathrm{Hom}(\F_2,\mathrm{Isom}(\mathcal{X}))$, such that for all $\rho' \in \mathcal{U}_\rho$, the trees $T_{\rho'}(K)$ are contained in $T$.
\end{Proposition}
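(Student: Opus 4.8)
The plan is to compare the tree $T_{\rho'}(K)$ of a perturbed representation $\rho'$ against a single \emph{fixed} finite tree $T$ attached to $\rho$, exploiting that $\rho$ is irreducible (Proposition \ref{prop:irreducible}) and satisfies \ref{def:BQb} (Theorem \ref{thm:equivalence a,b,d}). First I would fix $K \geq C_\delta$ and a second, much larger constant $K' > K$, and set $T := T_\rho(K')$, which is finite by Proposition \ref{prop:eq finite tree}. Since $\rho$ satisfies \ref{def:BQa}, Proposition \ref{prop:B(A+)neqA-explicite} gives $B(A^+)\neq A^-$ and $B(A^-)\neq A^+$ for every basis, so each arc $J_\rho(K',X)$ is genuinely finite. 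I would also record that the primitive systole $s_0 := \inf_{X\in\Omega} l_\rho(X)$ is strictly positive: since $l_\rho(X) > C_\delta$ off the finite set $\Omega_\rho(C_\delta)$ and no region has zero length by hyperbolicity, $s_0 \geq \min\{\min_{X\in\Omega_\rho(C_\delta)} l_\rho(X), C_\delta\} > 0$. This positivity is exactly what makes the relevant continuity constants uniform.

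I would then reduce the statement to two containments, to hold on a common neighborhood $\mathcal{U}_\rho$: (i) if $l_{\rho'}(X)\leq K$ then $l_\rho(X)\leq K'$, i.e.\ $\Omega_{\rho'}(K)\subseteq \Omega_\rho(K')$; and (ii) for every $X\in\Omega_\rho(K')$ one has $J_{\rho'}(K,X)\subseteq J_\rho(K',X)$. Granting both, $T_{\rho'}(K)=\bigcup_{X\in\Omega_{\rho'}(K)} J_{\rho'}(K,X) \subseteq \bigcup_{X\in\Omega_\rho(K')} J_\rho(K',X)=T$.

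For (ii) --- the part the preceding remark flags as the reason $T$ must exceed $T_\rho(K)$ --- I would argue region by region over the \emph{finite} set $\Omega_\rho(K')$. For fixed $X$, with $A=\rho(P(X))$ and $B=\rho(P(Y_X))$, Lemma \ref{lem:nearly-continuity} yields a neighborhood on which the perturbed arc index $\max\{N',(K+k')/l_{\rho'}(X)\}$ defining $J_{\rho'}^{\pm}(K,X)$ differs from its $\rho$-counterpart by at most $C(l_\rho(X),\delta)$, a quantity bounded uniformly in terms of $s_0$ and $\delta$. The arc $J_\rho^{\pm}(K',X)$ is the same index computed at the larger level $K'$, hence exceeds the $K$-level index by roughly $(K'-K)/l_\rho(X)$. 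Taking $K'-K$ large relative to the uniform bound on $C(\cdot,\delta)$ over $\Omega_\rho(K')$ makes the $\rho$-arc at level $K'$ swallow the $\rho'$-arc at level $K$; intersecting the finitely many neighborhoods produces the part of $\mathcal{U}_\rho$ realizing (ii).

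The genuinely hard part is (i): I must rule out new short regions appearing arbitrarily far from the core, where continuity of $l_S$ (Theorem \ref{thm:l_S-continuous}) controls only finitely many regions at once, yet the regions with $l_\rho(X)>K'$ are infinite in number. The remedy is the linear growth of Lemma \ref{lem:growth on big edge}: along each oriented edge $\vec{e}$ of the finite circular set $C(T)$ whose two bounding regions are long, one has $\alpha_\rho(e)=\vec{e}$ and $l_\rho$ grows linearly in $F_{\vec{e}}$ throughout $\Omega^{0-}(\vec{e})$, at a rate fixed by the finitely many boundary lengths along $C(T)$. These lengths move continuously, so on a small neighborhood the strict inequalities defining the orientation along $C(T)$ persist and the growth estimate survives for $\rho'$ with a rate degraded by at most a constant factor; this forces $l_{\rho'}(X)>K$ for every $X\in\Omega^-(\vec{e})$ past a bounded combinatorial depth, while the boundedly many regions adjacent to $T$ are handled by plain continuity. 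The one configuration needing extra care is a circular edge bordering a \emph{short} region $Y$ of $\rho$ (with $l_\rho(Y)\leq C_\delta$), where Lemma \ref{lem:growth on big edge} does not apply to $\vec{e}$ itself; there I would instead run the mechanism of Lemma \ref{lem:growth when l(X) petit}, applying the growth lemma to the edges $\vec{f_n}$ issuing from $\partial Y$ and using Proposition \ref{prop:AnB} --- whose hypothesis $B(A^+)\neq A^-$ survives perturbation by Lemma \ref{lem:nearly-continuity} --- to see that $l_{\rho'}(Y_n)=l_S(A^nB)$ still grows linearly in $n$. Assembling the finitely many neighborhoods from (i) and (ii) yields $\mathcal{U}_\rho$ and completes the argument.
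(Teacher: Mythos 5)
Your overall strategy (a fixed finite tree built from $\rho$, plus Lemma \ref{lem:nearly-continuity} to compare perturbed arcs) is in the spirit of the paper's proof, but you are missing its decisive structural step, and this forces you into a much harder argument that your sketch does not close. The paper first observes that $T_{\rho'}(K)$ is \emph{connected} and shares an edge with $T$ (both contain $X\cap Y_X$ for any region $X$ with $l_\rho(X)$ small), so to prove $T_{\rho'}(K)\subset T$ it suffices to check that no edge of the \emph{finite} circular set $C(T)$ belongs to $T_{\rho'}(K)$. This reduces the whole proposition to finitely many continuity statements and makes your step (i) --- the containment $\Omega_{\rho'}(K)\subseteq\Omega_\rho(K')$ --- entirely unnecessary. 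As you set things up, (i) is essentially the openness statement itself: you must control $l_{\rho'}$ on infinitely many regions far from $T$, and the sketch you give for this does not work as stated. Re-running Lemma \ref{lem:growth on big edge} for $\rho'$ at a circular edge requires knowing $\alpha_{\rho'}(e)$ points toward $T$, but the defining comparison $l_\rho(Z)$ vs.\ $l_\rho(W)$ need not be strict (ties are broken arbitrarily), so the orientation can flip under perturbation; and even when it persists, the growth estimate $l_{\rho'}(X')\geq(m-C_\delta)F_{\vec{e}}(X')+C_\delta$ with $m=\min\{l_{\rho'}(X),l_{\rho'}(Y)\}$ possibly barely above $C_\delta$ does not force $l_{\rho'}(X')>K$ at small combinatorial depth, where $F_{\vec{e}}$ is still $2$ or $3$.

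Your step (ii) also contains a concrete error. The index defining $J^{\pm}_\rho(\,\cdot\,,X)$ is $\max\bigl\{N_\pm,\,(K+k_\pm)/l_\rho(X)\bigr\}$; when this maximum is achieved by $N_\pm$ (which is perfectly possible, since $N_\pm$ is itself of order $(\text{data})/l_S(A)$ and need not be dominated by the second term), raising the level from $K$ to $K'$ does not lengthen the arc at all: $J_\rho(K',X)=J_\rho(K,X)$. Meanwhile Lemma \ref{lem:nearly-continuity} only guarantees that the perturbed index at level $K$ is within $C(l_S(A),\delta)$ of the unperturbed one, so $J_{\rho'}(K,X)$ can protrude beyond $J_\rho(K',X)$ by up to $C$ edges no matter how large you take $K'$. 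The correct fix is exactly what the paper does: keep the level at $K$ but enlarge each arc \emph{additively} by the constant $C$ from Lemma \ref{lem:nearly-continuity}, defining $\tilde J_\rho(K,X)$ and $T=\bigcup_{X\in\Omega_\rho(K)}\tilde J_\rho(K,X)$, and then verify edge by edge over the finite set $C(T)$ that $e\notin T_{\rho'}(K)$.
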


\begin{proof}
	Let $K_0 \geq K_\delta$ such that $T_\rho(K_0)$ is non-empty and let $K>K_0$.
	Then there is at least one region~$X$ such that $l_\rho(X)\leq K_0<K$. We can then ensure that, if $\rho'$ is in a sufficiently small neighborhood of $\rho$, $l_{\rho'}(X) < K$ by continuity of the stable norm (Theorem \ref{thm:l_S-continuous}), hence 
	$T_{\rho'}(K)$ is not empty. Moreover, the edge $X\cap Y_X$, which belong both to $J_{\rho}(K,X)$ and $J_{\rho'}(K,X)$, belong both to $T_{\rho}(K)$ and $T_{\rho'}(K)$. Thus $T_\rho(K) \cap T_{\rho'}(K) \neq \emptyset$. 
	We are now going to define a finite tree $T$ containing $T_{\rho}(K)$. For every $X \in \Omega_{\rho}(K)$, denote $A=\rho(P(X))$, $B=\rho(P(Y_X))$. Then, since $\rho$ satisfies \ref{def:BQa}, $A$ is hyperbolic, $B(A^+)\neq A^-$, and $B(A^-)\neq A^+$, so we can apply Lemma \ref{lem:nearly-continuity} to the two pairs $(A,B)$ and  $(A^{-1}, B)$ and then we introduce the constant $C=C(l_S(A),\delta)=C(l_S(A^{-1}),B)$ given by Lemma \ref{lem:nearly-continuity}. Now define 
	\begin{align*}
		\tilde{J}_\rho(K,X) & =\left\{X\cap Y_n \; | \; -\max \left\{N_-, \frac{K+k_-}{l_\rho(X)} \right\}-C \leq n \leq \max \left\{N_+,\frac{K+k_+}{l_{\rho}(X)} \right\}+C  \right\} \\
		\text{and } \qquad \qquad T & = \underset{X \in \Omega_\rho(K)}{\bigcup} \tilde{J}_\rho(K,X).
	\end{align*}
	It is immediate that $T$ is a finite tree containing $T_\rho(K)$. Now consider its circular set $C(T)$. This set is finite since $T$ is. Let $\vec{e} \in C(T)$ and let us prove that $e \notin T_{\rho'}(K)$, for $\rho'$ sufficiently close to $\rho$. Denote $\Omega^0(e)=\{X,Y\}$. There are three cases :
	\begin{itemize}
		\item If $l_\rho(X) >K$ and $l_\rho(Y)>K$, then in a small neighborhood of $\rho$, we still have $l_{\rho'}(X)>K$ and $l_{\rho'}(Y)>K$, so $e \notin T_{\rho'}(K)$. 
		\item If $l_\rho(X)\leq K$ and $l_\rho(Y) \leq K$, then this means that $e \in T_\rho(K)$, which is impossible since $e\in C(T)$. 
		\item If $l_\rho(X)\leq K$ and $l_\rho(Y)>K$, then after small deformation, we still have $l_{\rho'}(Y)>K$. The edge $e $ belong to the boundary $\partial X$ of $X$ so there exists $n \in \Z$ such that $Y=Y_n$. Since $e \notin T$, then $e \notin \tilde{J}_\rho(K,X)$ and as a consequence, $n>\max \{N_+,\frac{K+k_+}{l_\rho(X)}\}+C$ if $n \geq 0$ and $n< \max\{N_-, \frac{K+k_-}{l_\rho(X)}\}-C$ if $n \leq 0$. Assume that $n \geq 0$ (the case $n \leq 0$ is similar). Lemma \ref{lem:nearly-continuity} implies that if $\rho'$ belong to a sufficiently small neighborhood of $\rho$, then $A'=\rho'(P(X))$ is hyperbolic, $B'(A'^+)\neq A'^-$, with $B'=\rho'(P(Y_X))$, and  $n>\max\{N'_+,\frac{K+k'_+}{l_{\rho'}(X)}\}$, with $N'_+$ and $k'_+$ the constant given by Proposition \ref{prop:AnB} applied to $A'$ and $B'$. Therefore, $e \notin J_{\rho'}(K,X)$ and thus $e \notin T_{\rho'}(K)$. 
	\end{itemize}
  Then in any case, we proved that if $\vec{e} \in C(T)$, $e \notin T_{\rho'}(K)$. Since $T_{\rho'}(K) \cap T \neq \emptyset$ and $T_{\rho'}(K)$ is connected, this implies that $T_{\rho'}(K) \subset T$.
\end{proof}

\begin{Corollary}
	The set of Bowditch representation is open.
\end{Corollary}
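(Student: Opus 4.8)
The plan is to combine the two results just established---the ``neighbourhood'' Proposition~\ref{prop:neighborhood finite tree} and the characterization of \ref{def:BQa} by finiteness of the tree $T_\rho(K)$ (Proposition~\ref{prop:eq finite tree})---to produce, around every Bowditch representation, an explicit open neighbourhood in $\mathrm{Hom}(\F_2,\mathrm{Isom}(\mathcal{X}))$ consisting entirely of Bowditch representations.

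First I would fix a representation $\rho$ satisfying \ref{def:BQa}. By Proposition~\ref{prop:irreducible} it is irreducible, so Proposition~\ref{prop:neighborhood finite tree} applies and yields a constant $K\geq C_\delta$, a \emph{finite} subtree $T\subset\Sigma$, and a neighbourhood $\mathcal{U}_\rho$ of $\rho$ such that $T_{\rho'}(K)\subseteq T$ for every $\rho'\in\mathcal{U}_\rho$. In particular $T_{\rho'}(K)$ is finite for each such $\rho'$. The goal is then to upgrade this finiteness into the statement that every $\rho'\in\mathcal{U}_\rho$ is Bowditch, i.e. satisfies \ref{def:BQa}.

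The second step is to deduce \ref{def:BQa} for $\rho'\in\mathcal{U}_\rho$ from the finiteness of $T_{\rho'}(K)$. Since Proposition~\ref{prop:eq finite tree} carries an \emph{irreducibility} hypothesis, the only real work is to check that $\rho'$ is irreducible; once this is known, Proposition~\ref{prop:eq finite tree} immediately gives \ref{def:BQa}. I would establish irreducibility directly from finiteness of the tree, in two moves. If some primitive element $P(X)$ had non-hyperbolic image under $\rho'$, then $l_{\rho'}(X)=0\leq K$, so $X\in\Omega_{\rho'}(K)$ and $J_{\rho'}(K,X)=\partial X$ would be an infinite arc contained in $T_{\rho'}(K)$, contradicting finiteness; hence every primitive element has hyperbolic image. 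If moreover $\rho'$ were reducible, then (as in the proof of Proposition~\ref{prop:irreducible}, via Proposition~\ref{prop:irred}) there would be infinitely many distinct classes of primitive elements $\gamma$ with $l_S(\rho'(\gamma))\leq 329\delta\leq C_\delta\leq K$; these correspond to infinitely many regions in $\Omega_{\rho'}(K)$, each contributing a non-empty boundary-arc $J_{\rho'}(K,X)$, and since each edge lies in the boundary of at most two regions this again forces $T_{\rho'}(K)$ to be infinite---a contradiction. Therefore $\rho'$ is irreducible, and Proposition~\ref{prop:eq finite tree} shows that $\rho'$ satisfies \ref{def:BQa}.

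Putting the steps together, $\mathcal{U}_\rho$ is an open neighbourhood of $\rho$ inside the set of Bowditch representations, which proves openness. I expect the delicate point to be exactly the irreducibility of the nearby representations $\rho'$: this is where one must avoid circularity with the connectedness of $T_{\rho'}(K)$ that is invoked inside Proposition~\ref{prop:neighborhood finite tree}, and the cleanest way around it is the tree-finiteness argument above, which rules out both non-hyperbolic primitives and reducibility without appealing to any continuity of boundary fixed points (which is not automatic here, since $\mathcal{X}$ may be non-proper).
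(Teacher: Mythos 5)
Your proposal is correct and follows exactly the paper's route: the paper's proof of this corollary is literally ``immediate from Proposition~\ref{prop:eq finite tree} and Proposition~\ref{prop:neighborhood finite tree}'', which is the combination you describe. Your additional verification that nearby representations $\rho'$ are irreducible (ruling out non-hyperbolic primitives via the infinite arc $J_{\rho'}(K,X)=\partial X$, and reducibility via Proposition~\ref{prop:irred} producing infinitely many regions of length at most $329\delta\leq C_\delta$) is a legitimate point that the paper leaves implicit, and it is needed to invoke the irreducibility hypothesis of Proposition~\ref{prop:eq finite tree}.
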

\begin{proof}
      This is immediate from Proposition \ref{prop:eq finite tree} and Proposition \ref{prop:neighborhood finite tree}.
\end{proof}

Recall that the action of the outer automorphism group $\mathrm{Out}(\F_2)$ on $\chi(\F_2,\mathrm{Isom}(\mathcal{K}))$ is given by $[\Phi].[\rho]=[\rho \circ \Phi^{-1}]$, where $[\Phi] \in \mathrm{Out}(\F_2)$ and $[\rho] \in \chi(\F_2,\mathrm{Isom}(\mathcal{X})$.
\begin{Proposition}
	The group $\mathrm{Out}(\F_2)$ acts properly discontinuously on the set of Bowditch representations. 
\end{Proposition}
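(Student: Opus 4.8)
The plan is to reduce proper discontinuity to the combinatorial control provided by the finite trees $T_\rho(K)$ together with the action of $\mathrm{Out}(\F_2)$ on the Farey tree $\Sigma$. Recall that the standard identification $\mathrm{Out}(\F_2)\cong \mathrm{GL}(2,\Z)$ realizes $\mathrm{Out}(\F_2)$ as the group of simplicial automorphisms of $\Sigma$ (equivalently of the Farey tessellation), acting on the set $\Omega$ of complementary regions compatibly with the slope identification $\mathrm{Slope}\colon \mathcal{P}(\F_2)\to\Q\cup\{\infty\}$. The key observation is that the map $l_\rho$ transforms equivariantly: for $[\Phi]\in\mathrm{Out}(\F_2)$ one has $l_{\rho\circ\Phi^{-1}}(X)=l_\rho(\Phi^{-1}X)$, since $P(X)$ and $\Phi^{-1}$ interact through the slope bijection. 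Consequently $\Omega_{\rho\circ\Phi^{-1}}(K)=\Phi\bigl(\Omega_\rho(K)\bigr)$, and because the arcs $J_{\rho\circ\Phi^{-1}}(K,\Phi X)$ are carried to $J_\rho(K,X)$ by $\Phi$, we get the clean relation $T_{\rho\circ\Phi^{-1}}(K)=\Phi\bigl(T_\rho(K)\bigr)$.

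First I would fix a compact subset $\mathcal{K}$ of the set of (conjugacy classes of) Bowditch representations and cover it by finitely many of the neighborhoods $\mathcal{U}_\rho$ produced by Proposition \ref{prop:neighborhood finite tree}; this yields a single constant $K\geq C_\delta$ and a single finite tree $T\subset\Sigma$ such that $T_{\rho'}(K)\subset T$ for every $\rho'$ whose class lies in $\mathcal{K}$. Now suppose $[\Phi]\in\mathrm{Out}(\F_2)$ satisfies $[\Phi].\mathcal{K}\cap\mathcal{K}\neq\emptyset$, so that there is a Bowditch representation $\rho'$ with both $[\rho']$ and $[\Phi].[\rho']=[\rho'\circ\Phi^{-1}]$ in $\mathcal{K}$. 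By the previous paragraph $T_{\rho'}(K)\subset T$ and $\Phi\bigl(T_{\rho'}(K)\bigr)=T_{\rho'\circ\Phi^{-1}}(K)\subset T$. Since $\rho'$ is Bowditch, $T_{\rho'}(K)$ is a nonempty finite subtree (Proposition \ref{prop:eq finite tree}), so both $T_{\rho'}(K)$ and its image $\Phi\bigl(T_{\rho'}(K)\bigr)$ are nonempty finite subtrees of the fixed finite tree $T$.

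The finiteness then follows because a simplicial automorphism $\Phi$ of $\Sigma$ that sends some nonempty finite subtree contained in $T$ into $T$ is constrained to lie in a finite set: the automorphism group of $\Sigma$ acts freely and transitively on oriented edges (each oriented edge determines $\Phi$ completely, as $\mathrm{Out}(\F_2)$ acts simply transitively on the flags of $\Sigma$), and $\Phi$ must carry at least one oriented edge of the finite tree $T_{\rho'}(K)\subset T$ to an oriented edge of $T$. Since $T$ has finitely many oriented edges and each choice of source edge in $T$ together with target edge in $T$ pins down at most one $\Phi$, there are only finitely many possibilities for $[\Phi]$. The main obstacle here is bookkeeping the dependence of $T_{\rho'}(K)$ on $\rho'$ as $\rho'$ ranges over the preimages of $\mathcal{K}$: a priori different $\rho'$ yield different subtrees. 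This is handled precisely by the uniformity in Proposition \ref{prop:neighborhood finite tree}, which guarantees the \emph{same} ambient finite $T$ works for all of them, so that the pigeonhole argument on oriented edges of $T$ closes the proof. I would therefore conclude that $\{[\Phi]\in\mathrm{Out}(\F_2)\mid [\Phi].\mathcal{K}\cap\mathcal{K}\neq\emptyset\}$ is finite, which is exactly proper discontinuity.
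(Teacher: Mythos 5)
Your strategy (transport the finite tree by the $\mathrm{Out}(\F_2)$-action on $\Sigma$ and use rigidity of tree automorphisms) is genuinely different from the paper's, which instead extracts from Proposition \ref{prop:neighborhood finite tree} a constant $C$ in condition \ref{def:BQd} that is uniform on compact sets, deduces $\Vert \Phi\gamma\Vert \leq CC'\Vert\gamma\Vert$ for all primitive $\gamma$ whenever $[\Phi](\mathcal{K})\cap\mathcal{K}\neq\emptyset$, and concludes because only finitely many outer automorphisms distort primitive cyclically reduced lengths by a bounded factor. Your route could in principle work, but as written its linchpin fails: the ``clean relation'' $T_{\rho\circ\Phi^{-1}}(K)=\Phi\bigl(T_\rho(K)\bigr)$ is not true. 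The tree $T_\rho(K)$ is built from non-canonical data: a choice of neighbouring region $Y_X$ for each $X$, and the constants $N_\pm,k_\pm$ of Proposition \ref{prop:AnB}, which depend on choices of axes, basepoints $o$ and projections $p$ and are neither conjugation- nor $\Phi$-equivariant. What is genuinely equivariant is only the level set, $\Omega_{\rho\circ\Phi^{-1}}(K)=\Phi\bigl(\Omega_\rho(K)\bigr)$, since $l_\rho$ depends only on conjugacy and inversion classes. That weaker statement does not suffice when $\Omega_{\rho'}(K)$ is a single region $X$: the stabilizer of a region in $\mathrm{Out}(\F_2)$ is infinite (e.g.\ $a\mapsto a$, $b\mapsto a^nb$ all fix the class of $a$), so knowing that $\Phi$ sends one region of a fixed finite set to another only pins $\Phi$ down up to a coset of an infinite subgroup. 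Closing this requires a canonical, equivariant finite substitute for the arcs $J_\rho(K,X)$ (say, the set of neighbours of $X$ of length at most some $K'>K$, shown nonempty and uniformly finite over $\mathcal{K}$), which is precisely the control the non-equivariant tree was designed to encode.

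Two smaller points. First, $\mathrm{Out}(\F_2)$ does not act simply transitively on the oriented edges of $\Sigma$: the action factors through $\mathrm{PGL}(2,\Z)$ with a kernel of order two, and the stabilizer of an oriented edge is nontrivial (the class of $a\mapsto b$, $b\mapsto a$ preserves the edge between the regions of $a$ and $b$ together with both of its endpoints). The stabilizers are finite, so your pigeonhole count survives, but the claim as stated is false. Second, covering $\mathcal{K}$ by finitely many neighbourhoods from Proposition \ref{prop:neighborhood finite tree} yields pairs $(K_i,T_i)$ with possibly different constants, and $T_{\rho'}(K)\subset T_i$ is only guaranteed for $K=K_i$; enlarging $K$ enlarges the tree, so one must first fix a single $K$ valid on all of $\mathcal{K}$ (possible by compactness and continuity of $\rho\mapsto l_S(\rho(a))$) and rerun the proposition for that $K$. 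These last two issues are repairable; the equivariance gap is the one that needs a genuinely new argument.
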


\begin{proof} Let $\rho : \F_2 \to \mathrm{Isom}(\mathcal{X})$ be a representation which satisfies \ref{def:BQa}. Then Proposition \ref{prop:neighborhood finite tree} implies the existence of finite tree $T$ and a neighborhood of $\rho$ consisting of representations $\rho'$ with $T_{\rho'}(C_\delta) \subset T$. Therefore we deduce, using the proof of Propositions \ref{lem:growth on big edge} and \ref{lem:growth when l(X) petit} (and as in the proof of Theorem 2.3 in \cite{tan_generalized_2008}) that the constant $C$ in the characterization \ref{def:BQd} of $\rho'$ can be defined using only a finite number of regions in $\Omega$, therefore may be chosen independently of $\rho'$ in the neighborhood. As a consequence, it follows that the same constant in characterization \ref{def:BQd} can be chosen on compact sets of Bowditch representations. \\
Let $\mathcal{K} \subset \mathrm{BQ}(\F_2,\mathrm{Isom}(\mathcal{X}))$ be a compact set and consider $E_{\Phi}=\{[\Phi] \in \mathrm{Out}(\F_2) \; | \; [\Phi](\mathcal{K}) \cap \mathcal{K} \neq \emptyset\}$. By the previous discussion, there exists a constant $C>0$ such that for all $[\Phi] \in E_\mathcal{K}$ and $[\rho] \in \mathcal{K}$ such that $[\Phi].[\rho] \in \mathcal{K}$, and for all $[\gamma] \in \mathcal{P}(\F_2)$, $l_S(\Phi.\rho(\gamma)) \geq \frac{1}{C} \Vert \gamma \Vert $. Therefore,
\begin{equation}\label{eq:prop-disc lower bound l_S}
	l_S(\rho(\gamma)) = l_S(\rho(\Phi^{-1}\Phi \gamma))=l_S(\Phi.\rho(\Phi\gamma)) \geq \frac{1}{C} \Vert \Phi \gamma \Vert.
\end{equation}
In addition, note that by the triangle inequality, there exists a constant $C'>0$ such that $l_S(\rho'(\gamma)) \leq C'\Vert \gamma \Vert $ for all $\gamma \in \mathcal{P}(\F_2)$, and that this constant can be made independent of $\rho'$ in a small neighborhood of $\rho$, and thus also uniform on compact set. Therefore, for all $[\Phi] \in E_\mathcal{K}$, $[\rho] \in \mathcal{K}$ and $[\gamma] \in \mathcal{P}(\F_2)$,
\begin{equation}\label{eq:prop-disc upper bound l_S}
	l_S(\rho(\gamma)) \leq C'\Vert \gamma \Vert .
\end{equation}
From \eqref{eq:prop-disc lower bound l_S} and \eqref{eq:prop-disc upper bound l_S}, we deduce that for all $\Phi \in E_\mathcal{K}$, for all $\gamma \in \mathcal{P}(\F_2)$, $ \Vert \Phi\gamma \Vert \leq CC' \Vert \gamma \Vert$. Since only a finite number of outer automorphism $\Phi$ can satisfying this inequality, we deduce that $E_\mathcal{K}$ is finite, hence the action of $\mathrm{Out}(\F_2)$ is properly discontinuous. \\
\end{proof}

\subsection{Recognition of Bowditch representations}~\label{subsec:recognition} \\
\indent We are now going to explain how our previous results and the tree $T_\rho(K)$ defined in section \ref{subsec:treeJ} allows us to write an algorithm to recognize Bowditch representations in this setting. Note that in contrast with the previous works in $\mathrm{SL}(2,\C)$ of Bowditch \cite{bowditch_markoff_1998}, further generalized by Tan-Wong-Zhang \cite{tan_generalized_2008}, and in $\mathrm{SU}(2,1)$ of Lawton-Maloni-Palesi \cite{lawton_dynamics_2025}, the tree $T_\rho(K)$ here is not fully attracting, and that we don't have a description of the belonging of an edge $e$ to the tree $T_\rho(K)$ as nice and explicite as in their works, where an explicite function $H(x)$, with $x=l_\rho(X)$ and $X \in \Omega^0(e)$ was sufficient to compute in order to characterize the belonging to the tree. \\

The following algorithm, given an irreducible representation $\rho : \F_2\to \mathrm{Isom}(\mathcal{X})$ and an arbitrary edge $e$ of the tree $\Sigma$, follows the orientation $\alpha_\rho$ until it reaches a sink or a region with small length. 
Recall that the notion of a \emph{sink} has been defined just before Proposition \ref{lem:sink}.

\begin{Algorithm}[Find small region or sink.] \label{alg:small region or sink}  ~\\
	$\mathrm{INPUT} :$ $(\rho,e)$ with $\rho : \F_2\to \mathrm{Isom}(\mathcal{X})$ an irreducible representation and $e \leftrightarrow (X,Y;Z,W)$ an edge of the tree $\Sigma$. \\
 Compute $l_\rho(X)$, $l_\rho(Y)$, $l_\rho(Z)$ and $l_\rho(W)$.
	\begin{itemize}
		\item If $l_\rho(X) \leq C_\delta$, or $l_\rho(Y) \leq C_\delta$, then denote $R \in \{X,Y\}$ such that $l_\rho(R) \leq C_\delta$, and exit the procedure. $\mathrm{OUTPUT} :$ $(e,R)$. 
		\item If $l_\rho(Z) \leq C_\delta$ or $l_\rho(W)\leq C_\delta$, then denote $R \in \{Z,W\}$ such that $l_\rho(R) \leq C_\delta$,  $e=X\cap R$, and exit the procedure. $\mathrm{OUTPUT :}$ $(e,R)$.
		\item If all these four lengths are greater than $C_\delta$, then compare $l_\rho(Z)$ and $l_\rho(W)$ to compute the orientation $\alpha_\rho(e)$. Denote $v$ the head of $\alpha_\rho(e)$. Up to exchanging $Z$ and $W$, let us assume that $v \leftrightarrow (X,Y,Z)$. Then Proposition \ref{lem:sink} applies, and we deduce that $v$ is the tail of at most one edge. Compute the orientation $\alpha_\rho$ on $X \cap Z$ and $Y\cap Z$. 
		\begin{itemize}
			\item If $v$ is not the tail of an edge, it means that $v$ is a sink such that all three regions around have length greater that $C_\delta$. Then exit the procedure. \\
			$\mathrm{OUTPUT : }$ ``The representation $\rho$ is a Bowditch representation." 
			\item If $v$ is the tail of exactly one edge $e'$, then run Algorithm \ref{alg:small region or sink} with input 
			$(\rho,e')$.
		\end{itemize}
	\end{itemize}
\end{Algorithm}

\begin{Proposition} \label{prop:alg find small region or sink}
	Let $\rho : \F_2\to \mathrm{Isom}(\mathcal{X})$ be an irreducible representation. 
	\begin{itemize}
		\item If $\rho$ is a Bowditch representation, then Algorithm \ref{alg:small region or sink} ends in finite time.  
		\item If Algorithm \ref{alg:small region or sink} ends in finite time, then either it found an edge $e$ together with a region $X \in \Omega^0(e)$ such that $l_\rho(X) \leq C_\delta$ or it found a sink $v \leftrightarrow (X,Y,Z)$ with $l_\rho(X)>C_\delta$, $l_\rho(Y)>C_\delta$ and $l_\rho(Z)>C_\delta$, and in this case, $\rho$ is a Bowditch representation. 
	\end{itemize}
\end{Proposition}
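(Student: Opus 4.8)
The plan is to analyze the three ways Algorithm~\ref{alg:small region or sink} can halt, and to treat the two bullets separately, starting with the second, which is essentially a bookkeeping of the exit conditions plus one growth argument. The recursion exits in exactly three situations: (i) one of $X,Y$ (the two regions with $X\cap Y=e$) has length $\leq C_\delta$; (ii) one of $Z,W$ (the regions at the two endpoints of $e$) has length $\leq C_\delta$; (iii) the head $v$ of $\alpha_\rho(e)$ is a sink whose three surrounding regions all have length $>C_\delta$. Cases (i) and (ii) are precisely the conclusion ``found an edge $e$ with a region $X\in\Omega^0(e)$ of length $\leq C_\delta$'' (in case (ii) after the relabelling $e:=X\cap R$ performed by the algorithm), so the only thing to prove in the second bullet is that case (iii) forces $\rho$ to be Bowditch.

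To handle case (iii), write $v\leftrightarrow(X,Y,Z)$ and set $e_1=X\cap Y$, $e_2=Y\cap Z$, $e_3=X\cap Z$. Since $v$ is a sink, each $\alpha_\rho(e_i)$ has head $v$, and each $e_i$ has both adjacent regions of length $>C_\delta$; hence Lemma~\ref{lem:growth on big edge} applies to each $e_i$ and yields, on the subtree $\Omega^{0-}(\alpha_\rho(e_i))$, the linear lower bound $l_\rho(X')\geq(m_i-C_\delta)F_{\alpha_\rho(e_i)}(X')+C_\delta$ with $m_i>C_\delta$. Since removing $v$ decomposes the plane as $\Omega=\Omega^{0-}(\alpha_\rho(e_1))\cup\Omega^{0-}(\alpha_\rho(e_2))\cup\Omega^{0-}(\alpha_\rho(e_3))$ and the maps $F_e$ are mutually comparable, combining the three bounds produces a single constant $C>0$ with $l_\rho(X')\geq\frac1C F_{e_0}(X')$ for all $X'\in\Omega$, i.e.\ condition~\ref{def:BQd} with $D=0$. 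By Theorem~\ref{thm:equivalence a,b,d} this makes $\rho$ a Bowditch representation. This is exactly the argument of Proposition~\ref{lem:Omega vide}, now run from an arbitrary sink with big surrounding regions rather than from the unique sink of the empty case.

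For the first bullet, suppose $\rho$ is Bowditch; then $\rho$ is irreducible by Proposition~\ref{prop:irreducible} and satisfies~\ref{def:BQa}. The recursion visits edges $e_0,e_1,\dots$ by following the orientation: the head of $\vec{e_n}:=\alpha_\rho(e_n)$ is the tail of $\vec{e_{n+1}}=\alpha_\rho(e_{n+1})$, where Lemma~\ref{lem:fork} guarantees at most one such outgoing edge at each head vertex, so the recursion is well defined and $(\vec{e_n})_n$ is an \emph{escaping ray} in the sense of Lemma~\ref{lem:no-escaping-ray}. The algorithm recurses past $e_n$ only after verifying that all four regions $X,Y,Z,W$ attached to $e_n$ have length $>C_\delta$; hence, if the recursion never halts, every region in $\bigcup_n\Omega^0(e_n)$ has length $>C_\delta$, and the third region at each traversed vertex already lies in this union (it belongs to $\Omega^0(e_{n+1})$), so Lemma~\ref{lem:fork} applies at every vertex of the ray. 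I would then run the proof of Lemma~\ref{lem:no-escaping-ray} verbatim along this ray, producing infinitely many distinct regions $X$ with $l_\rho(X)\leq C_\delta+\delta=K_\delta$, which contradicts the finiteness of $\{[\gamma]\in\mathcal{P}(\F_2):l_S(\rho(\gamma))\leq K_\delta\}$ in~\ref{def:BQa}; therefore the recursion halts in finite time.

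The main obstacle I anticipate is precisely this termination step: Lemma~\ref{lem:no-escaping-ray} is stated under the hypothesis $\Omega_\rho(C_\delta)=\emptyset$, so I must check that its proof uses the bigness of regions only \emph{along} the escaping ray. Each invocation there of Lemma~\ref{lem:fork} and of Proposition~\ref{prop:ineq-l(AB)} involves only regions of $\bigcup_n\Omega^0(e_n)$, which are big here, so the argument transfers even though small regions may exist elsewhere in $\Sigma$. I would verify with care that the ``corner'' regions produced are genuinely infinite in number and distinct, handling the degenerate case where the ray eventually wraps around a single region via the linear growth furnished by Proposition~\ref{prop:AnB} (whose hypothesis is secured by Proposition~\ref{prop:B(A+)neqA-explicite}); this is what ultimately forces the contradiction with~\ref{def:BQa}.
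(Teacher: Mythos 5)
Your proof is correct, but the first bullet follows a genuinely different route from the paper's. For the second bullet you and the paper agree in substance: the only non-trivial exit is the sink with three big surrounding regions, and running the argument of Proposition~\ref{lem:Omega vide} (i.e.\ Lemma~\ref{lem:growth on big edge} on the three edges at $v$, plus $\Omega=\Omega^{0-}(\alpha_\rho(e_1))\cup\Omega^{0-}(\alpha_\rho(e_2))\cup\Omega^{0-}(\alpha_\rho(e_3))$ and the comparability of the $F_e$) gives \ref{def:BQd} and hence Bowditch; your write-up is if anything more explicit than the paper's one-line citation. For termination, however, the paper does not argue by contradiction via escaping rays: it splits into $\Omega_\rho(C_\delta)=\emptyset$, where Proposition~\ref{lem:sink} gives a unique sink toward which every edge is oriented, so the walk reaches it; and $\Omega_\rho(C_\delta)\neq\emptyset$, where Proposition~\ref{prop:orientation outiside tree} shows that every edge outside $T_\rho(C_\delta)$ and off the boundary of a small region is oriented toward the tree, so the walk is funneled into an edge adjacent to a region of $\Omega_\rho(C_\delta)$ and the length test fires. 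Your alternative — non-termination produces an escaping ray all of whose adjacent regions have been certified $>C_\delta$ by the algorithm, and the proof of Lemma~\ref{lem:no-escaping-ray} then manufactures infinitely many regions of length $\leq K_\delta$, contradicting \ref{def:BQa} — is sound: you correctly identify that the only point to check is that every application of Lemma~\ref{lem:fork} and Proposition~\ref{prop:ineq-l(AB)} in that proof involves only regions of $\bigcup_n\bigl(\Omega^0(e_n)\cup\{Z_n,W_n\}\bigr)$, all of which the algorithm has verified to be big before recursing, and that the degenerate case where the ray wraps around one region is killed by Propositions~\ref{prop:B(A+)neqA-explicite} and~\ref{prop:AnB}. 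What the paper's route buys is that it reuses machinery ($T_\rho(K)$, Proposition~\ref{prop:orientation outiside tree}) already built for openness; what yours buys is independence from that tree and a single uniform argument with no case split on whether $\Omega_\rho(C_\delta)$ is empty.
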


\begin{proof}
	\begin{itemize}
		\item Assume that $\rho$ is a Bowditch representation. If $\Omega_\rho(C_\delta)=\emptyset$, by Proposition \ref{lem:sink} and Lemma~\ref{lem:growth on big edge}, there exists a unique sink $v$ and all the edges of the tree are oriented toward this sink. Therefore Algorithm \ref{alg:small region or sink} will eventually find this sink and ends. If $\Omega_\rho(C_\delta) \neq \emptyset$, then $T_\rho(C_\delta) \neq \emptyset$ and Proposition \ref{prop:orientation outiside tree} ensures that Algorithm \ref{alg:small region or sink} will eventually find an edge $e$ such that the head of $\alpha_\rho(e)$ lies in the boundary $\partial X$ of some region $X$ in $\Omega_\rho(C_\delta)$. Therefore the algorithm stops. 
		\item It is clear that if Algorithm \ref{alg:small region or sink} ends, it either finds a couple $(e,X)$ with $e$ an edge and $X \in \Omega^0(e)$ or it finds a sink. If it finds a sink, then Proposition \ref{lem:sink} ensures that $\rho$ is a Bowditch representation. 
	\end{itemize}
\end{proof}

Now assuming that an edge $e$ is given together with $X \in \Omega^0(e) \cap \Omega_\rho(C_\delta)$, Algorithm \ref{alg:explore tree} explores recursively the set of regions $\Omega_\rho(C_\delta)$. The list $L$ represents the set of regions which have already been explored so far. 
\begin{Algorithm} \label{alg:explore tree}
	$\mathrm{INPUT} : (\rho,e,X,L)$ with $\rho : \F_2\to \mathrm{Isom}(\mathcal{X})$ an irreducible representation, $e \leftrightarrow (X,Y;Z,W) \in E(\Sigma)$ an edge of the tree $\Sigma$, $X \in \Omega^0(e)$ and $L \subset \Omega$ a list of regions.\\
	\begin{itemize}
	\item Compute $l_S(\rho(X))$.
	\item If $l_S(\rho(X)) \leq C_\delta$, then do :
	\begin{enumerate}[label=\arabic*.]
		\item \label{alg:test hyperbolicity} If $l_\rho(X)=0$, then exit the procedure. \\
			$\mathrm{OUTPUT} :$ ``The representation $\rho$ is not a Bowditch representation."
		\item \label{alg:test B(A^+)} Denote $A=\rho(P(X))$ and $B=\rho(P(Y))$. \\
		If $B(A^+)=A^-$ or $B(A^-) =A^+$, then exit the procedure. \\
		$\mathrm{OUTPUT} :$ ``The representation $\rho$ is not a Bowditch representation.`"
		\item \label{alg:compute N k} Compute $N_+$ and $k_+$ the constants given by Proposition \ref{prop:AnB} for $A$ and $B$ and $N_-$ and $k_-$ the constants given by Proposition \ref{prop:AnB} for $A^{-1}$ and $B$. Let $n_+ = \max\{N_+,\frac{C_\delta+k_+}{l_S(A)}\}$ and $n_- = \max\{N_-,\frac{C_\delta+k_-}{l_S(A)}\}$. Denote $(Y_n)_{n\in \N}$ the sequence of regions meeting $X$ such that $P(Y_n)=P(X)^nP(Y)$, for all $n \in \Z$. Let $L:=L\cup \{X\}$. \\
		For $-n_- \leq n \leq n_+$, if $Y_n \notin L$, then 
		denote $e':=X\cap Y_n$, $X':=Y_n$ and run algorithm \ref{alg:explore tree} with  new input $(\rho, e',X',L)$. 
	\end{enumerate}
	\end{itemize}
\end{Algorithm}

\begin{Proposition} \label{prop:alg explore tree}
	Let $\rho : \F_2\to \mathrm{Isom}(\mathcal{X})$ be an irreducible representation, $e \in E(\Sigma)$ be an edge and $X \in \Omega$ be a region such that $l_\rho(X) \leq C_\delta$. 
	\begin{itemize}
		\item If $\rho$ is a Bowditch representation, then Algorithm \ref{alg:explore tree} with input $(\rho, e,X,\emptyset)$ ends in finite time.
		\item If Algorithm \ref{alg:explore tree} with input $(\rho, e,X,\emptyset)$ such that $l_\rho(X) \leq C_\delta$ ends in finite time, then either $\rho$ is not a Bowditch representation and the output says it, or $\rho$ is a Bowditch representation and there is no output. 
	\end{itemize}
\end{Proposition}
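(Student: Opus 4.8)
The plan is to read Algorithm \ref{alg:explore tree} as a depth-first traversal of the tree $T_\rho(C_\delta)$, which expands one \emph{small} region at a time (a region $X$ with $l_\rho(X)\leq C_\delta$, i.e.\ $X\in\Omega_\rho(C_\delta)$) and recurses into its arc $J_\rho(C_\delta,X)$, while the list $L$ records the small regions already expanded so that the test $Y_n\notin L$ forbids expanding any region twice. The whole argument will then rest on relating the termination of this traversal to the finiteness of $T_\rho(C_\delta)$, which by Proposition \ref{prop:eq finite tree} (valid since $\rho$ is irreducible and $C_\delta\geq C_\delta$) is equivalent to $\rho$ satisfying \ref{def:BQa}.

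For the first bullet, assume $\rho$ is Bowditch. Then every primitive element has hyperbolic image, so step \ref{alg:test hyperbolicity} never fires; and by Proposition \ref{prop:B(A+)neqA-explicite} every basis $\{P(X),P(Y_X)\}$ satisfies $B(A^+)\neq A^-$ and $B(A^-)\neq A^+$ (otherwise the infinitely many primitive elements $P(X)^nP(Y_X)$ would all have stable norm at most $30\delta$, contradicting \ref{def:BQa}), so step \ref{alg:test B(A^+)} never fires either. Hence the constants $N_\pm,k_\pm$ of Proposition \ref{prop:AnB} are finite, the ranges $[-n_-,n_+]$ in step \ref{alg:compute N k} are finite, and the calls issued from a small region $X$ are exactly those on the $Y_n$ with $X\cap Y_n\in J_\rho(C_\delta,X)$. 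A recursive call on a region of length $>C_\delta$ computes $l_S$ and exits immediately, so only small regions are expanded; since Proposition \ref{prop:eq finite tree} makes $\Omega_\rho(C_\delta)$ finite and each small region is expanded at most once (via $L$), there are finitely many expansions, each issuing finitely many calls, and the recursion halts.

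For the second bullet I would first note that the only outputs occur in steps \ref{alg:test hyperbolicity} and \ref{alg:test B(A^+)}, and both correctly declare $\rho$ non-Bowditch: in step \ref{alg:test hyperbolicity} the primitive $P(X)$ has non-hyperbolic image, and in step \ref{alg:test B(A^+)} Proposition \ref{prop:B(A+)neqA-explicite} again yields infinitely many short primitive elements, so \ref{def:BQa} fails in either case. It then remains to handle termination with \emph{no} output. Here I would show that the set $S$ of small regions actually expanded equals $\Omega_\rho(C_\delta)$: the inclusion $S\subseteq\Omega_\rho(C_\delta)$ is immediate, and for the reverse I would combine the connectedness of $\bigcup_{R\in\Omega_\rho(C_\delta)}R$ (Lemma \ref{lem:connected}) with Remark \ref{rem:J(K,X)}, which guarantees that every small region adjacent to an already expanded $R$ is among the $Y_n$ explored from $R$, hence (there being no output) is itself expanded. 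Taking the first small region not in $S$ along a path in the connected set $\bigcup_{R\in\Omega_\rho(C_\delta)}R$ starting at the small input region $X\in S$ produces a contradiction, so $\Omega_\rho(C_\delta)=S$ is finite; since every region of $S$ cleared steps \ref{alg:test hyperbolicity} and \ref{alg:test B(A^+)}, all arcs $J_\rho(C_\delta,R)$ are finite, whence $T_\rho(C_\delta)=\bigcup_{R\in\Omega_\rho(C_\delta)}J_\rho(C_\delta,R)$ is finite and Proposition \ref{prop:eq finite tree} gives that $\rho$ is Bowditch.

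The main obstacle I anticipate is exactly the completeness of the exploration in the no-output case: arguing that halting without error forces every small region to have been reached. This is where irreducibility is used, through the connectedness Lemma \ref{lem:connected}, and where Remark \ref{rem:J(K,X)} is indispensable, since it is what certifies that the finite index ranges $[-n_-,n_+]$ really capture \emph{all} small neighbours of a given small region rather than merely some of them.
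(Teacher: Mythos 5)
Your proposal is correct and follows essentially the same route as the paper: termination under \ref{def:BQa} via finiteness of $\Omega_\rho(C_\delta)$ and of the index ranges, correctness of the outputs at steps \ref{alg:test hyperbolicity} and \ref{alg:test B(A^+)} via Proposition \ref{prop:B(A+)neqA-explicite}, and completeness of the exploration in the no-output case via Proposition \ref{prop:AnB} together with the connectedness Lemma \ref{lem:connected}. The only (cosmetic) difference is that you conclude by passing through finiteness of $T_\rho(C_\delta)$ and Proposition \ref{prop:eq finite tree}, whereas the paper deduces \ref{def:BQa} directly from the finiteness of $\Omega_\rho(C_\delta)$ and the hyperbolicity checks performed along the way.
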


\begin{proof}
	\begin{itemize}
		\item Assume that Algorithm \ref{alg:explore tree} with input $(\rho,e,X,[])$ runs forever, then it means that Algorithm \ref{alg:explore tree} will pass an infinite number of time through step \ref{alg:compute N k}, and therefore that there exists an infinite number of regions $R$ satisfying $l_\rho(R) \leq C_\delta$. Hence $\rho$ does not satisfy \ref{def:BQa}.
		\item Assume that Algorithm \ref{alg:explore tree} ends in finite time. 
		\begin{itemize}
			\item If Algorithm \ref{alg:explore tree} ends at step \ref{alg:test hyperbolicity}, then it found a region $X$ such that $\rho(P(X))$ is not hyperbolic, which prevents $\rho$ from satisfying \ref{def:BQa}. If is consistent with the output.
			\item If Algorithm \ref{alg:explore tree} ends at step \ref{alg:test B(A^+)}, then it found a basis $\{a,b\}$ of $\F_2$ so that, denoting $A=\rho(a)$ and $B=\rho(b)$, we have either $B(A^+)=A^-$ or $B(A^-)=A^+$ which again prevents $\rho$ from satisfying \ref{def:BQa} by Proposition \ref{prop:B(A+)neqA-explicite}. It is consistent with the output.
			\item If Algorithm \ref{alg:explore tree} ends after step \ref{alg:compute N k}, then it means that it has explored a finite number of region $X$ such that $l_\rho(X)\leq C_\delta$. Proposition \ref{prop:AnB} ensures that for all $n>n_+$ and all $n<n_-$, $l_\rho(Y_n) >C_\delta$ (with the notation of step \ref{alg:compute N k}). 
			Combined with the connectivity result of Lemma \ref{lem:connected}, we deduce all the regions $R$ such that $l_\rho(R) \leq C_\delta$ have been explored by the algorithm, and thus that $\Omega_\rho(C_\delta)$ is finite. Since the algorithm also checked that for all these regions $R$, $\rho(P(X))$ is hyperbolic, this ensures that $\rho$ is a Bowditch representation. In this case there is no output. 
		\end{itemize}
	\end{itemize}
\end{proof}

We can now combine Algorithm \ref{alg:small region or sink} and Algorithm \ref{alg:explore tree} to give the main algorithm which certifies that a given representation is a Bowditch representation. 
\begin{Algorithm}[Recognition of Bowditch representations]\label{alg:main} ~\\
	$\mathrm{INPUT : }$ $\rho : \F_2 \to \mathrm{Isom}(\mathcal{X})$ a representation. 
	\begin{enumerate}[label=\arabic*.]
		\item Pick $e \leftrightarrow (X,Y;Z,W)$ any edge of the tree $\Sigma$.
		\item If $l_\rho(X)=0$ or $l_\rho(Y)=0$, then exit the procedure. \\ $\mathrm{OUTPUT : }$ ``The representation $\rho$ is not a Bowditch representation."
		\item \label{alg:step-irreducibility} Denote $A=\rho(P(X))$, $B=\rho(P(Y))$. \\
		 If $\{A^+,A^-\}\cap \{B^+,B^-\} \neq \emptyset$, then exit the procedure. \\
			$\mathrm{OUTPUT :}$ ``The representation $\rho$ is not irreducible, hence not a Bowditch representation."
		\item Run algorithm \ref{alg:small region or sink} with input $(\rho,e)$.
		\begin{itemize}
			\item If Algorithm \ref{alg:small region or sink} ends and finds a sink $v$, then exit the procedure. \\
			$\mathrm{OUTPUT :}$ ``The representation $\rho$ is a Bowditch representation." 
			\item If Algorithm \ref{alg:small region or sink} ends and finds an edge $e$ together with a region $X \in \Omega^0(e)$ satisfying $l_\rho(X) \leq C_\delta$, continue.
		\end{itemize} 
		\item Run Algorithm \ref{alg:explore tree} with input $(\rho,e,X,[])$. 
		\begin{itemize}
		\item If Algorithm \ref{alg:explore tree} ends and output that $\rho$ is not a Bowditch representation, then\\
		$\mathrm{OUTPUT :}$ ``The representation $\rho$ is not a Bowditch representation."
		\item If Algorithm \ref{alg:explore tree} ends with no output, then\\
		$\mathrm{OUTPUT :}$ ``The representation $\rho$ is a Bowditch representation.".
		\end{itemize}
	\end{enumerate}
\end{Algorithm}

\begin{Theorem} Let $\rho : \F_2\to \mathrm{Isom}(\mathcal{X})$ be a representation. 
	\begin{itemize}
	\item If $\rho$ is a Bowditch representation, then Algorithm \ref{alg:main} ends in finite time.
	\item If Algorithm \ref{alg:main} ends in finite time, then it decides whether $\rho$ is a Bowditch representation. 
	\end{itemize}
\end{Theorem}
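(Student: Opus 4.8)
The plan is to prove the theorem by assembling the two propositions already established for the sub-routines, Proposition \ref{prop:alg find small region or sink} (for Algorithm \ref{alg:small region or sink}) and Proposition \ref{prop:alg explore tree} (for Algorithm \ref{alg:explore tree}), while checking at each call that the irreducibility hypothesis those sub-routines require is actually met. First I would trace through Algorithm \ref{alg:main} step by step and observe that steps 2 and 3 together serve to guarantee irreducibility: if the algorithm passes step 2 without halting, then $l_\rho(X) > 0$ and $l_\rho(Y) > 0$, so $A = \rho(P(X))$ and $B = \rho(P(Y))$ are hyperbolic and their fixpoints are well-defined; in that case step 3, which tests whether $\{A^+,A^-\} \cap \{B^+,B^-\} \neq \emptyset$, decides irreducibility exactly by Remark \ref{rem:irreducibleF2}. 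Hence any representation surviving steps 2 and 3 is irreducible, which is precisely the hypothesis needed to invoke Algorithms \ref{alg:small region or sink} and \ref{alg:explore tree} in steps 4 and 5.

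For the first assertion (termination when $\rho$ is Bowditch), I would argue that if $\rho$ is a Bowditch representation, then every primitive element is hyperbolic, so step 2 does not halt, and $\rho$ is irreducible by Proposition \ref{prop:irreducible}, so step 3 does not halt either. The call to Algorithm \ref{alg:small region or sink} thus receives an irreducible Bowditch input, and by the first bullet of Proposition \ref{prop:alg find small region or sink} it terminates, returning either a sink (in which case Algorithm \ref{alg:main} halts) or an edge $e$ together with a region $X$ satisfying $l_\rho(X) \leq C_\delta$. In the latter case Algorithm \ref{alg:explore tree} is launched on $(\rho,e,X,[])$ with $\rho$ irreducible and Bowditch, and by the first bullet of Proposition \ref{prop:alg explore tree} it too terminates. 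In every branch Algorithm \ref{alg:main} ends in finite time.

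For the second assertion (correctness upon termination), I would perform a case analysis on where Algorithm \ref{alg:main} halts. If it halts at step 2 because $l_\rho(X) = 0$ or $l_\rho(Y) = 0$, then the corresponding primitive element is sent to a non-hyperbolic isometry, since $l_S(A) > 0$ characterizes hyperbolicity, so $\rho$ is not Bowditch and the output is correct. If it halts at step 3, then $A$ and $B$ are hyperbolic (step 2 having been passed) and $\{A^+,A^-\} \cap \{B^+,B^-\} \neq \emptyset$, so $\rho$ is reducible by Remark \ref{rem:irreducibleF2}; as Bowditch representations are irreducible by Proposition \ref{prop:irreducible}, $\rho$ is again not Bowditch and the output is correct. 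If Algorithm \ref{alg:small region or sink} returns a sink, the second bullet of Proposition \ref{prop:alg find small region or sink} certifies that $\rho$ is Bowditch, matching the output. Finally, if Algorithm \ref{alg:explore tree} is reached and terminates, the second bullet of Proposition \ref{prop:alg explore tree} guarantees that its verdict correctly decides whether $\rho$ is Bowditch, and Algorithm \ref{alg:main} merely relays it.

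The bulk of the argument is pure bookkeeping once the two sub-routine propositions are in hand; the one point requiring genuine care is the interplay between steps 2 and 3 and the irreducibility hypothesis. Concretely, the main obstacle is ensuring that the irreducibility test in step 3 is both well-posed (the fixpoints $A^\pm, B^\pm$ exist) and faithful (Remark \ref{rem:irreducibleF2} applies only when $A$ and $B$ are hyperbolic). Checking that step 2 is necessarily cleared before step 3 is executed, so that these hyperbolicity hypotheses hold, is what legitimizes the subsequent invocations of the sub-routines and is the linchpin of the whole decomposition.
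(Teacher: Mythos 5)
Your proposal is correct and follows essentially the same route as the paper, which simply combines Proposition \ref{prop:alg find small region or sink} and Proposition \ref{prop:alg explore tree}; your additional care in verifying that steps 2 and 3 of Algorithm \ref{alg:main} establish the hyperbolicity of $A$ and $B$ and then the irreducibility hypothesis (via Remark \ref{rem:irreducibleF2} and Proposition \ref{prop:irreducible}) before the sub-routines are invoked is a welcome elaboration of a point the paper leaves implicit.
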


\begin{proof}
	This is just a combination of Proposition \ref{prop:alg find small region or sink} and Proposition \ref{prop:alg explore tree}. 
\end{proof}

Lastly, we end with a remark and a small complement on the algorithms. Note that the procedure requires to be able to test if two points in the boundary of the hyperbolic space $\mathcal{X}$ are equal. Indeed, step~\ref{alg:step-irreducibility} in Algorithm~\ref{alg:main} checks irreducibility by checking the equivalent condition $\{A^+,A^-\}\cap \{B^+,B^-\} =\emptyset$ and step \ref{alg:test B(A^+)} in Algorithm~\ref{alg:explore tree} asks if $B(A^+)\neq A^-$ and $B(A^-)\neq B^+$. 

Alternatively, we can give  algorithms to certify these conditions by doing only computation in the space~$\mathcal{X}$. \\

The following algorithm, given two parametrized geodesic, checks if their endpoints at $+\infty$ coïncide.
\begin{Algorithm}[Irreducibility] \label{alg:irreducibility}
	$\mathrm{INPUT :}$ $(l_1,l_2)$ with $l_1 : \R \to \mathcal{X}$ and $l_2 : \R \to \mathcal{X}$ two parametrized geodesics. 
	\begin{enumerate}[label=\arabic*.]
	\item Compute $N=\max\{6\delta,d(l_1(0),l_2(0))\}+d(l_1(0),l_2(0))+\delta$ and set $n=N$.  
	\item \label{alg:step;compute distance} Compute $d(l_1(n),l_2)$. 
	\begin{itemize}
		\item If $d(l_1(n),l_2) >2\delta$, then exit the procedure. $\mathrm{OUTPUT :} $ $l_1(+\infty) \neq l_2(+\infty)$. 
		\item If $d(l_1(n),l_2) \leq 2\delta$, then $n:=n+1$ and restart step \ref{alg:step;compute distance}. 
	\end{itemize} 
	\end{enumerate}
\end{Algorithm}

\begin{Proposition}
 Let $l_1 : \R \to \mathcal{X}$ and $l_2 : \R \to \mathcal{X}$ be two parametrized geodesics. \\
Algorithm \ref{alg:irreducibility} ends in finite time if and only if $l_1(+\infty)\neq l_2(+\infty)$.
\end{Proposition}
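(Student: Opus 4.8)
The plan is to reduce the equivalence to the geometric principle, already encoded in the preliminaries, that a geodesic ray stays uniformly close to a geodesic exactly when it is asymptotic to it. First I would record the elementary observation that, since $p_2$ is a projection onto $l_2$, we have $d(l_1(n),p_2(l_1(n)))=d(l_1(n),l_2)$ for every $n$. Hence the algorithm continues past step $n$ if and only if $d(l_1(n),l_2)\leq 2\delta$, and it terminates (returning $l_1(+\infty)\neq l_2(+\infty)$) exactly at the first index $n\geq N$ with $d(l_1(n),l_2)>2\delta$. The two implications of the stated equivalence are therefore contrapositive to one another: the algorithm fails to terminate if and only if $d(l_1(n),l_2)\leq 2\delta$ for all $n\geq N$, and I must show that this last condition is equivalent to $l_1(+\infty)=l_2(+\infty)$.

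For the implication $l_1(+\infty)=l_2(+\infty)\Rightarrow$ nontermination, I would apply Lemma \ref{lem:pos-proj} with $l=l_2$, common endpoint $l_\infty=l_2(+\infty)=l_1(+\infty)$, $y_0=l_2(0)$, $x_0=l_1(0)$ and $x=l_1(n)$, noting that $l_1(n)$ lies on the forward ray $[l_1(0),l_\infty)$. Writing $D=d(l_1(0),l_2(0))$ and using $d(l_1(n),l_1(0))=n$, the triangle inequality gives $d(l_1(n),l_2(0))\geq n-D$; the definition $N=\max\{6\delta,D\}+D+\delta$ is tailored precisely so that the two hypotheses $d(l_1(n),l_2(0))>D+2\delta$ and $d(l_1(n),l_2(0))>6\delta$ both hold for $n\geq N$. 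The first bullet of Lemma \ref{lem:pos-proj} then produces $y\in[l_2(0),l_\infty)\subset l_2$ with $d(l_1(n),y)\leq 2\delta$, whence $d(l_1(n),l_2)\leq 2\delta$, and the algorithm never exits.

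For the converse, I would suppose $d(l_1(n),l_2)\leq 2\delta$ for all $n\geq N$ and set $z_n=p_2(l_1(n))\in l_2$, so that $d(l_1(n),z_n)\leq 2\delta$. The bound $d(l_2(0),z_n)\geq d(l_2(0),l_1(n))-2\delta\geq n-D-2\delta$ shows that the $z_n$ leave every bounded set and hence accumulate on an endpoint of $l_2$; simultaneously, since $d(l_1(n),z_n)$ stays bounded while $d(l_2(0),l_1(n))\to+\infty$, the Gromov product $(l_1(n)\mid z_n)_{l_2(0)}\to+\infty$, so $l_1(n)$ and $z_n$ converge to the same boundary point and $l_1(+\infty)$ is an endpoint of $l_2$. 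It then remains to identify this endpoint as $l_2(+\infty)$ rather than $l_2(-\infty)$: here the lower bound forced by the choice of $N$ enters, guaranteeing (again via Lemma \ref{lem:pos-proj}, whose second bullet applies since $d(l_2(0),l_1(n))>6\delta$) that the projections $z_n$ march off along the forward ray $[l_2(0),l_2(+\infty))$, so that $l_1(+\infty)=l_2(+\infty)$.

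The hard part will be exactly this direction-control in the converse, namely ensuring that the projections accumulate on the $+\infty$ end of $l_2$ and not on the $-\infty$ end, since the threshold $2\delta$ is by itself symmetric in the two ends of $l_2$. The explicit value of $N$ is what breaks this symmetry, by pushing $l_1(n)$ far enough along its ray that its projection is pinned to the forward half of $l_2$; verifying that the stated $N$ simultaneously clears the $2\delta$- and $6\delta$-thresholds required to invoke both bullets of Lemma \ref{lem:pos-proj} is the one genuinely delicate computation, the remaining steps being routine applications of the triangle inequality and of the definition of convergence at infinity.
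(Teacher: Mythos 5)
Your first direction (shared endpoint implies non-termination) follows the paper's route exactly: both apply the first bullet of Lemma \ref{lem:pos-proj} with $l=l_2$, $x_0=l_1(0)$, $x=l_1(n)$. (Minor caveat: with the crude bound $d(l_1(n),l_2(0))\geq n-D$, the hypothesis $d(l_1(n),l_2(0))>D+2\delta$ gives $N-D=\max\{6\delta,D\}+\delta>D+2\delta$ only when $D<5\delta$, so your claim that the stated $N$ ``is tailored precisely'' to clear both thresholds does not quite check out for large $D$; but this is inherited from the algorithm's constant and is not the main issue.)

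The genuine gap is in your converse, precisely at the step you yourself flag as the hard part. Having shown that $l_1(+\infty)$ is an endpoint of $l_2$, you invoke the second bullet of Lemma \ref{lem:pos-proj} to conclude that the projections $z_n$ march along the \emph{forward} ray $[l_2(0),l_2(+\infty))$. But that bullet is stated relative to a half-geodesic $[x_0,l_\infty)$ where $l_\infty$ is the endpoint the two geodesics actually share: its conclusion is that the projections lie on $[y_0,l_\infty)$, i.e.\ on the ray toward \emph{whichever} end is shared. If the shared end were $l_2(-\infty)$, the same lemma would place the $z_n$ on the backward ray. Nothing in the $6\delta$ threshold breaks the symmetry between the two ends of $l_2$, so the argument is circular: it assumes the identification it is meant to establish. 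The obstruction is not cosmetic. The quantity the algorithm tests, $d(l_1(n),p_2(l_1(n)))=d(l_1(n),l_2)$, depends on $l_2$ only as a set, so it cannot distinguish $l_1(+\infty)=l_2(+\infty)$ from $l_1(+\infty)=l_2(-\infty)$; for instance with $l_1(t)=l_2(-t)$ in a tree ($\delta=0$) the algorithm runs forever although $l_1(+\infty)=l_2(-\infty)\neq l_2(+\infty)$. The paper's one-line proof of the forward implication silently assumes that $l_1(+\infty)$ is not an endpoint of $l_2$ at all and has the same gap; so you have correctly located a real weak point of the proposition, but your patch does not close it — one must either add the hypothesis $l_1(+\infty)\neq l_2(-\infty)$ (which suffices for the intended use in Algorithm \ref{alg:main}, where all four endpoint pairs are tested) or replace the test by one sensitive to the parametrization, e.g.\ comparing $l_1(n)$ with $l_2(n)$ as in Lemma \ref{lem:asymptot-parametrization}.
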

\begin{proof}
	\begin{itemize}
		\item If $l_1(+\infty)\neq l_2(+\infty)$, then there exists $n \geq N$ sufficiently large such that $d(l_1(n),l_2)>2\delta$, hence the algorithm ends in finite time.
		\item If $l_1(+\infty)=l_2(+\infty)$, by applying Lemma \ref{lem:pos-proj} we obtain that for all $n\geq N$, $d(l_1(n),l_2) \leq 2\delta$ and hence the algorithm runs forever. 
	\end{itemize}

\end{proof}
\begin{Remark} In $\mathrm{SL}(2,\C)$, the trace of the commutator characterizes irreducibility. Indeed, a representation $\rho : \F_2 \to \mathrm{SL}(2,\C)$ is irreducible if and only if $\mathrm{Tr}(\rho([a,b]))=2$, for any free generating set $\{a,b\}$ of $\F_2$. Thus, this condition can be computed easily. Irreducibility is an $\mathrm{Out}(\F_2)$-invariant condition which in this case corresponds to the slice $\mathrm{Tr}(\rho([a,b]))=2$ of the $\mathrm{SL}(2,\C)$-character variety.
\end{Remark}

Let us also remark that our work allows us to certify the condition $B(A^+)\neq A^-$ by computing only the lengths $l_S(A^nB)$.
\begin{Algorithm}[$B(A^+)\neq A^-$] \label{alg:B(A^+)neqA^-} 
	$\mathrm{INPUT :}$ $A$ a hyperbolic isometry and $B$ an isometry. 
	\begin{enumerate}[label=\arabic*.]
		\item Compute the constant $N$ given by Proposition \ref{prop:B(A+)neqA-explicite} and set $n=N$. 
		\item \label{step:compute-lS(AnB)} Compute $l_S(A^nB)$.
		\begin{itemize}
			\item If $l_S(A^nB) >30\delta$, then exit the procedure. $\mathrm{OUTPUT :}$ $B(A^+)\neq A^-$.
			\item If $l_S(A^nB) \leq 30\delta$, then $n:=n+1$ and restart step \ref{step:compute-lS(AnB)}. 
		\end{itemize}
	\end{enumerate}
\end{Algorithm}
\begin{Proposition}Let $A$ be a hyperbolic isometry and $B$ be an isometry. \\
	Algorithm \ref{alg:B(A^+)neqA^-} ends in finite time if and only if $B(A^+)\neq A^-$.
\end{Proposition}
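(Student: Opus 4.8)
The plan is to observe that the two mutually exclusive cases $B(A^+)=A^-$ and $B(A^+)\neq A^-$ are governed precisely by the two complementary results established earlier, Proposition \ref{prop:B(A+)neqA-explicite} and Proposition \ref{prop:AnB}, and that the threshold $30\delta$ and the starting index $N$ appearing in Algorithm \ref{alg:B(A^+)neqA^-} are exactly the quantities produced by those propositions.

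First I would record the elementary but crucial identity $l_S(A^nB)=l_S(BA^n)$, which holds because $BA^n=A^{-n}(A^nB)A^n$ and the stable norm is invariant under conjugation (see section \ref{subsec:hyperbolic isometries}). This lets me interpret the quantity computed at each step of the algorithm as $l_S(BA^n)$, which is precisely the quantity whose linear growth is controlled by Proposition \ref{prop:AnB}.

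For the implication ``$B(A^+)=A^-$ forces the algorithm to run forever'', I would invoke Proposition \ref{prop:B(A+)neqA-explicite}: since $A$ is hyperbolic and $B(A^+)=A^-$, and since the starting index $N$ is exactly the constant of \eqref{eq:N-lS bounded}, we obtain $l_S(A^nB)\leq 30\delta$ for every $n\geq N$. As the algorithm initializes at $n=N$ and only increments $n$, the test $l_S(A^nB)>30\delta$ fails at every step, so the procedure never exits. For the converse, assume $B(A^+)\neq A^-$. Then Proposition \ref{prop:AnB} applies to $A$ and $B$ and yields constants $N'$ and $k\geq 0$ with $l_S(BA^n)\geq l_S(A)\,n-k$ for all $n\geq N'$. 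By the identity above together with $l_S(A)>0$ (as $A$ is hyperbolic), the right-hand side tends to $+\infty$, so there exists an index $n_0\geq N$ with $l_S(A^{n_0}B)>30\delta$; the algorithm, incrementing $n$ by one starting from $N$, reaches such an index after finitely many steps and exits. The two implications together give the stated equivalence.

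I do not expect a genuine obstacle here, since the result is essentially a repackaging of the two propositions as a termination criterion. The only point requiring care is to check that the constant $N$ of Proposition \ref{prop:B(A+)neqA-explicite} is well-defined and computable even without assuming $B(A^+)=A^-$: inspecting \eqref{eq:N-lS bounded} shows it depends only on $l_S(A)$, a basepoint $o\in L_A$, and $d(Bo,o)$, so the same starting index $N$ is legitimately used by the algorithm in both cases.
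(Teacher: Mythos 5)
Your proposal is correct and follows essentially the same route as the paper, which likewise cites Proposition \ref{prop:B(A+)neqA-explicite} for the ``runs forever'' direction and Proposition \ref{prop:AnB} for termination; your additional remarks on the conjugation identity $l_S(A^nB)=l_S(BA^n)$ and on the computability of $N$ are sound but not needed beyond what the paper already implicitly uses.
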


\begin{proof}
	If $B(A^+) \neq A^-$, then Proposition \ref{prop:AnB} ensures the existence of an integer $n \geq N$ such that $l_S(A^nB) >30\delta$ and if $B(A^+)=A^-$, Proposition \ref{prop:B(A+)neqA-explicite} asserts that for all $n \geq N$, $l_S(A^nB) \leq 30\delta$. 
\end{proof}

\begin{Remark} In $\mathrm{SL}(2,\C)$, the condition $B(A^+)\neq A^-$ and $B(A^-)\neq A^+$ can also be certified easily using the trace of the commutator. Indeed, one can check that $B(A^+)\neq A^-$ and $B(A^-)\neq A^+$ if and only if $\mathrm{Tr}(A)^2 \neq \mathrm{Tr}([A,B])+2$. In particular, consider the $\mathrm{Out}(\F_2)$-invariant slices of the character variety $\chi(\F_2,\mathrm{SL}(2,\C))$ defined by $\{ \mathrm{Tr}(\rho[a,b])=\mu\}$, for $\mu \in \C$. Then for every representation $\rho : \F_2 \to \mathrm{SL}(2,\C)$, the condition $B(A^+)\neq A^-$ and $B(A^-)\neq A^+$ depends only on the trace of $\rho(a)$ and on which slice the representation $\rho$ lies in.  
\end{Remark}

\printbibliography

@article{minsky_dynamics_2013,
	title = {On dynamics of {Out}({F}\_n) on {PSL}(2,{C}) characters},
	volume = {193},
	url = {http://arxiv.org/abs/0906.3491},
	abstract = {We introduce and study an open set of PSL2(C) characters of a nonabelian free group, on which the action of the outer automorphism group is properly discontinuous, and which is strictly larger than the set of discrete, faithful convex-cocompact (i.e. Schottky) characters. This implies, in particular, that the outer automorphism group does not act ergodically on the set of characters with dense image. Hence there is a diﬀerence between the geometric (discrete vs. dense) decomposition of the characters, and a natural dynamical decomposition.},
	language = {en},
	urldate = {2021-03-24},
	journal = {Israel J. Math.},
	author = {Minsky, Yair N.},
	year = {2013},
	keywords = {Mathematics - Geometric Topology, 37F30, 57M60, Mathematics - Dynamical Systems},
	pages = {47--70},
}

@article{lee_bowditchs_2019,
	title = {Bowditch's {Q}-conditions and {Minsky}'s primitive stability},
	volume = {373},
	issn = {0002-9947, 1088-6850},
	url = {http://arxiv.org/abs/1812.04237},
	doi = {10.1090/tran/7953},
	abstract = {For the action of the outer automorphism group of the rank two free group on the corresponding variety of PSL(2, C) characters, two domains of discontinuity have been known to exist that are strictly larger than the set of Schottky characters. One is introduced by Bowditch in 1998 (followed by Tan, Wong and Zhang in 2008) and the other by Minsky in 2013. We prove that these two domains are equal. We then show that they are contained in the set of characters having what we call the bounded intersection property.},
	language = {en},
	number = {2},
	urldate = {2022-11-24},
	journal = {Transactions of the American Mathematical Society},
	author = {Lee, Jaejeong and Xu, Binbin},
	month = oct,
	year = {2019},
	keywords = {Mathematics - Geometric Topology, 57M60, 20E05, Mathematics - Group Theory},
	pages = {1265--1305},
}

@article{tan_generalized_2008,
	title = {Generalized {Markoff} maps and {McShane}'s identity},
	volume = {217},
	issn = {00018708},
	url = {https://linkinghub.elsevier.com/retrieve/pii/S0001870807002630},
	doi = {10.1016/j.aim.2007.09.004},
	abstract = {We study the (relative) SL(2, C) character varieties of the one-holed torus and the action of the mapping class group on the (relative) character variety. We show that the subset of characters satisfying two simple conditions called the Bowditch Q-conditions is open in the relative character variety and that the mapping class group acts properly discontinuously on this subset. Furthermore, this is the largest open subset for which this holds. We also show that a generalization of McShane’s identity holds for all characters satisfying the Bowditch Q-conditions. Finally, we show that further variations of the McShane–Bowditch identity hold for characters which are ﬁxed by an Anosov element of the mapping class group and which satisfy a relative version of the Bowditch Q-conditions, with applications to identities for incomplete hyperbolic structures on punctured torus bundles over the circle, and also for closed hyperbolic 3-manifolds which are obtained by hyperbolic Dehn surgery on such manifolds.},
	language = {en},
	number = {2},
	urldate = {2022-11-24},
	journal = {Advances in Mathematics},
	author = {Tan, Ser Peow and Wong, Yan Loi and Zhang, Ying},
	month = jan,
	year = {2008},
	pages = {761--813},
}

@article{maloni_character_2020,
	title = {On the character variety of the three–holed projective plane},
	volume = {24},
	issn = {1088-4173},
	url = {https://www.ams.org/ecgd/2020-24-04/S1088-4173-2020-00349-5/},
	doi = {10.1090/ecgd/349},
	abstract = {We study the (relative) SL(2, C) character variety of the threeholed projective plane and the action of the mapping class group on it. We describe a domain of discontinuity for this action, which strictly contains the set of primitive stable representations deﬁned by Minsky, and also the set of convex-cocompact characters.},
	language = {en},
	number = {4},
	urldate = {2023-05-01},
	journal = {Conformal Geometry and Dynamics of the American Mathematical Society},
	author = {Maloni, Sara and Palesi, Frédéric},
	month = mar,
	year = {2020},
	pages = {68--108},
}

@article{maloni_character_2015,
	title = {On the character variety of the four-holed sphere},
	volume = {9},
	issn = {1661-7207},
	url = {https://ems.press/doi/10.4171/ggd/326},
	doi = {10.4171/GGD/326},
	abstract = {We study the (relative) SL.2; C/ character varieties of the four-holed sphere and the action of the mapping class group on them. We describe a domain of discontinuity for this action, and, in the case of real characters, show that this domain of discontinuity may be non-empty on the components where the relative Euler class is non-maximal.},
	language = {en},
	number = {3},
	urldate = {2023-05-01},
	journal = {Groups, Geometry, and Dynamics},
	author = {Maloni, Sara and Palesi, Frédéric and Tan, Ser Peow},
	year = {2015},
	pages = {737--782},
}

@book{knapp_lie_1996,
	address = {Boston, MA},
	title = {Lie {Groups} {Beyond} an {Introduction}},
	isbn = {978-1-4757-2455-4 978-1-4757-2453-0},
	url = {http://link.springer.com/10.1007/978-1-4757-2453-0},
	doi = {10.1007/978-1-4757-2453-0},
	language = {en},
	urldate = {2023-06-02},
	publisher = {Birkhäuser Boston},
	author = {Knapp, Anthony W.},
	year = {1996},
}

@misc{schlich_equivalence_2024,
	title = {Equivalence of primitive-stable and {Bowditch} actions of the free group of rank two on {Gromov}-hyperbolic spaces},
	url = {http://arxiv.org/abs/2211.14546},
	doi = {10.48550/arXiv.2211.14546},
	abstract = {We prove that the set of Bowditch representations (introduced by Bowditch in 1998, then generalized by Tan, Wong and Zhang in 2008) and the set of primitive-stable representations (introduced by Minsky in 2013) of the free group of rank two in the isometry group of a Gromovhyperbolic space are equal. The case of PSL(2, C)-representations has already been proved by Lee and Xu and independently Series. Our proof in this context is independent.},
	language = {en},
	urldate = {2025-03-05},
	publisher = {arXiv},
	author = {Schlich, Suzanne},
	month = dec,
	year = {2024},
	note = {arXiv:2211.14546 [math]},
	keywords = {Mathematics - Geometric Topology},
	file = {Schlich - 2024 - Equivalence of primitive-stable and Bowditch actio.pdf:/Users/schlichs/Zotero/storage/QRVRSCG6/Schlich - 2024 - Equivalence of primitive-stable and Bowditch actio.pdf:application/pdf},
}

@article{bowditch_markoff_1998,
	title = {Markoff triples and quasifuchsian groups},
	volume = {77},
	issn = {00246115},
	url = {http://doi.wiley.com/10.1112/S0024611598000604},
	doi = {10.1112/S0024611598000604},
	language = {en},
	number = {3},
	urldate = {2021-03-11},
	journal = {Proceedings of the London Mathematical Society},
	author = {Bowditch, B.H.},
	month = nov,
	year = {1998},
	pages = {697--736},
}

@book{coornaert_geometrie_1990,
	address = {Berlin, Heidelberg},
	series = {Lecture {Notes} in {Mathematics}},
	title = {Géométrie et théorie des groupes},
	volume = {1441},
	isbn = {978-3-540-52977-4 978-3-540-46294-1},
	url = {http://link.springer.com/10.1007/BFb0084913},
	doi = {10.1007/BFb0084913},
	urldate = {2022-11-25},
	publisher = {Springer},
	author = {Coornaert, Michel and Delzant, Thomas and Papadopoulos, Athanase},
	year = {1990},
	keywords = {Combinatorial group theory, Hyperbolic group, Hyperbolic space, Isoperimetric inequality, Negative Curvature},
}

@article{delzant_displacing_2011,
	title = {Displacing representations and orbit maps},
	volume = {Univ. Chicago Press},
	language = {en},
	journal = {Geometry, rigidity and group actions},
	author = {Delzant, Thomas and Guichard, Olivier and Labourie, Francois and Mozes, Shahar},
	year = {2011},
	pages = {494--514},
}

@misc{schlich_simple-stable_2025,
	title = {Simple-stable and {Bowditch} representations of the four-punctured sphere group in {Gromov}-hyperbolic spaces},
	url = {http://arxiv.org/abs/2503.21859},
	doi = {10.48550/arXiv.2503.21859},
	abstract = {In this paper, we study representations from the four-punctured sphere group into isometry groups of Gromov-hyperbolic spaces. We prove that the set of simple-stable representations (in analogy with Minsky’s notion of primitive-stability) and the set of Bowditch representations (inspired by Bowditch’s work on the free group of rank two, generalized by Tan, Wong and Zhang) are equal. Along the way, we study the combinatorics of simple closed curves on the four-punctured sphere and prove a result which quantifies the redundancy of subwords of certain given lengths within simple words.},
	language = {en},
	urldate = {2025-03-31},
	publisher = {arXiv},
	author = {Schlich, Suzanne},
	month = mar,
	year = {2025},
	note = {arXiv:2503.21859 [math]},
	keywords = {Mathematics - Geometric Topology},
	file = {Schlich - 2025 - Simple-stable and Bowditch representations of the .pdf:/Users/schlichs/Zotero/storage/I23BXRYE/Schlich - 2025 - Simple-stable and Bowditch representations of the .pdf:application/pdf},
}

@misc{aougab_constructing_2025,
	title = {Constructing reducibly geometrically finite subgroups of the mapping class group},
	url = {http://arxiv.org/abs/2501.13234},
	doi = {10.48550/arXiv.2501.13234},
	abstract = {In this article, we consider qualified notions of geometric finiteness in mapping class groups called parabolically geometrically finite (PGF) and reducibly geometrically finite (RGF). We examine several constructions of subgroups and determine when they produce a PGF or RGF subgroup. These results provide a variety of new examples of PGF and RGF subgroups. Firstly, we consider the right-angled Artin subgroups constructed by Koberda [Kob12] and Clay–Leininger–Mangahas [CLM12], which are generated by high powers of given elements of the mapping class group. We give conditions on the supports of these elements that imply the resulting right-angled Artin subgroup is RGF. Secondly, we prove combination theorems which provide conditions for when a collection of reducible subgroups, or sufficiently deep finite-index subgroups thereof, generate an RGF subgroup.},
	language = {en},
	urldate = {2025-06-25},
	publisher = {arXiv},
	author = {Aougab, Tarik and Bray, Harrison and Dowdall, Spencer and Hoganson, Hannah and Maloni, Sara and Whitfield, Brandis},
	month = jan,
	year = {2025},
	note = {arXiv:2501.13234 [math]},
	keywords = {Mathematics - Geometric Topology, Mathematics - Group Theory},
	file = {Aougab et al. - 2025 - Constructing reducibly geometrically finite subgro.pdf:/Users/schlichs/Zotero/storage/87F6L859/Aougab et al. - 2025 - Constructing reducibly geometrically finite subgro.pdf:application/pdf},
}

@book{bridson_metric_1999,
	address = {Berlin ; New York},
	series = {Grundlehren der mathematischen {Wissenschaften}},
	title = {Metric spaces of non-positive curvature},
	isbn = {978-3-540-64324-1},
	language = {en},
	number = {319},
	publisher = {Springer},
	author = {Bridson, Martin R. and Haefliger, André},
	year = {1999},
	keywords = {Geometry, Differential, Metric spaces},
	file = {Bridson et Haefliger - 1999 - Metric spaces of non-positive curvature.pdf:/Users/schlichs/Zotero/storage/9U4KDQDJ/Bridson et Haefliger - 1999 - Metric spaces of non-positive curvature.pdf:application/pdf},
}

@article{series_primitive_2019,
	title = {Primitive stability and {Bowditch}'s {BQ}-condition are equivalent},
	url = {http://arxiv.org/abs/1901.01396},
	abstract = {The equivalence of two conditions on the primitive elements in an SL(2, C) representation of the free group F2 ={\textbackslash}textless a, b {\textbackslash}textgreater, namely Minsky’s condition of primitive stability and the BQ-conditions introduced by Bowditch and generalised by Tan, Wong and Zhang, has been proved by Lee and Xu and independently by the author in arXiv:1901.01396. This note is a revised version of our original proof, which is greatly simpliﬁed by incorporating some of the ideas introduced by Lee and Xu, combined with the language of the Bowditch tree.},
	language = {en},
	urldate = {2021-03-11},
	journal = {arXiv:1901.01396 [math]},
	author = {Series, Caroline},
	month = jan,
	year = {2019},
	keywords = {30F40 (Primary), 57M50 (Secondary), Mathematics - Geometric Topology},
}

@article{series_primitive_2020,
	title = {Primitive stability and the {Bowditch} conditions revisited},
	url = {https://arxiv.org/abs/2006.10403},
	abstract = {The equivalence of two conditions on the primitive elements in an SL(2, C) representation of the free group F2 ={\textbackslash}textless a, b {\textbackslash}textgreater, namely Minsky’s condition of primitive stability and the BQ-conditions introduced by Bowditch and generalised by Tan, Wong and Zhang, has been proved by Lee and Xu and independently by the author in arXiv:1901.01396. This note is a revised version of our original proof, which is greatly simpliﬁed by incorporating some of the ideas introduced by Lee and Xu, combined with the language of the Bowditch tree.},
	language = {en},
	urldate = {2022-11-25},
	journal = {arXiv:2006.10403 [math]},
	author = {Series, Caroline},
	month = jun,
	year = {2020},
	keywords = {30F40 (Primary), 57M50 (Secondary), Mathematics - Geometric Topology},
}

@book{drutu_geometric_2018,
	address = {Providence, Rhode                     Island},
	series = {Colloquium {Publications}},
	title = {Geometric {Group} {Theory}},
	volume = {63},
	isbn = {978-1-4704-1104-6 978-1-4704-4164-7},
	url = {http://www.ams.org/coll/063},
	doi = {10.1090/coll/063},
	language = {en},
	urldate = {2025-10-08},
	publisher = {American Mathematical                     Society},
	author = {Druţu, Cornelia and Kapovich, Michael},
	month = mar,
	year = {2018},
}

@article{lawton_dynamics_2025,
	title = {Dynamics on the \${\textbackslash}mathrm \{{SU}\}(2,1)\$-character variety of the one-holed torus},
	volume = {2},
	copyright = {https://creativecommons.org/licenses/by/4.0/},
	issn = {2949-7647, 2977-1382},
	url = {https://www.cambridge.org/core/product/identifier/S2949764725000037/type/journal_article},
	doi = {10.1112/mod.2025.3},
	abstract = {We study the relative SU(2, 1)-character varieties of the one-holed torus, and the action of the mapping class group on them. We use an explicit description of the character variety of the free group of rank two in SU(2, 1) in terms of traces, which allow us to describe the topology of the character variety. We then combine this description with a generalization of the Farey graph adapted to this new combinatorial setting, using ideas introduced by Bowditch. Using these tools, we can describe an open domain of discontinuity for the action of the mapping class group which strictly contains the set of convex cocompact characters, and we give several characterizations of representations in this domain.},
	language = {en},
	urldate = {2025-10-08},
	journal = {Moduli},
	author = {Lawton, Sean and Maloni, Sara and Palesi, Frédéric},
	year = {2025},
	pages = {e7},
	file = {PDF:/Users/schlichs/Zotero/storage/D79BQ35E/Lawton et al. - 2025 - Dynamics on the \$mathrm SU (2,1)\$-character variety of the one-holed torus.pdf:application/pdf},
}

@article{oregon-reyes_properties_2018,
	title = {Properties of sets of isometries of {Gromov} hyperbolic spaces},
	volume = {12},
	issn = {1661-7207, 1661-7215},
	url = {https://ems.press/doi/10.4171/ggd/468},
	doi = {10.4171/ggd/468},
	abstract = {We prove an inequality concerning isometries of a Gromov hyperbolic metric space, which does not require the space to be proper or geodesic. It involves the joint stable length, a hyperbolic version of the joint spectral radius, and shows that sets of isometries behave like sets of
              
                2 {\textbackslash}times 2
              
              real matrices. Among the consequences of the inequality, we obtain the continuity of the joint stable length and an analogue of Berger–Wang theorem.},
	number = {3},
	urldate = {2025-10-22},
	journal = {Groups, Geometry, and Dynamics},
	author = {Oregón-Reyes, Eduardo},
	month = aug,
	year = {2018},
	pages = {889--910},
	file = {Version soumise:/Users/schlichs/Zotero/storage/6VCYMKRI/Oregón-Reyes - 2018 - Properties of sets of isometries of Gromov hyperbolic spaces.pdf:application/pdf},
}

@article{series_modular_1985,
	title = {The {Modular} {Surface} and {Continued} {Fractions}},
	volume = {s2-31},
	copyright = {http://doi.wiley.com/10.1002/tdm\_license\_1.1},
	issn = {00246107},
	url = {http://doi.wiley.com/10.1112/jlms/s2-31.1.69},
	doi = {10.1112/jlms/s2-31.1.69},
	language = {en},
	number = {1},
	urldate = {2025-10-24},
	journal = {Journal of the London Mathematical Society},
	author = {Series, Caroline},
	month = feb,
	year = {1985},
	pages = {69--80},
	file = {PDF:/Users/schlichs/Zotero/storage/Y7IEBDRE/Series - 1985 - The Modular Surface and Continued Fractions.pdf:application/pdf},
}

@book{hatcher_topology_2022,
	address = {Providence, Rhode Island},
	title = {Topology of numbers},
	isbn = {978-1-4704-5611-5},
	language = {eng},
	publisher = {American Mathematical Society},
	author = {Hatcher, Allen},
	year = {2022},
	file = {Table of Contents PDF:/Users/schlichs/Zotero/storage/SWNTEPQE/Hatcher - 2022 - Topology of numbers.pdf:application/pdf},
}

@article{nielsen_isomorphismen_1917,
	title = {Die {Isomorphismen} der allgemeinen, unendlichen {Gruppe} mit zwei {Erzeugenden}},
	volume = {78},
	copyright = {http://www.springer.com/tdm},
	issn = {0025-5831, 1432-1807},
	url = {http://link.springer.com/10.1007/BF01457113},
	doi = {10.1007/BF01457113},
	language = {de},
	number = {1-4},
	urldate = {2025-10-26},
	journal = {Mathematische Annalen},
	author = {Nielsen, J.},
	month = dec,
	year = {1917},
	pages = {385--397},
	file = {Version soumise:/Users/schlichs/Zotero/storage/SMG3WUGI/Nielsen - 1917 - Die Isomorphismen der allgemeinen, unendlichen Gruppe mit zwei Erzeugenden.pdf:application/pdf},
}

@article{osborne_primitives_1981,
	title = {Primitives in the free group on two generators},
	volume = {63},
	copyright = {http://www.springer.com/tdm},
	issn = {0020-9910, 1432-1297},
	url = {http://link.springer.com/10.1007/BF01389191},
	doi = {10.1007/BF01389191},
	language = {en},
	number = {1},
	urldate = {2025-10-26},
	journal = {Inventiones Mathematicae},
	author = {Osborne, R. P. and Zieschang, H.},
	month = feb,
	year = {1981},
	pages = {17--24},
}

@misc{flechelles_primitive_2025,
	title = {Primitive stability and the {Q}-conditions for the rank two free group in hyperbolic d-space},
	url = {http://arxiv.org/abs/2507.09771},
	doi = {10.48550/arXiv.2507.09771},
	abstract = {The two largest known domains of discontinuity for the action of Out(F\_2) on the PSL(2,C)-character variety of F\_2 - defined by Minsky's primitive stability, and Bowditch's Q-conditions - were proven to be equal independently by Lee-Xu and Series. We prove the equivalence between primitive stability and a generalization of the Q-conditions for representations of F\_2 into the isometry group of hyperbolic d-space for d {\textgreater}= 3, under some assumptions. In particular, these assumptions are satisfied by all W\_3-extensible representations. We also generalize Lee-Xu's and Series' results concerning the bounded intersection property to higher dimensions after extending their original definition to this setting.},
	urldate = {2025-11-10},
	publisher = {arXiv},
	author = {Fl{\'e}chelles, Balthazar},
	month = jul,
	year = {2025},
	note = {arXiv:2507.09771 [math]},
	keywords = {Mathematics - Geometric Topology},
	file = {Preprint PDF:/Users/schlichs/Zotero/storage/7YHXN2I8/Fléchelles - 2025 - Primitive stability and the Q-conditions for the rank two free group in hyperbolic d-space.pdf:application/pdf;Snapshot:/Users/schlichs/Zotero/storage/4F97RE5D/2507.html:text/html},
}

@incollection{farb_mapping_2006,
	address = {Providence, Rhode Island},
	title = {Mapping class group dynamics on surface group representations},
	volume = {74},
	isbn = {978-0-8218-3838-9 978-0-8218-9380-7},
	url = {http://www.ams.org/pspum/074},
	doi = {10.1090/pspum/074/2264541},
	abstract = {Deformation spaces Hom(π, G)/G of representations of the fundamental group π of a surface Σ in a Lie group G admit natural actions of the mapping class group ModΣ, preserving a Poisson structure. When G is compact, the actions are ergodic. In contrast if G is noncompact semisimple, the associated deformation space contains open subsets containing the Fricke-Teichm¨uller space upon which ModΣ acts properly. Properness of the ModΣaction relates to (possibly singular) locally homogeneous geometric structures on Σ. We summarize known results and state open questions about these actions.},
	language = {en},
	urldate = {2025-11-11},
	booktitle = {Proceedings of {Symposia} in {Pure} {Mathematics}},
	publisher = {American Mathematical Society},
	author = {Goldman, William M.},
	editor = {Farb, Benson},
	year = {2006},
	pages = {189--214},
	file = {PDF:/Users/schlichs/Zotero/storage/PJZLNJ97/Goldman - 2006 - Mapping class group dynamics on surface group representations.pdf:application/pdf},
}
\end{document}